\numberwithin{equation}{section}
\definecolor{dgreen}{rgb}{0.1,0.7,0.1}
\definecolor{purple}{rgb}{0.49, 0.06, 0.51}
\newcommand{\Q}{\mathbb{Q}}
\def\N{\mathbb{N}}
\def\R{\mathbb{R}}
\def\Z{\mathbb{Z}}
\newcommand{\CC}{\mathscr{C}}
\newcommand{\CP}{\mathscr{P}}
\newcommand{\CQ}{\mathscr{Q}}
\newcommand{\CS}{\mathscr{S}}
\def\o{\overline}
\def\s{\sigma}
\def\m#1{\begin{pmatrix}#1\end{pmatrix}}
\newcommand{\vt}{\vartheta}
\newcommand{\ox}{\otimes}
\newcommand{\x}{\times}
\newcommand{\sm}{\setminus}
\newcommand{\Nil}{\mathrm{Nil}}
\newcommand{\wt}{\widetilde}
\newcommand{\id}{\mathrm{id}}
\newcommand{\vf}{\varphi}
\newcommand{\gr}{\mathrm{gr}}
\newcommand{\ovl}{\overline}
\newcommand{\pri}{\prescript{\iota}{}}
\newcommand{\bt}{\overline{\raisebox{-.3ex}{\phantom{x}}}^{\,t}}
\newcommand{\bbar}{\overline{\raisebox{-.3ex}{\phantom{x}}}\,}
\newcommand{\bbarr}{\overline{\raisebox{-.3ex}{\phantom{x}}}}
\DeclareMathOperator{\lift}{Lift}
\DeclareMathOperator{\Sym}{Sym}
\DeclareMathOperator{\st}{st}
\DeclareMathOperator{\Int}{Int}
\DeclareMathOperator{\Trd}{Trd}
\DeclareMathOperator{\diag}{diag}
\DeclareMathOperator{\End}{End}
\newcommand{\PSD}{\mathrm{PSD}}
\renewcommand{\geq}{\geqslant}
\renewcommand{\leq}{\leqslant}
\renewcommand{\ge}{\geqslant}
\renewcommand{\le}{\leqslant}
\newcommand{\ve}{\varepsilon}
\newcommand{\op}{\mathrm{op}}
\newcommand{\pr}{\mathrm{prod}}
\newcommand{\ad}{\mathrm{ad}}
\newcommand{\tas}{T_{(A,\s)}}
\newcommand{\qf}[1]{\langle #1\rangle}
\newcommand{\simtoo}{\overset\sim\longrightarrow}
\DeclareMathOperator{\ch}{\mathrm{char}}
\newcommand{\tu}{\textup}
\newcommand{\lra}{\Leftrightarrow}
\newcommand{\kk}{k}
\newtheorem{thm}{Theorem}[section]
\newtheorem{prop}[thm]{Proposition}
\newtheorem{cor}[thm]{Corollary}
\newtheorem{lemma}[thm]{Lemma}
\theoremstyle{definition}
\newtheorem{defi}[thm]{Definition}
\newtheorem{remark}[thm]{Remark}
\newtheorem{ex}[thm]{Example}
\begin{document}

\title{Positive cones and gauges on algebras with involution}
\author{Vincent Astier}
\address{School of Mathematics and Statistics\\ University College Dublin\\ Belfield\\
Dublin~4\\ Ireland} 
\email{vincent.astier@ucd.ie, thomas.unger@ucd.ie}

\author{Thomas Unger}

\keywords{Real algebra, algebras with involution, orderings, valuations}

\subjclass[2010]{Primary 13J30,16W60; Secondary 16W10, 06F25, 16K20}

\maketitle

\begin{abstract}
We extend the classical links between valuations and orderings on fields to
Tignol-Wadsworth gauges and positive cones on finite-dimensional simple
algebras with involution. We also study the compatibility of gauges and
positive cones, and prove lifting results in the style of the Baer-Krull
theorem for fields.
\end{abstract}

\section{Introduction}

The links between valuations and orderings on fields are well-known
and are an essential tool in real algebra and in quadratic form
theory, see for instance \cite{BCR, Lam-1983, Prestel84}.

In the recent papers \cite{T-W-2010, T-W-2011} and the book
\cite{T-W-book}, Tignol and Wadsworth introduced and started the
development of the theory of ``gauges'', valuation-like functions on
(finite-dimensional) semisimple algebras and central simple algebras
with involution, while in the past few years we studied signatures of
hermitian forms and their links to positivity \cite{A-U-Kneb,
A-U-prime, A-U-PS} and used them to propose ``positive cones'', a
notion of ordering for (finite-dimensional) simple algebras with
involution, cf. \cite{A-U-pos}.

It is therefore natural to wonder if there could be links between
gauges and positive cones, and if so, if they would be similar to the
classical ones in the field case between valuations and orderings.
This paper presents a positive answer to both questions. We show
that, on a finite-dimensional simple algebra $A$ with involution
$\s$, the most natural construction of a ``valuation ring''
associated to a positive cone leads to a unique $\s$-special gauge
(Section~\ref{sec:gauges-pc}). We then show that a notion of
compatibility between $\s$-invariant gauges and positive cones can be
described by several equivalent conditions, reminiscent of the field
case (Section~\ref{sec:comp}). We proceed with a number of technical
results about anisotropy of forms for use in the final section
(Section~\ref{sec:aniso}) and conclude the paper by studying the
lifting of positive cones from the residue algebra of a
$\s$-invariant gauge, in the style of the Baer-Krull theorem for
fields (Section~\ref{sec:B-K}).

\section{Notation}

In this paper $F$ will always be a field of characteristic different
from $2$, $A$ will always be an $F$-algebra, and $\s$ an $F$-linear
involution. We will explicitly indicate when $A$ is
finite-dimensional over $F$, simple or semisimple, and also when
$F=\Sym(A,\s)\cap Z(A)$, where $\Sym(A,\s):=\{a\in A \mid \s(a)=a\}$
and $Z(A)$ denotes the centre of $A$.

For a subset $S\subseteq A$ we define $S^\x:= S\cap A^\x$, where
$A^\x$ is the set of invertible elements of $A$. For each $n\in \N$
we denote the involution $(a_{ij})_{i,j}\mapsto (\s(a_{ji}))_{i,j}$
on $M_n(A)$ by $\s^t$. For $a_1,\ldots, a_\ell \in \Sym(A,\s)$, we
denote the hermitian form $A^\ell \x A^\ell \to A, (x, y)\mapsto
\s(x_1)a_1y_1+\ldots +\s(x_\ell) a_\ell y_\ell$ by $\qf{a_1,\ldots,
a_\ell}_\s$. For $a\in A^\x$, we denote the inner automorphism $A\to
A, x\mapsto axa^{-1}$ by $\Int(a)$. If $h$ is a hermitian form over
$(A,\s)$, we denote by $D_{(A,\s)}(h)$ the set of elements of $A$
that are represented by $h$.

Let $X_F$ denote the space of orderings of $F$. We make the convention that
orderings in $X_F$ always contain $0$. If $P \in X_F$, we denote by $F_P$ a
real closure of $F$ at $P$.

For $P\in X_F$ and ${\kk}$ a subfield of $F$ we denote the convex closure of
${\kk}$ in $F$ with respect to $P$, $\{x\in F \mid \exists m\in {\kk}\ \
-m\leq_P x \leq_P m\}$, by $R_{{\kk},P}$. By $v_{{\kk},P}$ we denote the
valuation on $F$ with valuation ring $R_{v_{{\kk},P}}=R_{{\kk},P}$. We also
denote the unique maximal ideal of $R_{{\kk},P}$ by $I_{{\kk},P}$. Recall that
$I_{\kk,P}=\{x\in F \mid \forall \ve\in \kk^\x \cap P \ -\ve\leq_P x \leq_P
\ve\}$.

If $v$ is a valuation on $F$, we denote its value group $v(F^\x)$ by $\Gamma_v$
and its residue field by $F_v$. In the context of graded rings later in the
paper, we will also use the notation $F_0$ for $F_v$. Recall from
\cite[p.~72]{Prestel84} that $v$ and $P$ are called compatible if for all
$a,b\in F$, $0<_Pa\leq_P b$ implies $v(a) \geq v(b)$. See
\cite[Lemma~7.2]{Prestel84} for equivalent characterizations; one that we will
repeatedly use is $v(a+b)=\min\{ v(a), v(b)\}$ for all $a,b \in P$. Also recall
from \cite[Thm.~7.21]{Prestel84} that if $v$ is compatible with $P$, then
$v=v_{{\kk},P}$ for some subfield ${\kk}$ of $F$.

Several of our results use matrices with quaternion coefficients, cf. Zhang's
paper \cite{Zhang-1997}, from which we recall what we need in
Appendix~\ref{sec:quat-mat}. Throughout the paper we make the convention that
eigenvalue means right eigenvalue.

\section{Gauges}

Gauges were defined as a general notion of ``valuation map'' on
finite-dimensional semisimple $F$-algebras by Tignol and Wadsworth, cf.
\cite{T-W-2010, T-W-2011, T-W-book} from which we recall the main definitions
below. We assume for the remainder of this section that $A$ is a
finite-dimensional semisimple $F$-algebra.

\begin{defi}
  Let $v:F\to \Gamma_v \cup \{\infty\}$ be a valuation on $F$ and let $\Gamma$
  be a totally ordered abelian group, containing $\Gamma_v$. A map $w: A\to
  \Gamma\cup\{\infty\}$ is
  \begin{enumerate}[$(1)$]
    \item  a \emph{$v$-value function} on $A$ if for all $x,y\in A$ and 
    $\lambda \in F$, we have
    \[w(x)=\infty \Leftrightarrow  x=0;\ w(x+y) \geq \min\{w(x), w(y)\};\ 
    w(\lambda x)= v(\lambda) 
    + w(x);\]

    \item \emph{surmultiplicative} if  $w(1)=0$ and
    $w(xy) \geq w(x) + w(y)$, for all $x,y\in A$;

    \item a \emph{$v$-norm} if  $A$ has a \emph{splitting basis}, i.e., an 
    $F$-basis $\{e_1,\ldots, 
    e_m\}$ such  that
      \[w (\sum_{i=1}^m \lambda_i e_i) =\min_{1\leq i \leq m} (v(\lambda_i) +w 
      (e_i)),\quad \forall 
      \lambda_1, \ldots, \lambda_m \in F.\]
  \end{enumerate}
\end{defi}

Let $w$ be a surmultiplicative $v$-value function on $A$ and let $\gamma \in
\Gamma$. We consider the abelian groups
\[A_{\geq \gamma}:= \{ x\in A \mid w(x) \geq \gamma\},\ A_{> \gamma}:= \{ x\in 
A \mid w(x) > 
\gamma\},
\text{ and } A_\gamma:= A_{\geq \gamma}/ A_{> \gamma}\]
and define
\[\gr_w(A):= \bigoplus_{\gamma \in \Gamma} A_\gamma,\]
which is a graded $\gr_v(F)$-algebra. Recall that a \emph{homogeneous $2$-sided
ideal} $I$ of $\gr_w(A)$ is a $2$-sided ideal of $\gr_w(A)$ such that $I =
\bigoplus_{\gamma \in \Gamma} I_\gamma$ where $I_\gamma := A_\gamma \cap I$ and
that $\gr_w(A)$ is called \emph{\tu{(}graded\tu{)} semisimple} if it does not
contain any nonzero homogeneous two-sided nilpotent ideal.

If $w$ is a surmultiplicative $v$-value function on $A$, we write 
   \[R_{w} :=\{ a\in A \mid w(a) \geq 0\} \quad \text{ and  }\quad I_{w} :=\{ 
   a\in A \mid w(a) > 
   0\}\]
for $A_{\geq 0}$ and $A_{>0}$, respectively.

We also define $\Gamma_w:=w(A\sm\{0\})$ and note that it is a union of cosets
of $\Gamma_v$, and is not a group in general.

\begin{defi}
  Let $v:F\to \Gamma_v \cup \{\infty\}$ be a valuation on $F$ and let $\Gamma$
  be a totally ordered abelian group, containing $\Gamma_v$. A map $w: A\to
  \Gamma\cup\{\infty\}$ is a \emph{$v$-gauge }on $A$ if it is a
  surmultiplicative $v$-value function that is a $v$-norm and such that
  $\gr_w(A)$ is a semisimple $\gr_v(F)$-algebra.

  If $\s$ is an $F$-linear involution on $A$, then a gauge (and more generally
  a $v$-value function) $w$ on $A$ is called \emph{$\s$-invariant} if $w\circ
  \s =w$, and \emph{$\s$-special} if $w(\s(x)x)= 2 w(x)$ for all $x\in A$.
  (Note that \cite[Def.~1.2]{T-W-2011} has the additional assumption that $A$
  is simple.)
\end{defi}

\begin{remark}\label{spec-aniso}
  The existence of a $\s$-special surmultiplicative $v$-value function on $A$
  implies that $\s$ is anisotropic (i.e., $\s(x)x=0$ implies $x=0$ for every
  $x\in A$), as observed in the first line of the proof of
  \cite[Prop.~1.1]{T-W-2011}. Also, a $\s$-special surmultiplicative $v$-value
  function is $\s$-invariant, cf. \cite[Prop.~1.1]{T-W-2011}.
\end{remark}

\begin{remark}\label{I-jacobson}
  Note that if $w$ is a $v$-gauge on $A$, then $A_0=R_w/I_w$ is a
  finite-dimensional semisimple $F_v$-algebra which is not simple in general,
  cf. \cite[Prop.~2.1. and \S ff.]{T-W-2010}. It follows that $I_w$ is the
  Jacobson radical of $R_w$, cf. \cite[p.~1685]{F-W}.
\end{remark}

If $w$ is a $\s$-invariant surmultiplicative $v$-value function, $\s$ induces
an involution on $A_0$, which we denote by $\s_0$, and, if $\s_0$ is
anisotropic, \cite[Prop.~1.1 (b)$\Rightarrow$(a)]{T-W-2011} shows that $w$ is
$\s$-special.

For future use we record the following two results.

\begin{lemma}\label{power-of-2}
  Let $w$ be a $\s$-special $v$-value function on $A$, where $v$ is any
  valuation on $F$. Let $a \in \Sym(A,\s)$ and $r\in \N$. Then $w(a) > \alpha$
  if and only if $w(a^{2^r}) > 2^r \alpha$.
\end{lemma}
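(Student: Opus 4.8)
The plan is to establish the exact identity $w(a^{2^r})=2^r\,w(a)$ for every $a\in\Sym(A,\s)$ and every $r\in\N$, and then to deduce the stated equivalence from the fact that multiplication by the positive integer $2^r$ is strictly order-preserving on the totally ordered abelian group $\Gamma\cup\{\infty\}$. First I would record the elementary observation that $a^{2^r}\in\Sym(A,\s)$ for all $r$: since $\s$ is an $F$-linear anti-automorphism with $\s(a)=a$, a one-line induction on $n$ gives $\s(a^n)=\s(a)^n=a^n$, and in particular this holds for $n=2^r$.

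Next I would prove $w(a^{2^r})=2^r\,w(a)$ by induction on $r$, the case $r=0$ being trivial. For the inductive step, set $b:=a^{2^r}$, which lies in $\Sym(A,\s)$ by the previous paragraph, so that $\s(b)b=b^2=a^{2^{r+1}}$. Applying the $\s$-special property of $w$ to $x=b$ and then the induction hypothesis yields
\[
  w\bigl(a^{2^{r+1}}\bigr)=w\bigl(\s(b)b\bigr)=2\,w(b)=2\cdot 2^r\,w(a)=2^{r+1}\,w(a).
\]
(If $a=0$ all terms are $\infty$ and the identity holds formally; in fact Remark~\ref{spec-aniso} gives $a^{2^r}=0\Leftrightarrow a=0$, but this is not needed.)

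Finally, the chain of equivalences
\[
  w(a)>\alpha \iff 2^r\,w(a)>2^r\,\alpha \iff w\bigl(a^{2^r}\bigr)>2^r\,\alpha
\]
completes the proof: the first equivalence holds because $g\mapsto 2^r g$ is strictly order-preserving on the totally ordered abelian group $\Gamma$ (both implications being immediate, and the value $\infty$ causing no trouble), and the second is the identity established above. There is no serious obstacle here: the argument is essentially a repackaging of the defining property of $\s$-special $v$-value functions, valid for an arbitrary valuation $v$ and without any gauge, norm, or surmultiplicativity hypotheses. The only point requiring (minimal) attention is that $a^{2^r}$ must be known to remain symmetric at each stage of the induction so that $\s$-speciality can be invoked — which is exactly the observation made in the first paragraph.
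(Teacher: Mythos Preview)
Your proof is correct and follows essentially the same approach as the paper: both arguments rest on the identity $w(a^{2^{r+1}})=w(\s(a^{2^r})a^{2^r})=2w(a^{2^r})$, which comes from $\s$-speciality applied to the symmetric element $a^{2^r}$. The paper carries out the induction directly on the inequality rather than first establishing $w(a^{2^r})=2^r w(a)$, but this is only a cosmetic difference.
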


\begin{proof}
  By induction on $r$. Since $w(a^{2^{r+1}}) = w(\s(a^{2^r})a^{2^r}) =
  2w(a^{2^r})$, we have $w(a^{2^{r+1}}) > 2^{r+1}\alpha$ if and only if
  $w(a^{2^r}) > 2^r \alpha$.
\end{proof}

\begin{prop}\label{semisimple}
  Let $w$ be a $\s$-special surmultiplicative $v$-value function on $A$, where
  $v$ is any valuation on $F$. Then $\gr_w(A)$ is semisimple. 
\end{prop}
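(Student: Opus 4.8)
The plan is to extract the whole statement from the single identity $w(\s(x)x)=2w(x)$: I will show that every nonzero homogeneous two-sided ideal $I$ of $\gr_w(A)$ contains an element that is not nilpotent, so that $I$ itself cannot be nilpotent. Since ``$\gr_w(A)$ semisimple'' means exactly that $\gr_w(A)$ has no nonzero homogeneous two-sided nilpotent ideal, this finishes the proof.

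First I would set up the involution on the graded ring. As $w$ is $\s$-special it is $\s$-invariant (Remark~\ref{spec-aniso}), so $\s$ maps each $A_{\geq\gamma}$ onto itself and each $A_{>\gamma}$ onto itself, hence induces an additive, order-$2$, product-reversing map $\wt\s$ on $\gr_w(A)$, i.e.\ a graded involution (the verification is routine). For $0\neq x\in A$ write $\wt x$ for the image of $x$ in $A_{w(x)}=A_{\geq w(x)}/A_{>w(x)}$; every nonzero homogeneous element of $\gr_w(A)$ has this form, one has $\wt\s(\wt x)=\wt{\s(x)}$, and $\wt x\,\wt y=\wt{xy}$ whenever $w(xy)=w(x)+w(y)$ (the product being $0$ in $\gr_w(A)$ otherwise).

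The crucial observation is that $\wt\s(\xi)\,\xi\neq 0$ for every nonzero homogeneous $\xi\in\gr_w(A)$. Indeed, writing $\xi=\wt x$ and $\gamma:=w(x)$, $\s$-invariance gives $w(\s(x))=\gamma$ and $\s$-specialness gives $w(\s(x)x)=2\gamma=w(\s(x))+w(x)$, so $\wt\s(\xi)\,\xi=\wt{\s(x)}\,\wt x=\wt{\s(x)x}\neq 0$. Now take $\xi$ to be in addition \emph{symmetric}, i.e.\ $\wt\s(\xi)=\xi$; then $\xi^2=\wt\s(\xi)\xi\neq 0$, and since $\xi^2$ is again homogeneous and symmetric, an immediate induction gives $\xi^{2^r}\neq 0$ for all $r\in\N$, so $\xi$ is not nilpotent. (Equivalently, one may lift $\xi$ to a symmetric $a\in\Sym(A,\s)$ with $w(a)=\deg\xi$ — possible since $\ch F\neq 2$ — and apply Lemma~\ref{power-of-2} to conclude $w(a^{2^r})=2^r w(a)$, i.e.\ $\xi^{2^r}=\wt{a^{2^r}}\neq 0$.)

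To finish, let $I$ be a nonzero homogeneous two-sided ideal and choose a nonzero homogeneous $\xi\in I$. Then $\eta:=\wt\s(\xi)\,\xi$ lies in $I$, is nonzero by the crucial observation, and is homogeneous and symmetric; hence $\eta^{2^r}\neq 0$ for all $r$, so $I^{2^r}\ni\eta^{2^r}\neq 0$ for all $r$ and $I$ is not nilpotent. Thus $\gr_w(A)$ has no nonzero homogeneous two-sided nilpotent ideal and is semisimple. I do not anticipate any real obstacle; the only points requiring a little care are checking that $\wt\s$ is a well-defined product-reversing involution on $\gr_w(A)$ and the bookkeeping that leading forms multiply as stated precisely when the value is additive — both entirely standard.
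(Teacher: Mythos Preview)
Your proposal is correct and follows essentially the same approach as the paper's proof: both set up the induced involution $\wt\s$ on $\gr_w(A)$, use $\s$-specialness to see that $\wt\s(\xi)\xi\neq 0$ for nonzero homogeneous $\xi$, and then iterate on the symmetric element $\wt\s(\xi)\xi$ to contradict nilpotence of the ideal. The only cosmetic difference is that the paper phrases it as ``nilpotent $\Rightarrow$ zero'' and carries out the iteration via lifts in $A$ using Lemma~\ref{power-of-2}, whereas you phrase it as ``nonzero $\Rightarrow$ not nilpotent'' and run the induction directly in $\gr_w(A)$; these are the same argument.
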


\begin{proof}
  Since $w$ is surmultiplicative and $\s$-special, we have $w \circ \s = w$,
  cf. Remark~\ref{spec-aniso}. Thus $\s$ induces a grade-preserving involution
  $\wt\s$ on $\gr_w(A)$. Let $I$ be a homogeneous nilpotent $2$-sided ideal of
  $\gr_w(A)$, and let $a \in I_\alpha$ for some $\alpha \in \Gamma$, so $a = b
  + A_{>\alpha}$ for some $b \in A_{\ge \alpha}$. If $w(b) > \alpha$ then $a =
  0$. If $w(b) = \alpha$ then $w(\s(b)b) = 2\alpha$ and $\wt \s(a)a = \s(b)b +
  A_{>2\alpha}$ in $A_{2\alpha}= A_{\ge 2\alpha}/A_{>2\alpha}$ (by definition
  of the product in the graded ring, cf. \cite[p.~98]{T-W-book}). Since $I$ is
  a 2-sided ideal, $\wt \s(a)a \in I$ and thus there is $r \in \N$ such that
  $(\wt \s(a)a)^{2^r} = 0$. Therefore, $(\s(b)b)^{2^r} + A_{>2\cdot 2^r \alpha}
  = 0$ in $A_{2\cdot 2^r \alpha}$, i.e., $w((\s(b)b)^{2^r}) > 2 \cdot 2^r
  \alpha$. By Lemma~\ref{power-of-2} we get $w(\s(b)b) > 2 \alpha$, so $w(b) >
  \alpha$ and thus $a = 0$.
\end{proof}

\begin{remark}\label{ttame} 
  Let $v$ be a valuation on $F$ such that $\ch F_v=0$, and let $B$ be a finite-dimensional 
  semisimple $F$-algebra. Then every $v$-gauge on $B$ is tame, cf. \cite[Def.~1.5, 
  Cor.~3.6]{T-W-2010}.
\end{remark}

Finally, we recall a special case of \cite[Thm.~6.1]{T-W-2011}:

\begin{prop}\label{inspired}
  Let $B$ be a finite-dimensional simple $F$-algebra with $F$-linear involution
  $\tau$ such that $F=\Sym(B,\tau)\cap Z(B)$, and let $v$ be a valuation on $F$
  such that $\ch F_v=0$ and with Henselization $(F^h, v^h)$. Then:
  \begin{enumerate}[$(1)$]
    \item If $\tau\ox \id_{F^h}$ is anisotropic then there is a $\tau$-special 
    $v$-gauge on $B$.
    \item Let $y$ be a $\tau$-special $v$-gauge on $B$. 
    Then $y$ is the unique $\tau$-invariant $v$-gauge on $B$ and is tame.
  \end{enumerate}
\end{prop}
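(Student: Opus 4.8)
The plan is to deduce the proposition from \cite[Thm.~6.1]{T-W-2011}: the real work is to match the hypotheses and to observe that the assumption $\ch F_v=0$ renders the tameness conditions in \emph{loc.\ cit.}\ automatic. First I would unpack the hypothesis $F=\Sym(B,\tau)\cap Z(B)$. Writing $K:=Z(B)$, this says that either $\tau$ is of the first kind and $K=F$, or $\tau$ is of the second kind, $K/F$ is a quadratic field extension (separable, as $\ch F\neq 2$), and $\tau|_K$ is the nontrivial $F$-automorphism of $K$. In both cases $(B,\tau)$ is a central simple $K$-algebra with involution of the type considered in \cite{T-W-2011}. In the second-kind case one checks in addition that the hypothesis of~(1) forces $v$ not to split in $K$: were $v$ to split, $B\ox_F F^h$ would decompose as a product $C\times C^{\op}$ carrying an exchange-type involution, which is isotropic, contradicting anisotropy of $\tau\ox\id_{F^h}$. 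So $v$ has a unique extension to $K$, and we are squarely in the scope of the cited theorem.

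For part~(1): this is the existence half of \cite[Thm.~6.1]{T-W-2011}, whose hypothesis is exactly the stated anisotropy of $\tau\ox\id_{F^h}$ over the Henselian valued field $(F^h,v^h)$ with residue field of characteristic $\neq 2$. Concretely the $\tau$-special $v$-gauge $y$ arises from the explicit Henselian construction in \emph{loc.\ cit.}: anisotropy of $\tau$ after Henselization makes the relevant value function well defined, surmultiplicativity and the identity $y(\tau(x)x)=2y(x)$ are then formal, and semisimplicity of $\gr_y(B)$ — which is what promotes $y$ from a $v$-norm to a $v$-gauge — can be read off directly from Proposition~\ref{semisimple}.

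For part~(2): let $y$ be a $\tau$-special $v$-gauge on $B$. Tameness is immediate: by Remark~\ref{ttame}, since $\ch F_v=0$, \emph{every} $v$-gauge on $B$ is tame, $y$ included. For uniqueness, let $w$ be an arbitrary $\tau$-invariant $v$-gauge on $B$ (also tame). I would show that the mere existence of the $\tau$-special gauge $y$ forces the residue involution $\tau_0$ that $w$ induces on $R_w/I_w$ to be anisotropic; by \cite[Prop.~1.1]{T-W-2011} (the implication recalled before Proposition~\ref{semisimple}) this makes $w$ itself $\tau$-special, and finally two tame $\tau$-special $v$-gauges on $B$ must agree — their value functions being pinned down by $v$ together with the structure of $B$, as in \cite[Thm.~6.1]{T-W-2011} — so $w=y$. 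The main obstacle is precisely this uniqueness step: proving that the existence of one $\tau$-special gauge forces the residue involution of every $\tau$-invariant gauge to be anisotropic, and that tame $\tau$-special gauges are unique. That is the substantive content of \cite[Thm.~6.1]{T-W-2011}; the remainder is bookkeeping around that theorem and Remark~\ref{ttame}.
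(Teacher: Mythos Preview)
Your approach is essentially the paper's: both reduce everything to \cite[Thm.~6.1]{T-W-2011} after observing that $\ch F_v=0$ makes its hypotheses automatic. Two points the paper handles more carefully than your sketch:

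First, the tameness hypothesis in \cite[Thm.~6.1]{T-W-2011} is a condition on the \emph{algebra} $B\ox_F F^h$ over $F^h$ (it must be split by the maximal tamely ramified extension of its centre $L$, and $L$ must be tame over $F^h$), not on any gauge. The paper spells out why $\ch F_v=0$ gives this: every algebraic extension of $L$ is tamely ramified, so the maximal tame extension is already the algebraic closure and certainly splits $B\ox_F F^h$. Your appeal to Remark~\ref{ttame} concerns tameness of \emph{gauges}, which is the conclusion you want in~(2), not the hypothesis you need to invoke the theorem.

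Second, to apply \cite[Thm.~6.1]{T-W-2011} in part~(2) you need $\tau\ox\id_{F^h}$ anisotropic, which is not assumed there. The paper notes explicitly that this follows from the mere existence of a $\tau$-special $v$-gauge, citing the opening of the proof of \cite[Thm.~6.1]{T-W-2011}; your sketch does not make this link. Once that is in place, your detour through anisotropy of the residue involution of an arbitrary $\tau$-invariant gauge $w$ is unnecessary: the cited theorem delivers uniqueness among $\tau$-invariant $v$-gauges directly.
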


\begin{proof}
  Note that $\tau\ox \id_{F^h}$ is also anisotropic in (2), as observed at the
  start of the proof of \cite[Thm.~6.1]{T-W-2011}, so that $\tau\ox \id_{F^h}$
  is anisotropic in (1) and (2). It follows that the centre $L$ of the
  $F^h$-algebra $B\ox_F F^h$ must be a field, see the third paragraph of the
  proof of \cite[Thm.~6.1]{T-W-2011}. Since $\ch F_v=0$, every algebraic
  extension of $L$ is tamely ramified (cf. \cite[Def.~A.4]{T-W-book}), so that
  the maximal tamely ramified extension of $L$ is its algebraic closure, and
  splits $B\ox_F F^h$. Since furthermore $L$ is tame over $F^h$, $B\ox_F F^h$
  is tame over $F^h$ (cf. \cite[Start of \S2]{T-W-2011}), and so the hypothesis
  of \cite[Thm.~6.1]{T-W-2011} is satisfied.
\end{proof}


\section{Positive cones}

In \cite{A-U-pos} we defined positive cones on finite-dimensional simple
$F$-algebras $A$ with involution $\s$, where $F = Z(A) \cap \Sym(A,\s)$. (Note
that such algebras were simply called ``$F$-algebras with involution'' in
\cite{A-U-pos}.) More general algebras, specifically finite-dimensional
semisimple $F$-algebras $A$ with involution $\s$ where $F\subseteq Z(A) \cap
\Sym(A,\s)$, will naturally appear as residue algebras of gauges, and motivate
the following relaxation of the hypotheses on $A$ and $\s$:

\begin{defi}\label{def-preordering} 
  Let $F$ be a field and $A$ an $F$-algebra. Let $\s$ be an $F$-linear
  involution on $A$. Let $P\in X_F$. A \emph{prepositive cone $\CP$ on $(A,\s)$
  over $P$} is a subset $\CP$ of $\Sym(A,\s)$ such that
  \begin{enumerate}[(P1)]
    \item $0\in \CP$;
    \item $\CP + \CP \subseteq \CP$; 
    \item $\s(a) \cdot \CP \cdot a \subseteq \CP$ for all $a \in A$;
    \item $\CP_F := \{u \in F \mid u\CP \subseteq \CP\}$ is equal to $P$ (we  
    say that $P$ is the 
      \emph{ordering
      associated to} $\CP$);
    \item $\CP \cap -\CP = \{0\}$ (we say that $\CP$ is \emph{proper}).
  \end{enumerate}
  A prepositive cone that is maximal with respect to inclusion is called a \emph{positive cone}. 
\end{defi}

We define $\Nil[A,\s]$ to be the set of all $P\in X_F$ such that there is no
positive cone on $(A,\s)$ over $P$, cf. \cite[\S2 and Prop.~6.6]{A-U-pos}. Note
that $\Nil[A,\s]$ only depends on the Brauer class of $A$ and the type of $\s$.

Note that a prepositive cone $\CP$ induces a partial ordering $\leq_\CP$ on $A$
defined by $a \leq_\CP b$ if and only if $b-a\in \CP$. Moreover, since $\CP$ is
closed under sums, we have $a \leq_\CP b$ and $c \leq_\CP d$ implies $a+c
\leq_\CP b+d$. As usual, we write $a<_\CP b$ for $a \leq_\CP b$ and $a\not=b$.
Also note that $\CP \not = \{0\}$ by (P4).

\begin{remark}\label{aboutPC}
  Let $\CP\subseteq A$, $\CP\not= \{0\}$  and suppose $\CP$ satisfies (P5), then to prove that
  $\CP_F=P$ for a given $P\in X_F$, it suffices to prove that $P\subseteq \CP_F$.
\end{remark}

\begin{ex}\label{PC-PSD} 
  Let $E$ be one of $F$, $F(\sqrt{-1})$ or $(-1,-1)_F$ (the Hamilton quaternion
  division algebra over $F$) and let $\bbar$ denote the identity on $F$ or
  conjugation in the remaining cases. Let $P\in X_F$. Recall that $M \in \Sym
  (M_n(E), \bt)$ is positive semidefinite with respect to $P$ if and only if
  $\ovl{x}^t M x \in P$ for all $x\in E^n$ (see Appendix~\ref{sec:quat-mat} for
  the quaternionic case). The only two (pre)positive cones on $(E, \bbar)$ over
  $P$ are $P$ and $-P$, since $\Sym(E,\bbar)=F$. Therefore, by
  \cite[Prop.~4.10]{A-U-pos}, the only two (pre)positive cones on $(M_n(E),
  \bt)$ are the set $\PSD_n(E,P)$ of positive semidefinite matrices with
  respect to $P$ and the set of negative semidefinite matrices with respect to
  $P$.

\end{ex}

\begin{lemma}\label{P-extends}
  Let $A$ be an $F$-algebra with $F$-linear involution $\s$. Let $\CP$
  be a prepositive cone on $(A,\s)$ over $P \in X_F$, and let $L$ be any
  subfield of $Z(A) \cap \Sym(A,\s)$ containing $F$.  Then $P$ extends to $L$.
\end{lemma}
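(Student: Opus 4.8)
The plan is to reduce the statement to the classical fact that an ordering on a field extends to any algebraic (or even arbitrary) field extension precisely when that extension is formally real relative to the ordering, i.e.\ when $-1$ is not a sum of elements of the form $p_i t_i^2$ with $p_i \in P$ and $t_i \in L$. So first I would suppose for contradiction that $P$ does \emph{not} extend to $L$. By the Artin--Schreier criterion for extending orderings, this means there is an identity
\[
-1 = \sum_{i=1}^{n} p_i t_i^2
\]
with $p_i \in P$ and $t_i \in L$. The goal is to use the positive cone $\CP$ to derive a violation of property (P5) (properness), since the $t_i$ live inside $Z(A)\cap\Sym(A,\s)$, and elements of $Z(A)$ commute with everything and are fixed by $\s$, so they interact well with the defining properties of a prepositive cone.

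The key step is to transport this relation into $\CP$. Pick any nonzero $c \in \CP$ (which exists since $\CP \neq \{0\}$ by (P4), as noted after Definition~\ref{def-preordering}). For each $i$, since $t_i \in Z(A) \cap \Sym(A,\s)$ we have $\s(t_i) = t_i$ and $t_i$ central, so by (P3) applied with $a = t_i$ we get $\s(t_i) c\, t_i = t_i^2 c \in \CP$; and since $p_i \in P = \CP_F$ (by (P4)), $p_i t_i^2 c \in \CP$. Summing over $i$ using (P2) gives $\bigl(\sum_i p_i t_i^2\bigr) c \in \CP$, that is $-c \in \CP$. But also $c \in \CP$, so $c \in \CP \cap -\CP = \{0\}$ by (P5), contradicting $c \neq 0$. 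Hence no such relation exists, and $P$ extends to $L$.

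The main obstacle — really the only point requiring care — is making sure the Artin--Schreier-type extension criterion is being invoked in the correct form: one needs that an ordering $P$ on $F$ fails to extend to a field extension $L/F$ if and only if $-1$ lies in the subsemiring of $L$ generated by $P$ and squares of elements of $L$ (equivalently, $L$ carries no ordering lying over $P$). This is standard (e.g.\ via the characterization of the set of sums $\sum p_i t_i^2$ as the intersection of all orderings of $L$ extending $P$, or simply Zorn's lemma applied to the preordering of $L$ generated by $P$), and since $L \subseteq Z(A)\cap\Sym(A,\s)$ is an honest commutative field, there is no obstruction coming from the noncommutativity of $A$. One should also note that the argument does not even use maximality of $\CP$ or property (P1) beyond what is implicit, only (P2), (P3), (P4), (P5), so the lemma holds for prepositive cones as stated.
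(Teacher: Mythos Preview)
Your proof is correct and follows essentially the same approach as the paper: both argue that if the preordering on $L$ generated by $P$ were improper, then multiplying the resulting relation by a nonzero element of $\CP$ (using that elements of $L$ are central and $\s$-symmetric) produces a nonzero element of $\CP \cap -\CP$, contradicting (P5). The only cosmetic difference is that the paper phrases the improperness as $\sum_i p_i \ell_i^2 = 0$ with a nonzero summand, while you invoke the equivalent Artin--Schreier form $-1 = \sum_i p_i t_i^2$.
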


\begin{proof}
  Let
  \[\Sigma := \{\sum_{i=1}^n \ell_i^2 p_i \mid n \in \N, \ \ell_i \in L,\ p_i 
  \in
  P\}.\]
  Then $\Sigma$ is closed under sums and products (since $P$ is), and contains
  $L^2$. If $\Sigma \cap -\Sigma = \{0\}$ then $\Sigma$ is a preordering on $L$
  containing $P$, which proves the result. Assume $\Sigma \cap -\Sigma \not =
  \{0\}$. Then there are $n \in \N$, $\ell_i \in L$ and $p_i \in P$ such that
  $\sum_{i=1}^n \ell_i^2 p_i = 0$ where not all $\ell_i^2 p_i$ are equal to $0$.
  For instance, $\ell_1^2p_1 \not = 0$. Let $a \in \CP \setminus \{0\}$. Then
  \[0 = (\sum_{i=1}^n \ell_i^2 p_i) a = \sum_{i=1}^n p_i \s(\ell_i) a \ell_i.\]
  But $p_i \s(\ell_i) a \ell_i \in \CP$ for every $i$, since $a \in \CP$, and 
  \[-p_1 \s(\ell_1) a \ell_1 = \sum_{i=2}^n \ell_i^2 p_ia,\]
  with $p_1 \s(\ell_1) a \ell_1 \not = 0$. So we have a nonzero element in $\CP 
  \cap
  -\CP$,  contradicting (P5).
\end{proof}

\begin{lemma}\label{lem:simple}
  Let $A$ be a finite-dimensional simple $F$-algebra with $F = \Sym(A,\s) \cap
  Z(A)$. Let $\CP$ be a prepositive cone on $(A,\s)$ over $P \in X_F$ and let
  $L \subseteq F_P$ be a field extension of $F$. Then $A \ox_F L$ is simple and
  $L = \Sym(A \ox_F L, \s \ox \id) \cap Z(A \ox_F L)$.
\end{lemma}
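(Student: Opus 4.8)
The plan is to split the statement into its two assertions — simplicity of $A\ox_F L$, and the identification of the centre with $L$ — and to reduce both to the field case, using the prepositive cone only to guarantee that $L$ is ordered (hence that $-1$ is not a sum of squares in $L$), which is what prevents the central simple algebra from degenerating over $L$.

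First I would recall that, since $A$ is finite-dimensional simple with $F=\Sym(A,\s)\cap Z(A)$, its centre $Z(A)$ is either $F$ itself or a quadratic extension $F(\sqrt{d})$ of $F$ (the case $\s$ of the second kind). In the first case $A$ is central simple over $F$, and $A\ox_F L$ is central simple over $L$ by the standard fact that scalar extension of central simple algebras stays central simple; in particular it is simple with centre $L$, and $\Sym(A\ox_F L,\s\ox\id)\cap Z(A\ox_F L)=\Sym(A\ox_F L,\s\ox\id)\cap L$, which equals $L$ since $L$ is fixed by $\s\ox\id$. In the second case, $Z(A)=F(\sqrt d)$, and $A\ox_F L \cong A\ox_{Z(A)}(Z(A)\ox_F L)$; the key point is that $Z(A)\ox_F L=L(\sqrt d)$ is again a field, because $\sqrt d\notin L$. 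This is where the hypothesis $L\subseteq F_P$ enters: by Lemma~\ref{P-extends}, $P$ extends to an ordering of $Z(A)=F(\sqrt d)$ only if it does not — in fact, since $\s$ is of the second kind with $F=\Sym(A,\s)\cap Z(A)$, the element $\sqrt d$ lies in the skew part, and the existence of the prepositive cone forces $d<_P 0$, so $\sqrt d\notin F_P\supseteq L$. Hence $L(\sqrt d)/L$ is a genuine quadratic field extension, $A\ox_F L$ is central simple over $L(\sqrt d)$, in particular simple, its centre is $L(\sqrt d)$, and the involution $\s\ox\id$ restricts on the centre to the nontrivial automorphism of $L(\sqrt d)/L$, so $\Sym(A\ox_F L,\s\ox\id)\cap Z(A\ox_F L)=L$.

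The main obstacle is the second-kind case, and specifically the claim that $d<_P 0$ (equivalently $\sqrt d\notin F_P$). I would extract this from the theory in \cite{A-U-pos}: the existence of a prepositive cone $\CP$ on $(A,\s)$ over $P$ requires $P\notin\Nil[A,\s]$, and when $\s$ is of the second kind with centre $F(\sqrt d)$ this is known to be incompatible with $P$ extending to $F(\sqrt d)$ — essentially because over a real closure $F_P$ containing $\sqrt d$ the algebra $A\ox_F F_P$ would split as a product $B\times B^{\op}$ interchanged by the involution, which admits no positive cone. So I would cite the relevant statement ($P\notin\Nil[A,\s]$ implies $F(\sqrt d)$ is not formally real over $P$, i.e. $d<_P 0$) from \cite{A-U-pos} rather than reprove it.

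Finally I would assemble the two cases into one short argument: in both, $Z(A)\ox_F L$ is a field $Z'$ (equal to $L$ or $L(\sqrt d)$), $A\ox_F L=A\ox_{Z(A)}Z'$ is central simple over $Z'$ hence simple, and $\s\ox\id$ fixes $L$ pointwise while — in the second-kind case — acting nontrivially on $Z'$; therefore $\Sym(A\ox_F L,\s\ox\id)\cap Z(A\ox_F L)=\Sym(A\ox_F L,\s\ox\id)\cap Z'=L$. The routine verifications (that scalar extension preserves central simplicity, that $\s\ox\id$ acts as claimed on the centre) I would leave to standard references on central simple algebras with involution.
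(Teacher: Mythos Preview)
Your proof is correct and rests on the same key input as the paper's: the existence of a prepositive cone over $P$ gives $P\notin\Nil[A,\s]$, from which one extracts that $Z(A)\ox_F L$ stays a field. The paper cites this as ``$A\ox_F F_P$ is simple'' (via \cite[Rem.~6.2]{A-U-pos}) and then deduces $A\ox_F L$ simple since $L\subseteq F_P$; for the centre it does a short case split, reaching a contradiction if $Z(A\ox_F L)\subseteq\Sym(A\ox_F L,\s\ox\id)$. You instead unpack the second-kind case explicitly, arguing $d<_P 0$ (essentially \cite[Prop.~8.4]{A-U-pos}, which the paper itself invokes only later in Lemma~\ref{not-extend}) and then computing directly that $\s\ox\id$ restricts to the nontrivial automorphism of $L(\sqrt d)/L$. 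Your route is a touch more concrete and avoids the contradiction step; the paper's is slightly more streamlined in citing a single simplicity statement. Neither has a substantive advantage. One small cleanup: your reference to Lemma~\ref{P-extends} is misplaced (that lemma concerns subfields of $Z(A)\cap\Sym(A,\s)$, which in the second-kind case is just $F$), but you immediately give the correct reason anyway, so it does no harm.
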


\begin{proof}
  By [3, Prop.~6.6] we have $P \in \widetilde X_F=X_F \sm \Nil[A,\s]$ and by
  [3, Rem.~6.2] we have that $A \ox_F F_P$ is simple. It follows that $A \ox_F
  L$ is simple.

  Clearly $L \subseteq \Sym(A \ox_F L, \s \ox \id) \cap Z(A \ox_F L)$. We
  consider two cases:

  If $F = Z(A)$ then $F \ox_F L = Z(A) \ox_F L$ i.e., $L = Z(A \ox_F L)$ (cf.
  \cite[Chap.~8, Thm.~3.2(i)]{Sch}), and the result follows.

  If $F \subsetneqq Z(A)$. Then $[Z(A):F] = 2$ and thus $[Z(A \ox_F L) : L] =
  2$. In case $Z(A \ox_F L) \not \subseteq \Sym(A \ox_F L, \s \ox \id)$ then
  $[Z(A \ox_F L): \Sym(A \ox_F L, \s \ox \id) \cap Z(A \ox_F L)]$ is also $2$,
  and thus $L = \Sym(A \ox_F L, \s \ox \id) \cap Z(A \ox_F L)$. In case $Z(A
  \ox_F L) \subseteq \Sym(A \ox_F L, \s \ox \id)$ then for every $a \in Z(A)$
  we have $a \ox 1 \in Z(A \ox_F L)$ and thus $\s(a) \ox 1 = a \ox 1$, from
  which it follows that $\s(a) = a$ and thus $Z(A) \subseteq \Sym(A, \s)$, a
  contradiction to $F \not = Z(A)$.
\end{proof}

\begin{prop}\label{ppc-extension}
  Let $A$ be a finite-dimensional $F$-algebra with $F$-linear involution $\s$.
  Let $\CP$ be a prepositive cone on $(A,\s)$ over $P \in X_F$ and let $(L,Q)$
  be an ordered field extension of $(F,P)$. Then $\CP \ox 1 := \{a \ox 1 \mid a
  \in \CP\}$ is contained in a positive cone $\CP'$ on $(A \ox_F L, \s \ox
  \id)$ over $Q$.

 If $\CP$ is a positive cone, then $\CP = \CP' \cap A$ \tu{(}where we
  identify $\CP$ with $\CP\ox 1$ and $A$ with $A\ox 1$\tu{)}; furthermore, if
  $A$ is simple and $F=\Sym(A,\s)\cap Z(A)$, then $\CP'$ is unique.
\end{prop}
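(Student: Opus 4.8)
The plan is to build $\CP'$ as a maximal extension of $\CP \ox 1$ using Zorn's lemma, once we know that $\CP\ox 1$ lies in \emph{some} prepositive cone on $(A\ox_F L,\s\ox\id)$ over $Q$. First I would check that $\CP\ox 1$ satisfies a weak form of the axioms: it is contained in $\Sym(A\ox_F L,\s\ox\id)$, it contains $0$, it is closed under addition, and it satisfies (P3) for elements of the form $a\ox 1$; moreover $Q\cdot(\CP\ox 1)$ is closed under addition and under $b\mapsto \s(c)bc$ for $c\in A\ox_F L$ (using (P3) for $\CP$ together with the fact that $Q=\sum P\cdot L^2$ as in the proof of Lemma~\ref{P-extends}, so that $Q(\CP\ox 1)=\{\sum q_i(a_i\ox 1)\mid q_i\in Q,\ a_i\in\CP\}$ is stable under the $A\ox_F L$-action). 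The one genuinely new thing to verify is that $Q(\CP\ox 1)$ is proper, i.e.\ $Q(\CP\ox 1)\cap -Q(\CP\ox 1)=\{0\}$: this is the main obstacle and I expect to handle it by scalar extension, reducing a putative nontrivial relation $\sum q_i(a_i\ox 1)=-\sum q_j'(a_j'\ox 1)$ with $q_i,q_j'\in Q\smz$, $a_i,a_j'\in\CP\smz$, to a relation over $F$ after clearing denominators and applying a suitable $F$-linear functional, or more directly by extending scalars to $F_P$, embedding $L$ into $F_P$ when possible and using that $\CP\ox_F F_P$ is proper (which follows from the structure theory in \cite{A-U-pos}, e.g.\ \cite[Prop.~6.6]{A-U-pos} together with Example~\ref{PC-PSD}); in the general case one passes to a common real closure of $(L,Q)$ and $(F,P)$. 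Granting properness, $Q(\CP\ox 1)$ is a prepositive cone over $Q$ by Remark~\ref{aboutPC} (we have $Q\subseteq (Q(\CP\ox 1))_F$ by construction, and $Q(\CP\ox 1)\neq\{0\}$ since $\CP\neq\{0\}$), and Zorn's lemma applied to the family of prepositive cones over $Q$ containing it — whose unions of chains are again prepositive cones over $Q$, the properness (P5) being preserved because a witnessing relation would already live in one member of the chain — yields the desired positive cone $\CP'$.

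For the second part, assume $\CP$ is a positive cone on $(A,\s)$ over $P$. Identifying $A$ with $A\ox 1\subseteq A\ox_F L$, the set $\CP'\cap A$ is clearly a prepositive cone on $(A,\s)$: it satisfies (P1)--(P3) because $\CP'$ does and $A$ is an $\s$-invariant $F$-subalgebra, it satisfies (P5) because $\CP'$ does, and it contains $\CP$; for (P4) one checks $(\CP'\cap A)_F\supseteq P$ directly (if $p\in P$ then $p\in Q$, so $p\CP'\subseteq\CP'$, hence $p(\CP'\cap A)\subseteq\CP'\cap A$) and invokes Remark~\ref{aboutPC}. Since $\CP$ is maximal among prepositive cones on $(A,\s)$ over $P$ and $\CP\subseteq\CP'\cap A$, we conclude $\CP=\CP'\cap A$.

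Finally, for uniqueness when $A$ is simple and $F=\Sym(A,\s)\cap Z(A)$: since $L\subseteq F_P$ is excluded in general, the right reading here is that uniqueness is asserted in the situation of the preceding hypotheses combined with the case relevant to \cite{A-U-pos}, so I would argue as follows. By Lemma~\ref{lem:simple} (applied with the field extension $L\subseteq F_Q\cong$ real closure of $(L,Q)$, or directly when $L\subseteq F_P$), $A\ox_F L$ is simple with $L=\Sym(A\ox_F L,\s\ox\id)\cap Z(A\ox_F L)$, so positive cones on it are governed by \cite[Prop.~6.6 and Prop.~4.10]{A-U-pos}. Suppose $\CP'_1$ and $\CP'_2$ are two positive cones on $(A\ox_F L,\s\ox\id)$ over $Q$ both containing $\CP\ox 1$. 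By the maximality/structure results of \cite{A-U-pos}, a positive cone over $Q$ on a simple algebra with involution of the relevant type is determined by its ``sign'' on a single symmetric element (this is the content of \cite[Prop.~4.10]{A-U-pos}, generalizing Example~\ref{PC-PSD}); pick $a\in\CP\smz$, so $a\ox 1\in\CP'_1\cap\CP'_2$ fixes that sign, forcing $\CP'_1=\CP'_2$. I expect the properness step in the first paragraph to be the crux; the rest is a standard Zorn's lemma plus maximality argument.
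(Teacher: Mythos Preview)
Your overall shape matches the paper's proof, which for the first statement simply cites \cite[Prop.~5.8]{A-U-pos} (after noting that \S5.2 there goes through in this generality), uses the same maximality argument for $\CP=\CP'\cap A$, and for uniqueness invokes Lemma~\ref{lem:simple} together with \cite[Thm.~7.5]{A-U-pos}. So your strategy is not different in kind; you are essentially trying to reconstruct \cite[Prop.~5.8]{A-U-pos}.

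There is, however, a genuine gap in your reconstruction. The set $Q(\CP\ox 1)=\{\sum q_i(a_i\ox 1)\}$ is \emph{not} closed under (P3) for arbitrary $c\in A\ox_F L$: writing $c=\sum_\alpha d_\alpha\ox e_\alpha$ with $d_\alpha\in A$, one gets $\s'(c)(a\ox 1)c=\sum_{\alpha,\beta}\s(d_\alpha)a d_\beta\ox e_\alpha e_\beta$, and the cross terms $\s(d_\alpha)ad_\beta$ for $\alpha\neq\beta$ need not lie in $\CP$, nor $e_\alpha e_\beta$ in $Q$. Your justification ``$Q=\sum P\cdot L^2$'' is also false in general (e.g.\ $P$ on $\Q$ has two extensions to $\Q(\sqrt{2})$, neither equal to the preordering $\sum P\cdot L^2$). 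So you must work with the full closure $\CC_Q(\CP\ox 1)=\{\sum q_i\,\s'(x_i)(a_i\ox1)x_i\mid q_i\in Q,\ x_i\in A\ox_F L,\ a_i\in\CP\}$, and your properness argument---which you wrote only for relations of the form $\sum q_i(a_i\ox 1)=0$---must be upgraded accordingly. Your second suggestion, passing to a real closure $L_Q\supseteq F_P$ and using the explicit description of positive cones there, is the right fix and is essentially how \cite[\S5.2]{A-U-pos} proceeds; the ``clear denominators and apply a linear functional'' idea does not work, since an $F$-linear $\phi:L\to F$ need not send $Q$ into $P$.

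Two smaller points on the uniqueness paragraph. First, Lemma~\ref{lem:simple} as stated assumes $L\subseteq F_P$, which need not hold here; the paper's citation is to be read as ``the proof of Lemma~\ref{lem:simple} goes through for any ordered extension $(L,Q)$ of $(F,P)$'' (the key step is that $Z(A)\ox_F L$ remains a field because the relevant $d$ with $Z(A)=F(\sqrt d)$ is negative at $P$, hence at $Q$). Your attempt to apply it ``with $L\subseteq F_Q$'' does not parse. Second, the reference you want for the final step is \cite[Thm.~7.5]{A-U-pos}, which gives the classification of positive cones on a simple algebra with involution over a fixed ordering; \cite[Prop.~4.10]{A-U-pos} is about going down from $(M_n(A),\s^t)$ to $(A,\s)$ and is not the right tool here.
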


\begin{proof}
  Observe that the results in \cite[\S 5.2]{A-U-pos} hold when $A$ is a
  finite-dimensional $F$-algebra with $F$-linear involution $\s$, and where
  Definition~\ref{def-preordering} is used for prepositive cones. Then the
  first statement is \cite[Prop.~5.8]{A-U-pos}.

  Since $\CP' \cap A$ is easily checked to be a prepositive cone on $(A,\s)$
  over $F$, the second statement is clear since $\CP$ is then a positive cone
  included in $\CP' \cap A$.

  The final statement follows from Lemma~\ref{lem:simple} and
  \cite[Thm.~7.5]{A-U-pos}.
\end{proof}

We can now relate positive cones over $F$ as defined in this paper and positive
cones over $Z(A)\cap \Sym(A,\s)$ as defined in \cite{A-U-pos} for $A$
finite-dimensional and simple:

\begin{prop}\label{ppc-equivalences}
  Assume that $A$ is a
  finite-dimensional simple $F$-algebra with $F$-linear involution $\s$.
  Let $ \CP \subseteq
  \Sym(A,\s)$, $P \in X_F$, and $F_\s := \Sym(A,\s) \cap Z(A)$.
  The following are equivalent:
  \begin{enumerate}[$(1)$]
    \item $\CP$ is a positive cone on $(A,\s)$ over $P$.
    \item $\CP$ is a positive cone on $(A,\s)$ over some ordering on $F_\s$ 
    that extends $P$.
  \end{enumerate}
\end{prop}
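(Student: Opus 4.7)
The strategy is a direct comparison of the two definitions of positive cone: axioms (P1), (P2), (P3), and (P5) are properties of $\CP \subseteq \Sym(A,\s)$ that do not mention any base field, so only axiom (P4) and the class of sets over which maximality is measured distinguish the two viewpoints. The equivalence (1) $\Leftrightarrow$ (2) therefore reduces to identifying, or constructing, the appropriate ordering on $F_\s$.

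For (2) $\Rightarrow$ (1), axioms (P1)--(P3) and (P5) transfer automatically, and (P4) reduces to the computation $\CP_F = \CP_{F_\s} \cap F = Q \cap F = P$, using that $Q$ extends $P$. For maximality, let $\CP \subseteq \CP'$ be a prepositive cone on $(A,\s)$ over $P$. The plan is to show $\CP'$ is in fact a prepositive cone over $Q$ and then conclude $\CP = \CP'$ by the hypothesis. The set $\CP'_{F_\s}$ is a proper preordering on $F_\s$ containing $P$; a short argument using that $\CP$ is maximal over $Q$ (by passing to the ppc over $Q$ generated by $\CP'$ and invoking propriety via (P5)) shows $Q \subseteq \CP'_{F_\s}$, and then maximality of $Q$ among preorderings forces $\CP'_{F_\s} = Q$. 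So $\CP'$ is a ppc over $Q$, whence $\CP = \CP'$.

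For (1) $\Rightarrow$ (2), the natural candidate is $Q := \CP_{F_\s} = \{u \in F_\s \mid u\CP \subseteq \CP\}$. Routine checks show $Q$ is a proper preordering of $F_\s$ extending $P$: closure under sums and products is immediate; the inclusion $F_\s^2 \subseteq Q$ follows from axiom (P3) applied with $a = u \in F_\s \subseteq Z(A) \cap \Sym(A,\s)$, yielding $u^2 p = \s(u) p u \in \CP$ for all $p \in \CP$; and $Q \cap -Q = \{0\}$ follows from (P5) together with the invertibility in $A$ of nonzero elements of $F_\s$ (using the remark that $\CP \neq \{0\}$). The containment $P \subseteq Q$ is then clear, and $Q \cap F = P$ follows from maximality of $P$ as an ordering of $F$.

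The main obstacle is to show that $Q$ is actually an ordering, i.e., $Q \cup -Q = F_\s$. I would argue by contradiction using the maximality of $\CP$: for $u \in F_\s$ with $\pm u \notin Q$, consider the candidates $\CP + u\CP$ and $\CP - u\CP$. Each is a subset of $\Sym(A,\s)$ closed under (P1)--(P3); axiom (P4) over $P$ holds whenever the set is proper, by the standard preordering argument combined with maximality of $P$. Nonmembership $u \notin Q$ supplies a witness $p_0 \in \CP$ with $up_0 \notin \CP$, ensuring $\CP \subsetneq \CP + u\CP$; hence if $\CP + u\CP$ satisfied (P5) it would contradict the maximality of $\CP$ over $P$, and similarly for $\CP - u\CP$. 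The delicate endgame is the remaining case, where both sets fail (P5), producing nonzero $P_1, Q_1, P_2, Q_2 \in \CP$ with $P_1 = -uQ_1$ and $P_2 = uQ_2$. The contradiction must be extracted from this configuration: a natural line is to apply (P3) to obtain $uQ_1Q_2Q_1 \in \CP$ (from $a = Q_1, c = P_2$) and $-uQ_2Q_1Q_2 \in \CP$ (from $a = Q_2, c = P_1$), then iterate (P3) and exploit the centrality and self-symmetry of $u$ to force a nonzero element in $\CP \cap -\CP$, contradicting (P5) for $\CP$ itself. Once $Q$ is known to be an ordering, axiom (P4) for $\CP$ over $Q$ holds by construction, and maximality of $\CP$ over $Q$ is inherited from maximality over $P$ via the (2) $\Rightarrow$ (1) argument applied to any ppc over $Q$ containing $\CP$.
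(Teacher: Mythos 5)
The direction $(2)\Rightarrow(1)$ in your proposal is essentially the same elementary observation as in the paper (the "$\CP_F = Q\cap F = P$" computation), though the maximality transfer is more delicate than you present: you cannot simply "pass to the ppc over $Q$ generated by $\CP'$," because nothing guarantees that the set $\CC_Q(\CP')$ you would form is proper until \emph{after} you know $Q\subseteq \CP'_{F_\s}$, which is exactly what you are trying to establish. (A clean way to get this, used implicitly in the paper, is to note that any prepositive cone $\CS$ containing a nonzero prepositive cone over $P$ must itself have $\CS_F=P$, by the standard $-u^2a\in\CS\cap-\CS$ trick.)

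The real gap is in $(1)\Rightarrow(2)$, and you acknowledge it. Your approach --- proving totality of $\CP_{F_\s}$ directly from maximality of $\CP$ by showing one of $\CP\pm u\CP$ must be proper --- is genuinely different from the paper, which extends scalars to $F_P$ via Lemma~\ref{P-extends} and Proposition~\ref{ppc-extension} and reads totality off the explicitly known positive cone $\CP'$ over $(A\ox_F F_P,\s\ox\id)$. But your endgame does not close. From $P_1=-uQ_1$ and $P_2=uQ_2$ the (P3)-conjugations you list yield $uQ_1Q_2Q_1\in\CP$ and $-uQ_2Q_1Q_2\in\CP$, and iterating (P3) only ever produces $\pm u^k$ times palindromic words in $Q_1,Q_2$ of odd length; since $Q_1Q_2Q_1$ and $Q_2Q_1Q_2$ are unrelated in a noncommutative simple algebra, there is no evident identity that makes two such words coincide, and "centrality of $u$" does not help because $u$ factors out of both sides anyway. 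To close this gap one essentially needs the much stronger fact that a single nonzero $Q_1\in\CP$ already generates all of $\CP$ under (P2), (P3) and scaling by $P$ (so that $-uQ_1\in\CP$ forces $-u\in\CP_{F_\s}$ immediately), but that is a nontrivial consequence of the classification of positive cones in \cite[Thm.~7.5]{A-U-pos} --- the very machinery the paper's proof is built on and that your proposal avoids citing. As written, the argument does not prove $Q\cup -Q = F_\s$.
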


\begin{proof}
  (1) $\Rightarrow$ (2): We only have to check that $\CP_{F_\s}$ is an ordering
  on $F_\s$. By Proposition~\ref{ppc-extension}, $\CP = \CP' \cap A$ with
  $\CP'$ a positive cone on $(A \ox_F F_P, \s \ox \id)$, and since $P$ extends
  to $F_\s$ (by Lemma~\ref{P-extends}) we can assume that $F_\s \subseteq F_P$.
  The set $\CP_{F_\s}$ is clearly a preordering on $F_\s$, so it remains to
  show that $\CP_{F_\s} \cup - \CP_{F_\s} = F_\s$. Let $u \in F_\s \subseteq
  F_P= (\CP')_{F_P} \cup - (\CP')_{F_P}$. If $u \in (\CP')_{F_P}$ then, for
  every $a \in \CP$, $ua \in \CP' \cap A = \CP$, so $u \in \CP_{F_\s}$.
  Similarly, if $u \in -(\CP')_{F_P}$ we obtain $u \in -\CP_{F_\s}$, which
  proves the result.
  
  (2) $\Rightarrow$ (1): We only have to check property (P4). It is clear that
  $\CP_F$ is a preordering on $F$, and we only have to check that $\CP_F \cup
  -\CP_F = F$. Let $u \in F \subseteq F_\s$. If $u \in \CP_{F_\s}$ then $u\CP
  \subseteq \CP$, so $u \in \CP_F$. Similarly, if $u \in -\CP_{F_\s}$ then $u
  \in -\CP_F$, proving the result.
\end{proof}

\begin{defi}
  For a subset $S$ of $A$ and $P$ in $X_F$, we define  
  \[\CC_P(S):=\{\sum_{i=1}^n u_i \s(x_i) s_i x_i \mid n\in \N, u_i \in P, x_i\in
  A, s_i\in S\},\]
  and 
  \[\CC(S) := \{\sum_{i=1}^n \s(x_i)s_ix_i \mid n \in \N, x_i \in
  A, s_i \in S\}.\]
\end{defi}

\subsection{Positive cones on finite-dimensional 
simple algebras with involution}

In this section let $A$ be a finite-dimensional simple $F$-algebra with
$F$-linear involution $\s$ such that $F=\Sym(A,\s)\cap Z(A)$ and let $\CP$ be a
positive cone on $(A,\s)$ over $P \in X_F$.

\begin{lemma}\label{lem:CP*}
  Let $\CP^\x := \CP\cap A^\x$. Then  $\CP = \CC(\CP^\x)$. 
\end{lemma}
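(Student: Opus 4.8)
The plan is to show both inclusions. The inclusion $\CC(\CP^\x)\subseteq \CP$ is immediate from the positive cone axioms (P2) and (P3): each summand $\s(x_i)s_ix_i$ lies in $\CP$ because $s_i\in\CP^\x\subseteq\CP$ and $\CP$ is closed under the transformations $y\mapsto \s(x)yx$, and finite sums of elements of $\CP$ stay in $\CP$. So the content is in the reverse inclusion $\CP\subseteq\CC(\CP^\x)$, i.e. every element of $\CP$ is a sum of elements of the form $\s(x)sx$ with $s$ an \emph{invertible} element of $\CP$.

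The natural strategy is: first observe $\CC(\CP^\x)$ is itself a prepositive cone on $(A,\s)$, then invoke maximality of $\CP$. Concretely, I would check that $\CC(\CP^\x)$ satisfies (P1)--(P5). Properties (P1), (P2) are obvious, and (P3) holds since $\s(a)\cdot\s(x)sx\cdot a=\s(xa)s(xa)$. For (P4), since $\CC(\CP^\x)\subseteq\CP$ we get $\CC(\CP^\x)_F\subseteq\CP_F=P$; conversely for $u\in P$ and $s\in\CP^\x$ we have $us=\s(1)(us)1$ with $us\in\CP^\x$ (it is in $\CP$ by (P2)-type scaling, using that $\CP$ is a positive cone over $P$, and invertible since $u\in P\subseteq F^\x$ and $s$ invertible), so $P\subseteq \CC(\CP^\x)_F$, giving (P4) (this also secretly needs $\CC(\CP^\x)\neq\{0\}$, which follows once we know $\CP^\x\neq\emptyset$). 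Property (P5) is free because $\CC(\CP^\x)\subseteq\CP$ and $\CP$ is proper. Then $\CC(\CP^\x)$ is a prepositive cone contained in the positive cone $\CP$, so by maximality $\CC(\CP^\x)=\CP$.

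The main obstacle is the nonemptiness of $\CP^\x$, i.e. that $\CP$ contains an invertible element: without this, $\CC(\CP^\x)=\{0\}$ and the maximality argument collapses. This is exactly the point where finite-dimensionality and simplicity of $A$ (with $F=\Sym(A,\s)\cap Z(A)$) must be used, presumably through the representation theory of hermitian forms: a positive cone $\CP$ over $P\in X_F\sm\Nil[A,\s]$ must meet $A^\x$, for instance because $\CP$ determines a signature that is nonzero, or because after scalar extension to $F_P$ the corresponding positive cone is $\PSD_n(E,P)$ (Example~\ref{PC-PSD}), which certainly contains invertible (indeed positive definite) elements, and one descends. I would isolate this as the key lemma, citing the relevant statement from \cite{A-U-pos} that positive cones on finite-dimensional simple algebras contain invertible elements, and then the rest of the proof is the routine verification sketched above.

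One alternative, if a direct "contains an invertible element" citation is unavailable, is to argue that for any $s_0\in\CP\sm\{0\}$ one can write $s_0=\sum\s(x_i)s_ix_i$ with the $s_i$ invertible by a perturbation argument inside the simple algebra (replacing $s_i$ by $s_i+\ve t$ for suitable $\ve\in P$ small and $t$ invertible in $\CP$, then absorbing the error), but this is more delicate and I expect the paper to take the maximality route instead. Either way the crux is the existence of invertibles in $\CP$.
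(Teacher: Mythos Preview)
Your maximality step is a genuine gap. Maximality of $\CP$ among prepositive cones means $\CP$ admits no proper \emph{extension}; it says nothing about proper sub-prepositive-cones. From ``$\CC(\CP^\x)$ is a prepositive cone contained in $\CP$'' you cannot conclude equality by maximality alone.

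The paper takes a different and more direct route. It cites \cite[Def.~3.11 and Thm.~7.5]{A-U-pos} for the structural fact that every positive cone on a finite-dimensional simple $(A,\s)$ has the form $\CP = \CC_P(S)$ for some set $S$ of \emph{invertible} elements of $A$. Since then $S \subseteq \CP \cap A^\x = \CP^\x$, one gets $\CP = \CC_P(S) \subseteq \CC_P(\CP^\x)$ immediately, and the reverse inclusion $\CC_P(\CP^\x) \subseteq \CP$ follows from (P2)--(P4) as you observed. A one-line computation ($u\,\s(x)bx = \s(x)(ub)x$ with $ub \in \CP^\x$ for $u \in P\setminus\{0\}$, $b \in \CP^\x$) then collapses $\CC_P(\CP^\x)$ to $\CC(\CP^\x)$. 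Your instinct that the crux is an input from \cite{A-U-pos} about invertibles in $\CP$ is correct, but the needed statement is stronger than ``$\CP^\x \neq \varnothing$'': one needs that $\CP$ is \emph{generated} by its invertible elements.

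Your strategy could be salvaged if you knew that on a finite-dimensional simple $(A,\s)$ every prepositive cone over $P$ is already a positive cone; then $\CC(\CP^\x)$, being a prepositive cone contained in the positive cone $\CP$, would have to equal it. This is true and follows from the same classification in \cite{A-U-pos}, but at that point you are invoking essentially the theorem the paper cites, and the paper's argument is shorter.
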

\begin{proof}
  By \cite[Def.~3.11 and Thm.~7.5]{A-U-pos}, $\CP = \CC_P (S)$ for some set $S$
  of invertible elements in $A$.  Therefore, $\CP$ is contained in $\CC_P
  (\CP^\x)$, and contains it by (P2), (P3) and (P4), so that $\CP = \CC_P(\CP^\x)$.

  But, for $u \in P \setminus
  \{0\}$, $x \in A$ and $b \in \CP^\x$ we have
  \[u \s(x)bx = \s(x) (ub) x, \text{ with } ub \in \CP^\x,\]
  so that $\CC_{P}(\CP^\x) = \CC(\CP^\x)$.
\end{proof}

Recall that the involution $\s$ is called  \emph{positive at $P$} if the form
$\Trd_A(\s(x)x)$ is positive definite at $P$, cf. \cite[\S 6]{A-U-pos}.

\begin{prop}\label{prop:tired}
  The involution $\s$ is positive at $P$ if and only if $(A\ox_F F_P, \s\ox\id) \cong 
  (M_{n_P}(D_P), \bt)$,  where  
  $n_P \in \N$, $D_P \in \{F_P, F_P(\sqrt{-1}), (-1,-1)_{F_P}\}$, and $\bbar$ denotes the identity 
  on $F_P$ and 
  conjugation in
  the remaining cases.  
\end{prop}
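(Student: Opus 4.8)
The plan is to use the structure theory of finite-dimensional simple algebras with involution together with the base change to the real closure $F_P$. First I would note that by Lemma~\ref{lem:simple} (applied with $L = F_P$), the algebra $A \ox_F F_P$ is simple with centre $Z(A\ox_F F_P)$ satisfying $F_P = \Sym(A\ox_F F_P, \s\ox\id) \cap Z(A\ox_F F_P)$, so $(A\ox_F F_P, \s\ox\id)$ is again an $F_P$-algebra with involution of the type considered in \cite{A-U-pos}. Since $F_P$ is real closed, its only finite extension inside an algebraic closure is $F_P(\sqrt{-1})$, and the finite-dimensional division algebras over $F_P$ are exactly $F_P$, $F_P(\sqrt{-1})$ and $(-1,-1)_{F_P}$ by the Frobenius-type classification over a real closed field. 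Hence by Wedderburn's theorem $A\ox_F F_P \cong M_{n_P}(D_P)$ for one of these three $D_P$, and the condition $F_P = \Sym \cap Z$ forces $\s\ox\id$ to be, up to conjugation by a symmetric or hermitian matrix, of the form $\ad_{h}$ for a hermitian or (in a way excluded by the centre condition) skew-hermitian form $h$ over $(D_P, \bbar)$; adjusting by a suitable congruence one may assume $h = \qf{a_1,\dots,a_{n_P}}$ is diagonal.

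Next I would bring in the positivity hypothesis. The involution $\ad_h$ on $M_{n_P}(D_P)$ is positive at the unique ordering of $F_P$ (equivalently, $\Trd(\s(x)x)$ is positive definite) precisely when all the diagonal entries $a_i$ of $h$ have the same sign with respect to that ordering: indeed $\Trd_{M_{n_P}(D_P)}(\ovl{x}^t h\, x)$ evaluated on the standard basis vectors recovers (a positive multiple of) the $a_i$, so positive definiteness of the trace form is equivalent to all $a_i$ being positive (after possibly replacing $h$ by $-h$, which does not change $\ad_h$). Scaling $h$ by $a_1^{-1}$ and absorbing squares, one sees $h \cong \qf{1,\dots,1}$ over $(D_P,\bbar)$, whence $(A\ox_F F_P, \s\ox\id) \cong (M_{n_P}(D_P), \bt)$, which is the forward direction. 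Conversely, if $(A\ox_F F_P, \s\ox\id) \cong (M_{n_P}(D_P), \bt)$ then $\Trd(\bt(x)x) = \Trd(\ovl{x}^t x)$ is manifestly positive definite (it is a sum of hermitian norms), so $\s$ is positive at $P$; and positivity of $\s$ at $P$ can be checked after the base change $F \hookrightarrow F_P$ since the trace form of $(A\ox_F F_P,\s\ox\id)$ is $\Trd_A(\s(x)x) \ox F_P$ and a form over $F$ is positive definite at $P$ iff it becomes positive definite over $F_P$.

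The main obstacle, I expect, is the bookkeeping around the centre: when $F \subsetneq Z(A)$ (unitary type) one must be careful that $Z(A\ox_F F_P) \cong F_P \x F_P$ is ruled out — but this is exactly what Lemma~\ref{lem:simple} guarantees via the positive cone, so $Z(A\ox_F F_P)$ is the field $F_P(\sqrt{-1})$ and $D_P = F_P(\sqrt{-1})$ with the conjugation involution; and one must check that the classification of involutions on $M_{n_P}(D_P)$ with $\Sym \cap Z = F_P$ indeed yields only the hermitian (not skew-hermitian) adjoint involutions, since a skew-hermitian form would give $Z \subseteq \Sym$ only in the orthogonal case and otherwise can be rescaled to hermitian over the relevant $D_P$. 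Once these structural points are pinned down the positivity computation is a routine diagonalisation argument, so I would organise the write-up as: (i) reduce to $F_P$ via Lemma~\ref{lem:simple}; (ii) classify $(A\ox_F F_P,\s\ox\id)$ as $\ad_h$ on $M_{n_P}(D_P)$ with $h$ diagonal; (iii) relate definiteness of $\Trd_A(\s(x)x)$ over $F_P$ to the signs of the $a_i$ and conclude in both directions.
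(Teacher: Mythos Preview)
Your overall strategy is sound and would give a self-contained argument, but the paper takes a much shorter route: it simply invokes \cite[Rem.~4.7]{A-U-PS}, which already states that $\s$ is positive at $P$ if and only if $(A\ox_F F_P,\s\ox\id)\cong (M_{n_P}(D_P),\Int(\Phi_P)\circ\bt)$ with $\Phi_P$ a positive definite matrix over $D_P$; then one conjugates by a square root $\sqrt{\Phi_P}$ to absorb $\Int(\Phi_P)$ and obtain $\bt$. Your steps (i)--(iii) amount to reproving that remark from scratch, which is fine but considerably longer.

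There is, however, a genuine confusion in the point you flag as ``the main obstacle''. You write that the centre condition $F_P=\Sym\cap Z$ rules out the skew-hermitian case, but it does not: a symplectic involution on $M_{2n}(F_P)$ and an orthogonal involution on $M_n((-1,-1)_{F_P})$ both satisfy $F_P=\Sym\cap Z$, so neither is excluded on those grounds. What actually eliminates these wrong-type cases is the \emph{positivity} hypothesis. For the standard symplectic involution on $M_2(F_P)$ one computes $\Trd(\s(M)M)=2\det(M)$, which is visibly indefinite; for an orthogonal involution $\tau=\Int(i)\circ\bbar$ on $(-1,-1)_{F_P}$ one has $\tau(j)=j$ and $\Trd_H(\tau(j)j)=\Trd_H(-1)=-2<0$. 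Only after these computations can you assert that $\s\ox\id$ is adjoint to a \emph{hermitian} form over $(D_P,\bbar)$ and proceed with your diagonalisation-and-signs argument in (iii). This type analysis is exactly what is packaged inside \cite[Rem.~4.7]{A-U-PS}, which explains why the paper's proof is two lines.
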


\begin{proof}
  By \cite[Rem.~4.7]{A-U-PS}, $\s$ is positive at $P$ if and only if  $(A\ox_F F_P, \s\ox\id) \cong 
  (M_{n_P}(D_P), 
  \Int(\Phi_P)\circ \bt)$, where $\Phi_P$ is a positive definite matrix over $D_P$. This matrix has 
  a square root 
  $\sqrt{\Phi_P}$, and applying $\Int ( \sqrt{\Phi_P}^{-1})$ gives the desired isomorphism.
\end{proof}

\begin{remark}\mbox{}\label{L}
  \begin{enumerate}[$(1)$]
    \item Recall that if $1\in \CP$, then the involution $\s$ is positive at $P$, 
    cf. \cite[Cor.~7.7]{A-U-pos}.
    
    \item By Proposition~\ref{prop:tired},  if $\s$ is positive at $P$
    there exist extensions $L$ of $F$ (even finite ones), with 
    $L\subseteq F_P$ such that  $(A\ox_F L, \s\ox\id) \cong (M_{n_P} (E_L), \bt)$, where $E_L \in 
    \{L, L(\sqrt{-1}), 
    (-1,-1)_{L} \}$. For any such extension $L$ we fix  an isomorphism 
  $f_L: (A\ox_F L, \s\ox\id) \simtoo (M_{n_P} (E_L), \bt)$ of $L$-algebras with involution.
  \end{enumerate}
 \end{remark}

\begin{defi}
  Assume that $\s$ is positive at $P$.
  If $a\in \Sym(A,\s)$, its \emph{$F_P$-eigenvalues} are defined to be the right 
  eigenvalues  (cf. Appendix~\ref{sec:quat-mat}) of the matrix $f_{F_P}(a\ox 1)\in
  M_{n_P}(D_P)$. Note that they belong to $F_P$ since $f_{F_P}(a\ox 1) \in \Sym (M_{n_P}(D_P), 
  \bt)$.
\end{defi}

\begin{prop}\label{prop:ev} 
  Assume that $1\in \CP$.  Let $a\in\Sym(A,\s)$. 
  Then $a\in\CP$ if and only if all $F_P$-eigenvalues of $a$ are nonnegative. Consequently, $1-a 
  \in \CP$ 
  if and only if all $F_P$-eigenvalues of $a$ are $\leq 1$.
\end{prop}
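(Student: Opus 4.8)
The plan is to reduce the statement to a linear-algebra fact about Hermitian matrices over $D_P$ and then transport it back along the isomorphism $f_{F_P}$. First I would record the setup: since $1 \in \CP$, Remark~\ref{L}(1) tells us $\s$ is positive at $P$, so Proposition~\ref{prop:tired} and Remark~\ref{L}(2) give an isomorphism $f_{F_P}\colon (A\ox_F F_P,\s\ox\id)\simtoo (M_{n_P}(D_P),\bt)$ of $F_P$-algebras with involution. By Example~\ref{PC-PSD}, the unique positive cone on $(M_{n_P}(D_P),\bt)$ over the unique ordering of $F_P$ is $\PSD_{n_P}(D_P, {\le_P})$ (the real-closed field $F_P$ has a unique ordering), and by Proposition~\ref{ppc-extension} the positive cone $\CP$ extends uniquely to a positive cone $\CP'$ on $(A\ox_F F_P,\s\ox\id)$ with $\CP = \CP'\cap A$; matching $1\in\CP$ forces $f_{F_P}(\CP') = \PSD_{n_P}(D_P,{\le_P})$ rather than its negative. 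Hence for $a\in\Sym(A,\s)$, $a\in\CP$ iff $a\ox 1\in\CP'$ iff $M:=f_{F_P}(a\ox 1)$ is positive semidefinite with respect to the ordering of $F_P$.

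Next I would invoke the spectral-type theorem for Hermitian matrices over a real-closed field (and over the Hamilton quaternions over a real-closed field, using the results recalled in Appendix~\ref{sec:quat-mat}): a Hermitian matrix $M\in\Sym(M_{n_P}(D_P),\bt)$ is positive semidefinite with respect to $\le_P$ if and only if all its right eigenvalues are $\ge_P 0$. This is exactly the defining property that $F_P$-eigenvalues of $a$ are, by definition, the right eigenvalues of $M$. Combining with the previous paragraph gives the first claim: $a\in\CP$ iff all $F_P$-eigenvalues of $a$ are nonnegative.

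For the consequence, apply the first claim to the element $1-a\in\Sym(A,\s)$ (it is symmetric since $1$ and $a$ are). Under $f_{F_P}$ we have $f_{F_P}((1-a)\ox 1) = I_{n_P} - M$, and the right eigenvalues of $I_{n_P}-M$ are precisely $1-\lambda$ as $\lambda$ ranges over the right eigenvalues of $M$ — over the quaternions this needs a one-line check that $Mx = x\lambda$ forces $(I-M)x = x(1-\lambda)$, which is immediate, and conversely. Therefore $1-a\in\CP$ iff all $1-\lambda\ge_P 0$, i.e. iff all $F_P$-eigenvalues $\lambda$ of $a$ satisfy $\lambda\le_P 1$.

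The main obstacle I anticipate is not conceptual but bookkeeping in the quaternionic case: right eigenvalues of quaternionic Hermitian matrices form conjugacy classes, and one must be careful that (i) the "all eigenvalues $\ge 0$" characterization of positive semidefiniteness is the one actually proved in Appendix~\ref{sec:quat-mat}, and (ii) the shift $M\mapsto I_{n_P}-M$ interacts correctly with right eigenvalues (it does, since scalars in $F_P$ are central in $D_P$, so $Mx = x\lambda$ with $\lambda\in F_P$ behaves exactly as in the commutative case). A secondary point to get right is the sign bookkeeping showing $f_{F_P}(\CP')$ is the positive — not negative — semidefinite cone; this follows because $I_{n_P}\in\PSD_{n_P}(D_P,{\le_P})$ but $-I_{n_P}\notin$ it, and $1\in\CP\subseteq\CP'$ maps to $I_{n_P}$.
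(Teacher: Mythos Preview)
Your proposal is correct and follows essentially the same route as the paper: extend $\CP$ to $\CP'$ on $A\ox_F F_P$ via Proposition~\ref{ppc-extension}, transport via $f_{F_P}$, identify $f_{F_P}(\CP')$ with $\PSD_{n_P}(D_P,P')$ using Example~\ref{PC-PSD} and $1\in\CP$, and read off the eigenvalue characterization. The paper's version is terser, leaving the eigenvalue characterization of positive semidefiniteness and the ``consequently'' clause implicit, but the argument is the same.
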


\begin{proof}
  By Proposition~\ref{ppc-extension}, 
  the positive cone $\CP$ extends to a positive cone $\CP'$ on $(A\ox_F F_P, \s\ox\id)$ over $P'$, 
  the unique
  ordering of $F_P$, which is carried  by $f_{F_P}$ to the positive cone $f_{F_P}(\CP')$ on  
  $(M_{n_P} (D_P), \bt)$ over $P'$ (cf. Proposition~\ref{prop:tired} for the notation). By 
  Example~\ref{PC-PSD}
  and since $1\in \CP$,  $f_{F_P}(\CP')$  must be $\PSD_{n_P}(D_P, P')$. The result follows since 
  $a\in \CP$
  if and only if $f_{F_P}(a\ox 1) \in \PSD_{n_P}(D_P, P')$.
\end{proof}

\begin{prop}\label{hs-simple}
  Assume that $F$ is real closed \tu{(}thus $P$ is its unique ordering\tu{)} and that $1\in \CP$. 
  Then $\CP$ is the set of all
  hermitian squares of $(A,\s)$.
\end{prop}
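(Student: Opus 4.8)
The goal is to identify the positive cone $\CP$ with the set of hermitian squares $\{\s(x)x \mid x\in A\}$ when $F$ is real closed and $1\in\CP$. I would first observe that since $1\in\CP$, the involution $\s$ is positive at $P$ (Remark~\ref{L}(1)), so by Proposition~\ref{prop:tired} we may transport everything via the isomorphism $f_{F_P}$; but $F$ is already real closed, so $F_P = F$ and $(A,\s)\cong(M_n(D),\bt)$ with $D\in\{F,F(\sqrt{-1}),(-1,-1)_F\}$ and $n=n_P$. Under this isomorphism, Proposition~\ref{prop:ev} (or directly Example~\ref{PC-PSD}) identifies $\CP$ with $\PSD_n(D,P)$, the positive semidefinite matrices. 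So the statement reduces to the purely linear-algebraic fact that over a real closed field $F$ (with $D$ one of the three possibilities), every positive semidefinite hermitian matrix is a hermitian square $\ovl{M}^t M$.

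The one inclusion is immediate: for any $x\in A$, all $F_P$-eigenvalues of $\s(x)x$ are nonnegative (this is where $F$ real closed matters — over $F_P$ a hermitian matrix is diagonalizable with real eigenvalues, and $\ovl{x}^t(\ovl{M}^tM)x = \ovl{(Mx)}^t(Mx)\in P$), so $\s(x)x\in\CP$ by Proposition~\ref{prop:ev}; hence the set of hermitian squares is contained in $\CP$. For the reverse inclusion, take $a\in\CP$; then $f_{F_P}(a\otimes 1)=M\in\PSD_n(D,P)$. I would invoke the standard spectral theorem for hermitian matrices over a real closed field in each of the three cases $D=F$, $F(\sqrt{-1})$, $(-1,-1)_F$ (for the quaternionic case appealing to Appendix~\ref{sec:quat-mat} and Zhang's paper \cite{Zhang-1997}): $M$ is conjugate by a unitary $U$ (i.e. $\ovl{U}^tU=I$) to a real diagonal matrix $\diag(\lambda_1,\dots,\lambda_n)$ with each $\lambda_i\ge_P 0$. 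Since $F$ is real closed each $\lambda_i$ has a square root $\mu_i\in F$, so $\diag(\lambda_i) = \ovl{\Delta}^t\Delta$ with $\Delta=\diag(\mu_1,\dots,\mu_n)$, and therefore $M = \ovl{U}^t\ovl{\Delta}^t\Delta U = \ovl{(\Delta U)}^t(\Delta U)$ is a hermitian square in $(M_n(D),\bt)$. Pulling back through $f_{F_P}^{-1}$ gives $a=\s(y)y$ for a suitable $y\in A$.

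The main obstacle is making the spectral decomposition rigorous in the quaternionic case $D=(-1,-1)_F$, where one must use the theory of right eigenvalues of quaternionic matrices rather than the usual eigenvalue theory; here I would lean on the material recalled in Appendix~\ref{sec:quat-mat} (the unitary diagonalizability of hermitian quaternionic matrices, with real eigenvalues, goes back to Zhang \cite{Zhang-1997}) and on the convention fixed in the Notation section that eigenvalue means right eigenvalue, so that Proposition~\ref{prop:ev} applies consistently. A minor point to check is that the unitary $U$ and the diagonal entries genuinely lie over $F$ (not merely over a larger real closed field), which is automatic since $F$ is itself real closed and the characteristic polynomial / minimal polynomial factors completely over $F$. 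Once the three cases are handled uniformly, pulling back along $f_{F_P}$ — an isomorphism of $F$-algebras with involution — completes the argument with no further work.
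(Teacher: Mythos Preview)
Your proposal is correct and follows essentially the same approach as the paper's explicit proof: reduce via Proposition~\ref{prop:tired} to $(M_n(D),\bt)$, identify $\CP$ with $\PSD_n(D,P)$ (the paper uses Example~\ref{PC-PSD} together with the uniqueness of the positive cone containing $1$, you use Proposition~\ref{prop:ev}), and then invoke the spectral theorem over the real closed field to write each positive semidefinite matrix as a hermitian square, with Appendix~\ref{sec:quat-mat} handling the quaternionic case. The only cosmetic difference is that you spell out both inclusions and the square-root construction, whereas the paper compresses this into ``the standard argument''.
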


\begin{proof}
  This follows directly from \cite[Cor.~7.15 and Thm.~7.5]{A-U-pos}. Alternatively, a more explicit 
  proof
  is as follows: By Remark~\ref{L}(1) and Proposition~\ref{prop:tired}, $(A,\s) \cong (M_m(D),
  \bt)$ where $m \in \N$ and $D \in \{F, F(\sqrt{-1}), (-1,-1)_F\}$. By \cite[Thm.~7.5]{A-U-pos} 
  there is a unique positive cone on $(M_m(D),\bt)$ over $P$ containing
  $1$, and by Example~\ref{PC-PSD} this positive cone must be $\PSD_m(D,P)$. The
  result follows since every element of $\PSD_m(D,P)$ is a hermitian square
  (the necessary results for the standard argument in the quaternion case can be found in 
  Appendix~\ref{sec:quat-mat}).
\end{proof}

\subsection{Positive cones on finite-dimensional semisimple algebras with involution}

We assume in this section that $A$ is  a finite-dimensional semisimple $F$-algebra equipped with
an $F$-linear involution $\s$. By \cite[1.2.8]{Knus} we may assume that
\[(A,\s) = (A_1,\s_1) \times \cdots \times (A_r,\s_r) \times (B_1,\tau_1) \times
\cdots \times (B_s, \tau_s),\]
where each $A_i$ is a simple $F$-algebra with $F$-linear involution $\s_i$, and
each $B_j$ is equal to $C_j \times C_j^\op$ where $C_j$ is a simple
$F$-algebra and $\tau_j(x,y)=(y,x)$ is the ($F$-linear) exchange involution. 

For $1 \le i \le r+s$, we denote by $e_i$ the element
$(0,\ldots,0,1,0,\ldots,0)$ of $A$ with  $1$  in the $i$-th position.

\begin{prop}\label{pc-ss}
  Let $\CP$ be a prepositive cone on $(A,\s)$ over $P \in X_F$. Then
  \[\CP = \CP_1 \times \cdots \times \CP_r \times 
  \underbrace{\{0\} \times \cdots \times \{0\}}_{s\text{ times}},\]
  where each $\CP_i$ is either a prepositive cone on $(A_i,\s_i)$ over $P$, or
  $\{0\}$, and at least one of $\CP_1, \ldots, \CP_r$ is not $\{0\}$.
  Furthermore, if $\CP$ is a positive cone,
  then all $\CP_i$ are positive cones.
\end{prop}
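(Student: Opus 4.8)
The plan is to prove the statement by first establishing the decomposition of $\CP$ along the product structure, and then analyzing the maximality condition component by component.

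\medskip

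\textbf{Step 1: The decomposition.} First I would observe that $\Sym(A,\s) = \Sym(A_1,\s_1) \times \cdots \times \Sym(A_r,\s_r) \times \Sym(B_1,\tau_1) \times \cdots \times \Sym(B_s,\tau_s)$, and that for the exchange involution $\tau_j$ on $B_j = C_j \times C_j^{\op}$ we have $\Sym(B_j,\tau_j) = \{(x,x^{\op}) \mid x \in C_j\}$. Setting $\CP_i := e_i \CP e_i = \{e_i a e_i \mid a \in \CP\}$ (identified with a subset of the $i$-th factor), I would check that each $\CP_i$ satisfies (P1), (P2), (P3), (P5) automatically: (P1) and (P2) because $e_i$ is central idempotent and $\CP$ is closed under sums; (P3) because $\s(b) e_i a e_i b = e_i \s(e_i b) a (e_i b) e_i \in e_i \CP e_i$ for $b$ in the $i$-th factor; (P5) because $e_i \CP e_i \cap -(e_i \CP e_i) \subseteq \CP \cap -\CP = \{0\}$. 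I would then show $\CP = \CP_1 \times \cdots \times \CP_{r+s}$ using (P3) with $b = e_i$: for $a \in \CP$, $\s(e_i) a e_i = e_i a e_i \in \CP$, and summing over $i$ recovers $a$, so $\CP = \sum_i \CP_i$, which in the product is $\CP_1 \times \cdots \times \CP_{r+s}$. Next, for the $B_j$ factors I would show $\CP_{r+j} = \{0\}$: if $(x,x^{\op}) \in \CP_{r+j}$, then conjugating by the (non-invertible) element $(1,0) \in C_j \times C_j^{\op}$ via (P3) lands $(x,0)$... but more carefully, since $C_j \times C_j^{\op}$ with exchange involution satisfies $\s((0,y))(x,x^{\op})(0,y) = (0, y^{\op} x^{\op} y)$ and $\s((x',0))(x,x^{\op})(x',0) = (x'^{\op} x x', 0)$, so both $(z,0)$ and $(0,w)$ lie in $\CP_{r+j}$ for suitable $z, w$; but $(z,0) = \s((z,0))\cdot(1,1)\cdot(?,?)$... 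Actually the cleanest route: $(x,x^{\op}) \in \CP_{r+j}$ implies by (P3) that $\s((1,0))(x,x^{\op})(1,0) = (0, x^{\op}) \in \CP_{r+j}$ and $\s((0,1))(x,x^{\op})(0,1) = (x, 0) \in \CP_{r+j}$; hence also $-(x,x^{\op}) = (-x, 0) + (0, -x^{\op})$... this still needs $(x,0) \in -\CP_{r+j}$. Instead I would argue: taking $y = (0,1)$, $\s(y)(x,x^{\op})y = (x,0)$, and taking $y' = (1,0)$ applied to the element $(x,0)$ gives $\s(y')(x,0)y' = (0,0)$; more to the point, apply (P3) to $(x,x^{\op})$ with the element $(0,1) - (1,0) =: c$; then $\s(c)(x,x^{\op})c$ lands as $(x, x^{\op})$ but with a sign twist giving $(-x, -x^{\op})$ after cross terms cancel — I would compute this explicitly to conclude $(x,x^{\op}) \in \CP_{r+j} \cap -\CP_{r+j} = \{0\}$. (This is the kind of routine computation I would not grind through here, but it is the standard reason the exchange-involution factors carry no nonzero prepositive cone, cf. \cite{A-U-pos}.)

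\medskip

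\textbf{Step 2: Each nonzero $\CP_i$ is a prepositive cone over $P$.} Given that $\CP_i$ satisfies (P1), (P2), (P3), (P5), I only need (P4): that $(\CP_i)_F = P$. By Remark~\ref{aboutPC} it suffices to check $P \subseteq (\CP_i)_F$, which is immediate since $P \CP \subseteq \CP$ forces $P \CP_i \subseteq \CP_i$ (as $e_i$ is central, $P(e_i \CP e_i) = e_i (P\CP) e_i \subseteq e_i \CP e_i$). Conversely $(\CP_i)_F \subseteq P$: if $u \in (\CP_i)_F$ and $\CP_i \neq \{0\}$, pick $0 \neq a \in \CP_i$; then $ua \in \CP_i \subseteq \CP$, and — since we need $u\CP \subseteq \CP$, not just $u\CP_i \subseteq \CP_i$ — I would instead argue using that for $u \notin P$ we have $-u \in P$, so $-ua \in \CP_i$, giving $ua \in \CP_i \cap -\CP_i = \{0\}$, hence $ua = 0$, contradicting $a \neq 0$ in the (simple, hence has no zero divisors acting this way... actually $A_i$ simple finite-dimensional need not be a division algebra) — here I would note $ua = 0$ with $u \in F^\times$ forces $a = 0$, the desired contradiction. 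So $(\CP_i)_F = P$, proving $\CP_i$ is a prepositive cone over $P$. Finally, at least one $\CP_i \neq \{0\}$ because $\CP \neq \{0\}$ by (P4).

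\medskip

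\textbf{Step 3: Maximality.} Now suppose $\CP$ is a \emph{positive} cone. I must show every nonzero $\CP_i$ is maximal as a prepositive cone on $(A_i,\s_i)$ over $P$; and I should also address whether the $\{0\}$ components could be enlarged — but the claim as stated only asserts "all $\CP_i$ are positive cones", which I read as: the nonzero ones are positive cones, while the $\{0\}$ ones... here I would point out that in fact maximality of $\CP$ forces each $\CP_i$ with $i \le r$ to be a genuine (nonzero) positive cone: if some $\CP_i = \{0\}$ for $i \le r$ but $(A_i,\s_i)$ over $P$ admitted \emph{some} prepositive cone $\CP_i'$ (i.e. $P \notin \Nil[A_i,\s_i]$), then replacing the $i$-th component by $\CP_i'$ would enlarge $\CP$, contradicting maximality; so either $\CP_i \neq \{0\}$, or $P \in \Nil[A_i,\s_i]$ in which case $\{0\}$ is vacuously the maximal prepositive cone over $P$. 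Thus in all cases $\CP_i$ is the maximal prepositive cone on $(A_i,\s_i)$ over $P$, i.e., a positive cone (with the convention that $\{0\}$ counts when no proper one exists; but the statement intends the nonzero case — I would phrase the conclusion to match). For the core maximality argument: suppose $\CP_i \subsetneq \CQ_i$ with $\CQ_i$ a prepositive cone on $(A_i,\s_i)$ over $P$. Then $\CP_1 \times \cdots \times \CQ_i \times \cdots$ is easily checked (using Step 1's reasoning in reverse) to be a prepositive cone on $(A,\s)$ over $P$ strictly containing $\CP$, contradicting the maximality of $\CP$. Verifying (P1)–(P3), (P5) for this enlarged set is routine (componentwise); (P4) follows from Remark~\ref{aboutPC} and $P$ being contained in the $F$-part of each component. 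This completes the proof.

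\medskip

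\textbf{Main obstacle.} The genuinely fiddly part is Step 1, specifically showing $\CP_{r+j} = \{0\}$ for the exchange-involution factors $B_j = C_j \times C_j^{\op}$: one must exploit (P3) with the non-invertible idempotents $(1,0)$ and $(0,1)$ together with (P5), via the sign-twist computation sketched above. The maximality argument (Step 3) is conceptually the point of the proposition but is formally straightforward once the decomposition is in hand — the only subtlety is the bookkeeping about whether a $\{0\}$ component can legitimately be called a "positive cone", which I would resolve by invoking the convention around $\Nil[A_i,\s_i]$ and, if the intended reading is that all $\CP_i$ are nonzero positive cones, by observing that maximality of $\CP$ combined with $P \notin \Nil[A_i,\s_i]$ (which holds whenever $(A,\s)$ over $P$ carries a positive cone with nonzero $i$-th... ) forces nonzeroness for the relevant indices.
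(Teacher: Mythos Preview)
Your approach is the same as the paper's: set $\CP_i=\s(e_i)\CP e_i$, verify (P1)--(P3) and (P5) directly, obtain (P4) via Remark~\ref{aboutPC}, cite \cite[Rem.~3.3]{A-U-pos} for the exchange-involution factors, and read off maximality componentwise (the paper just calls this last step ``clear''). Your muddled attempt at the exchange case is salvaged by taking $x=(1,(-1)^{\op})$ in (P3), giving $\tau_j(x)(a,a^{\op})x=(-a,(-a)^{\op})$ and hence $a=0$ by (P5); and your Step~3 worry about $\{0\}$ components when $P\in\Nil[A_i,\s_i]$ is legitimate---the paper does not address it, but every subsequent use (Corollary~\ref{pc-ss-2}, Proposition~\ref{mpppc-inv}) assumes $1\in\CP$, which forces each $\CP_i\neq\{0\}$.
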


\begin{proof}
  For $1\leq i \leq r+s$  let 
  $\CP_i = \s(e_i) \CP e_i$ (identifying $\s(e_i) \CP e_i$ with the corresponding subset of $A_i$).
  Obviously $\CP_i \subseteq \Sym(A_i,\s_i)$ for $1\leq i \leq r$, and
  it is immediate   that $\CP_i$ satisfies (P1), (P2), (P3), and also
  (P5) (since $\CP_i \subseteq \CP$). Furthermore $P \subseteq (\CP_i)_F$ and
  by Remark~\ref{aboutPC} we have either $\CP_i = \{0\}$ or $\CP_i$ is a
  prepositive cone on $(A_i,\s_i)$ over $P$.
  Now let $1\leq i\leq s$. Then $\CP_{r+i}=\{0\}$, since otherwise it would be a prepositive cone on $(B_i,\tau_i)$,
   contradicting
 \cite[Rem.~3.3]{A-U-pos}.
  Since, for $a \in \CP$, we have 
  $a = \bigl(\s(e_1)ae_1 , \cdots ,   \s(e_{r+s})ae_{r+s}\bigr)$
  (identifying $\s(e_i)ae_i$ with the corresponding element of $A_i$), we obtain  
  $\CP=\CP_1 \times \cdots \times \CP_r \times \{0\} \times \cdots \times
  \{0\}$.
  Finally, one of $\CP_1, \ldots, \CP_r$ must be different from $\{0\}$,
  otherwise $\CP = \{0\}$, contradicting property (P4). The final statement of the proposition
  is clear.
\end{proof}

\begin{cor}\label{pc-ss-2}
  Let $\CP$ be a positive cone on $(A,\s)$ with $1 \in \CP$. Then $s=0$ and 
  \[\CP = \CP_1 \times \cdots \times \CP_r,\]
  where each $\CP_i$ is a positive cone on $(A_i,\s_i)$ over $P$ with $1
  \in \CP_i$.

  Furthermore, if $F$ is real closed then $\CP$ is equal to the set of 
  hermitian squares in $(A,\s)$.
\end{cor}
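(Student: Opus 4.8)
The plan is to derive Corollary~\ref{pc-ss-2} directly from Proposition~\ref{pc-ss} together with the results already established for the simple case, in particular Proposition~\ref{hs-simple}. First I would apply Proposition~\ref{pc-ss} to the positive cone $\CP$ to write $\CP = \CP_1 \times \cdots \times \CP_r \times \{0\} \times \cdots \times \{0\}$ with each $\CP_i$ a positive cone on $(A_i,\s_i)$ over $P$. Then I would use the hypothesis $1\in\CP$: since $1 = (1,\ldots,1,1,\ldots,1)$ in the product decomposition, and since the $(r+i)$-th component of every element of $\CP$ is $0$ for $1\le i\le s$, having $1\in\CP$ forces $s=0$ (otherwise the $(r+1)$-th component of $1$ would be $1\neq 0$). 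The same component-wise reasoning shows that the $i$-th component of $1$, which is $1\in A_i$, lies in $\CP_i$, so $1\in\CP_i$ for each $i$.

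For the ``furthermore'' part, I would assume $F$ is real closed. Then each $A_i$ is a finite-dimensional simple $F$-algebra with involution $\s_i$ and $1\in\CP_i$; to apply Proposition~\ref{hs-simple} I need $F = \Sym(A_i,\s_i)\cap Z(A_i)$. This holds because $1\in\CP_i$ means $\s_i$ is positive at $P$ (Remark~\ref{L}(1)), hence by Proposition~\ref{prop:tired} $(A_i\ox_F F, \s_i\ox\id)=(A_i,\s_i)\cong (M_{n_i}(D_i),\bt)$ with $D_i\in\{F,F(\sqrt{-1}),(-1,-1)_F\}$, and in each of these cases $\Sym(M_{n_i}(D_i),\bt)\cap Z(M_{n_i}(D_i)) = F$. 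Applying Proposition~\ref{hs-simple} to each factor, $\CP_i$ is the set of hermitian squares of $(A_i,\s_i)$.

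It then remains to check that the set of hermitian squares of $(A,\s)=\prod_i (A_i,\s_i)$ is the product $\prod_i (\text{hermitian squares of }(A_i,\s_i))$. This is immediate: for $x=(x_1,\ldots,x_r)\in A$ we have $\s(x)x = (\s_1(x_1)x_1,\ldots,\s_r(x_r)x_r)$, so the hermitian squares of $A$ are exactly the tuples whose $i$-th entry is a hermitian square of $A_i$. Combining this with the identification $\CP=\CP_1\times\cdots\times\CP_r$ gives the claim.

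I do not expect a serious obstacle here; the only point requiring a little care is the verification that the hypothesis $F=\Sym(A_i,\s_i)\cap Z(A_i)$ needed for Proposition~\ref{hs-simple} actually holds for the simple factors $A_i$, since Proposition~\ref{pc-ss} is stated for semisimple algebras without that hypothesis on the factors. As indicated above, this is recovered from positivity of $\s_i$ at $P$ via Proposition~\ref{prop:tired}. Everything else is a routine bookkeeping of the product decomposition.
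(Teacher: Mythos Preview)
Your first part is fine and matches the paper's ``direct consequence'' remark.

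The gap is in your verification that $F = \Sym(A_i,\s_i)\cap Z(A_i)$ for each simple factor. You invoke Remark~\ref{L}(1) and Proposition~\ref{prop:tired}, but both of these live in Section~4.1, whose standing hypothesis is precisely that $F = \Sym(A,\s)\cap Z(A)$ for the algebra under consideration. So you are applying to $(A_i,\s_i)$ results that already assume the conclusion you want to draw; the argument is circular. (Concretely, the notion ``$\s$ is positive at $P$'' and the isomorphism in Proposition~\ref{prop:tired} are only set up once one knows $F = \Sym(A,\s)\cap Z(A)$.)

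The paper fills this step by a direct elementary argument that avoids Section~4.1 entirely: since $F$ is real closed and $Z(A_i)$ is a finite field extension of $F$, either $Z(A_i)=F$ (and the claim is trivial) or $Z(A_i)=F(\sqrt{-1})$. In the latter case, if $\s_i(\sqrt{-1})=\sqrt{-1}$ then, using $1\in\CP_i$ and (P3), one gets $-1=\s_i(\sqrt{-1})\cdot 1\cdot\sqrt{-1}\in\CP_i$, contradicting (P5). Hence $\sqrt{-1}\notin\Sym(A_i,\s_i)$, and $\Sym(A_i,\s_i)\cap Z(A_i)=F$. With this in hand, Proposition~\ref{hs-simple} applies and the rest of your argument (the product-of-hermitian-squares bookkeeping) goes through.
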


\begin{proof}
  The first part of the statement is a direct consequence of
  Proposition~\ref{pc-ss}. 
  
  For the second part: since $F$ is real closed and $Z(A_i)$ is a field that is algebraic
  over $F$ (since $A$ is finite-dimensional over $F$),  we have $[Z(A_i):F]\leq 2$.
  
  It follows that $F=\Sym(A_i,\s_i) \cap Z(A_i)$. Indeed, if $Z(A_i)=F$, this is clear. So assume that
  $Z(A_i)=F(\sqrt{-1})$. If $\s_i(\sqrt{-1})=\sqrt{-1}$, then $-1\in\CP$, a contradiction. Thus 
  $F(\sqrt{-1})$ is not contained in $\Sym(A_i,\s_i)$ and so
  $\Sym(A_i,\s_i) \cap F(\sqrt{-1})$ is a field of index $2$ in $Z(A_i)$ that contains $F$,
  and so must equal $F$. 
  
  Then, by
  Proposition~\ref{hs-simple}, the elements of each $\CP_i$ are
  hermitian squares, and the result follows.
\end{proof}

\begin{lemma}\label{lem:sszero}
 Let $\CP$ be a positive cone on $(A, \s)$.
  Then:
  \begin{enumerate}[$(1)$]
    \item If $\sum_{i=1}^\ell a_i=0$ with $a_1,\ldots, a_\ell \in \CP$, then $a_i=0$ for all $i$.
    \item If $1\in \CP$, then  $\s(a)a=0$ implies $a=0$ for all $a\in A$.
  \end{enumerate}
\end{lemma}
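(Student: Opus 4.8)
For part (1), the plan is to reduce to the semisimple decomposition $(A,\s)=\prod_i (A_i,\s_i)\times\prod_j(B_j,\tau_j)$ and Proposition~\ref{pc-ss}, which tells us $\CP=\CP_1\times\cdots\times\CP_r\times\{0\}\times\cdots\times\{0\}$ with each $\CP_i$ a positive cone on a finite-dimensional simple $F$-algebra with involution. Thus it suffices to handle the simple case. So assume $A$ is simple; replacing $F$ by $F_\s=\Sym(A,\s)\cap Z(A)$ via Proposition~\ref{ppc-equivalences} we may assume $F=\Sym(A,\s)\cap Z(A)$, with $\CP$ a positive cone over some $P\in X_F$. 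Now suppose $\sum_{i=1}^\ell a_i=0$ with $a_i\in\CP$. Apply Proposition~\ref{ppc-extension} to pass to $(A\ox_F F_P,\s\ox\id)$, where $\CP$ extends to a positive cone $\CP'$; the relation $\sum (a_i\ox 1)=0$ persists, and each $a_i\ox 1\in\CP'$. Using Proposition~\ref{prop:tired} (after noting that $\s$ is positive at $P$ whenever $1\in\CP$, Remark~\ref{L}(1) — but here we do not have $1\in\CP$ a priori, so instead I would argue directly) — actually the cleanest route is: by Proposition~\ref{ppc-extension} we may assume $F=F_P$ is real closed; then by Proposition~\ref{hs-simple} combined with the structure theory, $\CP=\PSD_m(D,P)$ for suitable $m,D$ once we know $1\in\CP$. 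The subtlety is that we are not told $1\in\CP$. I would handle this by observing that for a positive cone over a real closed field, either $\CP$ or $-\CP$ contains $1$ (one of the two positive cones on $(M_m(D),\bt)$ is $\PSD$, the other is $-\PSD$, by Example~\ref{PC-PSD}); replacing $\CP$ by $-\CP$ if necessary does not affect the statement. So we may assume $\CP=\PSD_m(D,P)$, and then $\sum a_i=0$ with each $a_i$ positive semidefinite forces each $a_i=0$, since $\overline{x}^t(\sum a_i)x=\sum\overline{x}^t a_i x$ is a sum of nonnegative elements of $F_P$ vanishing identically, hence each $\overline{x}^t a_i x=0$ for all $x$, hence $a_i=0$ by positive semidefiniteness (in the quaternionic case using the results of Appendix~\ref{sec:quat-mat}).

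For part (2), assume $1\in\CP$ and $\s(a)a=0$. Here I can use Remark~\ref{L}(1) directly: $\s$ is positive at $P$, so $\Trd_A(\s(x)x)$ is positive definite at $P$. Then apply $\Trd_A$ to $\s(a)a=0$ to get $\Trd_A(\s(a)a)=0$; but positive-definiteness of the form $x\mapsto\Trd_A(\s(x)x)$ at $P$ — more precisely the fact that it represents only elements $\geq_P 0$ and vanishes only at $x=0$ — forces $a=0$. One should be slightly careful: positive-definiteness of a form over $F$ at $P$ means it is positive definite after scalar extension to $F_P$, so strictly I would extend to $F_P$, note $a\ox 1=0$ iff $a=0$, and conclude there. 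Alternatively, and perhaps more self-containedly, reduce again via Proposition~\ref{ppc-extension} to $F$ real closed and $\CP=\PSD_m(D,P)$, where $\s^t(M)M=\overline{M}^tM=0$ implies $\overline{x}^t\overline{M}^tMx=\overline{(Mx)}^t(Mx)=0$ for all $x$, hence $Mx=0$ for all $x$, hence $M=0$.

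\textbf{Main obstacle.} The only real subtlety is the bookkeeping around the hypothesis "$1\in\CP$": part (1) does not assume it, so I must either argue that $\CP$ or $-\CP$ contains $1$ (valid over a real closed field by Example~\ref{PC-PSD}, and the statement of (1) is symmetric under $\CP\mapsto-\CP$) or give a direct argument not relying on the PSD description. Everything else is a routine descent to the split real closed case via Propositions~\ref{pc-ss}, \ref{ppc-equivalences} and \ref{ppc-extension}, plus the elementary fact that a finite sum of positive semidefinite hermitian matrices vanishes only if each summand does. I would present part (1) via the real-closed reduction and part (2) via the trace-form argument (with the $F_P$ extension made explicit), keeping both short.
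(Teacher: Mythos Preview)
Your plan for (1) is correct but wildly overcomplicated: the paper's proof is literally ``This follows from property (P5).'' Indeed, if $\sum_{i=1}^\ell a_i=0$ with each $a_i\in\CP$, then $a_1=-\sum_{i\ge 2}a_i\in-\CP$ by (P2), so $a_1\in\CP\cap-\CP=\{0\}$ by (P5); repeat for the remaining summands. No decomposition into simple factors, no scalar extension to a real closure, no PSD matrices, no worry about whether $1\in\CP$.

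For (2) your approaches are correct but different from the paper's. Note first that you must decompose into simple components via Corollary~\ref{pc-ss-2} before invoking Remark~\ref{L}(1), since the latter lives in the simple setting; you do this for (1) but skip it in your write-up of (2). After that decomposition, your trace-form argument (positivity of $\s_i$ at $P$ means $\Trd_{A_i}(\s_i(x)x)$ is positive definite at $P$, hence $\s_i(a_i)a_i=0$ forces $a_i=0$) works fine. The paper instead argues, on each simple factor $(A_i,\s_i)$, that $\qf{1}_{\s_i}$ must be anisotropic: if it were isotropic, some orthogonal multiple $\ell\times\qf{1}_{\s_i}$ would be universal by \cite[Lemma~2.3]{A-U-pos}, whence $\Sym(A_i,\s_i)=D_{(A_i,\s_i)}(\ell\times\qf{1}_{\s_i})\subseteq\CP_i$, contradicting properness. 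Your route is more concrete; the paper's stays entirely within the hermitian-form and positive-cone formalism and avoids appealing to reduced traces or the structure over $F_P$.
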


\begin{proof}
  (1) This follows from  property (P5).

  (2) By Corollary~\ref{pc-ss-2}, $1 \in \CP_i$ for $1\leq i \leq r$ and the result follows from the fact that the forms
    $\qf{1}_{\s_i}$ are anisotropic: indeed, 
  if $\qf{1}_{\s_i}$ were isotropic, then $\ell\x 
  \qf{1}_{\s_i}$ would be universal for some $\ell\in \N$ by \cite[Lemma~2.3]{A-U-pos}, and so
  $D_{(A_i,\s_i)}(\ell\x  \qf{1}_{\s_i})=\Sym(A_i,\s_i)\subseteq \CP_i$,
 contradicting that $\CP_i$ is proper.
\end{proof}

\begin{lemma}\label{unique-mpppc}
  Let $\CP$ be a prepositive cone on $(A,\s)$ over $P \in X_F$ such that
  $1 \in \CP$. Then there is a unique positive cone on $(A,\s)$ over $P$
  containing $1$, which then contains $\CP$.
\end{lemma}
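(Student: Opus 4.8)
The plan is to reduce to the simple-factor case via the structural decomposition. First I would apply Proposition~\ref{pc-ss} (or rather its setup) to pass to the decomposition $(A,\s) = \prod_i (A_i,\s_i) \times \prod_j (B_j,\tau_j)$, and observe that since $1 \in \CP$, the component of $1$ in each $B_j$-factor would force a nonzero prepositive cone on $(B_j,\tau_j)$, contradicting \cite[Rem.~3.3]{A-U-pos}; hence $s=0$ and $\CP$ decomposes (after enlarging to a positive cone, as in Corollary~\ref{pc-ss-2}) as a product over the simple factors. For \emph{existence} of a positive cone over $P$ containing $1$: extend $\CP$ to a positive cone $\CP'$ on $(A,\s)$ over $P$ using Zorn's lemma (any prepositive cone is contained in a maximal one with the same associated ordering, by \cite[\S2]{A-U-pos} — one must check (P4) is preserved, which follows from Remark~\ref{aboutPC} since the chain union still satisfies (P5) and contains $P$ in its $F$-multipliers). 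Since $1 \in \CP \subseteq \CP'$, this gives a positive cone over $P$ containing $1$ that contains $\CP$.

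The heart of the matter is \emph{uniqueness}. Suppose $\CP'$ and $\CP''$ are both positive cones on $(A,\s)$ over $P$ containing $1$. I would work factor by factor, so it suffices to treat the case $A$ simple with $F = \Sym(A,\s)\cap Z(A)$ (the reduction to $F = \Sym(A_i,\s_i)\cap Z(A_i)$ works as in the proof of Corollary~\ref{pc-ss-2}, using that $-1 \notin \CP_i$). Now both $\CP'$ and $\CP''$ restrict to positive cones over $P$ containing $1$ on each simple factor, and the claim becomes: a positive cone on a simple $(A,\s)$ over $P$ containing $1$ is unique. For this I would extend both to $(A\ox_F F_P, \s\ox\id)$ via Proposition~\ref{ppc-extension} — which gives \emph{unique} positive cones $\wt{\CP'}$, $\wt{\CP''}$ over the unique ordering $P'$ of $F_P$, with $\CP' = \wt{\CP'}\cap A$ and similarly for $\CP''$. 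Then transport along the isomorphism $f_{F_P}: (A\ox_F F_P,\s\ox\id) \simtoo (M_{n_P}(D_P),\bt)$ of Proposition~\ref{prop:tired} (valid since $1 \in \CP'$ makes $\s$ positive at $P$, by Remark~\ref{L}(1)). By Example~\ref{PC-PSD}, the only positive cones on $(M_{n_P}(D_P),\bt)$ are $\PSD_{n_P}(D_P,P')$ and its negative, and since $1$ maps to a positive-definite matrix, both $f_{F_P}(\wt{\CP'})$ and $f_{F_P}(\wt{\CP''})$ must equal $\PSD_{n_P}(D_P,P')$. Hence $\wt{\CP'} = \wt{\CP''}$, so $\CP' = \wt{\CP'}\cap A = \wt{\CP''}\cap A = \CP''$.

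The main obstacle I anticipate is bookkeeping around the two reductions — first from semisimple to simple, then from $F$ to $F_\s = \Sym(A,\s)\cap Z(A)$ — making sure that "positive cone over $P$ containing $1$" is correctly tracked through each step (in particular that Proposition~\ref{ppc-equivalences} lets one pass between positive cones over $P$ and over an ordering of $F_\s$ extending $P$ without losing the condition $1\in\CP$, and that the extended ordering is unique once we further extend to $F_P$). The remaining point requiring a little care is checking that the Zorn's lemma extension in the existence part genuinely lands in the set of prepositive cones over $P$ (not over some larger preordering), but this is exactly Remark~\ref{aboutPC}. Everything else is an application of results already established: Propositions~\ref{ppc-extension}, \ref{prop:tired}, \ref{pc-ss}, Remark~\ref{L}(1), and Example~\ref{PC-PSD}.
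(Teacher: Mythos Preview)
Your proposal is correct and rests on the same underlying idea as the paper's proof: pass to the real closure $F_P$, where a positive cone containing $1$ is forced to be the set of hermitian squares (equivalently, the PSD matrices), and then restrict back.

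The paper's route is considerably shorter, however. It never decomposes $A$ into simple factors and never invokes the uniqueness clause of Proposition~\ref{ppc-extension}. Instead it argues in one stroke at the semisimple level: given any positive cone $\CQ$ over $P$ with $1 \in \CQ$, Proposition~\ref{ppc-extension} gives $\CQ = \CQ' \cap A$ for \emph{some} positive cone $\CQ'$ on $A \ox_F F_P$; since $1 \in \CQ'$ and $F_P$ is real closed, Corollary~\ref{pc-ss-2} forces $\CQ'$ to equal the set of hermitian squares. Thus $\CQ$ is the intersection of $A$ with a set depending only on $(A,\s,P)$, and uniqueness follows. The point is that uniqueness of the \emph{extension} $\CQ'$ is irrelevant: whatever extension one picks, it is the hermitian squares. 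This sidesteps all of the bookkeeping you anticipated (the reduction to $F = \Sym(A_i,\s_i)\cap Z(A_i)$, the passage through Proposition~\ref{ppc-equivalences}, and the matrix identification via $f_{F_P}$). Your approach reproves Corollary~\ref{pc-ss-2} by hand rather than quoting it; the paper's approach simply quotes it.
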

\begin{proof}
  Let $\CQ$ be a positive cone on $(A,\s)$ over $P$ containing $1$. By 
  Proposition~\ref{ppc-extension}, $\CQ = \CQ' \cap A$, where $\CQ'$ is a positive cone on
  $(A\ox_F F_P, \s\ox\id)$, and is thus equal to the set of hermitian squares of 
  $(A\ox_F F_P, \s\ox\id)$ by the final part of Corollary~\ref{pc-ss-2}. Therefore,
  $\CQ$ is the intersection of $A$ with the set of hermitian squares of 
  $(A\ox_F F_P, \s\ox\id)$, so only depends on $A$ and $P$.
\end{proof}

\begin{prop}\label{mpppc-inv}
  Let  $\CP$ be a positive cone on $(A,\s)$ over $P \in
  X_F$ such that $1 \in \CP$. Then
  \[\CP = \CC(\CP^\x).\]
\end{prop}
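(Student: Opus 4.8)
The plan is to reduce to the simple case, where the analogue is exactly Lemma~\ref{lem:CP*}, and then transfer the identity $\CP = \CC(\CP^\x)$ coordinatewise through the product decomposition of Corollary~\ref{pc-ss-2}. First I would invoke Corollary~\ref{pc-ss-2}: since $1 \in \CP$, we have $s = 0$ and $\CP = \CP_1 \times \cdots \times \CP_r$, where each $\CP_i$ is a positive cone on the simple algebra $(A_i, \s_i)$ over $P$ with $1 \in \CP_i$. In particular each $A_i$ satisfies the running hypothesis $F = \Sym(A_i, \s_i) \cap Z(A_i)$ needed to apply the results of the previous subsection — this is precisely the point verified in the proof of Corollary~\ref{pc-ss-2}, so I would quote it rather than reprove it. Hence Lemma~\ref{lem:CP*} applies to each factor and gives $\CP_i = \CC(\CP_i^\x)$ for $1 \le i \le r$.

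Next I would match up $\CC(\CP^\x)$ with $\CC(\CP_1^\x) \times \cdots \times \CC(\CP_r^\x)$. For the inclusion $\CP \subseteq \CC(\CP^\x)$: given $a = (a_1, \ldots, a_r) \in \CP$ with each $a_i \in \CP_i = \CC(\CP_i^\x)$, write $a_i = \sum_j \s_i(x_{ij}) s_{ij} x_{ij}$ with $s_{ij} \in \CP_i^\x$; then lift each $s_{ij}$ to the element $e_i s_{ij} e_i + (1 - e_i) \in A^\x$, which lies in $\CP^\x$ because it equals $s_{ij}$ in the $i$-th coordinate (an element of $\CP_i$) and $1 \in \CP_j$ in every other coordinate, and it is invertible since each coordinate is. Assembling these expressions coordinatewise exhibits $a$ as an element of $\CC(\CP^\x)$. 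The reverse inclusion $\CC(\CP^\x) \subseteq \CP$ is immediate from (P3) and (P2): for $s \in \CP^\x \subseteq \CP$ and $x \in A$ we have $\s(x) s x \in \CP$, and $\CP$ is closed under sums.

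The one step needing a little care is the claim that the lifted elements actually lie in $\CP^\x$, i.e.\ that an element of $A$ whose $i$-th coordinate is in $\CP_i$ and whose other coordinates equal $1 \in \CP_j$ belongs to $\CP$; but this is exactly the content of the product decomposition $\CP = \CP_1 \times \cdots \times \CP_r$ from Corollary~\ref{pc-ss-2}, read in the easy direction. Invertibility is clear since $s_{ij}$ is invertible in $A_i$ (being in $\CP_i^\x$) and the remaining coordinates are $1$. I expect the main (modest) obstacle to be purely bookkeeping: keeping the identification of $\CP_i$ with $\s(e_i)\CP e_i \subseteq A_i$ straight when translating the simple-case expressions back into $A$. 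An alternative, shorter route avoiding the decomposition would be to prove $\CP \subseteq \CC(\CP^\x)$ directly from \cite[Def.~3.11, Thm.~7.5]{A-U-pos} — which give $\CP = \CC_P(S)$ for a set $S$ of invertible elements — exactly as in the proof of Lemma~\ref{lem:CP*}, replacing $u\s(x)bx$ by $\s(x)(ub)x$ with $ub \in \CP^\x$; but since those results are stated for simple $A$, one still needs the reduction to the simple factors, so the route above is cleanest.
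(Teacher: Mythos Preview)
Your overall strategy---reduce to the simple factors via Corollary~\ref{pc-ss-2} and invoke Lemma~\ref{lem:CP*} on each---is exactly the paper's approach, and your assembly of the product expression (lifting each $s_{ij}$ to $e_i s_{ij} e_i + (1-e_i)$) is a valid variant of the paper's more compact trick of taking a common $n$ and writing $(b_{1j},\ldots,b_{rj}) \in \CP \cap A^\x$ directly.

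There is, however, a genuine gap. You assert that ``each $A_i$ satisfies the running hypothesis $F = \Sym(A_i,\s_i) \cap Z(A_i)$'' and say this is ``precisely the point verified in the proof of Corollary~\ref{pc-ss-2}.'' It is not: that identity is established there only in the \emph{second} part of the corollary, under the additional assumption that $F$ is real closed. In the general semisimple setting of this proposition, $\Sym(A_i,\s_i) \cap Z(A_i)$ can be a proper extension of $F$ (for instance $A = F(\sqrt{d})$ with the identity involution). Since Lemma~\ref{lem:CP*} lives in the subsection whose standing hypothesis is $F = \Sym(A,\s) \cap Z(A)$, you cannot apply it to $(A_i,\s_i)$ over $F$ as written.

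The paper closes this gap by inserting one extra step you omit: it invokes Proposition~\ref{ppc-equivalences} to say that each $\CP_i$, which is a positive cone over $P \in X_F$, is equally a positive cone over some ordering $Q_i$ on the larger field $F_{\s_i} := Z(A_i) \cap \Sym(A_i,\s_i)$. Now $(A_i,\s_i)$ viewed as an $F_{\s_i}$-algebra \emph{does} satisfy the standing hypothesis of Lemma~\ref{lem:CP*}, and the lemma applies to give $\CP_i = \CC(\CP_i^\x)$. With that correction your argument goes through.
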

\begin{proof}
  With the same notation as in  Corollary~\ref{pc-ss-2} we write
 $\CP = \CP_1 \times \cdots \times
  \CP_r$ with $\CP_i$ positive cone on $(A_i, \s_i)$ over $P$. By
  Proposition~\ref{ppc-equivalences}, $\CP_i$ is a positive cone on $(A_i, \s_i)$
  over some ordering $Q_i$ of $Z(A_i) \cap \Sym(A_i, \s_i)$.

  By Lemma~\ref{lem:CP*}, $\CP_i = \CC(\CP_i \cap A_i^\x)$.
  Therefore, if $p = (p_1, \ldots, p_r) \in \CP_1 \times \cdots \times \CP_r$ we
  have, for every $i=1, \ldots, r$,
  \[p_i = \sum_{j=1}^n \s_i(x_{ij}) b_{ij}x_{ij},\]
  for some $n \in \N$ (we can take the same $n$ for all $i$), $x_{ij} \in A_i$ and
  $b_{ij} \in \CP_i \cap A_i^\x$. Consequently
  \[p = \sum_{j=1}^n \s((x_{1j}, \ldots, x_{rj})) (b_{1j}, \ldots, b_{rj})
  (x_{1j}, \ldots, x_{rj}),\]
  with $(b_{1j}, \ldots, b_{rj}) \in \CP \cap A^\x$, proving the result.
\end{proof}

\section{Gauges from positive cones}
\label{sec:gauges-pc}

In this section we construct  gauges from positive cones in a natural way, inspired by the classical case of valuations
and orderings.

Let $A$ be an $F$-algebra, $\s$ an $F$-linear involution on $A$, $\CP$  a prepositive cone on 
$(A,\s)$ over $P\in X_F$, 
and $\kk$ a subfield of $F$.
Following Holland \cite[\S4]{Holland-1980}, we 
define the following:
\begin{align*}
R_{{\kk},\CP} &:= \{a \in A \mid \exists m \in {\kk}\cap P \ \s(a)a \le_{\CP} m\},\\
I_{{\kk},\CP} &:= \{a \in A \mid \forall \ve \in {\kk}^\x\cap P \ \s(a)a \le_{\CP} \ve\}.
\end{align*}

Our general strategy is as follows: we first construct a gauge corresponding to
$R_{\kk, \CP}$ for some specific algebra with involution, cf.
Section~\ref{sec:index-two}.  We then address the general case by first reducing to the special
case via a scalar extension,
and then by showing that the restriction of the gauge thus obtained is still a gauge.

Lemmas~\ref{basic} and \ref{basic2} below are reformulations of Holland's results in our context,
using the notation $n(a)$ for $\s(a)a$.

\begin{lemma}\label{basic}
Assume that $1\in \CP$ and
let $a,b \in A$. Then
  \begin{enumerate}[$(1)$]
    \item $n(a) \geq_\CP 0$;
    \item $n(a+b) \leq_\CP 2(n(a)+n(b))$;
    \item $n(ab) = \s(b)n(a)b$;
    \item if $a$ is invertible, then $n(a^{-1}) = n(\s(a))^{-1}$.
  \end{enumerate}
 \end{lemma}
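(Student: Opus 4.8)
The plan is to prove each of the four items by a direct computation with the involution and the partial order $\leq_\CP$, using the axioms (P1)--(P5) of a prepositive cone together with the hypothesis $1\in\CP$.

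First, for (1): we have $n(a)=\s(a)a=\s(a)\cdot 1\cdot a$, and since $1\in\CP$, axiom (P3) (the condition $\s(x)\CP x\subseteq\CP$ applied with $x=a$) gives $\s(a)\cdot 1\cdot a\in\CP$, i.e.\ $n(a)\geq_\CP 0$. Second, for (3): this is pure algebra with no positivity involved, namely $n(ab)=\s(ab)(ab)=\s(b)\s(a)ab=\s(b)\bigl(\s(a)a\bigr)b=\s(b)n(a)b$, using that $\s$ is an anti-automorphism. Third, for (4): if $a$ is invertible then so is $\s(a)$, with $\s(a)^{-1}=\s(a^{-1})$ since $\s(a^{-1})\s(a)=\s(a\cdot a^{-1})^{\,}=\s(1)=1$ (here I am writing $\s$ for the involution; note $\s(1)=1$). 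Then $n(a^{-1})=\s(a^{-1})a^{-1}=\s(a)^{-1}a^{-1}=(a\s(a))^{-1}$. On the other hand $n(\s(a))=\s(\s(a))\s(a)=a\s(a)$, so $n(\s(a))^{-1}=(a\s(a))^{-1}=n(a^{-1})$, as claimed.

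The one item requiring the positivity structure in an essential way is (2). The standard trick: consider the expression $\s(a-b)(a-b)=n(a)-\s(a)b-\s(b)a+n(b)$, which lies in $\CP$ by (1)/(P3). Hence $\s(a)b+\s(b)a\leq_\CP n(a)+n(b)$. Similarly $n(a+b)=n(a)+\s(a)b+\s(b)a+n(b)$, so adding $n(a+b)$ to the previous inequality gives $n(a+b)+\bigl(\s(a)b+\s(b)a\bigr)\leq_\CP n(a)+n(b)+n(a+b)$; but this is not quite the cleanest route. Instead I would argue directly from $\s(a+b)(a+b)+\s(a-b)(a-b)=2n(a)+2n(b)$: since $\s(a-b)(a-b)\in\CP$, i.e.\ $\s(a-b)(a-b)\geq_\CP 0$, subtracting it from both sides and using that $\leq_\CP$ is compatible with addition (which follows since $\CP+\CP\subseteq\CP$ by (P2), as noted after Definition~\ref{def-preordering}) yields $n(a+b)=2n(a)+2n(b)-\s(a-b)(a-b)\leq_\CP 2n(a)+2n(b)=2(n(a)+n(b))$.

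I do not expect any real obstacle here; the only points needing minor care are: that $\leq_\CP$ is translation-invariant (already recorded in the excerpt right after Definition~\ref{def-preordering}); that $\s(1)=1$, which is immediate since $\s(1)=\s(1\cdot 1)=\s(1)\s(1)$ forces, after multiplying by $\s(1)^{-1}=\s(1^{-1})$ when $1$ is a unit, $\s(1)=1$ — or more simply $\s(1)=\s(1)\cdot 1$ and applying $\s$ to $1=\s(\s(1))$; and that the hypothesis $1\in\CP$ is used only in part (1) (and hence indirectly in (2)), while (3) and (4) hold for any $F$-linear involution. Everything else is a two-line manipulation.
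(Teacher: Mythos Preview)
Your proof is correct and follows essentially the same approach as the paper: item (1) via (P3) with $1\in\CP$, item (2) via the identity $2(n(a)+n(b))-n(a+b)=n(a-b)\in\CP$, and items (3), (4) as direct computations. The paper's version is simply terser.
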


\begin{proof}
(1) Since $1 \in \CP$, we have       $\s(a)a \in \CP$.
 
 (2) It suffices to note that $2(n(a)+n(b)) -  n(a+b) = n(a-b) \in \CP$ as $1\in \CP$. 
 
 (3) and (4) are direct.
 \end{proof}

\begin{lemma}\label{basic2}
  Assume  that $1\in \CP$. Then
  $R_{{\kk},\CP}$ is a subring of $A$ and $I_{{\kk},\CP}$ is an ideal of $R_{{\kk},\CP}$. 
\end{lemma}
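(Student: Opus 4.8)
The plan is to obtain both assertions directly from Lemma~\ref{basic}, together with a handful of elementary facts about the order $\le_\CP$ coming from the positive cone axioms: transitivity and additivity (from (P2), closure of $\CP$ under sums), compatibility with multiplication by scalars in $P$ and with the twisting maps $x\mapsto\s(d)xd$ for $d\in A$ (from (P3)), and the arithmetic remark that $\kk\cap P$ is closed under multiplication while for $\ve,m\in\kk^\x\cap P$ the elements $\ve/4$ and $\ve/m$ again lie in $\kk^\x\cap P$, being $\ve$ times a square, resp. a square, in the field $\kk$. Throughout I use that elements of $F$ (hence of $\kk$) are central in $A$, so such scalars move freely past $n(a)$, $\s(b)$, etc.

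First I would check that $R_{\kk,\CP}$ is a subring. It contains $1$ since $n(1)=1\le_\CP 1$ with $1\in\kk\cap P$, and it is stable under $a\mapsto-a$ because $n(-a)=n(a)$. For sums, if $n(a)\le_\CP m_1$ and $n(b)\le_\CP m_2$ with $m_i\in\kk\cap P$, then Lemma~\ref{basic}(2), additivity, and scaling by $2$ give $n(a+b)\le_\CP 2\bigl(n(a)+n(b)\bigr)\le_\CP 2(m_1+m_2)\in\kk\cap P$. For products, Lemma~\ref{basic}(3) gives $n(ab)=\s(b)n(a)b$; applying the twisting map to $m_1-n(a)\in\CP$ and using centrality of $m_1$ yields $n(ab)\le_\CP\s(b)m_1 b=m_1 n(b)\le_\CP m_1 m_2\in\kk\cap P$. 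Hence $a+b,ab\in R_{\kk,\CP}$.

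Next I would show $I_{\kk,\CP}$ is a two-sided ideal of $R_{\kk,\CP}$. Taking $\ve=1$ shows $I_{\kk,\CP}\subseteq R_{\kk,\CP}$, and closure under negation is again immediate from $n(-a)=n(a)$. For sums, fix $\ve\in\kk^\x\cap P$; using $n(a)\le_\CP\ve/4$ and $n(b)\le_\CP\ve/4$ together with Lemma~\ref{basic}(2) gives $n(a+b)\le_\CP 2\bigl(n(a)+n(b)\bigr)\le_\CP\ve$, and since $\ve$ was arbitrary, $a+b\in I_{\kk,\CP}$. For absorption, let $a\in I_{\kk,\CP}$ and $r\in R_{\kk,\CP}$, say $n(r)\le_\CP m$; replacing $m$ by $m+1$ if necessary we may take $m\in\kk^\x\cap P$. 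Then for any $\ve\in\kk^\x\cap P$, Lemma~\ref{basic}(3), the twisting maps and centrality give
\[
n(ra)=\s(a)n(r)a\le_\CP m\,n(a)\le_\CP m\cdot(\ve/m)=\ve,\qquad
n(ar)=\s(r)n(a)r\le_\CP(\ve/m)\,n(r)\le_\CP(\ve/m)\,m=\ve,
\]
using $n(a)\le_\CP\ve/m$ in each case. So $ra,ar\in I_{\kk,\CP}$.

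I expect no genuine obstacle, only bookkeeping: the factor $2$ in Lemma~\ref{basic}(2) forbids treating $\le_\CP$ as a field-like order, so every estimate must be routed through a scalar in $\kk\cap P$, and one has to verify at each step that the scalar produced ($2(m_1+m_2)$, $m_1 m_2$, $\ve/4$, $\ve/m$) indeed lies in $\kk\cap P$ (or $\kk^\x\cap P$) and is central; one also has to dispose of the degenerate case $n(r)=0$ (zero divisor $r$) by enlarging $m$ to $m+1$. This is easy to get slightly wrong but presents no conceptual difficulty.
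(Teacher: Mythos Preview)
Your proof is correct and follows essentially the same approach as the paper, which compresses your two-step product estimate $n(ab)\le_\CP m_1 n(b)\le_\CP m_1 m_2$ into the single identity $rs-n(ab)=r(s-n(b))+\s(b)(r-n(a))b$ and is terser about two-sided absorption. One nitpick: $\ve/m$ need not be a square, but it lies in $\kk^\x\cap P$ simply because $\kk^\x\cap P$ is closed under products and inverses.
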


\begin{proof}
  By definition, $R_{\kk,\CP}$ contains $1$ and 
  is closed under multiplication by $-1$. It is also clear from 
  Lemma~\ref{basic}(2) that $R_{\kk,\CP}$ and $I_{\kk,\CP}$ are closed under sums.
  Note that for $r,s \in \kk^\x \cap P$ and $a,b\in A$, if (i) $r-n(a) \in\CP$
  and $s-n(b)\in \CP$, then (ii) $rs-n(ab)=r(s-n(b))+\s(b)(r-n(a))b \in \CP$. So, if
  $a,b\in R_{\kk,\CP}$, there exist $r,s$ satisfying (i); then (ii) shows $ab\in R_{\kk,\CP}$.
  Likewise, if $a\in R_{\kk,\CP}$ and $b\in I_{\kk,\CP}$ then (i) holds for some $r$ and all $s$,
  so (ii) shows $ab\in I_{\kk,\CP}$. 
\end{proof}

\begin{remark}\label{oneinp}\mbox{}
  \begin{enumerate}[$(1)$] 
    \item  The hypothesis $1\in \CP$ in Lemma~\ref{basic2} is necessary since, by definition of $I_{{\kk},\CP}$, to have 
    $0 \in I_{{\kk},\CP}$ we must have $1\in \CP$.

    \item If $1\in \CP$, we have $\CP\cap F=P$ (cf. \cite[Prop.~3.8(1)]{A-U-pos}, which also holds if $(A,\s)$ 
    is an $F$-algebra with $F$-linear involution), and 
     it follows from the definition  of $R_{{\kk},\CP}$ that $R_{{\kk},\CP} \cap F = R_{{\kk},P}$
   and $I_{{\kk},\CP} \cap F = I_{{\kk},P}$.

 \end{enumerate}  
\end{remark}

\begin{lemma}\label{u-invert}
  Assume  that $1\in \CP$.
  Let $u \in A$ be unitary, i.e., $\s(u)u= 1$. Then $u \in R_{{\kk},\CP}^\x$.
\end{lemma}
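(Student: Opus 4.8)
The plan is to check the defining conditions of $R_{\kk,\CP}$ directly, exploiting the fact that for a unitary $u$ the ``norm'' $n(u)=\s(u)u$ is simply $1$. First I would verify that $u\in R_{\kk,\CP}$: since $\kk$ is a subfield of $F$ we have $1\in\kk$, and since $P$ is an ordering we have $1=1^2\in P$, so $1\in\kk\cap P$; by unitarity $n(u)=\s(u)u=1$, whence $1-n(u)=0\in\CP$ by (P1), i.e.\ $n(u)\le_\CP 1$. Thus $u$ satisfies the defining condition of $R_{\kk,\CP}$ with $m=1$.

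Next I would record that $u$ is invertible in $A$ with $u^{-1}=\s(u)$: this is immediate if ``unitary'' is taken to mean $\s(u)u=u\s(u)=1$, and in the finite-dimensional setting it already follows from $\s(u)u=1$ alone, a left inverse of an element of a finite-dimensional algebra being a two-sided inverse. Then $n(u^{-1})=n(\s(u))=\s(\s(u))\,\s(u)=u\s(u)=1$ as well (or one may invoke Lemma~\ref{basic}(4)), so by the same argument as above $u^{-1}\in R_{\kk,\CP}$. Combining, $u\in R_{\kk,\CP}\cap A^\x=R_{\kk,\CP}^\x$, and in fact $u$ is a unit of the ring $R_{\kk,\CP}$.

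There is essentially no obstacle here: the whole proof is a one-line computation once one observes that a unitary element has $n$-value $1$ and is invertible with inverse $\s(u)$. The only point deserving a moment's care is the invertibility of $u$ in $A$ (so that the statement $u\in R_{\kk,\CP}^\x$ is even meaningful), which is built into the notion of unitary element, and in any case guaranteed by the finite-dimensionality present in all the intended applications.
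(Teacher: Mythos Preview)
Your argument is correct and is essentially the same as the paper's: show $n(u)=1\le_\CP 1$ so $u\in R_{\kk,\CP}$, observe that $u^{-1}=\s(u)$ is again unitary, and conclude $u^{-1}\in R_{\kk,\CP}$. Your added remark about why $u$ is invertible in $A$ is a reasonable bit of extra care, since at this point in the paper $A$ is not yet assumed finite-dimensional; the paper simply takes ``unitary'' to entail invertibility and writes $u^{-1}$ without comment.
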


\begin{proof}
  We have $\s(u)u=1 \in \CP$, so $\s(u)u \leq_\CP 1$
  and  thus $u \in R_{{\kk},\CP}$. Since $u^{-1}$ is also unitary, we get $u \in R_{{\kk},\CP}^\x$.
\end{proof}

\subsection{Special case with index $\leq 2$ and conjugate transposition}\label{sec:index-two}
Let $E$ be one of $F$, $C:=F(\sqrt{-1})$ or $H:=(-1,-1)_F$ and let $(A,\s)= (M_n(E), \bt)$,
where $\bbar$ denotes the identity on $F$ or conjugation in the remaining cases. 
Let $n_E(x):=\bar{x}x$ for $x\in E$.
Let $P\in X_F$ and let
$v:F\to \Gamma_v\cup\{\infty\}$ be a valuation on $F$, compatible with $P$.

\begin{lemma}\label{valonfch}
  The function $v_E$ defined by 
  \begin{equation}\label{eq:val}
    v_E(x):=\tfrac{1}{2} v(n_E(x))
  \end{equation}
  is a valuation on $E$ and a tame 
  $v$-gauge on $E$. It is the unique extension of $v$ to $E$. 
\end{lemma}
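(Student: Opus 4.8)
\textbf{Proof plan for Lemma~\ref{valonfch}.}

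The plan is to treat the three cases for $E$ in parallel as much as possible, using that $n_E$ is a norm form: $n_E(xy)=n_E(x)n_E(y)$ when $E$ is commutative (the cases $E=F$ and $E=C$), and $n_E(xy)=\s(xy)xy = \s(y)\s(x)xy$, which for $E=H$ still satisfies $n_E(xy)=n_E(x)n_E(y)$ because $n_E(x)=\bar x x \in F$ is central. So in all three cases $v_E(xy)=\tfrac12 v(n_E(xy))=\tfrac12 v(n_E(x))+\tfrac12 v(n_E(y))=v_E(x)+v_E(y)$, giving the multiplicativity half of the valuation axioms and in particular $v_E(1)=0$, $v_E(\lambda x)=v(\lambda)+v_E(x)$ for $\lambda\in F$ (using $n_E(\lambda x)=\lambda^2 n_E(x)$). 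Also $v_E(x)=\infty \Leftrightarrow n_E(x)=0 \Leftrightarrow x=0$, since $\bbar$ is anisotropic on $E$ ($E$ is a division algebra). This already shows $v_E$ restricts to $v$ on $F$.

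Next I would verify the ultrametric inequality $v_E(x+y)\ge\min\{v_E(x),v_E(y)\}$. The key point is that $v$ is compatible with $P$, so by the characterization recalled in Section~2 (from \cite[Lemma~7.2]{Prestel84}), $v(a+b)=\min\{v(a),v(b)\}$ for all $a,b\in P$. For $x,y\in E$ write $n_E(x+y)=n_E(x)+n_E(y)+t$ where $t=\bar x y+\bar y x\in F$ is the ``trace'' term; note $n_E(x),n_E(y)\in P$ (they are sums of squares in $F$, using the standard formula for the norm form of $C$ or $H$ in terms of the coordinates). A clean way is: on the $F$-vector space $E$, the form $n_E$ is positive definite at $P$ (it is $\qf{1}$, $\qf{1,1}$, or $\qf{1,1,1,1}$ over $F$), so $q(x):=$ the associated quadratic form satisfies the Cauchy--Schwarz-type bound, and more directly one can reduce to the rank-$1$ scalar case by a unitary change of basis: after scaling we may assume $v_E(x)\le v_E(y)$, then $v_E(x^{-1}(x+y))=v_E(x+y)-v_E(x)$ and $x^{-1}(x+y)=1+x^{-1}y$ with $v_E(x^{-1}y)\ge 0$, so it suffices to show $v_E(1+z)\ge 0$ whenever $v_E(z)\ge 0$, i.e. $v(n_E(1+z))\ge 0$ whenever $v(n_E(z))\ge 0$. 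Expanding, $n_E(1+z)=1+z+\bar z+n_E(z)$; since $1, n_E(z)\in P$ and $v(1)=0$, $v(n_E(z))\ge 0$, the compatibility formula gives $v(1+n_E(z))=0$, and one then controls the cross term $z+\bar z=\Trd(z)$ by noting $(z+\bar z)^2 \le 4\cdot 1\cdot n_E(z)$ in the ordering (AM--GM / positive-semidefiniteness of $n_E$ restricted to $\Span(1,z)$), hence $v(z+\bar z)\ge \tfrac12 v(4 n_E(z))=v_E(z)\ge 0$; combining, $v(n_E(1+z))=\min\{0,\dots\}\ge 0$. This gives that $v_E$ is a valuation on $E$ extending $v$.

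Then I would read off the gauge properties. A single valuation is automatically surmultiplicative (indeed multiplicative) with $w(1)=0$, and it is a $v$-norm with splitting basis $\{1\}$ when $E=F$; for $E=C$ or $H$ I would exhibit a splitting basis by choosing an $F$-basis of $E$ adapted to $v$: the point is that $\gr_{v_E}(E)$ is a graded field (since $v_E$ is a valuation on a division algebra), and by \cite[Prop.~2.1]{T-W-2010} or directly by counting dimensions, $[\gr_{v_E}(E):\gr_v(F)]\le [E:F]$ with equality forcing the norm property; equality holds here because $v_E$ is the unique extension (see below), which by the fundamental inequality for valuations on commutative fields, resp. its quaternionic analogue, means $e\cdot f=[E:F]$ is ``defectless''. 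Semisimplicity of $\gr_{v_E}(E)$ is immediate as it is a graded division ring, hence has no nonzero nilpotent ideals; tameness follows from $\ch F_v\ne 2$ together with the explicit ramification/residue data (for $E=H=(-1,-1)_F$ the extension is tame precisely because $2$ is invertible in $F_v$), or one may simply invoke Remark~\ref{ttame} when $\ch F_v=0$ and argue the positive-characteristic residue case directly since $[E:F]\le 4$ and $2\in F_v^\x$. Finally, uniqueness of the extension: any valuation $w$ on $E$ with $w|_F=v$ satisfies $2w(x)=w(\bar x x)=w(n_E(x))=v(n_E(x))$ because $\bar x x\in F$, so $w=v_E$ on the nose; this also shows $v_E$ is independent of the case analysis. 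The main obstacle is the ultrametric inequality: verifying $v_E(1+z)\ge 0$ for $v_E(z)\ge 0$ genuinely uses $P$-compatibility of $v$ (a general valuation on $F$ need not extend to $E$ via this formula), and getting the cross-term estimate $v(\Trd z)\ge v_E(z)$ cleanly — I expect to route it through the positive semidefiniteness of the rank-$2$ subform $n_E|_{\Span_F(1,z)}$, i.e. $\det\begin{pmatrix}1 & \tfrac12\Trd z\\ \tfrac12\Trd z & n_E(z)\end{pmatrix}\ge_P 0$, which with compatibility gives the needed valuation inequality.
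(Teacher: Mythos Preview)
Your approach is correct and genuinely different from the paper's. The paper argues by citation: it takes a Henselization $F^h\subseteq F_P$, observes that $H\otimes_F F^h$ remains a division algebra, and then invokes Wadsworth's extension theorems \cite{Wadsworth-2002} to get existence, uniqueness, and the explicit formula for $v_E$, and Tignol--Wadsworth \cite{T-W-2010} for the gauge property. You instead verify everything by hand from $P$-compatibility of $v$. Your route is more self-contained and makes transparent exactly where the ordering enters (the ultrametric inequality and the trace bound), at the cost of being longer; the paper's route is short but imports substantial machinery.

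Two points to tighten. First, your uniqueness argument has a gap: from $w|_F=v$ you write ``$2w(x)=w(\bar x x)$'', but this presupposes $w(\bar x)=w(x)$, which is not automatic for an arbitrary valuation $w$ extending $v$. You can fill it with the trace bound you already proved: if $w(x)<\tfrac12 v(n_E(x))$, then since $\bar x=\Trd(x)-x$ and $v(\Trd(x))\ge \tfrac12 v(n_E(x))>w(x)$, the strict ultrametric gives $w(\bar x)=w(x)$, whence $w(x)+w(\bar x)=2w(x)<v(n_E(x))$, contradicting $w(x)+w(\bar x)=w(\bar x x)=v(n_E(x))$. So $w(x)\ge \tfrac12 v(n_E(x))$, and symmetrically for $\bar x$, forcing equality. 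Second, the splitting-basis claim is much more direct than your defectless detour: since $n_E\simeq\qf{1,\ldots,1}$ over $F$ in the standard basis ($\{1\}$, $\{1,\sqrt{-1}\}$, or $\{1,i,j,ij\}$), compatibility gives $v(\sum a_\ell^2)=\min_\ell v(a_\ell^2)$ and hence $v_E(\sum a_\ell e_\ell)=\min_\ell v(a_\ell)$ immediately; this also shows $\Gamma_{v_E}=\Gamma_v$. Tameness then follows from Remark~\ref{ttame}, since $F_v$ is formally real (as $v$ is $P$-compatible) and hence $\ch F_v=0$.
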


\begin{proof}
  Let $F^h$ be a Henselization of $F$ with respect to $v$. Since we can take $F^h\subseteq F_P$,
  $H\ox_F F^h$ is a division algebra. Therefore, $v$ extends to a valuation on $H$ by 
  \cite[Thm.~2.3]{Wadsworth-2002}. By \cite[Thm.~2.1 and (2.7)]{Wadsworth-2002} 
  this extension is unique
  and equals $v_H$; furthermore, $v$ then extends uniquely to $C$ and this extension equals $v_C$.
  The fact that $v_E$ is a gauge follows when $E=C$ from
  \cite[Cor.~1.10]{T-W-2010}, and when $E=H$ from \cite[Prop.~1.12]{T-W-2010},
  and it is tame by Remark~\ref{ttame}.
\end{proof}

\begin{lemma}\label{well-known}
  For all 
  $x_1,\ldots, x_r \in E\sm\{0\}$ we have
  \[v_{E}(n_E(x_1) + \cdots + n_E(x_r)) = 2\min\{v_E(x_1), \ldots,  v_E(x_r)\}.\]
\end{lemma}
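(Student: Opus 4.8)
The plan is to reduce the claimed identity to the compatibility of $v$ with $P$ on $F$, exploiting the fact that each $n_E(x_i) = \bar{x_i}x_i$ lies in $P$ (as a norm element) and that $v_E$ restricted to $F$ equals $v$ (since $v_E$ is the unique extension of $v$ to $E$, by Lemma~\ref{valonfch}). First I would observe that for $x \in E \sm \{0\}$ we have $v_E(n_E(x)) = v(n_E(x)) = 2 v_E(x)$ directly from \eqref{eq:val}, so that the right-hand side $2\min\{v_E(x_1),\ldots,v_E(x_r)\}$ equals $\min\{v(n_E(x_1)),\ldots,v(n_E(x_r))\}$. Thus the statement becomes
\[
v\bigl(n_E(x_1) + \cdots + n_E(x_r)\bigr) = \min\{v(n_E(x_1)),\ldots, v(n_E(x_r))\},
\]
an identity purely about the valuation $v$ on $F$.

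Next I would invoke the hypothesis that $v$ is compatible with $P$, together with the characterization recalled from \cite[Lemma~7.2]{Prestel84} in the Notation section: if $a, b \in P$ then $v(a+b) = \min\{v(a), v(b)\}$. Since each $n_E(x_i)$ is a hermitian square in $F$ (namely $\bar{x_i}x_i \in P$ because $P$ is an ordering and these are norms from $E$; in the quaternion case one uses the standard fact that $n_H(x) = \mathrm{Nrd}$ is a sum of squares, hence in $P$), and since $x_i \neq 0$ forces $n_E(x_i) \neq 0$ by Lemma~\ref{valonfch} (each $n_E(x_i)$ is invertible in $F$), an easy induction on $r$ using the two-term compatibility identity gives $v(n_E(x_1) + \cdots + n_E(x_r)) = \min_i v(n_E(x_i))$. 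The induction step is immediate: the partial sum $n_E(x_1) + \cdots + n_E(x_{r-1})$ is again in $P$ (closed under addition) and nonzero (its value is finite by the inductive hypothesis), so the two-term rule applies to it together with $n_E(x_r)$.

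The only point requiring a little care — and the main (minor) obstacle — is justifying $n_E(x) \in P$ in all three cases $E \in \{F, F(\sqrt{-1}), H\}$: for $E = F$ it is $x^2 \in P$; for $E = F(\sqrt{-1})$ it is $\bar{x}x = \Re(x)^2 + \Im(x)^2 \in P$; and for $E = H = (-1,-1)_F$ it is $\bar{x}x = x_0^2 + x_1^2 + x_2^2 + x_3^2 \in P$ (the reduced norm form, anisotropic over $F$, and a sum of squares). In each case the relevant facts are already available via Appendix~\ref{sec:quat-mat} for the quaternionic case and are elementary otherwise. Assembling these pieces yields the desired equality.
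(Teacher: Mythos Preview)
Your proposal is correct and follows essentially the same approach as the paper's proof: both reduce to the compatibility of $v$ with $P$ via the observation that each $n_E(x_i) \in P$, then use $v_E(n_E(x_i)) = v(n_E(x_i)) = 2v_E(x_i)$. The paper's version is just more terse, omitting the explicit induction on $r$ and the case-by-case justification that $n_E(x) \in P$, but the substance is identical.
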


\begin{proof}
  Since each $n_E(x_i) \in P$ and $v$ is compatible with $P$, we  have
    \begin{align*}
      v_E(n_E(x_1) + \cdots + n_E(x_r)) &= v(n_E(x_1) + \cdots + n_E(x_r))\\
      &=\min\{v(n_E(x_1)), \ldots,  v(n_E(x_r))\}\\
      &=2\min\{v_E(x_1), \ldots,  v_E(x_r)\}.\qedhere
    \end{align*}
\end{proof}

We define a function $w:M_n(E)\to \Gamma_{v_E}\cup\{\infty\}$ as follows, for 
$a=(a_{ij})_{i,j}\in M_n(E)$,
\[
  w(a):= \min_{i,j} \{v_E(a_{ij})\}.
\]

\begin{remark}\label{matrix}
  By definition of $w$, $R_w = M_n(R_{v_E})$ and $I_w =
  M_n(I_{v_E})$. It follows that $R_w/I_w = M_n(R_{v_E}/I_{v_E})$ is simple.
\end{remark}

\begin{lemma}\label{wbtspecial}
  $w$ is a $\bt$-special $v$-gauge on $M_n(E)$. It is the unique
  $\bt$-invariant $v$-gauge on $M_n(E)$.
\end{lemma}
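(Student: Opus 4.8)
The plan is to verify directly that $w$ satisfies each clause of the definition of a $\bt$-special $v$-gauge, and then to invoke Proposition~\ref{inspired} for uniqueness. First I would check that $w$ is a $v$-value function: the equivalence $w(a)=\infty\Leftrightarrow a=0$ is immediate from the definition of $w$ as a coordinatewise minimum, the inequality $w(a+b)\ge\min\{w(a),w(b)\}$ follows entrywise from the corresponding inequality for $v_E$ (which holds since $v_E$ is a valuation on $E$ by Lemma~\ref{valonfch}), and the homogeneity $w(\lambda a)=v(\lambda)+w(a)$ for $\lambda\in F$ follows from $v_E(\lambda x)=v(\lambda)+v_E(x)$, which holds because $v_E$ extends $v$. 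For surmultiplicativity, $w(1)=0$ is clear, and $w(ab)\ge w(a)+w(b)$ follows from the entry formula $(ab)_{ij}=\sum_k a_{ik}b_{kj}$ together with the value-function inequality for $w$ on a sum and the inequality $v_E(a_{ik}b_{kj})\ge v_E(a_{ik})+v_E(b_{kj})$ (in fact equality, since $v_E$ is a valuation). The $v$-norm property holds with the matrix units $\{E_{ij}\}$ as splitting basis: however, a small subtlety is that $E$ itself need not be $F$ of dimension one, so the correct splitting basis is $\{x\,E_{ij} : 1\le i,j\le n,\ x \text{ ranging over a splitting basis of } E \text{ over } F\}$, which is a splitting basis because $v_E$ is a $v$-norm on $E$ (being a $v$-gauge, by Lemma~\ref{valonfch}) and $w$ is the coordinatewise min.

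Next I would establish $\bt$-invariance and $\bt$-speciality. Invariance $w\circ\bt=w$ is immediate since $\bt$ permutes the entries $a_{ij}\mapsto\overline{a_{ji}}$ and $v_E(\overline x)=v_E(x)$ (as $n_E(\overline x)=n_E(x)$). For $\bt$-speciality I need $w(\bt(a)a)=2w(a)$ for all $a\in M_n(E)$. Writing $b:=\bt(a)a$, the $(i,i)$ diagonal entry is $b_{ii}=\sum_k \overline{a_{ki}}a_{ki}=\sum_k n_E(a_{ki})$. By Lemma~\ref{well-known}, $v_E(b_{ii})=2\min_k\{v_E(a_{ki})\}$. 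Taking the minimum over $i$ of these diagonal entries already gives $w(b)\le\min_i v_E(b_{ii})=2\min_{i,k}\{v_E(a_{ki})\}=2w(a)$; on the other hand surmultiplicativity (already proved) gives $w(b)=w(\bt(a)a)\ge w(\bt(a))+w(a)=2w(a)$ using invariance. Hence $w(b)=2w(a)$, as required. At this point $w$ is a $\bt$-special surmultiplicative $v$-value function that is also a $v$-norm; Proposition~\ref{semisimple} then tells us $\gr_w(M_n(E))$ is semisimple, so $w$ is a $v$-gauge, completing the first assertion. (Alternatively one can cite Remark~\ref{matrix}, which shows $R_w/I_w=M_n(R_{v_E}/I_{v_E})$ is simple, to see semisimplicity of the residue algebra directly.)

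Finally, for uniqueness among $\bt$-invariant $v$-gauges I would apply Proposition~\ref{inspired}(2) with $B=M_n(E)$ and $\tau=\bt$. One first checks the hypotheses of that proposition: $M_n(E)$ is a finite-dimensional simple $F$-algebra, $F=\Sym(M_n(E),\bt)\cap Z(M_n(E))$ (indeed $Z(M_n(E))=E$ and $\Sym\cap E=F$ in all three cases, since $\bbar$ is the identity on $F$ and restricts to a nontrivial involution on $E$ when $E\ne F$), and $\ch F_v=0$ — the last point is where we use that $v$ is compatible with $P$, since $P$ induces an ordering on $F_v$ forcing residue characteristic $0$. We also need $\bt\ox\id_{F^h}$ anisotropic over the Henselization: taking $F^h\subseteq F_P$ as in the proof of Lemma~\ref{valonfch}, the form $\qf{1,\dots,1}_{\bt}$ over $(M_n(E^h),\bt)$ is positive definite at the (unique) ordering of $F^h$ and hence anisotropic. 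The main obstacle I anticipate is precisely the bookkeeping around the base field $E$ versus $F$ in the $v$-norm clause and in verifying the hypotheses of Proposition~\ref{inspired} — in particular being careful that "$v$-gauge" is taken with respect to $v$ on $F$ (not $v_E$ on $E$), so that the splitting basis must be an $F$-basis of $M_n(E)$ and one must combine a splitting basis of $E/F$ with the matrix units. Everything else reduces to the coordinatewise nature of $w$ and the facts about $v_E$ already recorded in Lemmas~\ref{valonfch} and~\ref{well-known}.
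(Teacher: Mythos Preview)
Your proof is correct, but your route to $\bt$-speciality and to semisimplicity of the graded ring differs from the paper's. The paper first establishes that $w$ is a gauge by showing $(M_n(E))_0$ is simple (Remark~\ref{matrix}) and $\Gamma_w=\Gamma_{v_E}$, then invokes \cite[Lemma~2.13]{T-W-book} for graded semisimplicity; only afterwards does it prove $\bt$-speciality by computing the residue involution $((\bt)_0 \cong (\bbarr)_0{}^t$ on $M_n(E_0)$), checking it is anisotropic because $F_0$ is formally real, and then applying \cite[Prop.~1.1]{T-W-2011}. You instead prove $\bt$-speciality \emph{first}, by a direct diagonal-entry computation using Lemma~\ref{well-known} (the key observation $v_E(b_{ii})=2\min_k v_E(a_{ki})$ combined with surmultiplicativity), and then deduce graded semisimplicity from Proposition~\ref{semisimple}. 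Your approach is more self-contained---it avoids the external citations \cite[Lemma~2.13]{T-W-book} and \cite[Prop.~1.1, Cor.~2.3, Rem.~2.5]{T-W-2011}---while the paper's approach has the side benefit of explicitly identifying the residue algebra with involution. For uniqueness both arguments are the same (Proposition~\ref{inspired}(2)); note that you need not separately verify anisotropy of $\bt\ox\id_{F^h}$ for part~(2), since it is automatic once a $\bt$-special gauge exists.
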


\begin{proof}
  Straightforward computations show that $w$ is a surmultiplicative
  $v$-value function.
  It is also a $v$-norm: let $\{e_{ij}\}$ be the standard $F$-basis of $M_n(F)$ and let
  $\{q_\ell\}$  be a splitting basis for the $v$-norm $v_E$. Then a direct verification
  shows that $\{q_\ell e_{ij} \}$ is a splitting basis for the $v$-value function $w$.

  As observed in Remark~\ref{matrix}, $(M_n(E))_{0}=R_w/I_w$ is simple.
  Furthermore, since
  $\Gamma_w=\Gamma_{v_E}$ and $v_E$ is a valuation, it follows that $\Gamma_w^\x
  = \Gamma_w$. Therefore, by \cite[Lemma~2.13]{T-W-book}, $\gr_w(M_n(E))$ is
  semisimple, and therefore $w$ is a $v$-gauge.
  
  To show that $w$ is $\bt$-special, we use \cite[Prop.~1.1]{T-W-2011}. It is
  clearly a $\bt$-invariant gauge. A direct computation yields $(M_n(E)_0,
  (\bt)_0) \cong (M_n(E_0), {(\bbarr)_0}^t)$, where $(E_0, (\bbarr)_0)$ is one of
  $(F_0,\id)$, $(F_0(\sqrt{-1}), \bbar)$, $(-1,-1)_{F_0}, \bbar)$. In all three
  cases the involution $(\bt)_0$ is anisotropic
  (since hermitian squares in $(E_0, (\bbarr)_0)$
  are sums of squares in $F_0$ in all 
  three cases and $F_0$ is formally real), 
  from which it follows that $\wt{(\bt)}$ is anisotropic by 
  \cite[Cor.~2.3 and Rem.~2.5]{T-W-2011}. Thus $w$ is $\bt$-special by \cite[Prop.~1.1]{T-W-2011}.

  The uniqueness follows from Proposition~\ref{inspired}.
\end{proof}

\begin{lemma}\label{wadcom0}
  Let $g$ be a $\bt$-special surmultiplicative value function on $M_n(E)$, and
  let $u\in M_n(E)$ be unitary, i.e, $\bar{u}^tu=1$, then $g(u)=g(u^{-1})=0$ and
  $g(uau^{-1})=g(a)$ for any $a\in M_n(E)$. 
\end{lemma}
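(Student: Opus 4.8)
The plan is to show that a $\bt$-special surmultiplicative value function $g$ on $M_n(E)$ is $\bt$-invariant and then exploit unitarity. First I would recall from Remark~\ref{spec-aniso} that $\bt$-special implies $g\circ\bt = g$, i.e.\ $g(\bar{a}^t) = g(a)$ for all $a$. Applying the $\bt$-special identity to the unitary $u$ gives $g(1) = g(\bar u^t u) = 2g(u)$, and since $g(1) = 0$ by surmultiplicativity, we get $g(u) = 0$. The same argument applied to $u^{-1}$ (which is again unitary, since $\bar{(u^{-1})}^t u^{-1} = (\bar u^t)^{-1}u^{-1} = (u\bar u^t)^{-1} = 1$, using $\bar u^t u = 1 \Rightarrow u\bar u^t = 1$ as $u$ is a square invertible matrix) yields $g(u^{-1}) = 0$.

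For the conjugation-invariance, I would use surmultiplicativity twice: $g(uau^{-1}) \ge g(u) + g(a) + g(u^{-1}) = g(a)$, and symmetrically $g(a) = g(u^{-1}(uau^{-1})u) \ge g(u^{-1}) + g(uau^{-1}) + g(u) = g(uau^{-1})$. Combining the two inequalities gives $g(uau^{-1}) = g(a)$, as desired.

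I do not anticipate a serious obstacle here; the only point requiring a little care is the passage $\bar u^t u = 1 \Rightarrow u \bar u^t = 1$, which holds because $u$ is a finite square matrix over $E$ and hence a one-sided inverse is a two-sided inverse — this also identifies $u^{-1} = \bar u^t$ and makes $u^{-1}$ unitary. Everything else is a direct application of the two defining inequalities of a surmultiplicative value function together with the $\bt$-special equation, so the proof will be short.

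\begin{proof}
  Since $g$ is $\bt$-special, it is $\bt$-invariant by Remark~\ref{spec-aniso}, i.e.\ $g(\bar{a}^t) = g(a)$ for all $a \in M_n(E)$. As $u$ is a square matrix with $\bar u^t u = 1$, we have $u^{-1} = \bar u^t$ and hence $u \bar u^t = 1$, so $u^{-1}$ is also unitary. Applying the $\bt$-special identity to $u$ gives $0 = g(1) = g(\bar u^t u) = 2 g(u)$, so $g(u) = 0$; the same computation for $u^{-1}$ gives $g(u^{-1}) = 0$. Finally, surmultiplicativity yields $g(uau^{-1}) \ge g(u) + g(a) + g(u^{-1}) = g(a)$, and also $g(a) = g\bigl(u^{-1}(uau^{-1})u\bigr) \ge g(u^{-1}) + g(uau^{-1}) + g(u) = g(uau^{-1})$, whence $g(uau^{-1}) = g(a)$.
\end{proof}
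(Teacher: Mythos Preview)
Your proof is correct and follows essentially the same approach as the paper: use the $\bt$-special identity $g(\bar u^t u)=2g(u)$ together with $g(1)=0$ to get $g(u)=g(u^{-1})=0$, then apply surmultiplicativity in both directions for the conjugation statement. The only superfluous step is your opening remark that $g$ is $\bt$-invariant, which you never actually use.
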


\begin{proof}
  If $u$ is unitary in $M_n(E)$, then $g(\bar{u}^t u)=0$. Thus, since $g$ is
  $\bt$-special, $g(u)=0$. Similarly, $g(u^{-1})=0$. Using surmultiplicativity,
  it follows that $g(uau^{-1})=g(a)$ for any $a\in M_n(E)$.
\end{proof}

Let $\CP$ be a positive cone on $(A,\s)= (M_n(E), \bt)$ over $P$ such that $1\in \CP$.
Note that  $\CP$ is then equal to $\PSD_n(E,P)$ by Example~\ref{PC-PSD} 
and extends  to the positive cone $\PSD_n(E\ox_F F_P, P')$ on $(A\ox_F F_P, \s\ox\id) \cong 
(M_{n} (E\ox_F F_P), \bt)$, where $P'$ is the unique ordering on $F_P$.
Since $v$ is compatible with $P$, there exists a subfield $\kk$ of $F$ such that
$v=v_{\kk,P}$. Consider the valuation  $v'=v_{\kk,P'}$  that extends $v$ to $F_P$.

The following lemma was suggested to us by A. Wadsworth and simplifies a number of arguments in
the paper.

\begin{lemma}\label{wadcom1}
  Let $b\in\Sym(M_n(E),\bt)$, and let $\lambda_1,\ldots, \lambda_n$ be the
  $F_P$-eigenvalues of $b$. Then
  \begin{enumerate}[$(1)$]
    \item $w(b)=\min\{v'(\lambda_1),\ldots, v'(\lambda_n)\}$;
    \item if $b\in\CP$, then $w(b)\leq \min\{v(\bar{x}^tbx) - v(\bar{x}^t x) \mid
      x\in E^n \sm\{0\} \}$, with equality if $\lambda_1,\ldots, \lambda_n \in F$;
    \item if $b\in\CP$, and if $v'(\lambda_1)=\cdots= v'(\lambda_n)$, then
      $v(\bar{x}^tbx)=w(b)+v(\bar{x}^tx)$ for each $x\in E^n \sm\{0\}$.
  \end{enumerate}
\end{lemma}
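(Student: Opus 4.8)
The plan is to reduce everything to the split situation over the real closed field $F_P$. Since $v=v_{\kk,P}$ is compatible with $P$, its extension $v'=v_{\kk,P'}$ is compatible with $P'$, the unique ordering of $F_P$, and $D_P:=E\ox_F F_P$ is again one of $F_P$, $F_P(\sqrt{-1})$, $(-1,-1)_{F_P}$. Applying Lemma~\ref{wbtspecial} to $(M_n(D_P),\bt)$ over $(F_P,v')$ produces the $\bt$-special $v'$-gauge $w'$ on $M_n(D_P)$ given by $w'(a)=\min_{i,j}\tfrac{1}{2}v'(n_{D_P}(a_{ij}))$. As $n_{D_P}$ restricts to $n_E$ on $E$ and $v'$ restricts to $v$ on $F$, one checks $w'|_{M_n(E)}=w$; in particular $w(b)=w'(b)$, and $v(\bar x^t b x)=v'(\bar x^t b x)$, $v(\bar x^t x)=v'(\bar x^t x)$ for $x\in E^n$ (these elements lie in $F$). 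The $F_P$-eigenvalues $\lambda_1,\ldots,\lambda_n$ of $b$ are the right eigenvalues of $b$ regarded as a matrix over $D_P$.

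For (1) I would invoke the spectral theorem for hermitian matrices over a division algebra with positive definite norm form over a real closed field — classical for $D_P\in\{F_P,F_P(\sqrt{-1})\}$ and, for $D_P=(-1,-1)_{F_P}$, from the results recalled in Appendix~\ref{sec:quat-mat} — to get a unitary $U\in M_n(D_P)$ with $\overline U^t b U=\diag(\lambda_1,\ldots,\lambda_n)$. Since $w'$ is $\bt$-special, the conjugation invariance of Lemma~\ref{wadcom0} (applied over $(F_P,v')$) gives $w(b)=w'(b)=w'(\diag(\lambda_i))=\min_i\tfrac{1}{2}v'(n_{D_P}(\lambda_i))=\min_i v'(\lambda_i)$, the last equality because $\lambda_i\in F_P$, so $n_{D_P}(\lambda_i)=\lambda_i^2$.

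Now assume $b\in\CP$. Since $1\in\CP$, Proposition~\ref{prop:ev} forces $\lambda_i\ge_{P'}0$ for all $i$. Writing $b=\overline U^t\diag(\lambda_i)U$ and putting $y:=Ux\in D_P^n$ for $x\in E^n$, a direct computation using the unitarity of $U$ gives $\bar x^t b x=\sum_i\lambda_i\,n_{D_P}(y_i)$ and $\bar x^t x=\sum_i n_{D_P}(y_i)$, with all summands in $P'$. As $v'$ is compatible with $P'$, the $v'$-value of each of these sums is the minimum of the $v'$-values of its summands, so for $x\ne 0$ (equivalently $y\ne 0$)
\[
v(\bar x^t b x)-v(\bar x^t x)=\min_{y_i\ne 0}\bigl(v'(\lambda_i)+v'(n_{D_P}(y_i))\bigr)-\min_{y_j\ne 0}v'(n_{D_P}(y_j))\ \ge\ \min_i v'(\lambda_i)=w(b),
\]
which is the inequality in (2); and when $v'(\lambda_1)=\cdots=v'(\lambda_n)$ (all $\lambda_i$ then nonzero if $b\ne 0$) the same computation yields $v(\bar x^t b x)=w(b)+v(\bar x^t x)$, proving (3). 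The case $b=0$ is trivial.

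It remains to get equality in (2) when all $\lambda_i\in F$. I would pick $i_0$ with $v'(\lambda_{i_0})=w(b)$, assume $\lambda_{i_0}\ne 0$ (else $b=0$), and note that $b-\lambda_{i_0}I\in M_n(E)$ is singular over $D_P$ (it kills a right $\lambda_{i_0}$-eigenvector of $b$ there), hence already singular over $E$: for $E\in\{F,C\}$ because the determinant is unchanged under scalar extension, and for $E=H$ because invertibility of a square matrix over a division algebra is detected after extending scalars to $D_P$ (compare $F_P$-dimensions of images). So there is $x\in E^n\sm\{0\}$ with $bx=\lambda_{i_0}x$, whence $\bar x^t b x=\lambda_{i_0}\bar x^t x$ with $\bar x^t x\in P^\x$, giving $v(\bar x^t b x)-v(\bar x^t x)=v(\lambda_{i_0})=v'(\lambda_{i_0})=w(b)$. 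The main obstacles I anticipate are: verifying carefully that the scalar-extended $w'$ restricts to $w$ and is genuinely $\bt$-special (so that Lemma~\ref{wadcom0} is available), and handling the ``singular over $D_P\Rightarrow$ singular over $E$'' step for the quaternions without recourse to a determinant.
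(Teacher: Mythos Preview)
Your proposal is correct and follows essentially the same route as the paper: extend to $F_P$, diagonalize $b$ by a unitary via the Principal Axis Theorem, use the unitary-invariance of the $\bt$-special gauge $w'$ (Lemma~\ref{wadcom0}) for part~(1), and use compatibility of $v'$ with $P'$ on the diagonalized expressions for parts~(2) and~(3). Two minor remarks: your derivation of the inequality in~(2) is slightly more direct than the paper's (which first normalises to $\min_i v'(\bar x_i x_i)=0$), and your justification that an eigenvector for $\lambda_{i_0}\in F$ already lives in $E^n$ (via ``singular over $D_P$ $\Rightarrow$ singular over $E$'') makes explicit a point the paper simply asserts.
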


\begin{proof} 
  For $a=(a_{ij})_{i,j}\in M_n(E\ox_F F_P)$, let $w'(a):= \min_{i,j}
  \{v'_E(a_{ij})\}$, where $v'_E(x)=\tfrac{1}{2}v'(n_{E\ox F_P}(x))$ is the
  unique extension of $v'$ to $E\ox_F F_P$. Note that $w'|_{M_n(E)}=w$, and that
  $w'$ is a $(\bbar \ox \id)^t$-special $v'$-gauge on $M_n(E\ox_F F_P)$ by
  Lemma~\ref{wbtspecial}. It is also immediate that $w'$ is a $v_E$-value
  function.
  
  Note that since $b$ is symmetric, $b\ox 1 \in M_n(E\ox_F F_P)$ is
  diagonalizable by congruences with eigenvalues in $F_P$ by the Principal Axis
  Theorem, cf. Appendix~\ref{sec:quat-mat} for the quaternionic case. Therefore,
  let $u\in M_n(E\ox_F F_P)$ be unitary such that $\bar{u}^t(b\ox
  1)u=\diag(\lambda_1,\ldots, \lambda_n)$ with $\lambda_1,\ldots, \lambda_n \in
  F_P$. 
  
  We now prove the statements.
  
  (1)  By Lemma~\ref{wadcom0}, 
  \[
    w(b)=w'(b\ox 1)=w'(\diag(\lambda_1,\ldots,
    \lambda_n))= \min\{v'(\lambda_1),\ldots, v'(\lambda_n)\}.
  \]
  
  (2) We first show that $w'(b\ox 1)\leq v'(\bar{x}^t(b\ox 1)x) - v'(\bar{x}^t
  x)$ for every $x\in  (E\ox_F F_P)^n \sm\{0\} $. As observed above, we may assume that
  $b\ox 1=\diag(\lambda_1,\ldots, \lambda_n)$ with $\lambda_1,\ldots, \lambda_n
  \in F_P$.  Note that since $b\ox 1 \in \PSD_n(E\ox_F F_P, P')$,
  $\lambda_1,\ldots, \lambda_n \in P'$ and thus, since $v'$ is compatible with
  $P'$, we have $v'(\bar{x}^t(b\ox 1)x)=\min_i\{v'(\lambda_i)+v'(\bar x_i x_i)  \}$ and
  $v'(\bar{x}^t x)=\min_i\{v'(\bar x_i x_i)  \}$. With this observation, the inequality
  holds if $\min_i\{v'(\bar x_i x_i)\}=0$: it is then equivalent to $\min_i \{v'(\lambda_i)\}
  \le \min_i \{v'(\lambda_i) + v'(\bar x_i x_i)\}$, which is true since
  $v'(\bar x_i x_i) \ge 0$ for all $i$.
  For the general case, let $i_0$ be such that $\min_i \{v'(\bar x_i x_i)\} =
  v'({\bar x_{i_0}}x_{i_0})$.
  Then the inequality is equivalent to  
  \[
    w'\bigl(\ovl{x_{i_0}^{-1}} (b \ox 1) x_{i_0}^{-1}\bigr) \le
    v'\bigl(\ovl{(xx_{i_0}^{-1})}^t(b \ox 1) (xx_{i_0}^{-1})\bigr) -
    v'\bigl(\ovl{(xx_{i_0}^{-1})}^t (xx_{i_0}^{-1})\bigr), 
  \]  
  and we are reduced to the
  previous case. It then follows that $w(b) \le \min_i\{v(\bar{x}^tbx) -
  v(\bar{x}^t x) \mid x\in E^n \sm\{0\} \}$.

  Assume now that $\lambda_1,\ldots, \lambda_n \in F$ and let $y_1,\ldots, y_n \in E^n\sm\{0\}$ 
  be eigenvectors
  corresponding to these right eigenvalues. Then
  $v(\bar{y_i}^tby_i) - v(\bar{y_i}^t y_i) = v(\lambda_i)$
  and the result follows.

  (3) This is a direct computation, using that $v'$ is comptatible with $P'$ and
  that $\lambda_1, \ldots, \lambda_n \in P'$ since $b \in \CP$. If $\gamma =
  v'(\lambda_1) = \cdots = v'(\lambda_n)$, then
  \begin{align*}
    v(\bar{x}^tbx) &= v\bigl(\ovl{\bar{u}^tx}^t (\bar{u}^t(b \ox 1) u) \bar{u}^tx\bigr) \\
      &= v(\bar{y}^t \diag(\lambda_1, \ldots, \lambda_n) y),
        \quad \text{where $y = \bar{u}^tx$} \\
      &= v(\sum_i \bar y_i \lambda_i y_i) \\
      &= v(\sum_i \lambda_i \bar y_i y_i), 
        \quad \text{where the summands are all in $P'$} \\
      &= \gamma + \min \{v'(\bar y_i y_i)\} \\
      &= \gamma + v'(\bar{y}^ty) \\
      &= \gamma + v(\bar{x}^tx). \qedhere
  \end{align*}
\end{proof}

\begin{prop}\label{wadcom2}
  We have
  \begin{align*}
    R_w&=\{a\in M_n(E) \mid \text{the $F_P$-eigenvalues of } \bar{a}^ta \text{ belong
      to } R_{v'}\} =R_{\kk,\CP}\\
    \intertext{and}
    I_w&=\{a\in M_n(E) \mid \text{the $F_P$-eigenvalues of } \bar{a}^ta \text{ belong
      to } I_{v'}\} =I_{\kk,\CP}.
  \end{align*}
\end{prop}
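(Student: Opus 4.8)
The plan is to prove the two chains of equalities by first establishing the left-hand equalities (relating $R_w$ and $I_w$ to the location of the $F_P$-eigenvalues of $\bar a^t a$ relative to $R_{v'}$ and $I_{v'}$), and then the right-hand equalities (identifying these sets with $R_{\kk,\CP}$ and $I_{\kk,\CP}$). For the first equality, note that $w$ is $\bt$-special, so $w(a) = \tfrac12 w(\bar a^t a)$; writing $b := \bar a^t a \in \Sym(E,\bt)$, which lies in $\CP = \PSD_n(E,P)$ since $1 \in \CP$, we may apply Lemma~\ref{wadcom1}(1) to get $w(b) = \min\{v'(\lambda_1),\dots,v'(\lambda_n)\}$ where $\lambda_1,\dots,\lambda_n$ are the $F_P$-eigenvalues of $b$. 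Hence $w(a) \geq 0$ iff $w(b) \geq 0$ iff all $v'(\lambda_i) \geq 0$ iff all $\lambda_i \in R_{v'}$, and similarly $w(a) > 0$ iff all $\lambda_i \in I_{v'}$ (using that the $\lambda_i \in P' \subseteq F_P$, so $v'(\lambda_i) > 0$ is equivalent to $\lambda_i \in I_{v'}$ for the nonzero ones, with $0 \in I_{v'}$). This gives the left-hand equalities for both $R_w$ and $I_w$.

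For the right-hand equalities I would argue directly from the definitions of $R_{\kk,\CP}$ and $I_{\kk,\CP}$, using $n(a) = \s(a)a = \bar a^t a = b$ and Proposition~\ref{prop:ev}. Recall $R_{\kk,\CP} = \{a \mid \exists m \in \kk\cap P,\ b \leq_\CP m\}$. By Proposition~\ref{prop:ev}, $m - b \in \CP$ iff all $F_P$-eigenvalues of $b$ are $\leq_{P'} m$ (applying the "consequently" part of that proposition to $m^{-1}b$, or rescaling); so $a \in R_{\kk,\CP}$ iff there is $m \in \kk \cap P$ with $\lambda_i \leq_{P'} m$ for all $i$. Now $v = v_{\kk,P}$ has valuation ring $R_{\kk,P} = \{x \in F \mid \exists m \in \kk,\ -m \leq_P x \leq_P m\}$, and $v'$ extends $v$ with $R_{v'} = R_{\kk,P'}$; since each $\lambda_i \in P'$, the condition $\lambda_i \in R_{v'}$ is precisely the existence of $m \in \kk$ (equivalently $m \in \kk \cap P$, taking $m^2$ or absolute values) with $\lambda_i \leq_{P'} m$. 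Thus $a \in R_{\kk,\CP}$ iff all $\lambda_i \in R_{v'}$, which is the middle set. The argument for $I_{\kk,\CP}$ is parallel: $a \in I_{\kk,\CP}$ iff for all $\ve \in \kk^\x \cap P$ we have $\ve - b \in \CP$ iff all $\lambda_i \leq_{P'} \ve$ for all such $\ve$; and $I_{v'} = I_{\kk,P'} = \{x \in F_P \mid \forall \ve \in \kk^\x \cap P',\ -\ve \leq_{P'} x \leq_{P'} \ve\}$, so since $\lambda_i \in P'$ this says exactly all $\lambda_i \in I_{v'}$.

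The main obstacle I anticipate is the careful handling of the quantifier "$\exists m \in \kk \cap P$" versus "$\exists m \in \kk$" in the definition of $R_{\kk,P}$, together with making sure Proposition~\ref{prop:ev} is applied correctly to the rescaled element (the proposition is stated for $1 - a \in \CP$, so one writes $m - b = m(1 - m^{-1}b)$ and uses that $\CP_F = P$ so multiplication by $m \in P^\x$ preserves $\CP$; for the residue-field comparison one also needs $R_{v'} \cap F = R_v$ and $I_{v'} \cap F = I_v$, but since the $\lambda_i$ genuinely live in $F_P$ and not necessarily $F$, the relevant statement is the description of $R_{v'}$ and $I_{v'}$ as convex sets over $\kk$, which holds because $v' = v_{\kk,P'}$). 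A minor point to verify is the edge case $b = 0$ (i.e. $a = 0$), where all eigenvalues are $0 \in I_{v'}$ and $a$ trivially lies in both $R_{\kk,\CP}$ and $I_{\kk,\CP}$; and the case where some $\lambda_i = 0$ but $b \neq 0$, handled uniformly since $0 \in I_{v'} \subseteq R_{v'}$ and $0 \leq_{P'} m$ for any $m \in P$.
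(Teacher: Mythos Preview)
Your proposal is correct and follows essentially the same route as the paper: the first equality via $w$ being $\bt$-special together with Lemma~\ref{wadcom1}(1), and the second equality by translating $b \le_\CP m$ into the eigenvalue bound $\lambda_i \le_{P'} m$ and then matching this against the convex-hull description of $R_{v'}=R_{\kk,P'}$ (respectively $I_{v'}=I_{\kk,P'}$). The only cosmetic difference is that you route the eigenvalue bound through Proposition~\ref{prop:ev} with a rescaling, whereas the paper argues directly that $r-\lambda$ is an $F_P$-eigenvalue of $r-b$ and uses the characterization of $\PSD_n(E\otimes_F F_P,P')$.
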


\begin{proof}
  We only check the first statement, the second one is similar. Let $a \in
  M_n(E)$. Then $w(a) \ge 0$ if and only if $w(\bar{a}^ta) \ge 0$ by
  Lemma~\ref{wbtspecial} and, by
  Lemma~\ref{wadcom1}(1), this is equivalent to all the $F_P$-eigenvalues of
  $\bar{a}^ta$ belonging to $R_{v'}$, proving the first equality.

  Observe now that if $b \in \CP$ and $r \in F$ then, using extension of
  scalars to $F_P$ and the fact that $\lambda$ is an $F_P$-eigenvalue of $b$ if
  and only if $r - \lambda$ is an $F_P$-eigenvalue of $r - b$:
  \begin{align*}
    b \le_\CP r & \Leftrightarrow (r - b) \ox 1 \in \PSD_n(E \ox_F F_P, P') \\
      &\Leftrightarrow \text{ all $F_P$-eigenvalues of $r - b$ are in $P'$} \\
      &\Leftrightarrow \lambda \le_{P'} r \text{ for every $F_P$-eigenvalue
        $\lambda$ of $b$}.
  \end{align*}
  Applying this observation to $\bar{a}^ta$ we obtain, for $a \in M_n(E)$: there exists $r \in
  \kk \cap P$ such that $\bar{a}^ta \le_\CP r$ if and only if all the $F_P$-eigenvalues of
  $\bar{a}^ta$ belong to $R_{v'}$, proving the second equality.
\end{proof}

\begin{prop}\label{wadcom2.5}
  $w$ is the unique $v$-gauge on $(M_n(E),\bt)$ with gauge ring $R_{\kk,\CP}$.
\end{prop}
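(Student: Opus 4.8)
The plan is to deduce uniqueness from the uniqueness statement in Lemma~\ref{wbtspecial} together with the identification of the gauge ring in Proposition~\ref{wadcom2}. First I would note that, by Lemma~\ref{wbtspecial}, $w$ is the unique $\bt$-invariant $v$-gauge on $M_n(E)$, and by Proposition~\ref{wadcom2} its gauge ring $R_w$ equals $R_{\kk,\CP}$; so $w$ certainly is \emph{a} $v$-gauge with gauge ring $R_{\kk,\CP}$. The content is therefore: if $g$ is any $v$-gauge on $(M_n(E),\bt)$ with $R_g=R_{\kk,\CP}=R_w$, then $g=w$.

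The key step will be to show that such a $g$ is automatically $\bt$-invariant, after which Lemma~\ref{wbtspecial} forces $g=w$. For this I would argue as follows. A gauge is determined by its gauge ring together with the ambient valuation $v$ in a fairly rigid way: for $a\in M_n(E)$ and $\gamma\in\Gamma_v$, one has $g(a)\ge\gamma$ iff $a\in A_{\ge\gamma}$, and when $\gamma=v(\lambda)$ for $\lambda\in F^\x$ this says $\lambda^{-1}a\in R_g$. Since $\Gamma_w=\Gamma_{v_E}=\Gamma_v$ here (as noted in the proof of Lemma~\ref{wbtspecial}, because $v_E$ is a valuation) and the value group of any $v$-gauge on $M_n(E)$ is squeezed between $\Gamma_v$ and $\frac12\Gamma_v$ in a controlled way — more precisely $2\Gamma_g\subseteq\Gamma_v$ by $\bt$-speciality once speciality is known — I would first establish that $g$ and $w$ have the same value group, then that $g(a)\in\frac12\Gamma_v$ is pinned down on each $a$ by the condition ``$\lambda^{-1}\bar a^t a\in R_g=R_w$'' for $\lambda$ ranging over $F^\x$, using that $g$ is $\bt$-special (hence $g(a)=\frac12 g(\bar a^t a)$) — but to invoke speciality of $g$ I need it $\bt$-invariant first. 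To break this circularity, I would instead use Remark~\ref{I-jacobson}: $I_g$ is the Jacobson radical of $R_g=R_w$, hence $I_g=I_w$, so $g$ and $w$ have the same valuation ring \emph{and} the same maximal ideal; then apply Proposition~\ref{inspired}(2), which says there is at most one $\bt$-invariant $v$-gauge and it is $\bt$-special, together with the observation that any $v$-gauge $g$ on $M_n(E)$ with $R_g=R_w$, $I_g=I_w$ restricts on $F$ to the valuation with ring $R_w\cap F=R_{\kk,P}=R_v$ (Remark~\ref{oneinp}(2)), so $g$ is a genuine $v$-gauge, and $\gr_g(M_n(E))$ is a graded form of $\gr_w(M_n(E))$ with the same degree-zero part $R_w/I_w=M_n(E_0)$; a uniqueness-of-gauge-refining-a-given-ring argument (as in \cite[Thm.~6.1]{T-W-2011}, via Proposition~\ref{inspired}) then gives $g=w$.

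The main obstacle I anticipate is precisely making the last sentence rigorous: showing that a $v$-gauge on $M_n(E)$ is uniquely determined by its gauge ring (equivalently, that two $v$-gauges with the same gauge ring coincide), without first assuming $\bt$-invariance. The cleanest route is probably to show directly that $g$ must equal $w$ by computing $g$ on an arbitrary matrix $a$: since $R_g=R_w=M_n(R_{v_E})$, for $\lambda\in F^\x$ we have $g(\lambda^{-1}a)\ge 0 \iff \lambda^{-1}a\in M_n(R_{v_E}) \iff v_E(a_{ij})\ge v(\lambda)$ for all $i,j \iff w(a)\ge v(\lambda)$; since $g$ is a $v$-value function, $g(\lambda^{-1}a)=g(a)-v(\lambda)$, so $g(a)\ge v(\lambda)\iff w(a)\ge v(\lambda)$ for all $\lambda\in F^\x$. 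Because $\Gamma_w=\Gamma_v$, for each $a$ the set $\{v(\lambda):\lambda\in F^\x,\ w(a)\ge v(\lambda)\}$ has maximum $w(a)$, and the same computation with $g$ in place of $w$ shows $g(a)$ lies in $\Gamma_v\cup\{\infty\}$ and is that same maximum; hence $g(a)=w(a)$ for all $a$, and $g=w$. This argument uses only that $\Gamma_w=\Gamma_v$ (so $w$ takes values in $\Gamma_v$) and that $g$ is a $v$-value function — no invariance or speciality of $g$ is needed — and it sidesteps the circularity entirely.
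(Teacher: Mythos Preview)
Your final direct argument has a genuine gap. From $R_g=R_w=M_n(R_{v_E})$ you correctly deduce
\[
g(a)\ge v(\lambda)\ \Longleftrightarrow\ w(a)\ge v(\lambda)\qquad\text{for all }\lambda\in F^\x,
\]
and since $\Gamma_w=\Gamma_v$ you get $g(a)\ge w(a)$. But the sentence ``the same computation with $g$ in place of $w$ shows $g(a)$ lies in $\Gamma_v\cup\{\infty\}$'' is not justified: the computation for $w$ used the explicit formula $w(a)=\min_{i,j}v_E(a_{ij})$, and you have no analogue for $g$. A priori $g$ takes values in a totally ordered group strictly containing $\Gamma_v$, and then $g(a)$ could sit strictly between $w(a)$ and every $\gamma\in\Gamma_v$ with $\gamma>w(a)$; your Dedekind-cut argument does not rule this out. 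So as written you only get $g(a)\ge w(a)$, not equality.

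The fix is exactly the ingredient you mentioned earlier and then dropped: $I_g=J(R_g)=J(R_w)=I_w$ by Remark~\ref{I-jacobson}. Pick $\lambda_0\in F^\x$ with $v(\lambda_0)=w(a)$ (possible since $\Gamma_w=\Gamma_v$); then $\lambda_0^{-1}a\in R_w\setminus I_w=R_g\setminus I_g$, so $g(\lambda_0^{-1}a)=0$ and hence $g(a)=v(\lambda_0)=w(a)$. This needs no $\bt$-invariance or speciality of $g$, only that $g$ is a $v$-value function with $R_g=R_w$ (and the Jacobson radical observation).

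With this correction your argument is valid and more elementary than the paper's, which quotes \cite[Prop.~3.48]{T-W-book} as a black box (a general statement that a gauge is determined by its gauge ring once the degree-zero residue algebra is simple). Your route instead exploits the special feature $\Gamma_w=\Gamma_v$ of this particular situation to scale any element into $R_w\setminus I_w$ and read off the value directly; it avoids the external reference at the cost of being less general. Both arguments hinge on $I_g=I_w$, so you should not discard that step.
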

\begin{proof}
  Let $g$ be a $v$-gauge on $(M_n(E),\bt)$ with gauge ring $R_{\kk,\CP}$. Then
  $I_g = J(R_{\kk,\CP}) = I_w$ by Remark~\ref{I-jacobson} and thus $R_g/I_g$ is
  simple, cf. Remark~\ref{matrix}.  In this case, by
  \cite[Prop.~3.48]{T-W-book}, $g$ is completely determined by $R_g =
  R_w$.
\end{proof}

\begin{defi}\label{wadcom3}
  Following Proposition~\ref{wadcom2.5}, we define $w_{\kk,\CP} := w$ on $(M_n(E),\bt)$.
\end{defi}

\subsection{The general case}\label{sec:general}

Assume now that $A$ is a finite-dimensional simple $F$-algebra with $F$-linear involution $\s$ such
that $F=\Sym(A,\s)\cap Z(A)$, and that $\CP$ is a positive cone on $(A,\s)$ over $P\in X_F$ such
that $1\in \CP$.

\begin{remark}\mbox{}

    \begin{enumerate}[$(1)$] 
      \item Recall from \cite[Cor.~7.7]{A-U-pos} that $1 \in
    \CP$ means that the involution $\s$ is positive at $P$.

      \item If the hypothesis $1 \in \CP$ is not satisfied, it is possible to
    regain it, at the cost of changing the involution (and the positive cone):
    Take $a \in \CP^\x$ (it exists by \cite[Lemma~3.6]{A-U-pos}). 
    Then $a^{-1} \CP$
    is a positive cone on $(A, \Int(a^{-1}) \circ \s)$ over $P$.
 \end{enumerate}
\end{remark}

Since $1\in \CP$, $\s$ is positive at $P$ by  Remark~\ref{L}(1).
Let $f_{F_P}: (A\ox_F F_P, \s\ox\id) \simtoo (M_{n_P} (D_P), \bt)$, with $n_P$ and $D_P$ as in
Proposition~\ref{prop:tired} and $f_{F_P}$ as in Remark~\ref{L}(2).
Let $P'$ be the unique ordering on $F_P$ and let $\CP'$ be the unique positive
cone on $(A\ox_F F_P, \s\ox\id)$ over $P'$ that extends $\CP$, cf. Proposition~\ref{ppc-extension},
i.e., $\CP' \cap A = \CP$. Let $\CQ=f_{F_P}(\CP')$ and note that $1\in \CQ$.
As observed in the proof of Proposition~\ref{prop:ev}, $\CQ=\PSD_{n_P}(D_P, P')$.
 
Consider the diagram
\begin{equation}\label{diag-w}
  \begin{aligned}
  \xymatrix{
  (A \ox_F F_P, \s \ox \id) \ar[r]^-{\sim}_-{f_{F_P}} & (M_{n_P}(D_P), \bt)
      \ar[d]^{w_{{\kk},\CQ}} \\
  (A, \s) \ar[u]^\zeta &  \Gamma \cup\{\infty\}
}
\end{aligned}
\end{equation}
where $\zeta$ is the canonical morphism $a \mapsto a \ox 1$, and $w_{{\kk},\CQ}$ is the
gauge defined  on $M_{n_P}(D_P)$ obtained from $\CQ$, cf. Definition~\ref{wadcom3}.  
We define 
\begin{equation}\label{eq:gauge}
  w_{{\kk},\CP} := w_{{\kk},\CQ} \circ f_{F_P} \circ \zeta.
\end{equation}
Thus, $w_{{\kk},\CP}(a) = w_{{\kk},\CQ}(f_{F_P}(a \ox 1))$.

A priori $w_{{\kk},\CP}$ depends on $f_{F_P}$. However, we will show in Theorem~\ref{gauge}
that it is a $\s$-special $v_{{\kk},P}$-gauge  
and  therefore unique. 

\begin{prop}
  $R_{w_{\kk, \CP}} = R_{\kk, \CP}$ and $I_{w_{\kk, \CP}} = I_{\kk, \CP}$.
\end{prop}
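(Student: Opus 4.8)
The plan is to reduce the equality of rings and ideals on $(A,\s)$ to the already-established equalities on $(M_{n_P}(D_P),\bt)$ via the isomorphism $f_{F_P}$ and the scalar extension $\zeta$. By definition $w_{\kk,\CP}(a) = w_{\kk,\CQ}(f_{F_P}(a\ox 1))$, so $a \in R_{w_{\kk,\CP}}$ if and only if $f_{F_P}(a\ox 1) \in R_{w_{\kk,\CQ}}$, and similarly for $I$. By Proposition~\ref{wadcom2}, $R_{w_{\kk,\CQ}} = R_{\kk,\CQ}$ and $I_{w_{\kk,\CQ}} = I_{\kk,\CQ}$ (with the roles of $E$, $P$, $\kk$, $\CP$ in that proposition played here by $D_P$, $P'$, $\kk$, $\CQ$, and noting $v' = v_{\kk,P'}$ extends $v = v_{\kk,P}$). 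So it suffices to show, for $a \in A$, that $a \in R_{\kk,\CP}$ if and only if $f_{F_P}(a\ox 1) \in R_{\kk,\CQ}$, and the analogous statement for $I$.

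First I would record that, since $f_{F_P}$ is an isomorphism of algebras with involution carrying $\CP'$ to $\CQ$, it carries the partial order $\leq_{\CP'}$ to $\leq_\CQ$ and satisfies $f_{F_P}(\s(a\ox 1)(a\ox 1)) = \overline{f_{F_P}(a\ox 1)}^t f_{F_P}(a\ox 1)$; also $f_{F_P}$ restricts to the identity on scalars, so for $m \in \kk \cap P$ we have $f_{F_P}(m\ox 1) = m$. Hence for $a \in A$ and $m \in \kk\cap P$: $\s(a)a \leq_\CP m$ in $A$ $\Leftrightarrow$ $\s(a\ox 1)(a\ox 1) \leq_{\CP'} m\ox 1$ in $A\ox_F F_P$ (this is where I use $\CP = \CP'\cap A$, so that $m - \s(a)a \in \CP$ iff its image under $\zeta$ lies in $\CP'$ — the forward direction is immediate, and the backward direction holds because the element lies in $A$ and $\CP'\cap A = \CP$) $\Leftrightarrow$ $\overline{f_{F_P}(a\ox 1)}^t f_{F_P}(a\ox 1) \leq_\CQ m$ in $M_{n_P}(D_P)$. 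Quantifying over $m$ gives $a \in R_{\kk,\CP} \Leftrightarrow f_{F_P}(a\ox 1) \in R_{\kk,\CQ}$; quantifying the analogous inequality over $\ve \in \kk^\x \cap P$ gives $a \in I_{\kk,\CP} \Leftrightarrow f_{F_P}(a\ox 1) \in I_{\kk,\CQ}$.

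The one point that needs a little care — and is the main obstacle, if there is one — is the passage $\s(a)a \leq_\CP m$ $\Leftrightarrow$ $\s(a\ox1)(a\ox1) \leq_{\CP'} m\ox 1$, i.e.\ that the scalar extension $\zeta$ both preserves and reflects the relevant order relations. Preservation is clear from $\CP\ox 1 \subseteq \CP'$. Reflection uses that the element $m - \s(a)a$ lies in $A$ (identified with $A\ox 1$), so $(m-\s(a)a)\ox 1 \in \CP'$ forces $m-\s(a)a \in \CP'\cap A = \CP$, which is exactly Proposition~\ref{ppc-extension}. Chaining these equivalences with Proposition~\ref{wadcom2} applied on $(M_{n_P}(D_P),\bt)$ yields $R_{w_{\kk,\CP}} = R_{\kk,\CP}$ and $I_{w_{\kk,\CP}} = I_{\kk,\CP}$, completing the proof.
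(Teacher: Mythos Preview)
Your proof is correct and follows essentially the same approach as the paper: both reduce to Proposition~\ref{wadcom2} via the chain $a \in R_{\kk,\CP} \Leftrightarrow a\ox 1 \in R_{\kk,\CP'} \Leftrightarrow f_{F_P}(a\ox 1) \in R_{\kk,\CQ} \Leftrightarrow w_{\kk,\CQ}(f_{F_P}(a\ox 1)) \ge 0$, using $\CP = \CP' \cap A$ and $f_{F_P}(\CP') = \CQ$. The paper packages the first two steps as the set identities $R_{\kk,\CP'}\cap A = R_{\kk,\CP}$ and $f_{F_P}(R_{\kk,\CP'}) = R_{\kk,\CQ}$, while you unpack them at the level of the defining inequalities, but the content is the same.
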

\begin{proof}
  Since $\CP'$ is an extension of $\CP$, we have $R_{\kk, \CP'} \cap A = R_{\kk,
  \CP}$ and, since $f_{F_P}(\CP') = \CQ$, we have $f_{F_P}(R_{\kk, \CP'}) =
  R_{\kk, \CQ}$. Therefore, for $a \in A$:
  \begin{align*}
    a\in R_{\kk,\CP} & \lra a\ox 1\in R_{\kk,\CP'}\\ 
      &\lra f_{F_P}(a\ox 1)\in R_{\kk,\CQ}\\ 
      &\lra w_{\kk,\CQ}(f_{F_P}(a\ox 1)) \geq 0 \qquad \text{[by
          Proposition~\ref{wadcom2}]} \\
      &\lra w_{\kk,\CP}(a) \geq 0,
  \end{align*}
  and similarly for $I_{\kk,\CP}$.
\end{proof}

\begin{thm}\label{gauge}
  The map $w_{{\kk},\CP}$ is a $\s$-special $v_{{\kk},P}$-gauge on $A$. It is
  tame, and it is the only $\s$-invariant $v_{\kk, P}$-gauge on $A$.
\end{thm}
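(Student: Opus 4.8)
The plan is to reduce everything to the already-established special case on $(M_{n_P}(D_P),\bt)$, transported back along the isomorphism $f_{F_P}$ and the scalar extension $\zeta$. First I would establish that $w_{\kk,\CP}$ is a $v_{\kk,P}$-value function on $A$: since $w_{\kk,\CQ}$ is a $v'$-value function on $M_{n_P}(D_P)$ (where $v'=v_{\kk,P'}$ extends $v:=v_{\kk,P}$) and since $f_{F_P}$ is an isomorphism of $F_P$-algebras, the composite $w_{\kk,\CQ}\circ f_{F_P}\circ\zeta$ inherits the three axioms, with the scalar axiom holding because for $\lambda\in F$ we have $v'(\lambda)=v(\lambda)$. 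Surmultiplicativity and $w_{\kk,\CP}(1)=0$ transport in the same way. The $\s$-invariance is immediate: $w_{\kk,\CQ}$ is $\bt$-invariant by Lemma~\ref{wbtspecial} and $f_{F_P}$ intertwines $\s\ox\id$ with $\bt$, hence $w_{\kk,\CP}\circ\s = w_{\kk,\CQ}\circ(\bt)\circ f_{F_P}\circ\zeta = w_{\kk,\CP}$.

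Next I would verify that $w_{\kk,\CP}$ is a $v$-norm: take a splitting basis $\{e_\ell\}$ of $M_{n_P}(D_P)$ for the $v'$-norm $w_{\kk,\CQ}$ and pull it back via $f_{F_P}^{-1}$ to an $F_P$-basis of $A\ox_F F_P$; the point is that one can choose this basis to lie in $A\ox 1$, i.e.\ to be of the form $\{a_\ell\ox 1\}$ with $a_\ell\in A$ — this is possible because $A\ox_F F_P$ has an $F$-rational basis and the splitting-basis property only constrains values, which by the value-function axiom are unchanged under $F$-scaling; then $\{a_\ell\}$ is a splitting basis for $w_{\kk,\CP}$ on $A$ over $v$, using $v'(\lambda)=v(\lambda)$ for $\lambda\in F$ again. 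For semisimplicity of $\gr_{w_{\kk,\CP}}(A)$, I would invoke Proposition~\ref{semisimple}: it suffices to know $w_{\kk,\CP}$ is $\s$-special, which I would get exactly as in the proof of Lemma~\ref{wbtspecial} — namely, $(A_0,\s_0)$ is computed from $(M_{n_P}(D_P)_0,(\bt)_0)\cong(M_{n_P}((D_P)_0),{(\bbarr)_0}^t)$, whose involution is anisotropic because hermitian squares in the residue algebra are sums of squares in the formally real residue field $F_0$; hence $\wt\s$ is anisotropic, and $w_{\kk,\CP}$ is $\s$-special by \cite[Prop.~1.1]{T-W-2011}, and then $\gr_{w_{\kk,\CP}}(A)$ is semisimple by Proposition~\ref{semisimple}. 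This makes $w_{\kk,\CP}$ a $v$-gauge.

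Tameness and uniqueness then follow from Proposition~\ref{inspired}(2) applied with $B=A$, $\tau=\s$: the hypotheses $F=\Sym(A,\s)\cap Z(A)$ and $\ch F_v=0$ hold (the latter because $F_v=F_0$ is formally real, hence of characteristic $0$), so any $\s$-special $v$-gauge on $A$ is automatically tame and is the unique $\s$-invariant $v$-gauge; since we have just shown $w_{\kk,\CP}$ is such a gauge, it is tame and unique. As a byproduct this resolves the earlier remark that $w_{\kk,\CP}$ a priori depended on the choice of $f_{F_P}$: any choice yields a $\s$-special $v$-gauge with the same properties, and uniqueness forces them all to coincide.

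The main obstacle I expect is the $v$-norm verification — specifically, arguing that a splitting basis for $w_{\kk,\CQ}$ can be taken to be $F$-rational (of the form $a_\ell\ox 1$), so that it descends to a splitting basis of $A$ over $v$ rather than merely a splitting basis of $A\ox_F F_P$ over $v'$. One clean way around this is to avoid producing the basis by hand and instead note that $w_{\kk,\CP}$ is already known to be a surmultiplicative $v$-value function with $\gr_{w_{\kk,\CP}}(A)$ semisimple, and that $w_{\kk,\CP}\ox_F F_P$ (in the sense of value functions, cf.\ \cite[\S1]{T-W-2011}) equals $w_{\kk,\CQ}$ transported by $f_{F_P}$, which is a $v'$-norm; by the descent behaviour of norms under Henselian/residually-nice scalar extension (\cite{T-W-2010}), $w_{\kk,\CP}$ is then a $v$-norm. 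I would present whichever of these two routes is shortest given the lemmas already cited, but flag the norm step as the one requiring genuine care.
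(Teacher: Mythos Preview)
Your approach differs substantially from the paper's, and the difference matters because your route has a genuine gap at the $v$-norm step.

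First, a minor point: you overcomplicate the $\s$-special verification. Since $w_{\kk,\CQ}$ is $\bt$-special (Lemma~\ref{wbtspecial}) and $f_{F_P}$ intertwines $\s\ox\id$ with $\bt$, the identity $w_{\kk,\CP}(\s(a)a)=2w_{\kk,\CP}(a)$ transports \emph{directly} along $f_{F_P}\circ\zeta$; there is no need to analyse $(A_0,\s_0)$. Once $\s$-special and surmultiplicative are in hand, Proposition~\ref{semisimple} does give graded semisimplicity. So the only axiom in doubt is that $w_{\kk,\CP}$ is a $v$-\emph{norm}, and here neither of your two suggestions works as stated. Your first claim, that a splitting basis for $w_{\kk,\CQ}$ ``can be chosen to lie in $A\ox 1$'', is unsupported: the explicit splitting basis used in Lemma~\ref{wbtspecial} consists of matrix units times a splitting basis of $D_P$ over $F_P$, and there is no reason these lie in $f_{F_P}(A\ox 1)$; your remark that ``the splitting-basis property only constrains values'' does not produce an $F$-basis of $A$ with the required min formula. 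Your second suggestion invokes a ``descent of norms'' result from \cite{T-W-2010}, but the results there (e.g.\ Cor.~1.26) go in the opposite direction --- they extend a gauge along a scalar extension --- and do not give you that the restriction of a $v'$-norm to an $F$-form is a $v$-norm. Moreover, to even speak of $w_{\kk,\CP}\ox_F F_P$ in the sense of \cite{T-W-2010,T-W-2011} you typically need $w_{\kk,\CP}$ to be a norm already, which is exactly what you are trying to prove.

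The paper sidesteps this entirely. Rather than verifying axioms for $w_{\kk,\CP}$, it first uses the abstract existence result Proposition~\ref{inspired}(1): since $1\in\CP'$, Lemma~\ref{lem:sszero}(2) gives that $\s\ox\id_{F_P}$ is anisotropic, and since one may take the Henselization $F^h\subseteq F_P$, also $\s\ox\id_{F^h}$ is anisotropic; hence there \emph{exists} a $\s$-special tame $v_{\kk,P}$-gauge $g$ on $A$. One then extends $g$ to $A\ox_F F_P$ as the $(\s\ox\id)$-invariant $v'$-gauge $g\ox v'$ (via \cite[Cor.~1.26]{T-W-2010} and \cite[Cor.~1.4]{T-W-2011}), and compares it with $w_{\kk,\CQ}\circ f_{F_P}$, which is $(\s\ox\id)$-special. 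Uniqueness on $A\ox_F F_P$ (Proposition~\ref{inspired}(2)) forces $g\ox v'=w_{\kk,\CQ}\circ f_{F_P}$, and restricting along $\zeta$ yields $g=w_{\kk,\CP}$. Thus $w_{\kk,\CP}$ inherits the full gauge property (including the norm axiom) from $g$ for free; the norm step is absorbed into the Tignol--Wadsworth existence theorem rather than checked by hand. Uniqueness and tameness then follow from Proposition~\ref{inspired}(2) as you say, and this also disposes of the dependence on $f_{F_P}$.
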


\begin{proof}
  Let $v'$ be the valuation $v_{\kk,P'}$ on $F_P$ and note that it extends $v_{\kk,P}$.
  
  By Lemma~\ref{lem:sszero}(2), $\s\ox\id_{F_P}$ is anisotropic since $\CP'$ is
  a positive cone containing~$1$. Let $F^h$ denote a Henselization of $F$ with
  respect to $v_{\kk,P}$.  Since $(F_P,v')$ is Henselian, we may assume that
  $F^h\subseteq F_P$ and obtain in particular that $\s\ox\id_{F^h}$ is
  anisotropic. It then follows from Proposition~\ref{inspired}(1) and Remark~\ref{ttame}
  that there is
  a $\s$-special tame $v_{\kk,P}$-gauge $g$ on $A$.
  Therefore, by \cite[Cor.~1.26]{T-W-2010} and Remark~\ref{ttame}, $g \ox v'$ is
  a (tame) $v'$-gauge on $A \ox_F F_P$, and by \cite[Cor.~1.4]{T-W-2011} it is
  $\s\ox\id_{F_P}$-invariant.  
  Since $w_{\kk, \CQ}$ is a $\bt$-special $v'$-gauge (cf.
  Lemma~\ref{wbtspecial}), $w_{\kk, \CQ} \circ f_{F_P}$ is a $\s \ox
  \id$-special $v'$-gauge on $A \ox_F F_P$. Therefore, by
  Proposition~\ref{inspired}(2), $w_{\kk, \CQ} \circ f_{F_P} = g \ox v'$ and it
  follows that $w_{\kk, \CQ} \circ f_{F_P} \circ \zeta  =g$, i.e., $g=w_{\kk,\CP}$.
  Thus, $w_{\kk,\CP}$ is a $\s$-special (tame) $v_{\kk,P}$-gauge. 
  
  The uniqueness statement follows from Proposition~\ref{inspired}(2).
\end{proof}

We finish this section with  a description of the elements in 
\begin{equation}\label{eq:st}
\st (w_{\kk,\CP}):=\{a\in A^\x \mid
w_{\kk,\CP} (a^{-1}) = - w_{\kk,\CP}(a)\},  
\end{equation}
cf. \cite[p.~709]{T-W-2010} (and also \cite[Lemma~1.3]{T-W-2010} for some
properties of the set $\st(w_{\kk,\CP})$):

\begin{prop}\mbox{}
  \begin{enumerate}[$(1)$]

    \item Let $a \in \Sym(A,\s)^\x$. Then $a \in \st(w_{\kk,\CP})$ if and only if all $F_P$-eigenvalues of $a$ 
    have the same $v_{\kk,P}$-value. 

    \item Let $a \in A^\x$. Then $a \in \st(w_{\kk,\CP})$ if and only if all $F_P$-eigenvalues of $\s(a)a$ 
    have the same $v_{\kk,P}$-value. 
  \end{enumerate}
\end{prop}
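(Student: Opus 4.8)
The plan is to reduce both statements to the computation carried out in Lemma~\ref{wadcom1} and the characterization of $\st(w_{\kk,\CP})$ via the behaviour of $w_{\kk,\CP}$ on $a$ and $a^{-1}$. First I would handle part $(1)$. For $a\in\Sym(A,\s)^\x$, transporting everything through $f_{F_P}$ and extending scalars to $F_P$, we are asking about $w(b)$ and $w(b^{-1})$ where $b=f_{F_P}(a\ox 1)\in\Sym(M_{n_P}(D_P),\bt)^\x$ and $w=w_{\kk,\CQ}$ is the gauge of Definition~\ref{wadcom3} (using that $w_{\kk,\CP}(a)=w(b)$ by \eqref{eq:gauge} and that $f_{F_P}$ is an isomorphism of algebras with involution, so $f_{F_P}(a^{-1}\ox 1)=b^{-1}$). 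Diagonalize $b$ by congruence, $\bar u^t b u=\diag(\lambda_1,\dots,\lambda_{n_P})$ with $u$ unitary and $\lambda_i\in F_P$ the $F_P$-eigenvalues of $a$ (Principal Axis Theorem, as in the proof of Lemma~\ref{wadcom1}); then $\bar u^t b^{-1}u=\diag(\lambda_1^{-1},\dots,\lambda_{n_P}^{-1})$, and by Lemma~\ref{wadcom0} together with the formula $w'(\diag(\mu_1,\dots,\mu_{n_P}))=\min_i\{v'(\mu_i)\}$ (this is the identity used in the proof of Lemma~\ref{wadcom1}(1), with $v'=v_{\kk,P'}$), we get
\[
w_{\kk,\CP}(a)=\min_i\{v'(\lambda_i)\},\qquad w_{\kk,\CP}(a^{-1})=\min_i\{v'(\lambda_i^{-1})\}=-\max_i\{v'(\lambda_i)\}.
\]
Hence $w_{\kk,\CP}(a^{-1})=-w_{\kk,\CP}(a)$ if and only if $\min_i\{v'(\lambda_i)\}=\max_i\{v'(\lambda_i)\}$, i.e.\ all $v'(\lambda_i)$ coincide; and since $v'$ restricted to $F_P$ extends $v_{\kk,P}$ and the $\lambda_i$ lie in $F_P$ anyway, "same $v'$-value" is literally "same $v_{\kk,P}$-value" in the sense intended (the $F_P$-eigenvalues are only defined up to conjugacy, but their values under the unique extension are well defined). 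This proves $(1)$.

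For part $(2)$, let $a\in A^\x$ and set $b=\s(a)a\in\Sym(A,\s)^\x$. The key observation is that $w_{\kk,\CP}$ is $\s$-special (Theorem~\ref{gauge}), so $w_{\kk,\CP}(b)=w_{\kk,\CP}(\s(a)a)=2w_{\kk,\CP}(a)$; likewise $b^{-1}=a^{-1}\s(a)^{-1}=a^{-1}\s(a^{-1})=\s(\s(a^{-1}))\,\s(a^{-1})$ is $\s(c)c$ with $c=\s(a^{-1})$, so $w_{\kk,\CP}(b^{-1})=2w_{\kk,\CP}(\s(a^{-1}))=2w_{\kk,\CP}(a^{-1})$ using $\s$-invariance of the gauge (Remark~\ref{spec-aniso}). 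Therefore $w_{\kk,\CP}(a^{-1})=-w_{\kk,\CP}(a)$ if and only if $w_{\kk,\CP}(b^{-1})=-w_{\kk,\CP}(b)$, i.e.\ $a\in\st(w_{\kk,\CP})$ if and only if $b=\s(a)a\in\st(w_{\kk,\CP})$. Now $b\in\Sym(A,\s)^\x$, so part $(1)$ applies and gives: $b\in\st(w_{\kk,\CP})$ if and only if all $F_P$-eigenvalues of $\s(a)a$ have the same $v_{\kk,P}$-value. Combining the two equivalences yields $(2)$.

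The main obstacle, I expect, is making the eigenvalue bookkeeping fully rigorous in the quaternionic case: for $E=H$ (equivalently $D_P=(-1,-1)_{F_P}$), "eigenvalue" means right eigenvalue and these are only determined up to conjugacy in $H$, but one must check that $v_{\kk,P}$ of a right eigenvalue is nonetheless well defined and that diagonalization by unitary congruence produces eigenvalues with the correct values and with $\diag(\lambda_i)^{-1}=\diag(\lambda_i^{-1})$ still representing $b^{-1}$ up to unitary congruence. All of this is already packaged in Appendix~\ref{sec:quat-mat} and used in the proof of Lemma~\ref{wadcom1} (Principal Axis Theorem for symmetric quaternionic matrices, with $F_P$-eigenvalues landing in $F_P$), so in practice one cites that appendix and Lemma~\ref{wadcom1} rather than redoing it. A secondary point to state carefully is that $w_{\kk,\CP}(a^{-1})=-\max_i\{v'(\lambda_i)\}$: this follows because $w_{\kk,\CP}(a^{-1})=w'(\bar u^t b^{-1}u)$ by Lemma~\ref{wadcom0} applied to the $\bt$-special value function $w'$ and the unitary $u$, and $w'(\diag(\lambda_1^{-1},\dots,\lambda_{n_P}^{-1}))=\min_i\{v'(\lambda_i^{-1})\}=\min_i\{-v'(\lambda_i)\}=-\max_i\{v'(\lambda_i)\}$.
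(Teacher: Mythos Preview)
Your proposal is correct and follows essentially the same approach as the paper: for (1) you compute $w_{\kk,\CP}(a)=\min_i v'(\lambda_i)$ and $w_{\kk,\CP}(a^{-1})=-\max_i v'(\lambda_i)$ via Lemma~\ref{wadcom1}(1) (the paper does this directly, you spell out the transport through $f_{F_P}$ and Lemma~\ref{wadcom0}), and for (2) both you and the paper reduce to (1) by using that $w_{\kk,\CP}$ is $\s$-special to show $a\in\st(w_{\kk,\CP})\Leftrightarrow \s(a)a\in\st(w_{\kk,\CP})$.
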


\begin{proof}
  We denote the unique ordering on $F_P$ by $P'$, $v_{\kk,P'}$ by $v'$ and $w_{\kk,\CP}$ by $w$. 

(1) 
 Let $\lambda_1, \ldots, \lambda_n$ be the $F_P$-eigenvalues of $a$, ordered
  such that $v'(\lambda_1) \le \cdots \le v'(\lambda_n)$ (so that $w(a) =
  v'(\lambda_1)$ by Lemma~\ref{wadcom1}(1)). 
  
  The $F_P$-eigenvalues of $a^{-1}$ are $\lambda_1^{-1}, \ldots, \lambda_n^{-1}$, 
  and have values
  $-v'(\lambda_n) \le \cdots \le -v'(\lambda_1)$, so that $w(a^{-1}) =
  -v'(\lambda_n)$.  Therefore $w(a) = -w(a^{-1})$ if and only if $v'(\lambda_1) =
  v'(\lambda_n)$ if and only if $v'(\lambda_1) = \cdots = v'(\lambda_n)$.
  
(2) Since $w$ is $\s$-special, we have the equivalences
\begin{align*}
w(a^{-1})=-w(a) &\lra   2w(a^{-1})=-2w(a) \lra w(a^{-1} \s(a^{-1}) )= -w(\s(a) a)\\ 
&\lra w ((\s(a)a)^{-1})=-w(\s(a)a)
\end{align*}
and the result follows from (1) since $\s(a)a$ is symmetric.  
\end{proof}

\subsection{Explicit computation in the special case with index $\leq 2$ and arbitrary 
involution}\label{sec:5.3}
The content of this section is partially inspired by a talk of  J.-P. Tignol at the 2009 conference
``Positivity, Valuations, and Quadratic Forms'' at the University of Konstanz.

Let $(A,\s) = (M_n(E),\ad_{h})$ with $(E,\bbar)$ as in Section~\ref{sec:index-two} and
$h = \qf{e_1,e_2,\ldots, e_n}_{-}$, where $e_1,\ldots, e_n \in F^\x = \Sym(E,\bbar)\sm\{0\}$. 
Let $P\in X_F$ and let $\CP$ be a positive cone on $(A,\s)$ over $P$ and with $1\in \CP$.
Let $v$ be a valuation on $F$, compatible with $P$ (i.e., $v=v_{\kk, P}$ for some subfield $\kk$
of $F$)  and let $v_E$ be the unique extension of $v$ to
$E$, cf. Lemma~\ref{valonfch}.

Let $e = \diag(e_1,\ldots,e_n) \in M_n(F)\subseteq M_n(E)$. Then $\s = \Int(e^{-1}) \circ \bt$. 

\begin{lemma}
  The elements $e_1,\ldots, e_n$ are all in $P$, or are all in $-P$.  
\end{lemma}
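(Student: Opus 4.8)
The plan is to read the conclusion off the positivity of $\s$ at $P$, by evaluating the involution trace form on a well‑chosen $\s$‑symmetric matrix, without ever leaving $A$. Since $1\in\CP$, Remark~\ref{L}(1) says $\s$ is positive at $P$, so the form $y\mapsto \Trd_A(\s(y)y)$ is positive definite at $P$; recalling that a quadratic form over $F$ is positive definite at $P$ exactly when all of its nonzero vectors take values in $P\setminus\{0\}$, this gives $\Trd_A(\s(y)y)>_P 0$ for every nonzero $y\in\Sym(A,\s)$. If $n=1$ there is nothing to prove, so from now on I assume $n\ge 2$.

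Next I would fix indices $k\ne l$, write $E_{ij}$ for the matrix units of $M_n(E)$, set $\lambda:=e_l^{-1}e_k\in F^\x$, and test the trace form on $y:=E_{kl}+\lambda E_{lk}$. Using $\s=\Int(e^{-1})\circ\bt$ with $e=\diag(e_1,\dots,e_n)$, a one‑line computation gives $\s(E_{kl})=\lambda E_{lk}$ and $\s(E_{lk})=\lambda^{-1}E_{kl}$, hence $\s(y)=y$ (and $y\ne 0$). Since $k\ne l$ we have $y^2=\lambda(E_{kk}+E_{ll})$, so $\Trd_A(\s(y)y)=\Trd_A(y^2)=\lambda\,\Trd_A(E_{kk}+E_{ll})=2d\lambda$, where $d:=\Trd_A(E_{11})$ is a nonzero natural number (equal to $1$ if $E\in\{F,F(\sqrt{-1})\}$ and to $2$ if $E=H$), so in particular $d>_P 0$.

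Putting these together, positivity gives $2d\lambda>_P 0$, hence $e_l^{-1}e_k>_P 0$, hence $e_ke_l>_P 0$, i.e.\ $e_k$ and $e_l$ lie on the same side of $P$. As $k,l$ were arbitrary, all of $e_1,\dots,e_n$ share a common sign at $P$, so they are either all in $P$ or all in $-P$, which is the claim.

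The only point needing care is the bookkeeping around $\Trd_A$: one must be sure that $\Trd_A(\s(y)y)$ is genuinely a value of the involution trace form (hence $>_P 0$), which is fine because $\s(y)y$ is $\s$‑symmetric, so its reduced trace lies in $\Sym(Z(A),\s)=F$, and because $\Trd_A$ is $Z(A)$‑linear the scalar $\lambda\in F$ factors out; the rest is a direct matrix calculation in each of the three cases for $E$. (A structural alternative, if preferred: by Proposition~\ref{prop:tired}, $(M_n(E)\otimes_F F_P,\s\otimes\id)\cong(M_n(E\otimes_F F_P),\bt)$, since $E\otimes_F F_P$ is a division algebra over the real closed field $F_P$; as adjoint involutions on a matrix algebra are classified by the similarity class of the defining hermitian form, $\langle e_1,\dots,e_n\rangle$ over $(E\otimes_F F_P,\bbar)$ must be similar to $\langle 1,\dots,1\rangle$, and comparing signatures at the unique ordering of $F_P$ again forces all $e_i$ to have the same sign.)
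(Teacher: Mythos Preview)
Your proof is correct and takes a genuinely different, more self-contained route than the paper. Both arguments start from the same point---since $1\in\CP$, the involution $\s$ is positive at $P$---but then diverge. The paper invokes external signature machinery from \cite{A-U-PS}: positivity of $\ad_h$ forces it to have maximal signature at $P$, and \cite[Prop.~4.4(iv)]{A-U-PS} then translates this into the hermitian form $\qf{e}_{\bt}$ having maximal signature in absolute value, i.e., $e$ is definite at $P$. Your argument instead stays inside $A$ and exploits positivity of the involution trace form directly: for each pair $k\neq l$ you produce an explicit $\s$-symmetric element $y=E_{kl}+\lambda E_{lk}$ with $\Trd_A(\s(y)y)=2d\,e_l^{-1}e_k$, from which $e_ke_l>_P0$ is immediate. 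This is more elementary and avoids the citations, at the cost of a short matrix computation and the case check on $d=\Trd_A(E_{11})$. Your parenthetical ``structural alternative'' via Proposition~\ref{prop:tired} is closer in spirit to the paper's approach. One small remark: the restriction to $y\in\Sym(A,\s)$ in your opening paragraph is harmless but unnecessary---positivity of $\s$ at $P$ gives $\Trd_A(\s(x)x)>_P0$ for \emph{all} nonzero $x\in A$.
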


\begin{proof}
  Since $1\in \CP$, the involution $\ad_h$ is positive at $P$ by \cite[Cor.~7.7]{A-U-pos}
  and so has maximal signature at $P$ by  \cite[Rem.~4.3]{A-U-PS}. Since $\ad_h=(\bt)_e$
  in the notation of \cite[p.~352]{A-U-PS}, the hermitian form $\qf{e}_{-^t}$ has maximal signature at $P$
  in absolute value by \cite[Prop.~4.4(iv)]{A-U-PS}, i.e., the matrix $e$ is positive definite 
  or
  negative definite at $P$. The statement follows.    
\end{proof}

Since $\ad_h=\ad_{-h}$, we may assume that the $e_i$ are all in $P$. It follows that the matrix $e\ox 1$
has a square root $\sqrt{e\ox 1}$  in  $M_n(E\ox_F F_P)$.

Let $\CP'$ be the unique extension of $\CP$ to 
$(A \ox_F F_P, \s\ox\id)$ over $P'$, the unique ordering of $F_P$, 
cf. Proposition~\ref{ppc-extension}. 
Consider the valuation $v'=v_{\kk,P'}$ that extends $v$ to $F_P$ and let $v'_E$ be the unique extension of $v'$ to $E\ox_F F_P$, cf. Lemma~\ref{valonfch}.

After canonically identifying $(A\ox_F F_P, \s\ox\id)$ with 
$(M_n(E\ox_F F_P), \ad_{h\ox 1})$ and noting that
$\ad_{h\ox 1}=\Int(e^{-1} \ox 1)\circ \bt$, we consider the isomorphism
\[f = \Int(\sqrt{e\ox 1}): (M_n(E\ox_F F_P), \Int(e^{-1} \ox 1) \circ \bt) \simtoo (M_n(E\ox_F F_P), \bt).\]
Then $f(\CP')$ is a positive cone over $P'$ containing $1$, and thus $f(\CP')=\PSD_n(E\ox_F F_P,
P')$, the positive cone of all positive semidefinite $n\x n$-matrices over $E\ox_F F_P$, cf.
Example~\ref{PC-PSD}. 
Following Section~\ref{sec:general}, the gauge $w_{\kk,\CP}$ is defined via the following diagram:
\begin{equation*}
  \begin{aligned}
  \xymatrix{
  (M_n(E\ox_F F_P) , \ad_{h\ox 1}) \ar[r]^-{\sim}_-{f} & (M_{n}(E\ox_F F_P), \bt)
      \ar[d]^{w_{{\kk},f(\CP')}} \\
  (M_n(E), \ad_h) \ar[u]^\zeta \ar[r]_{w_{\kk,\CP}}  &  \Gamma \cup\{\infty\}
}
\end{aligned}
\end{equation*}
where
\[
  w_{{\kk},f(\CP')} \bigl((x_{ij})_{i,j} \bigr) = \min_{i,j} \{v'_E (x_{ij}) \}.
\]
For $a=(a_{ij})_{i,j} \in M_n(E)$ we have
\begin{equation}\label{eq:wkp}
  \begin{aligned}
    w_{\kk,\CP}(a)&= w_{\kk, f(\CP')}(f(a\ox 1))\\
    &=\min_{i,j} \{v'_E (\sqrt{e_i \ox 1} (a_{ij}\ox 1) \sqrt{e_j\ox 1}^{-1})\}\\
    &=\min_{i,j} \{v_{E} (a_{ij}) + \tfrac{1}{2}v(e_i)-\tfrac{1}{2}v(e_j)\}.  
  \end{aligned}
\end{equation}
Therefore
\begin{equation}\label{eq:rkp}
\begin{aligned}
  R_{{\kk},\CP}&=\{ (a_{ij})_{i,j}  \in M_n(E) \mid \forall i,j \quad   v_{E}(a_{ij}) 
  \geq  -  \tfrac{1}{2}v(e_i e_j^{-1})\}\\
  &=:\Bigl( \{v_{E}(x) \geq  -  \tfrac{1}{2}v(e_i e_j^{-1})\}\Bigr)_{i,j}.
\end{aligned}
\end{equation}
Similarly, we obtain
\begin{align*}
  I_{{\kk},\CP}&=\{ (a_{ij})_{i,j}  \in M_n(E) \mid \forall i,j \quad   v_{E}(a_{ij}) 
  >  -  \tfrac{1}{2}v(e_i e_j^{-1})\}\\
  &=:\Bigl( \{v_{E}(x) >  -  \tfrac{1}{2}v(e_i e_j^{-1})\}\Bigr)_{i,j}.
\end{align*}
We conclude that
\[R_{{\kk},\CP}/I_{{\kk},\CP} = \Bigl( \{v_{E}(x) \geq  -  \tfrac{1}{2}v(e_i e_j^{-1})\} 
/  \{v_{E}(x)  >  -  \tfrac{1}{2}v(e_i e_j^{-1})\}   
\Bigr)_{i,j}.\]
Observe that if $\tfrac{1}{2}v(e_i e_j^{-1}) \not \in \Gamma_{v_{E}}$, i.e., if
$v(e_i) \not= v(e_j) \bmod 2\Gamma_{v_{E}}$,  then
\[\{v_{E}(x) \geq  -  \tfrac{1}{2}v(e_i e_j^{-1})\} = \{v_{E}(x) >  -  
\tfrac{1}{2}v(e_i e_j^{-1})\}\]
and so
\begin{equation}\label{eq:zero}
\{v_{E}(x) \geq  -  \tfrac{1}{2}v(e_i e_j^{-1})\} /  \{v_{E}(x) >  -  
\tfrac{1}{2}v(e_i e_j^{-1})\} =\{0\}.
\end{equation}

Larmour \cite[\S 3]{Larmour} defined residue forms of hermitian forms over valued division algebras 
with involution. 
We recall what we need from \cite{Larmour}. 
There are nonzero elements $\pi_1,\ldots, \pi_r \in \Sym (E,\bbar)=F$,  such 
that 
$v_{E}(\pi_1)=v(\pi_1), \ldots, v_{E}(\pi_r)=v(\pi_r)$ 
are all distinct modulo $2\Gamma_{v_{E}}$, and
\[h \simeq h_1 \perp \cdots \perp h_r,\]
where
\[h_i=\qf{u_{i,1} \pi_i,\ldots, u_{i,n_i} \pi_i}_{-}\] 
for units $u_{i,\ell}$ in $R_{v_E}$.
The residue forms of $h$ are then the forms 
\[\wt{h}_i= \qf{u_{i,1} + I_{v_E}, \ldots, u_{i,n_i} + I_{v_E}}_{\vt_i}\]
over $(R_{v_E}/ I_{v_E}, \vt_i)$, where $\vt_i$ is the involution induced by 
$\Int{(\pi_i)} \circ \bbar=\bbar$ (since $\pi_i \in F$).

\begin{prop}\label{semisimple2} 
  We have
  \[
    (A_0,\s_0)\cong (M_{n_1}(E_0), \ad_{\wt{h}_1})\x \cdots \x (M_{n_r}(E_0), \ad_{\wt{h}_r})
  \]
  and
  \[
    \Gamma_{w_{\kk,\CP}} = \sum_{i,j=1}^r \tfrac{1}{2}(v(\pi_i)-v(\pi_j))+\Gamma_{v_E}.
  \]
\end{prop}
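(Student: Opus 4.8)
The plan is to reorganize the matrix $R_{\kk,\CP}/I_{\kk,\CP}$ computed above according to the decomposition $h\simeq h_1\perp\cdots\perp h_r$ coming from Larmour's work, and to read off both statements at once. First I would fix the congruence $h\simeq h_1\perp\cdots\perp h_r$ and the corresponding change of basis $g\in\GL_n(E)$, so that $g^te\,\bar g$ (suitably interpreted) is block-diagonal with blocks $\diag(u_{i,1}\pi_i,\ldots,u_{i,n_i}\pi_i)$; conjugating the gauge $w_{\kk,\CP}$ by $\Int(g)$ — which is legitimate by the uniqueness in Theorem~\ref{gauge}, or directly because congruences are realized inside $A^\x$ and one can invoke Lemma~\ref{wadcom0}-type invariance on the $F_P$-level — lets me assume from the start that $e=\diag(u_{1,1}\pi_1,\ldots,u_{1,n_1}\pi_1,u_{2,1}\pi_2,\ldots)$. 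The point is that the diagonal entries of $e$ now fall into $r$ groups, the $i$-th group having $v$-value $v(\pi_i)$ modulo $2\Gamma_{v_E}$ (since the $u_{i,\ell}$ are units, $v_E(u_{i,\ell})=0$), and the $v(\pi_i)$ are pairwise distinct mod $2\Gamma_{v_E}$.

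Next I would substitute this choice of $e$ into the formulas \eqref{eq:wkp}, \eqref{eq:rkp} and the displayed description of $R_{\kk,\CP}/I_{\kk,\CP}$. By \eqref{eq:zero}, the $(i,j)$ block of the residue matrix vanishes whenever the row index sits in the $\pi_i$-group and the column index in the $\pi_j$-group with $i\ne j$, because then $v(e_ie_j^{-1})\equiv v(\pi_i)-v(\pi_j)\not\equiv 0\pmod{2\Gamma_{v_E}}$. Hence $R_{\kk,\CP}/I_{\kk,\CP}$ is block-diagonal with $r$ blocks, the $i$-th block being the $n_i\times n_i$ matrices over $R_{v_E}/I_{v_E}=E_0$ (all the $\pm\tfrac12 v(u_{i,\ell}\pi_i u_{i,m}^{-1}\pi_i^{-1})$ involved in one block lie in $\Gamma_{v_E}$, so no further entries are killed). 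This already gives $A_0\cong M_{n_1}(E_0)\x\cdots\x M_{n_r}(E_0)$ as an algebra. To pin down the involution $\s_0$ on the $i$-th block, I recall $\s=\Int(e^{-1})\circ\bt$, so on the residue level $\s_0=\Int(\bar e_0^{-1})\circ(\bbarr)_0^t$ where $\bar e_0$ is the image of the $i$-th diagonal block of $e$, namely $\diag(u_{i,1}+I_{v_E},\ldots,u_{i,n_i}+I_{v_E})$; this is exactly $\ad_{\wt h_i}$ by definition of the residue form, since $\vt_i=(\bbarr)_0$ and the form has the displayed diagonalization. Some care is needed here with the half-integer shifts $\tfrac12 v(e_\ell)$ inside a block: they conjugate $\bt$ on that block but keep it $\bt$-congruent to $\ad$ of a diagonal unit form, which is all that is claimed.

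For the value group statement, I would argue directly from \eqref{eq:wkp}: $\Gamma_{w_{\kk,\CP}}=w_{\kk,\CP}(A\sm\{0\})$, and an element supported only on the $(i,j)$-entry of value $\gamma\in\Gamma_{v_E}$ contributes $\gamma+\tfrac12(v(e_i)-v(e_j))$, which with the chosen $e$ ranges over $\tfrac12(v(\pi_i)-v(\pi_j))+\Gamma_{v_E}$ as $\gamma$ ranges over $\Gamma_{v_E}$ and the row/column indices range over the $\pi_i$- and $\pi_j$-groups. Taking the union over all $i,j$ gives the $\supseteq$ inclusion; for $\subseteq$, since $w_{\kk,\CP}(a)=\min_{i,j}\{v_E(a_{ij})+\tfrac12 v(e_i)-\tfrac12 v(e_j)\}$ is a minimum of such values, any value of $w_{\kk,\CP}$ is already one of these, so it lies in $\bigcup_{i,j}\tfrac12(v(\pi_i)-v(\pi_j))+\Gamma_{v_E}=\sum_{i,j=1}^r\tfrac12(v(\pi_i)-v(\pi_j))+\Gamma_{v_E}$ (the ``$\sum$'' in the statement just being sugar for this union, as the sets overlap additively).

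The main obstacle I anticipate is bookkeeping rather than conceptual: justifying cleanly that conjugating $w_{\kk,\CP}$ by the congruence-realizing matrix $g$ produces again a gauge of the form covered by Section~\ref{sec:5.3} (so that \eqref{eq:wkp} applies to the new $e$), and carefully tracking that within a single $\pi_i$-block none of the entry-wise half-integer shifts $\tfrac12(v(u_{i,\ell}\pi_i)-v(u_{i,m}\pi_i))=\tfrac12(v(u_{i,\ell})-v(u_{i,m}))$ accidentally leaves $\Gamma_{v_E}$ — this uses precisely that the $u_{i,\ell}$ are units, hence $v_E(u_{i,\ell})=0$, so these shifts are in fact $0$. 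Once that is in place, the identification of $\s_0$ on each block with $\ad_{\wt h_i}$ is immediate from the definitions of Larmour's residue form, and both displayed formulas drop out.
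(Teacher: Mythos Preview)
Your approach is correct and essentially the same as the paper's: assume the $e_i$ are already the Larmour coefficients $u_{\ell,j}\pi_\ell$, use \eqref{eq:zero} to kill the off-diagonal blocks of $R_{\kk,\CP}/I_{\kk,\CP}$, identify each diagonal block as $M_{n_\ell}(E_0)$ with residue involution $\ad_{\wt h_\ell}$, and read $\Gamma_{w_{\kk,\CP}}$ straight off \eqref{eq:wkp}. The only difference is presentational: the paper silently relabels the $e_i$ (isometric forms give isomorphic algebras with involution), whereas you make the change of basis explicit and justify it via the uniqueness in Theorem~\ref{gauge}---which is the right justification, since your alternative appeal to Lemma~\ref{wadcom0} does not apply directly (the congruence matrix $g$ need not be unitary).
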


\begin{proof}
  The computation of $\Gamma_{w_{\kk,\CP}}$ is immediate from \eqref{eq:wkp}.  
  Going back to the notation $h = \qf{e_1,e_2,\ldots, e_n}_{-}$ and, writing  
  \[h_1 = \qf{e_1,\ldots,e_{n_1}}_{-},\  h_2 = \qf{e_{n_1+1},\ldots,e_{n_1+n_2}}_{-}, \ldots\]
  and
  \[I_1 = \{1,\ldots,n_1\}, \ I_2 = \{n_1+1,\ldots,n_1+n_2\}, \ldots\]
  we have $v_{E}(e_i) = v_E(\pi_\ell) $ for every $i \in I_\ell$.
  Then, using \eqref{eq:zero} and the computation leading to it, we obtain
  \[A_0=R_{{\kk},\CP} / I_{{\kk},\CP} \cong \m{M_1 & 0 & & \\ 0 & M_2 & & \\ & & \ddots & 0 \\ & &
  0 & M_r} \cong M_1 \times \cdots \times M_r,\]
  where, for $\ell=1,\ldots, r$,
    \begin{align*}
      M_\ell &= \Bigl( \{v_{E}(x) \geq  -  \tfrac{1}{2}v(e_i e_j^{-1})\} /  
      \{v_{E}(x) >  - \tfrac{1}{2}v  (e_i e_j^{-1})\}  \Bigr)_{i,j \in I_\ell} \\
      &= \Bigl( \{v_{E}(x) \geq  0\} /  \{v_{E}(x) >  0\}  \Bigr)_{i,j \in I_\ell} \\
      &= M_{n_\ell}(R_{v_E}/I_{v_E})\\
      &= M_{n_\ell}(E_0).
    \end{align*}
  Finally, a direct computation shows that $\s_0|_{M_\ell}=\ad_{\wt{h}_\ell}$ for 
  $\ell=1,\ldots, r$.
\end{proof}

\begin{cor}  
  $h$ has only one residue form if and only if $R_{{\kk},\CP}$ is a Dubrovin valuation ring of $M_n(E)$.
\end{cor}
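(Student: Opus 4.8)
The plan is to unravel what each side of the claimed equivalence means in terms of the data $e_1,\ldots,e_n$ and the valuation $v_E$, using the explicit computation already carried out. First I would recall that $h$ has only one residue form precisely when $r=1$, i.e., when all the values $v_E(e_1),\ldots,v_E(e_n)$ are congruent modulo $2\Gamma_{v_E}$ (equivalently, all the $\frac12 v(e_ie_j^{-1})$ lie in $\Gamma_{v_E}$). In that case the matrix description \eqref{eq:rkp} collapses: $R_{\kk,\CP} = M_n(R_{v_E})$ (after possibly adjusting the $e_i$ within their valuation classes, or simply noting all the off-diagonal conditions become $v_E(x)\ge 0$), so $R_{\kk,\CP}$ is conjugate to $M_n(R_{v_E})$. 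Since $R_{v_E}$ is a valuation ring of the division algebra $E$ (Lemma~\ref{valonfch}), its $n\times n$ matrix ring is a Dubrovin valuation ring of $M_n(E)$, and this property is preserved under conjugation; hence $R_{\kk,\CP}$ is a Dubrovin valuation ring of $M_n(E)$.

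For the converse, I would argue contrapositively: if $r\ge 2$, then $A_0 = R_{\kk,\CP}/I_{\kk,\CP}$ is, by Proposition~\ref{semisimple2}, a direct product $M_{n_1}(E_0)\times\cdots\times M_{n_r}(E_0)$ of at least two factors, hence is not simple. But a Dubrovin valuation ring $R$ of a simple artinian ring has the defining property that $R/J(R)$ is simple artinian (this is part of the standard characterization of Dubrovin valuation rings). Since $I_{\kk,\CP}=J(R_{\kk,\CP})$ by Remark~\ref{I-jacobson} (applied to the gauge $w_{\kk,\CP}$, whose residue ring $A_0$ is semisimple), a non-simple $A_0$ forces $R_{\kk,\CP}$ not to be a Dubrovin valuation ring. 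This contradicts the assumption, so $r=1$ and $h$ has only one residue form.

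The main obstacle I anticipate is making the ``only if'' direction airtight, specifically pinning down exactly which characterization of Dubrovin valuation rings to invoke and confirming that $R_{\kk,\CP}$ genuinely fails it when $r\ge 2$ — one must be sure that the failure of $R/J(R)$ to be simple really does disqualify $R$ as a Dubrovin valuation ring (as opposed to some other valuation-ring notion), and that $R_{\kk,\CP}$ is not a Dubrovin valuation ring of some smaller algebra by accident. A secondary point requiring care in the ``if'' direction is the normalization: \eqref{eq:rkp} only gives $R_{\kk,\CP}$ as a ``staircase'' order, and when $r=1$ one should check it really equals a full matrix ring over $R_{v_E}$ up to conjugation (multiplying rows/columns by suitable elements of $E^\times$ of the appropriate values), rather than merely being Morita-equivalent to one; this is routine but should be stated. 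Everything else — the structure of $A_0$, the identification $I_{\kk,\CP}=J(R_{\kk,\CP})$, and the fact that $M_n(R_{v_E})$ is Dubrovin — is either already in the excerpt or standard, so the proof should be short.
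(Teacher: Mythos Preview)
Your proposal is correct and follows essentially the same approach as the paper. The paper's proof is slightly more streamlined because by the time Proposition~\ref{semisimple2} is stated, the diagonal entries $e_1,\ldots,e_n$ have already been taken in the normalized form $u_{i,j}\pi_i$ with $u_{i,j}$ units; thus when $r=1$ one has $v(e_i)=v(e_j)$ for all $i,j$, and \eqref{eq:rkp} gives $R_{\kk,\CP}=M_n(R_{v_E})$ on the nose, with no conjugation needed. For the converse the paper argues exactly as you do (directly rather than contrapositively): a Dubrovin valuation ring by definition has simple residue, and Proposition~\ref{semisimple2} then forces $r=1$.
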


\begin{proof} 
  Assume that $h$ has only one residue form. Then  the proof of Proposition~\ref{semisimple2} together with
  \eqref{eq:rkp}
  shows that $R_{{\kk},\CP} =M_n(R_{v_E})$, which is a Dubrovin valuation ring of $M_n(E)$
  since $R_{v_E}$ is a valuation ring of $E$, cf. \cite[p.~606--607]{Morandi-1989}. 
  
  Conversely, if $R_{{\kk},\CP}$ is a Dubrovin valuation ring of $M_n(E)$ then, by definition,
   $R_{{\kk},\CP}/I_{{\kk},\CP}$ is simple 
  and so $h$
  has only one residue form by Proposition~\ref{semisimple2}.
\end{proof}

\section{Compatibility of positive cones and $\s$-invariant gauges}
\label{sec:comp}

In this section we let $A$ be a finite-dimensional simple $F$-algebra
with $F$-linear involution $\s$ such that $F=\Sym(A,\s)\cap Z(A)$, we
fix $P \in X_F$ and a positive cone $\CP$ on $(A,\s)$ over $P$ such
that $1\in \CP$. Recall that this implies $\CP\cap F=P$, cf.
Remark~\ref{oneinp}(2). If $S\subseteq T$ are subsets of $A$ we say
that $S$ is \emph{$\CP$-convex in $T$} if for all $s_1, s_2 \in S$
and $t\in T$, $s_1 \leq_\CP t \leq_\CP s_2$ implies $t \in S$.

Let $w$ be a $\s$-invariant $v$-gauge on $A$. As before,
we denote the induced involution on $A_0=R_w/I_w$ by $\s_0$. Recall
that $R_w\cap F=R_v$ and $I_w\cap F=I_v$ since $w|_F=v$ and thus that
$A_0$ is an $F_v$-algebra. Denote the canonical projections $R_v\to
F_v$ and $R_w \to A_0$ by $\pi_v$ and $\pi_w$, respectively. Note
that $\pi_w$ extends $\pi_v$.

Inspired by the classical compatibility conditions between valuations and orderings, we consider the following 
properties:

\begin{enumerate}

\item[(C0)] for all $a,b\in \CP$, $w(a+b)=\min\{w(a), w(b)\}$;
\item[(C1)] for all $a,b\in A$, $0\leq_\CP a \leq _\CP b$ implies $w(b) \leq w(a)$;
\item[(C2)] $R_w$ is $\CP$-convex in $A$;
\item[(C3)] $I_w$ is $\CP$-convex in $A$;
\item[(C4)] $I_w$ is $\CP$-convex in $R_w$;
\item[(C5)] $\pi_w(\CP\cap R_w)$ is a prepositive cone on $(A_0,\s_0)$;
\item[(C6)] $a\in \CP\cap I_w\Rightarrow a <_\CP 1$;
\item[(C7)] $1+ \Sym(I_w,\s) \subseteq \CP$.
\end{enumerate}

It is clear that (C0) and (C1) are equivalent, that (C1) implies (C2) and (C3),
and also that (C3) implies (C4). Using the fact that $w_{\kk, \CP}$ is the
unique $\s$-invariant $v_{\kk,P}$-gauge on $A$ (Theorem~\ref{gauge}), we will
show in Propositions~\ref{c1-c8} and \ref{equivalences} below that properties
(C0), \ldots, (C7) are equivalent. However, first we would like to point out
that the equivalences (C4) $\Leftrightarrow$ (C5) and (C6) $\Leftrightarrow$
(C7) can be obtained in an elementary way, using only the definitions of (C0), \ldots, (C7):

\begin{prop}\label{c4-c5} 
\textup{(C4)} implies \textup{(C5)}. More precisely under the hypothesis \textup{(C4)} we have:
 \begin{enumerate}[$(1)$]
    \item $I_v=I_w \cap F$ is $P$-convex in $R_v=R_w \cap F$, and so  
    $\ovl{P}:= \pi_v(P\cap R_v)$ is an ordering  on $F_v$.
    \item The set $\pi_w(\CP\cap R_w)$  
    is a prepositive cone  on $(A_0,\s_0)$ over $\ovl{P}$.
  \end{enumerate}
\end{prop}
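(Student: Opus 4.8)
The plan is to derive everything by lifting along the quotient map $\pi_w\colon R_w\to A_0$, using only that $R_w$ is a $\s$-invariant subring of $A$, that $I_w$ is an ideal of $R_w$, and that $\CP\cap R_w$ contains $0$ and is closed under sums and under $a\mapsto\s(x)ax$; the hypothesis \textup{(C4)} will be invoked exactly twice, once in part~(1) and once for the properness of the residue cone in part~(2).

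For part~(1): recall $R_v=R_w\cap F$ and $I_v=I_w\cap F$, and that $\CP\cap F=P$ by Remark~\ref{oneinp}(2), so on $F$ the relation $\leq_\CP$ restricts to $\leq_P$. Hence if $s_1,s_2\in I_v$ and $t\in R_v$ satisfy $s_1\leq_P t\leq_P s_2$, then $s_1\leq_\CP t\leq_\CP s_2$ with $s_1,s_2\in I_w$ and $t\in R_w$, so \textup{(C4)} gives $t\in I_w\cap F=I_v$. Thus $I_v$ is $P$-convex in $R_v$, which says precisely that $v$ and $P$ are compatible (cf. \cite[Lemma~7.2]{Prestel84}); consequently $\overline P=\pi_v(P\cap R_v)$ is an ordering on $F_v$. (If one prefers a self-contained argument, the only ordering axiom that is not immediate is $\overline P\cap-\overline P=\{0\}$: if $\pi_v(a)=-\pi_v(b)\neq 0$ with $a,b\in P\cap R_v$, then $a+b\in I_v$ while $0\leq_P a\leq_P a+b$ forces $a\in I_v$ by the $P$-convexity just established, a contradiction.)

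For part~(2): put $\overline{\CP}:=\pi_w(\CP\cap R_w)$. Since $\CP\subseteq\Sym(A,\s)$ and $\s_0\circ\pi_w=\pi_w\circ\s$ we have $\overline{\CP}\subseteq\Sym(A_0,\s_0)$, and (P1), (P2) are clear. For (P3), given $\overline a\in A_0$ lift it to $a\in R_w$ and note that for $p\in\CP\cap R_w$ one has $\s(a)pa\in\CP\cap R_w$, whence $\s_0(\overline a)\,\pi_w(p)\,\overline a=\pi_w(\s(a)pa)\in\overline{\CP}$. For the properness axiom (P5), suppose $\overline x\in\overline{\CP}\cap-\overline{\CP}$; lift $\overline x=\pi_w(a)$ and $-\overline x=\pi_w(b)$ with $a,b\in\CP\cap R_w$, so $a+b\in I_w$; then $0\leq_\CP a\leq_\CP a+b$ inside $R_w$ with both endpoints in $I_w$, so \textup{(C4)} yields $a\in I_w$, i.e. $\overline x=0$. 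Finally, for (P4): if $u\in P\cap R_v$ and $p\in\CP\cap R_w$, then $up\in\CP$ (as $u\in P=\CP_F$) and $up\in R_w$, so $\pi_w(u)\,\overline{\CP}\subseteq\overline{\CP}$, giving $\overline P\subseteq\overline{\CP}_{F_v}$ (the set of $u\in F_v$ with $u\overline{\CP}\subseteq\overline{\CP}$). Since $1\in\CP\cap R_w$ and $A_0\neq 0$, we have $\overline{\CP}\neq\{0\}$; as $\overline{\CP}$ satisfies (P5) and $\overline P\in X_{F_v}$ by part~(1), Remark~\ref{aboutPC} gives $\overline{\CP}_{F_v}=\overline P$, which is (P4). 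Hence $\overline{\CP}$ is a prepositive cone on $(A_0,\s_0)$ over $\overline P$, which in particular is statement~\textup{(C5)}.

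I do not expect a genuine obstacle: the argument is a sequence of routine lift-and-verify steps. The two points that require a little care are that \textup{(C4)} is exactly what delivers properness (P5) of $\overline{\CP}$, and that (P4) must be obtained through Remark~\ref{aboutPC} — a direct proof of $\overline{\CP}_{F_v}\subseteq\overline P$ would be more delicate — together with the bookkeeping facts that $\leq_\CP$ agrees with $\leq_P$ on $F$ (because $1\in\CP$) and that the residue algebra $A_0=R_w/I_w$ is nonzero.
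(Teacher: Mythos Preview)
Your proof is correct and follows essentially the same approach as the paper: both arguments reduce (P5) for $\overline{\CP}$ to the $\CP$-convexity hypothesis \textup{(C4)} via the observation $0\leq_\CP a\leq_\CP a+b\in I_w$, and both obtain (P4) by showing only the inclusion $\overline P\subseteq(\overline{\CP})_{F_v}$ and invoking Remark~\ref{aboutPC}. The only cosmetic difference is that the paper phrases the $P$-convexity check in part~(1) with lower endpoint $0$ rather than a general $s_1\in I_v$, which is equivalent.
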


\begin{proof}
(1) Let $a \in R_w \cap F$ and $b \in I_w \cap F$ be such that $0 \le_P a
      \le_P b$. Since $1\in \CP$,  $P = \CP \cap F$, so $0 \le_\CP a
      \le_\CP b$ and by hypothesis we obtain $a \in I_w$, so $a \in I_w \cap F$.

(2) Let $\o{\CP}  :=\pi_w(\CP\cap R_w)$. 
      We obviously have $\o{\CP} \not = \varnothing$, $\o{\CP} + \o{\CP}
      \subseteq \o{\CP}$ and, for $\pi_w( a) \in \o{\CP}$ and $\pi_w(x) \in A_0$,
      $ \s_0\bigl(\pi_w(x)\bigr) \pi_w(a) \pi_w(x) = \pi_w(\s(x)ax) \in \o{\CP}$.

      We check $\o{\CP} \cap -\o{\CP} = \{0\}$. Let $a \in \CP\cap R_w$ be such
      that $\pi_w(a) \in \o{\CP} \cap -\o{\CP}$, i.e., there is $b \in \CP \cap R_w$
      such that $\pi_w(a) = -\pi_w( b)$. Then $a+b \in I_w$. By the assumptions on $a$ and $b$
    we have $0 \le_\CP a \le_\CP a+b$. Therefore, by $\CP$-convexity of $I_w$, 
    we get $a \in I_w$, so $\pi_w( a) =  0$.

      We show that $\o{\CP}$ is over $\ovl{P}$, i.e., that $(\o{\CP})_{F_v} =
      \ovl{ P}$ by showing that $\ovl{ P} \subseteq (\o{\CP})_{F_v}$,
      cf. Remark~\ref{aboutPC} ($\ovl{\CP} \not= \{0\}$ since $\ovl{1}\in \ovl{\CP}$). 
      Let $\pi_w(u) \in \ovl{ P}$ with $u \in P\cap R_v=P \cap R_w$, and let $\pi_w(a)
      \in \o{\CP}$ with $a \in \CP \cap R_w$. Then $ua \in \CP \cap R_w$
      by (P4) and the definition of gauge, so 
      $\pi_w(u a)= \pi_w(u) \pi_w(a) \in \o{\CP}$. 
\end{proof}

Note that we will show in Theorem~\ref{prop:maxl} that $\pi_w(\CP\cap R_w)$ is actually maximal, i.e., is a 
positive cone on $(A_0,\s_0)$ over $\ovl{P}$.

\begin{prop} 
  \textup{(C5)} implies \textup{(C4)}.
\end{prop}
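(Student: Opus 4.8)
The plan is to prove the contrapositive: assume (C4) fails and deduce that (C5) fails, i.e., that $\pi_w(\CP\cap R_w)$ is not a prepositive cone on $(A_0,\s_0)$. The only axiom from Definition~\ref{def-preordering} that can fail for $\o\CP:=\pi_w(\CP\cap R_w)$ is properness (P5), since (P1), (P2), (P3) hold trivially as in the proof of Proposition~\ref{c4-c5}(2), and (P4) is automatic once we know $\o\CP$ is proper, because $P\subseteq \CP_F$ forces $\pi_v(P\cap R_v)\subseteq (\o\CP)_{F_v}$ and $\ovl 1\in\o\CP$ shows $\o\CP\neq\{0\}$ (again as in that proof). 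So it suffices to show: if $I_w$ is not $\CP$-convex in $R_w$, then $\o\CP\cap-\o\CP\neq\{0\}$.

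So suppose there exist $a,b\in R_w$ and $c\in R_w$ with $a\le_\CP c\le_\CP b$, $a,b\in I_w$, but $c\notin I_w$. First I would reduce to the case $a=0$: replacing $c$ by $c-a$ and $b$ by $b-a$, we have $0\le_\CP c-a\le_\CP b-a$ with $b-a\in I_w$ (an ideal, hence a group under addition) and $c-a\notin I_w$ (since $c\notin I_w$ and $a\in I_w$); note $c-a\in\CP$. So write $c':=c-a\in\CP\cap R_w$, $c'\notin I_w$, and there is $b'\in I_w\cap\CP$ with $b'-c'\in\CP$. Set $d:=b'-c'\in\CP\cap R_w$. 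Then $c'+d=b'\in I_w$, so $\pi_w(c')=-\pi_w(d)$ in $A_0$, exhibiting $\pi_w(c')\in\o\CP\cap-\o\CP$; and $\pi_w(c')\neq 0$ because $c'\notin I_w$. This contradicts (C5).

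The main thing to get right is the bookkeeping around the ideal structure of $I_w$: one needs $I_w$ to be a subgroup of $(R_w,+)$ and an $R_w$-bimodule so that $b-a\in I_w$ and $b'-c'$ stays in the relevant sets, but this is just the definition of the gauge ring and its radical (Remark~\ref{I-jacobson}), so there is no real obstacle here. The only subtlety is making sure the translation by $a$ preserves membership in $\CP$: from $a\le_\CP c$ we get $c-a\in\CP$ directly, and from $c\le_\CP b$ we get $b-c=(b-a)-(c-a)\in\CP$, so both $c-a$ and $b-(c-a)-a=b-c$ lie in $\CP$, but we actually want $b'=b-a$ rather than $b-c$; since $b-a\in\CP$ would need $a\le_\CP b$, which follows from $a\le_\CP c\le_\CP b$ by transitivity of $\le_\CP$. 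Thus all the pieces are in place, and the argument is essentially a direct translation of the failure of convexity into a nonzero element of $\o\CP\cap-\o\CP$, mirroring the properness check in the proof of Proposition~\ref{c4-c5}(2).
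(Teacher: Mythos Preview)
Your proof is correct and uses essentially the same idea as the paper's: exhibit that if $0\le_\CP a \le_\CP b$ with $b\in I_w$, then writing $b=a+c$ with $c\in\CP\cap R_w$ gives $\pi_w(a)=-\pi_w(c)\in\o\CP\cap -\o\CP$, and properness (P5) forces $\pi_w(a)=0$. The paper phrases this directly (assume (C5), conclude $a\in I_w$) rather than as the contrapositive, and omits your preamble about (P1)--(P4) and the explicit translation by $a$, but the substance is identical.
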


\begin{proof} 
  Let $a \in R_w$ and $b \in I_w$ be such that $0 \le_\CP a \le_\CP b$. So $a \in
  \CP$ and $b = a + c$ for some $c \in \CP \cap R_w$. Then $0 = \pi_w( b) = \pi_w( a) +
  \pi_w( c)$, therefore $\pi_w( a) = - \pi_w( c )\in \pi_w(\CP\cap R_w)  \cap -\pi_w(\CP\cap R_w) = \{0\}$, so $a
  \in I_w$.
\end{proof}

\begin{prop}\label{c5-c6}
  \textup{(C5)} implies that $v$ is compatible with $P$.
\end{prop}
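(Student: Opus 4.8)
The plan is to push the compatibility question down to the base field $F$ and then invoke one of the classical characterizations of compatibility recalled from \cite[Lemma~7.2]{Prestel84}; concretely, I would aim to establish that $1+I_v\subseteq P$, which is one of the equivalent forms of ``$v$ is compatible with $P$''.

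First I would use the previous proposition (that \textup{(C5)} implies \textup{(C4)}) to reduce to \textup{(C4)}, and then apply Proposition~\ref{c4-c5}(1), which under \textup{(C4)} gives that $I_v=I_w\cap F$ is $P$-convex in $R_v=R_w\cap F$. Here one uses the facts, already recorded at the start of the section, that $w|_F=v$ (so $R_w\cap F$ and $I_w\cap F$ really are the valuation ring and maximal ideal of $v$) and that $\CP\cap F=P$ (so that $\leq_P$ and $\leq_\CP$ agree on $F$). This reduces the statement to the purely field-theoretic implication: if $I_v$ is $P$-convex in $R_v$, then $v$ is compatible with $P$. For that, let $m\in I_v$ and suppose for contradiction that $1+m\notin P$. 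Since $P\in X_F$ we have $P\cup-P=F$, hence $-(1+m)\in P$, i.e.\ $0\leq_P 1\leq_P -m$. Now $0$ and $-m$ both lie in $I_v$, while $1$ lies in $R_v$, so $P$-convexity of $I_v$ in $R_v$ forces $1\in I_v$, contradicting $v(1)=0$. Therefore $1+I_v\subseteq P$, and $v$ is compatible with $P$ by \cite[Lemma~7.2]{Prestel84}.

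I do not expect a genuine obstacle here: the substantive content is the one-line observation that $P$-convexity of $I_v$ in $R_v$ immediately yields $1+I_v\subseteq P$, and everything else is the bookkeeping that transfers the algebra-level hypothesis \textup{(C5)} to the field-level statement via \textup{(C4)} and Proposition~\ref{c4-c5}(1). If one prefers to avoid routing through \textup{(C4)}, the same conclusion can be reached directly: restricting $\pi_w$ to $R_w\cap F=R_v$ shows that $\pi_v(P\cap R_v)\subseteq\pi_w(\CP\cap R_w)$, so properness of the prepositive cone in \textup{(C5)} forces $\pi_v(P\cap R_v)$ to be proper, which is exactly the $P$-convexity of $I_v$ in $R_v$ used above.
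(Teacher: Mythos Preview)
Your proof is correct. The paper's own argument is essentially your alternative route at the end: it observes directly that $\pi_v(P\cap R_v)\subseteq\pi_w(\CP\cap R_w)$ (using $\CP\cap F=P$), so properness of the latter forces $\pi_v(P\cap R_v)$ to be a proper preordering on $F_v$, hence an ordering, which is one of the standard characterizations of compatibility (the paper cites \cite[Thm.~3.6.1(1)]{Marshall}). Your main route via \textup{(C5)}$\Rightarrow$\textup{(C4)}$\Rightarrow$Proposition~\ref{c4-c5}(1)$\Rightarrow$($1+I_v\subseteq P$) is a small detour through already-proved implications, but it reaches the same field-level conclusion and is perfectly valid; the direct version simply avoids invoking the \textup{(C5)}$\Rightarrow$\textup{(C4)} step and Proposition~\ref{c4-c5}(1) separately.
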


\begin{proof} 
  It is sufficient to show that $\pi_v(P\cap R_v)$ is an ordering on $F_v$, cf. 
  \cite[Thm.~3.6.1(1)]{Marshall}.
  The set $\pi_v(P\cap R_v)$ is closed 
  under sum and product and satisfies $F_v\subseteq \pi_v(P\cap R_v) \cup -\pi_v(P\cap R_v)$. Thus to show that 
  $\pi_v(P\cap R_v)$ is an ordering on $F_v$ we only need to show that it is proper. This follows from the 
  observation that
  $\CP \cap R_v=\CP\cap F\cap R_v = P\cap R_v$ and thus $\pi_v(P\cap R_v)=\pi_w(P\cap R_v)
  \subseteq \pi_w(\CP\cap R_w)$,
  which is proper by the assumption and (P5).
\end{proof}

\begin{prop}\label{c7-c8} 
  \textup{(C6)} and \textup{(C7)} are equivalent. 
\end{prop}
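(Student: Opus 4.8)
The plan is to prove the two implications separately, using only the definitions. First I record two trivial observations: since $w\circ\s=w$, the ideal $I_w$ is $\s$-invariant, so $\Sym(I_w,\s)=\Sym(A,\s)\cap I_w$ is meaningful; and any $a\in I_w$ satisfies $w(a)>0=w(1)$, hence $a\neq 1$. Consequently (C6) says precisely that $a\in\CP\cap I_w$ implies $1-a\in\CP$.

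For (C7) $\Rightarrow$ (C6): given $a\in\CP\cap I_w$, the element $-a$ lies in $\Sym(I_w,\s)$ (it is symmetric because $\CP\subseteq\Sym(A,\s)$, and it lies in $I_w$), so $1-a=1+(-a)\in\CP$ by (C7); together with $a\neq 1$ this gives $a<_\CP 1$.

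For (C6) $\Rightarrow$ (C7), the idea is to complete the square. Let $s\in\Sym(I_w,\s)$ and set $v:=1+\tfrac12 s$. Since $\tfrac12 s\in I_w$ and $I_w=J(R_w)$ by Remark~\ref{I-jacobson}, $v$ is a unit of $R_w$, with $\s(v)=v$ and $\s(v^{-1})=v^{-1}$. Put $x:=\tfrac12 s\,v^{-1}$ and $t:=\s(x)x=v^{-1}(\tfrac14 s^2)v^{-1}$. Then $t$ is a hermitian square, hence $t\in\CP$ by (P3) (using $1\in\CP$), and $w(t)\ge 2w(x)\ge 2w(s)>0$ since $v^{-1}\in R_w$; thus $t\in\CP\cap I_w$. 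A direct computation using $\s(v)=v$ gives
\[
  \s(v)(1-t)v \;=\; v^2-\tfrac14 s^2 \;=\; \bigl(1+\tfrac12 s\bigr)^2-\tfrac14 s^2 \;=\; 1+s.
\]
Now (C6) applied to $t$ yields $1-t\in\CP$, and then (P3) with $a=v$ gives $1+s=\s(v)(1-t)v\in\CP$, which is (C7).

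The main obstacle is locating the identity $1+s=\s(v)(1-t)v$ with $t$ a hermitian square lying in $I_w$; once that is in hand, everything reduces to axiom (P3) for a positive cone, the definition of $I_w$, and the fact that $I_w$ is the Jacobson radical of $R_w$ (so that $v$ is a unit). The only further subtlety is the harmless halving of $s$, which is available since $v(2)=0$ in the settings of interest.
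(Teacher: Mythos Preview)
Your argument for (C7) $\Rightarrow$ (C6) is identical to the paper's. For (C6) $\Rightarrow$ (C7) you take a genuinely different route: the paper applies the eigenvalue characterization of $\CP$ (Proposition~\ref{prop:ev}), arguing that $s^2\in\CP\cap I_w$ gives $1-s^2\in\CP$, hence all $F_P$-eigenvalues of $s$ lie in $[-1,1]$, hence $1+s\in\CP$. Your completing-the-square identity $\s(v)(1-t)v=1+s$ with $t=\s(x)x\in\CP\cap I_w$ avoids eigenvalues and the passage to $F_P$ altogether, relying only on (P3), surmultiplicativity, and $I_w=J(R_w)$. This is more elementary and would go through verbatim for semisimple $A$, whereas the paper's argument needs the simple setting to invoke Proposition~\ref{prop:ev}. (A minor stylistic point: you reuse the letter $v$ for the element $1+\tfrac12 s$, clashing with the valuation $v$ on $F$; a different letter would help.)

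There is one point that is not ``harmless'' as written: you assert $v(2)=0$ ``in the settings of interest'' without proof, and the argument genuinely needs it (otherwise $\tfrac12 s$ need not lie in $I_w$, and your unit $1+\tfrac12 s$ need not lie in $R_w$). This is easy to justify from (C6) itself: if $v(2)>0$ then $2\in I_v=I_w\cap F$, and since $1\in\CP$ gives $2\in P=\CP\cap F$, we would have $2\in\CP\cap I_w$; (C6) would then force $1-2=-1\in\CP$, contradicting (P5). Hence (C6) already implies $v(2)=0$, and you should say so rather than leave it as an aside.
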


\begin{proof} 
  Assume that (C6) holds, i.e., $a\in \CP\cap I_w\Rightarrow 1-a \in
\CP$ (since $a\not=1$). Let $a\in \Sym(I_w,\s)$. Then $a=\s(a)$ and,
using (P3) and $1\in \CP$, $a^2 \in \CP\cap I_w$. Therefore
$1-a^2\in\CP$. By Proposition~\ref{prop:ev}, all $F_P$-eigenvalues of
$a^2$ are at most $1$ and so all $F_P$-eigenvalues of $a$ are in
$[-1,1]$. Thus $1+a$ only has nonnegative $F_P$-eigenvalues, and so
$1+a \in \CP$ by Proposition~\ref{prop:ev}.

  Conversely, assume that (C7) holds. Let $a\in \CP\cap I_w$. Then 
  $a$ and thus $-a$ are in $\Sym(I_w,\s)$. Therefore $1-a\in \CP$ by the
assumption, i.e., $a\leq_\CP 1$. The result follows since $1\not\in
I_w$.

\end{proof}

We now consider the $v_{\kk,P}$-gauge $w_{\kk,\CP}$, cf. \eqref{eq:gauge} in 
Section~\ref{sec:general}.

\begin{prop}\label{c1-c8} 
   \tu{(C0)}, \ldots, \tu{(C7)} hold for $w_{{\kk}, \CP}$.
\end{prop}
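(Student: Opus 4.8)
The plan is to reduce the statement to verifying conditions \textup{(C1)} and \textup{(C6)} for $w:=w_{{\kk},\CP}$, exploiting the implications already recorded before Proposition~\ref{c4-c5}: \textup{(C0)}$\,\Leftrightarrow\,$\textup{(C1)}, \textup{(C1)}$\,\Rightarrow\,$\textup{(C2)}, \textup{(C1)}$\,\Rightarrow\,$\textup{(C3)}, \textup{(C3)}$\,\Rightarrow\,$\textup{(C4)}, together with Proposition~\ref{c4-c5} for \textup{(C4)}$\,\Rightarrow\,$\textup{(C5)} and Proposition~\ref{c7-c8} for \textup{(C6)}$\,\Leftrightarrow\,$\textup{(C7)}. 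Both remaining conditions will be checked after extending scalars to $F_P$. Recall from \eqref{eq:gauge} that $w_{{\kk},\CP}=w_{{\kk},\CQ}\circ f_{F_P}\circ\zeta$, where $\CQ=\PSD_{n_P}(D_P,P')$, $P'$ is the unique ordering of $F_P$, and $w_{{\kk},\CQ}$ is the gauge on $(M_{n_P}(D_P),\bt)$ attached to $\CQ$ over $F_P$ with the valuation $v':=v_{{\kk},P'}$ extending $v_{{\kk},P}$, cf.\ Definition~\ref{wadcom3}; note that $v'$ is compatible with $P'$. For $a\in\Sym(A,\s)$, applying Lemma~\ref{wadcom1}(1) to $f_{F_P}(a\ox1)\in\Sym(M_{n_P}(D_P),\bt)$ and using the definition of the $F_P$-eigenvalues $\lambda_1,\dots,\lambda_{n_P}$ of $a$ gives the basic formula $w_{{\kk},\CP}(a)=\min_i v'(\lambda_i)$. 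Moreover, since $\CP\ox1\subseteq\CP'$ and $f_{F_P}(\CP')=\CQ$, the relation $0\leq_\CP a\leq_\CP b$ in $A$ transports to the statement that $A':=f_{F_P}(a\ox1)$, $B':=f_{F_P}(b\ox1)$ and $B'-A'$ are positive semidefinite hermitian matrices over $D_P$ with respect to $P'$.

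For \textup{(C6)}: if $a\in\CP\cap I_{w_{{\kk},\CP}}$, then all $F_P$-eigenvalues $\lambda_i$ of $a$ are $\geq_{P'}0$ by Proposition~\ref{prop:ev}, and $w_{{\kk},\CP}(a)>0$ forces $v'(\lambda_i)>0=v'(1)$ for every $i$; compatibility of $v'$ with $P'$ then gives $\lambda_i<_{P'}1$, so all $F_P$-eigenvalues of $a$ are $\leq_{P'}1$ and $1-a\in\CP$ by Proposition~\ref{prop:ev}, i.e.\ $a\leq_\CP1$. Since $w_{{\kk},\CP}(1)=0\neq w_{{\kk},\CP}(a)$, also $a\neq1$, hence $a<_\CP1$.

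For \textup{(C1)}: with $A'$, $B'$ as above, if $B'=0$ then $A'=0$ and $w_{{\kk},\CP}(b)\leq w_{{\kk},\CP}(a)$ is trivial; otherwise choose an $F_P$-eigenvalue $\mu>_{P'}0$ of $B'$ with $v'(\mu)=\min_j v'(\mu_j)=w_{{\kk},\CP}(b)$. Every eigenvalue $\mu_j$ of $B'$ is $\geq_{P'}0$ with $v'(\mu_j)\geq v'(\mu)$, so $\mu_j\mu^{-1}\in R_{v'}=R_{{\kk},P'}$, whence there is $m\in{\kk}^\x\cap P'$ with $\mu_j\leq_{P'}m\mu$ for all $j$; diagonalizing $B'$ shows $m\mu\cdot I-B'$ is positive semidefinite, and therefore so is $m\mu\cdot I-A'=(m\mu\cdot I-B')+(B'-A')$. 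Hence every $F_P$-eigenvalue $\lambda_i$ of $a$ satisfies $0\leq_{P'}\lambda_i\leq_{P'}m\mu$; since $v'(m)=0$, compatibility of $v'$ with $P'$ gives $v'(\lambda_i)\geq v'(m\mu)=v'(\mu)$, and the basic formula yields $w_{{\kk},\CP}(a)=\min_i v'(\lambda_i)\geq v'(\mu)=w_{{\kk},\CP}(b)$.

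The step I expect to be the main obstacle is \textup{(C1)}; the points to get right are that the eigenvalue comparison for $a$ and $b$ uses only positive semidefiniteness of $B'-A'$ (so that it is insensitive to the passage to the matrix algebra over the real closure) and that the auxiliary upper bound $m\mu$, which lies in $F_P$ rather than in $F$, enters only positive-semidefiniteness statements over $F_P$. All remaining conditions then follow formally from the implications recorded above.
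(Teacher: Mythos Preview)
Your proof is correct. The treatment of \textup{(C6)} is essentially identical to the paper's: both use Lemma~\ref{wadcom1}(1) to read off $v'(\lambda_i)>0$ from $w_{\kk,\CP}(a)>0$, and then compatibility of $v'$ with $P'$ to conclude $\lambda_i<_{P'}1$.

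The difference lies in how you handle the remaining condition. The paper proves \textup{(C0)} directly, working in $(M_{n_P}(D_P),\bt)$ and invoking the Rayleigh-quotient description of $w'$ on $\CP'$ from Lemma~\ref{wadcom1}(2) (with equality, since over $F_P$ all eigenvalues lie in the base field): for $a,b\in\CP'$ one has $v'(\bar x^t(a+b)x)=\min\{v'(\bar x^t a x),v'(\bar x^t b x)\}$ by compatibility, and minimizing over $x$ gives $w'(a+b)\leq\min\{w'(a),w'(b)\}$. You instead prove \textup{(C1)} by a spectral comparison: you bound the eigenvalues of $B'$ above by $m\mu$ with $m\in\kk^\times$ (so $v'(m)=0$), deduce that $m\mu\cdot I-A'$ is positive semidefinite as a sum of PSD matrices, and then use compatibility to compare $v'(\lambda_i)$ with $v'(\mu)$. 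Both arguments rest on the eigenvalue formula of Lemma~\ref{wadcom1}(1) and compatibility of $v'$ with $P'$; the paper's route is slightly slicker in that it avoids introducing the auxiliary bound $m\mu$, while yours has the advantage of staying entirely within Lemma~\ref{wadcom1}(1) and never needing the equality case of part~(2).
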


\begin{proof}
  We start by proving (C0), from which (C1) up to (C5) follow. 
  
  We first consider the case where $F=F_P$, $(A,\s)=(M_n(E), \bt)$, and $P'$ is the unique ordering
  on $F_P$ (with $E$ as in Section~\ref{sec:index-two}). Let $\CP'=\PSD_n(E,P')$ and let 
  $a,b\in\CP'$. Then, for every $x \in E^n$, we have $\bar x^t a x$, $\bar x^t b x \in P'$. Hence,
  by the $P'$-convexity of $v':=v_{\kk,P'}$, 
  \[
    v'(\bar x^t (a+b) x) = v'(\bar x^t a x + \bar x^t b x) = \min \{v'(\bar x^t a x), 
    v'(\bar x^t b x) \}.
  \]
  Therefore, by Lemma~\ref{wadcom1}(2) (and since all $F_P$-eigenvalues are trivially 
  in $F_P$)  for $w':=w_{\kk,\CP'}$,
  \[
    w'(a+b) = \min_{x\in E^n \sm \{0\}} \{v'(\bar x^t (a+b) x) - v'(\bar x^t x)  \}
    \leq \min_{x\in E^n \sm \{0\}} \{v'(\bar x^t a x) - v'(\bar x^t x)  \} = w'(a).
  \]
  Likewise, $w'(a+b)\leq w'(b)$. Therefore, $w'(a+b)\leq \min \{w'(a), w'(b) \}$ and the reverse
  inequality holds for any value function. The case for general $(A,\s)$ then follows easily by
  extension of scalars to $F_P$ and using the map $f_{F_P}$, cf.
  diagram~\eqref{diag-w} at the start of Section~\ref{sec:general}.
  
  Finally, we prove (C6). Then (C7) follows by Proposition~\ref{c7-c8}.
  For $a \in \CP\cap I_{w_{{\kk},\CP}}$, $v'(\lambda)>0$ for each $F_P$-eigenvalue $\lambda$ of
  $a$, cf. Lemma~\ref{wadcom1}(1). Therefore, each $F_P$-eigenvalue $1-\lambda$ of $1-a$ satisfies
  $1-\lambda>_{P'}0$ by convexity of $I_{v'}$. Hence, $f_{F_P} ((1-a)\ox 1) \in \CP'$, so
  $1-a\in \CP$.
\end{proof}

The fact that (C7) holds for $w_{{\kk},\CP}$ can be strengthened as follows:

\begin{prop}\label{prop:complicated}
  Let $c \in \CP \cap R_{w_{{\kk},\CP}}$ be such that $\pi_{w_{{\kk},\CP}}(c) \in A_0^\x$, and let $\ve \in
  \Sym(I_{w_{{\kk},\CP}}, \s)$. Then $c+\ve \in \CP$.
\end{prop}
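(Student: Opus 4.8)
The plan is to reduce to the split case over $F_P$ by scalar extension, exactly as in the proof of Proposition~\ref{c1-c8}, and then work with positive semidefinite matrices. Extend $c$ and $\ve$ to $A \ox_F F_P$ and transport via $f_{F_P}$ to $M_{n_P}(D_P)$, where $\CP'$ becomes $\PSD_{n_P}(D_P,P')$ and $w_{\kk,\CP}$ becomes $w_{\kk,\CQ}$, which on a matrix $(x_{ij})$ is $\min_{i,j} v'_E(x_{ij})$. So it suffices to show: if $C \in \PSD_{n_P}(D_P, P')$ has $\pi_{w_{\kk,\CQ}}(C)$ invertible in $M_{n_P}(E_0)$ and $\mathcal E \in \Sym$ with all entries of positive $v'_E$-value, then $C + \mathcal E \in \PSD_{n_P}(D_P, P')$, i.e.\ $\bar x^t (C+\mathcal E) x \in P'$ for all $x \in (E\ox_F F_P)^{n_P} \smz$.

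The key observation is that $\pi_{w_{\kk,\CQ}}(C)$ invertible forces $w_{\kk,\CQ}(C) = 0$ (the residue matrix has entries of value $\ge 0$, and invertibility of its image rules out $w(C) > 0$), and moreover the residue matrix $\pi(C) \in \PSD_{n_P}(E_0, \o P)$ is itself \emph{positive definite} over the residue: since it is symmetric PSD with nonzero determinant in the formally real field $E_0 \cap F_0$, all its residue-eigenvalues are $>_{\o P} 0$. This is where I would invoke Proposition~\ref{prop:ev} (or Proposition~\ref{prop:tired}/Example~\ref{PC-PSD}) at the residue level. The point is then that for $x$ with $\min_i v'_E(x_i) = 0$ (which we may assume by scaling $x$ by a suitable scalar, using $v'_E$ multiplicative on $E\ox F_P$), we have $v'(\bar x^t C x) = w_{\kk,\CQ}(C) + v'(\bar x^t x) = 0$ by Lemma~\ref{wadcom1}(3) applied to $C$ — legitimate precisely because all residue-eigenvalues of $C$ have the same $v'$-value $0$ — whereas $v'(\bar x^t \mathcal E x) > 0$ since every entry of $\mathcal E$ has strictly positive $v'_E$-value and $\bar x^t \mathcal E x$ is a sum of terms $\bar x_i \mathcal E_{ij} x_j$ each of positive value. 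Then $\bar x^t(C+\mathcal E)x = \bar x^t C x (1 + (\bar x^t C x)^{-1}\bar x^t \mathcal E x)$ with the second factor a $1$-unit in the valuation ring of $F_P$ at $v'$; since $v'$ is compatible with $P'$, the residue of $\bar x^t C x$ is $>_{\o{P'}} 0$, so $\bar x^t C x \in P'$ and $1 + (\text{positive value element}) \in P'$, giving $\bar x^t(C+\mathcal E)x \in P'$. Unwinding the identification, $c + \ve \in \CP$.

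The main obstacle is justifying that the residue of $C$ is positive \emph{definite} (not merely semidefinite) and hence that $v'(\bar x^t C x) = v'(\bar x^t x)$ for \emph{all} nonzero $x$, i.e.\ that Lemma~\ref{wadcom1}(3)'s hypothesis $v'(\lambda_1) = \cdots = v'(\lambda_{n_P})$ holds. I would argue: $w_{\kk,\CQ}(C) = 0$ means $\min_i v'(\lambda_i) = 0$ by Lemma~\ref{wadcom1}(1); and if some $v'(\lambda_j) > 0$ then that residue-eigenvalue would vanish in $E_0$, forcing $\det \pi(C) = 0$, contradicting $\pi(C)$ invertible — so all $v'(\lambda_i) = 0$. (There is a minor wrinkle when $D_P$ is quaternionic, where "eigenvalue" and "determinant of the residue" must be handled via the apparatus of Appendix~\ref{sec:quat-mat}, but the argument via Lemma~\ref{wadcom1} is stated there to cover that case.) Once equal values is established, the rest is the routine valuation-theoretic computation sketched above, and the final descent from $F_P$ back to $F$ is identical to the one already used in Proposition~\ref{c1-c8}.
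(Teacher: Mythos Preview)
Your proposal is correct and follows essentially the same route as the paper: reduce via $f_{F_P}$ to $(M_{n_P}(D_P),\bt)$ with $\CP' = \PSD_{n_P}(D_P,P')$, show that all $F_P$-eigenvalues of $c$ have $v'$-value $0$, then use Lemma~\ref{wadcom1}(3) to get $v'(\bar x^t c x) = v'(\bar x^t x)$ and compare with $v'(\bar x^t \ve x) > v'(\bar x^t x)$, concluding by compatibility of $v'$ with $P'$. The one place where the paper is slightly slicker is the ``all eigenvalues have value $0$'' step: rather than descending the unitary diagonalization to the residue (which is what your determinant argument implicitly requires, and which needs the extra observation that $\pi(U)$ is unitary whenever $U$ is), the paper simply notes that $\pi_w(c)\in A_0^\x$ forces $c\in R_w^\x$ because $I_w$ is the Jacobson radical of $R_w$ (Remark~\ref{I-jacobson}), whence $c^{-1}\in R_w$ and Lemma~\ref{wadcom1}(1) applied to $c^{-1}$ gives $v'(\lambda_i)\le 0$ for every~$i$.
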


\begin{proof}
  (The idea of this proof is due to A.~Wadsworth.)  
  We use the notation from the start of Section~\ref{sec:general}.
  Since $\CP = A \cap \CP'$, $R_{w_{\kk,\CP}} = A \cap
  R_{w_{\kk,\CP'}}$,  $I_{w_{\kk,\CP}} = A \cap I_{w_{\kk,\CP'}}$ and
  $f_{F_P}$ is an isomorphism of algebras with involution, we may assume that
  $F=F_P$, $P$ is the unique ordering on $F_P$, $(A,\s) = (M_n(E), \bt)$ (with $E$
  as in Section~\ref{sec:index-two}), and $\CP = \PSD_n(E,P)$.  Let $v=v_{\kk,P}$ and 
  $w=w_{\kk,\CP}$.

  At the end of the following argument we use that, since $v$ is compatible with
  $P$, for every $a, b \in F$ with $v(a) > v(b)$, $b >_P 0$ implies $a+b >_P 0$
  (as can easily be checked).
  
  We show $c + \ve \in \PSD_n(E, P)$ by showing that $\bar{x}^t (c +
  \ve) x >_P 0$ for every $x \in E^n \setminus \{0\}$. Observe that, since
  $I_w=\ker(\pi_w)$ is the Jacobson radical of $R_w$ (see
  Remark~\ref{I-jacobson}), $\pi_w(c) \in A_0^\x$ implies $c \in R_w^\x$. If
  $\lambda_1, \ldots, \lambda_n$ are the eigenvalues of $c$, then
  $\lambda_1^{-1}, \ldots, \lambda_n^{-1}$ are the eigenvalues of $c^{-1}$, and
  since both $c$ and $c^{-1}$ are in $R_w \setminus I_w$, Lemma~\ref{wadcom1}(1)
  yields $v(\lambda_i) \ge 0$ and $v(\lambda_i) \le 0$ for every $i$, i.e.,
  $v(\lambda_1) = \cdots = v(\lambda_n) = 0$. In particular, and since $c \in
  \PSD_n(E, P)$, $\lambda_1 >_P 0, \ldots, \lambda_n >_P 0$.

  Let $x \in E^n \setminus \{0\}$. Since $w(\ve) > 0$ we obtain by
  Lemma~\ref{wadcom1}(2) that $v(\bar{x}^t\ve x) \ge w(\ve) +
  v(\bar{x}^tx) > v(\bar{x}^tx)$. But, by Lemma~\ref{wadcom1}(3),
  $v(\bar{x}^tcx) = v(\bar{x}^tx)$, so that $v(\bar{x}^t\ve x) >
  v(\bar{x}^t c x)$. Since $\bar{x}^tcx >_P 0$ and $v$ is compatible with $P$,
  it follows that $\bar{x}^tcx + \bar{x}^t \ve x >_P 0$, i.e.,
  $\bar{x}^t(c+ \ve) x >_P 0$.
\end{proof}

\begin{prop}\label{equivalences}
   \textup{(C0)},\ldots, \textup{(C7)} are equivalent. 
\end{prop}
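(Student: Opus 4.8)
\textbf{Plan for the proof of Proposition~\ref{equivalences}.}
The plan is to combine the elementary implications already established with a uniqueness argument that closes the remaining gap. From the discussion preceding Proposition~\ref{c4-c5} we already have, for an arbitrary $\s$-invariant $v$-gauge $w$ on $A$, the implications (C0) $\Leftrightarrow$ (C1) $\Rightarrow$ (C2), (C1) $\Rightarrow$ (C3) $\Rightarrow$ (C4), together with (C4) $\Leftrightarrow$ (C5) and (C6) $\Leftrightarrow$ (C7). So it suffices to close a cycle; I would aim to show that (C5) forces $w$ to be $\s$-special, hence, by Proposition~\ref{inspired}(2) (whose hypotheses apply since $F = \Sym(A,\s)\cap Z(A)$ and one can arrange $\ch F_v=0$ in the relevant setting), $w$ coincides with the unique $\s$-invariant gauge, which up to the identification of valuations must be $w_{\kk,\CP}$ for the subfield $\kk$ with $v=v_{\kk,P}$ — and for that gauge \emph{all} of (C0)--(C7) hold by Proposition~\ref{c1-c8}.

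First I would spell out the reduction. Assume (C5) for $w$. By Proposition~\ref{c5-c6}, $v$ is compatible with $P$, so $v=v_{\kk,P}$ for some subfield $\kk$ of $F$ (by the result of Prestel recalled in the Notation section). By Proposition~\ref{c4-c5}(1), $I_v$ is $P$-convex in $R_v$, so $\o P=\pi_v(P\cap R_v)$ is an ordering on $F_v$ and in particular $F_v$ is formally real, which gives $\ch F_v=0$. Next, (C5) says $\pi_w(\CP\cap R_w)$ is a prepositive cone on $(A_0,\s_0)$; in particular it is proper, so $\s_0$ is anisotropic (a nonzero $a$ with $\s_0(a)a=0$ would make the form $\qf 1_{\s_0}$, and hence by universality all of $\Sym(A_0,\s_0)$, contained in any positive cone refining $\pi_w(\CP\cap R_w)$, contradicting properness — the same argument as in Lemma~\ref{lem:sszero}(2)). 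Then \cite[Prop.~1.1 (b)$\Rightarrow$(a)]{T-W-2011}, as recalled in the text just before Lemma~\ref{power-of-2}, shows $w$ is $\s$-special.

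Now the uniqueness step: since $w$ is a $\s$-special $v_{\kk,P}$-gauge on $A$ and $\s\ox\id_{F^h}$ is anisotropic for a Henselization $F^h\subseteq F_P$ (this anisotropy is automatic from $\s$-specialness, cf.\ Remark~\ref{spec-aniso} applied after scalar extension, or directly from Proposition~\ref{inspired}), Proposition~\ref{inspired}(2) tells us $w$ is \emph{the} $\s$-invariant $v_{\kk,P}$-gauge on $A$. But $w_{\kk,\CP}$, constructed in Section~\ref{sec:general}, is also a $\s$-invariant (indeed $\s$-special) $v_{\kk,P}$-gauge on $A$ by Theorem~\ref{gauge}. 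Hence $w=w_{\kk,\CP}$, and by Proposition~\ref{c1-c8} all of (C0),\ldots,(C7) hold for $w$. This shows (C5) $\Rightarrow$ (C0), closing the loop (C0) $\Rightarrow$ (C1) $\Rightarrow$ (C3) $\Rightarrow$ (C4) $\Leftrightarrow$ (C5) $\Rightarrow$ (C0), so (C0)--(C5) are all equivalent; and since (C5) holds for $w_{\kk,\CP}$ which also satisfies (C6)--(C7), while conversely (C6) $\Leftrightarrow$ (C7) and one checks directly that (C7) for $w$ forces, say, (C0) — or more simply that any $w$ satisfying (C6)/(C7) again has $\s_0$ anisotropic via Proposition~\ref{prop:ev}-type reasoning, hence is $\s$-special, hence equals $w_{\kk,\CP}$ — all eight conditions merge into one.

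\textbf{Main obstacle.}
The delicate point is the very last merge: showing (C6)/(C7) alone implies the rest, since (C6)--(C7) are phrased purely in terms of $\CP$ and $I_w$ and, a priori, do not obviously yield compatibility of $v$ with $P$ the way (C5) does. I expect the cleanest route is: from (C7), $1+\Sym(I_w,\s)\subseteq\CP$, deduce that for symmetric $\ve\in I_w$ both $1+\ve$ and $1-\ve$ lie in $\CP$, hence $1-\ve^2\in\CP$; feeding this into the $F_P$-eigenvalue criterion of Proposition~\ref{prop:ev} bounds the eigenvalues of elements of $I_w$ and lets one recover that $I_w\cap F=I_v$ is $P$-convex in $R_v$, i.e.\ (a fragment of) (C5) — after which the uniqueness machinery above applies verbatim. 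Making that eigenvalue bookkeeping precise, while keeping the argument from circularly invoking (C5), is where the care is needed; everything else is assembling already-proven propositions.
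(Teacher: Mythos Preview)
Your uniqueness argument is on the right track, but you are working much harder than necessary and leaving genuine gaps. The paper's proof is a single uniform stroke that you almost stumble upon at the very end.

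\textbf{The missed key observation.} Since $R_v=R_w\cap F$, $I_v=I_w\cap F$ and $P=\CP\cap F$ (as $1\in\CP$), each of (C0)--(C7) on $(A,\s,w,\CP)$ immediately implies the corresponding statement on $(F,\id,v,P)$. In the field case these eight conditions are classically all equivalent to compatibility of $v$ with $P$. So \emph{any one} of (C0)--(C7) gives $v=v_{\kk,P}$ for some $\kk$, with $\ch F_v=0$; now Theorem~\ref{gauge} produces the $\s$-special gauge $w_{\kk,\CP}$, and Proposition~\ref{inspired}(2) says it is the \emph{unique} $\s$-invariant $v$-gauge, hence $w=w_{\kk,\CP}$. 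By Proposition~\ref{c1-c8}, all of (C0)--(C7) hold. Done --- uniformly, for every condition at once.

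\textbf{Where your argument goes astray.} First, your detour through ``$\s_0$ anisotropic $\Rightarrow$ $w$ $\s$-special'' is unnecessary: uniqueness in Proposition~\ref{inspired}(2) is for $\s$-\emph{invariant} gauges, and $w$ is $\s$-invariant by standing hypothesis; you never need to prove $w$ itself is $\s$-special. Second, your chain (C0)$\Rightarrow$(C1)$\Rightarrow$(C3)$\Rightarrow$(C4)$\Leftrightarrow$(C5)$\Rightarrow$(C0) omits (C2) entirely --- you have (C1)$\Rightarrow$(C2) but nothing coming back. Third, your ``main obstacle'' for (C6)/(C7) dissolves instantly under the restriction-to-$F$ idea: if $\ve\in I_v\subseteq\Sym(I_w,\s)$ then (C7) gives $1+\ve\in\CP\cap F=P$, i.e.\ $1+I_v\subseteq P$, a standard field-level compatibility condition. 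No eigenvalue bookkeeping is needed; you were on the verge of this when you wrote ``recover that $I_w\cap F=I_v$ is $P$-convex in $R_v$'', but you did not notice that the same trick disposes of (C2), (C6), (C7) --- indeed of all eight conditions --- in one line.
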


\begin{proof} 
  Since $R_v=R_w \cap F$, $I_v=I_w\cap F$ and $P=\CP \cap F$ (since
$1\in\CP$), each of the properties \textup{(C0)},\ldots,
\textup{(C7)} implies the corresponding property where $A$ is
replaced by $F$, $w$ is replaced by $v$, and $\CP$ is replaced by
$P$. Therefore, any of these properties implies that $v$ is
compatible with $P$, i.e., $v=v_{{\kk},P}$ for some subfield ${\kk}$
of $F$, cf. \cite[Thm.~7.21]{Prestel84}. By Theorem~\ref{gauge},
$w_{{\kk},\CP}$ is the unique $\s$-invariant 
$v_{{\kk},P}$-gauge on $(A,\s)$ and thus $w=w_{{\kk},\CP}$. The other properties then hold since they hold for
$w=w_{{\kk},\CP}$ by Proposition~\ref{c1-c8}.
\end{proof}

\begin{defi} We say that $w$ (recall that $w$ is assumed 
  $\s$-invariant)
  and $\CP$ are \emph{compatible} if any 
  one of the equivalent properties (C0), \ldots, (C7)
  holds. 
\end{defi}

\begin{prop}
  If $w$ is compatible with $\CP$, then there exists a subfield $\kk$ 
  of $F$ such that $v=v_{\kk,P}$
  and $w=w_{\kk,\CP}$. In particular, $w$ is $\s$-special.  
\end{prop}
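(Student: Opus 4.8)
The plan is to reduce this to results already in hand, following the pattern of the proof of Proposition~\ref{equivalences}. First I would unwind the hypothesis: $w$ compatible with $\CP$ means that one of (C0), \ldots, (C7) holds for $w$ and $\CP$. Because $R_v = R_w \cap F$, $I_v = I_w \cap F$ and $P = \CP \cap F$ (the last since $1 \in \CP$, cf.\ Remark~\ref{oneinp}(2)), whichever of (C0), \ldots, (C7) holds for $w$ and $\CP$ restricts to the analogous statement for $v$ and $P$ on $F$. In each case this restricted statement forces $v$ to be compatible with $P$ in the classical field sense, so by \cite[Thm.~7.21]{Prestel84} there is a subfield $\kk$ of $F$ with $v = v_{\kk,P}$.

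Next I would invoke Theorem~\ref{gauge} for this $\kk$: the map $w_{\kk,\CP}$ is a $\s$-special $v_{\kk,P}$-gauge on $A$, and it is the \emph{only} $\s$-invariant $v_{\kk,P}$-gauge on $A$. Since $w$ is by assumption a $\s$-invariant $v$-gauge and $v = v_{\kk,P}$, this uniqueness gives $w = w_{\kk,\CP}$. The final assertion then follows at once, since $w_{\kk,\CP}$ is $\s$-special by Theorem~\ref{gauge}.

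I do not expect any real obstacle here; the statement is essentially a repackaging of (the proof of) Proposition~\ref{equivalences} together with Theorem~\ref{gauge}. The only point worth a moment's attention is that the uniqueness of the $\s$-invariant $v_{\kk,P}$-gauge used in Theorem~\ref{gauge} relies on Proposition~\ref{inspired}, whose hypothesis includes $\ch F_v = 0$. This is automatic once $v$ is compatible with $P$: then $\ovl P = \pi_v(P \cap R_v)$ is an ordering on $F_v$, so $F_v$ is formally real and hence of characteristic zero, and everything applies.
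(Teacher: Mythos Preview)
Your proposal is correct and follows essentially the same approach as the paper: the paper's proof simply refers back to the argument of Proposition~\ref{equivalences} to get $v=v_{\kk,P}$ and then invokes the uniqueness part of Theorem~\ref{gauge} to conclude $w=w_{\kk,\CP}$. Your additional remark about $\ch F_v=0$ is a reasonable clarification but not something the paper spells out here.
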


\begin{proof}
  As observed in the proof of Proposition~\ref{equivalences}, 
  $v=v_{{\kk},P}$ for some subfield
  ${\kk}$ of $F$. Since $w$ is a $\s$-invariant $v$-gauge on $A$, 
  we have $w= w_{\kk,\CP}$ by Theorem~\ref{gauge}.
\end{proof}

 Assume now that $w$ is compatible with $\CP$.
 Let $Q$ be the ordering on $F_v$ that is induced by $P$ and let
 $\CQ$ be the unique positive cone on $(A_0,\s_0)$ that is over $Q$ and such that $1\in \CQ$. 
 Observe that $\CQ$ exists 
 by property~(C5) and Lemma~\ref{unique-mpppc}. 
 We denote by $P'$ the unique ordering on $F_P$ and by $v'$ the natural extension
 of $v=v_{{\kk},P}$ to $F_P$, given by the convex closure of ${\kk}$ with respect to
 $P'$, i.e., $v'=v_{{\kk},P'}$.
 The following lemma is folklore. We provide a proof for the convenience of the reader.

\begin{lemma} \label{K-S} 
  The residue field $(F_P)_{v'}$ is a real closure of $F_v$ at $Q$.
\end{lemma}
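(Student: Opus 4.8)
The plan is to show that $(F_P)_{v'}$ is real closed and that $Q$ becomes its unique ordering, which (together with $(F_P)_{v'}$ being algebraic over $F_v$ — a point that also needs care) identifies it as a real closure of $F_v$ at $Q$. First I would recall that $v' = v_{\kk,P'}$ is by construction the convex closure of $\kk$ in $F_P$ with respect to the unique ordering $P'$ of the real closed field $F_P$. Since $F_P$ is real closed, it is in particular Henselian with respect to any valuation whose residue characteristic is $0$; more to the point, the residue field of any valuation on a real closed field is itself real closed. This is the key input: $(F_P)_{v'}$ is real closed, hence has a unique ordering.

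Next I would check that the ordering $Q$ on $F_v$, which was defined as the ordering induced by $P$ via $\pi_v$ (i.e. $\ovl P = \pi_v(P \cap R_v)$), is exactly the restriction to $F_v$ of the unique ordering on $(F_P)_{v'}$. Since $v'$ extends $v$, there is a natural inclusion $F_v \hookrightarrow (F_P)_{v'}$, compatible with the projections $\pi_v$ and $\pi_{v'}$. The ordering induced on $(F_P)_{v'}$ by $P'$ is $\pi_{v'}(P' \cap R_{v'})$, and its restriction to $F_v$ is $\pi_v(P' \cap F \cap R_{v'}) = \pi_v(P \cap R_v) = Q$ because $P' \cap F = P$ and $R_{v'} \cap F = R_v$. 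So $Q$ is indeed the restriction of the (unique) ordering of $(F_P)_{v'}$.

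The remaining point — and I expect this to be the main obstacle — is to verify that $(F_P)_{v'}$ is \emph{algebraic} over $F_v$, so that a real closed field algebraic over $(F_v, Q)$ is precisely a real closure of $F_v$ at $Q$. This is where the specific construction $v' = v_{\kk,P'}$ (rather than an arbitrary valuation on $F_P$) is essential: $v$ is compatible with $P$ because $w$ is compatible with $\CP$, so $v = v_{\kk,P}$ for a subfield $\kk$ of $F$, and by the classical Baer–Krull setup (cf. \cite[Thm.~7.21, and the discussion of convex closures]{Prestel84}) $F_v$ is algebraic over $\kk_{v|\kk}$ and likewise $(F_P)_{v'}$ is algebraic over $\kk$ modulo its valuation; chasing the inclusions $\kk \subseteq F \subseteq F_P$ and using that $F_P$ is algebraic over $F$ gives that $(F_P)_{v'}$ is algebraic over $F_v$. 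Once this is in place, the conclusion follows: $(F_P)_{v'}$ is a real closed field, algebraic over $F_v$, carrying an ordering restricting to $Q$, hence it is a real closure of $(F_v, Q)$, which is the assertion of the lemma.
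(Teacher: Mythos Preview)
Your proposal follows the same three-step approach as the paper: $(F_P)_{v'}$ is real closed, the induced ordering restricts to $Q$ on $F_v$, and $(F_P)_{v'}$ is algebraic over $F_v$. The first two steps are fine (and your treatment of the ordering is actually more careful than the paper's one-line ``it is immediate'').

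However, you badly overestimate the difficulty of the algebraicity step, and your detour through $\kk$ contains a false claim. The assertion that ``$F_v$ is algebraic over $\kk_{v|\kk}$'' is not true in general: the restriction of $v_{\kk,P}$ to $\kk$ is trivial (every element of $\kk$ lies in the convex hull of $\kk$), so $\kk_{v|\kk} = \kk$, and $F_v$ need not be algebraic over $\kk$ (take $\kk = \Q$, $F = \R$, $P$ the usual ordering: then $v$ is trivial and $F_v = \R$). Fortunately this detour is entirely unnecessary. The paper dispatches the point in a parenthetical: since $F_P$ is algebraic over $F$, the residue extension $(F_P)_{v'}/F_v$ is algebraic --- this is the standard fact that if $L/K$ is algebraic and $w$ extends $v$, then $L_w/K_v$ is algebraic (any $\bar\alpha \in L_w$ lifts to $\alpha \in R_w$ satisfying a monic polynomial over $K$; scaling gives one over $R_v$, and reducing gives a nonzero polynomial over $K_v$ killing $\bar\alpha$). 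You even invoke ``$F_P$ is algebraic over $F$'' at the very end of your paragraph, which is all that was ever needed; drop everything before it.
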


\begin{proof}
  By \cite[Thm.~ 1, p.~66]{K-S-1989}, $(F_P)_{v'}$ is
  real closed. It is also algebraic over $F_v$ (since $F_P$ is algebraic over
  $F$) and it is immediate that
  the ordering induced on $(F_P)_{v'}$ by $P'$ extends the ordering on
  $F_v$ induced by $P$ (i.e., $Q$). Therefore $(F_P)_{v'}$ is a real closure of
  $F_v$ at $Q$. 
\end{proof}

In light of Lemma~\ref{K-S}, we may take $ (F_P)_{v'}$ as a real closure $(F_v)_Q$ of $F_v$ at $Q$
in the considerations below. Furthermore, $P'$ is 
then a lifting of the unique ordering $Q'$ of   $(F_v)_Q$.

\begin{thm}\label{prop:maxl}  
  $\pi_w(\CP \cap R_w)$ is a positive cone on
  $(A_0,\s_0)$ over $Q$ containing $1$.
\end{thm}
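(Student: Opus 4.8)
The plan is to show that $\o{\CP} := \pi_w(\CP\cap R_w)$, which is already known to be a prepositive cone on $(A_0,\s_0)$ over $Q$ by Proposition~\ref{c4-c5}, is in fact maximal, hence a positive cone. The natural strategy is to exploit uniqueness: by Lemma~\ref{unique-mpppc} there is a \emph{unique} positive cone $\CQ$ on $(A_0,\s_0)$ over $Q$ containing $1$, and since $\o 1 = \pi_w(1) \in \o\CP$ (because $1 \in \CP \cap R_w$), it suffices to prove that $\o\CP$ equals $\CQ$, or equivalently that $\CQ \subseteq \o\CP$ (the reverse inclusion then follows from $\o\CP$ being prepositive with $1\in\o\CP$ and the maximality of $\CQ$). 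So the real task is: every element of $\CQ$ lifts to an element of $\CP \cap R_w$.

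First I would reduce to the split case. Since $w = w_{\kk,\CP}$ is compatible with $\CP$ (by the Proposition preceding Lemma~\ref{K-S}), and since $\CP = A \cap \CP'$, $R_w = A\cap R_{w'}$, $I_w = A\cap I_{w'}$ where $w' = w_{\kk,\CP'}$ on $A\ox_F F_P$, and $f_{F_P}$ is an isomorphism of algebras with involution, I can pass to $(M_{n_P}(D_P),\bt)$ with $\CP = \PSD_{n_P}(D_P,P)$, $v = v_{\kk,P}$ over a real closed base would be the cleanest but in general $F$ need not be real closed; what matters is that $(F_v)_Q = (F_P)_{v'}$ is real closed by Lemma~\ref{K-S}. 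Using the explicit description of $w_{\kk,\CP}$ from Section~\ref{sec:index-two} and Lemma~\ref{wadcom1}, I would identify $A_0 = R_w/I_w$ with $M_{n_P}(E_0)$ equipped with the conjugate-transpose involution $(\bbarr)_0^t$ (cf. the computation in the proof of Lemma~\ref{wbtspecial}), and $\o\CP$ with a prepositive cone over $Q$ containing $1$ — which by Example~\ref{PC-PSD} and Lemma~\ref{unique-mpppc} must then be exactly $\PSD_{n_P}(E_0, Q)$, i.e.\ all of $\CQ$. Concretely: given a positive semidefinite matrix $\bar M$ over $(E_0,(\bbarr)_0)$ with respect to $Q$, I want to lift it to a positive semidefinite matrix $M$ over $E$ with respect to $P$ whose entries lie in $R_w$; since $(F_v)_Q$ is real closed, $\bar M$ is a hermitian square $\o N^t \o N$ over the residue algebra, and I can lift $\o N$ entrywise to some $N$ over $E$ with entries in $R_{v_E}$, so that $M := \o N^t N$ lies in $\CP\cap R_w$ and $\pi_w(M) = \bar M$.

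The main obstacle I anticipate is bookkeeping around the eigenvalue/convexity machinery rather than a conceptual difficulty: one must be careful that lifting ``entrywise'' genuinely produces an element of $R_w$ (which it does, since $R_w = M_{n_P}(R_{v_E})$ by Remark~\ref{matrix} in the split conjugate-transpose case) and that $\pi_w$ of a hermitian square is the corresponding hermitian square downstairs (immediate since $\pi_w$ is a ring homomorphism commuting with the involutions). An alternative, perhaps slicker, route avoiding any explicit lifting of matrices: observe that $\o\CP$ is a prepositive cone over $Q$ with $1\in\o\CP$, so by Lemma~\ref{unique-mpppc} it is contained in the unique positive cone $\CQ$ over $Q$ with $1\in\CQ$; to get equality it then suffices to show $\CQ\subseteq\o\CP$, and for this one can use Mori\-ta theory / the structure of positive cones on $M_{n_P}(E_0)$ from Example~\ref{PC-PSD} together with Proposition~\ref{prop:ev}: an element $\o a\in\CQ$ has all $(F_v)_Q$-eigenvalues $\ge_{Q'} 0$, and lifting $\o a$ to any $a\in\Sym(R_w,\s)$, its $F_P$-eigenvalues reduce modulo $v'$ to those of $\o a$ (by Lemma~\ref{wadcom1}(1) and the identification $(F_P)_{v'}=(F_v)_Q$), so adding a suitable element of $I_w$ (e.g.\ $a + \ve\cdot 1$ for $\ve\in\kk^\x\cap P$ small) we can arrange all $F_P$-eigenvalues to be $>_{P'}0$, whence $a+\ve\in\CP$ by Proposition~\ref{prop:ev}, and $\pi_w(a+\ve) = \o a$. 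This second argument has the advantage of working directly over $A$ with no explicit matrix normalization, and I would present it as the proof, falling back on the split-case computation only to justify the eigenvalue-reduction step.
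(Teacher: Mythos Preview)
Your overall strategy---show $\o\CP := \pi_w(\CP\cap R_w)$ equals the unique positive cone $\CQ$ over $Q$ containing $1$ by proving $\CQ\subseteq\o\CP$---is the same as the paper's. But both of your routes to that inclusion have genuine gaps.

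\textbf{The residue-algebra identification.} You write ``I would identify $A_0=R_w/I_w$ with $M_{n_P}(E_0)$''. This is not correct: $A_0$ is an $F_v$-algebra, whereas $M_{n_P}((D_P)_0)$ is the residue of $A\ox_F F_P$, an $(F_P)_{v'}=(F_v)_Q$-algebra. In general $A_0\ox_{F_v}(F_v)_Q$ merely embeds into $(A\ox_F F_P)_0$ (cf.\ \cite[Prop.~2.12]{T-W-book}), and neither the splitting $f_{F_P}$ nor Remark~\ref{matrix} descends to $A_0$. So ``lift $\o N$ entrywise to $N$ with entries in $R_{v_E}$'' produces an element of $\CP'\cap R_{w'}\subseteq A\ox_F F_P$, not of $\CP\cap R_w\subseteq A$; you still have to bring a preimage back down into $A$, and that is exactly where the work lies.

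\textbf{The perturbation argument.} In Approach~2 you propose $a+\ve\cdot 1$ with $\ve\in\kk^\x\cap P$ ``small''. But every $\ve\in\kk^\x$ has $v(\ve)=0$ (by the definition of $R_{\kk,P}$ and $I_{\kk,P}$), so such an $\ve$ is not in $I_w$ and $\pi_w(a+\ve)\ne\pi_w(a)$. If instead you take $\ve\in I_v$ with $\ve>_P 0$, the argument still fails for non-invertible $\o a$: some $F_P$-eigenvalue $\lambda$ of the lift $a$ may satisfy $\lambda<_{P'}0$ and $v'(\lambda)>0$, and there need be no $\ve\in F^\x$ with $0<v(\ve)<v'(\lambda)$ (e.g.\ $\Gamma_v=\Z$, $v'(\lambda)=1$, and the lift $a=\diag(1,-t)$ of $\o a=\diag(1,0)$). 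So an arbitrary lift cannot be corrected this way. Also, the claim that eigenvalues ``reduce modulo $v'$'' is not what Lemma~\ref{wadcom1}(1) says, and would in any case concern $(A\ox_F F_P)_0$, not $A_0$.

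The paper resolves both issues simultaneously: it restricts to $\o a\in\CQ^\x$, lifts to a symmetric $b\in R_w$, passes to $A\ox_F F_P$ via the inclusion $\iota$, writes $b\ox 1=c+\ve$ with $c\in\CP'$ having \emph{invertible} residue and $\ve\in\Sym(I_{w'},\s\ox\id)$, and then invokes Proposition~\ref{prop:complicated} (the correct form of your perturbation idea, crucially using invertibility of $\pi_{w'}(c)$) to get $b\ox1\in\CP'$, hence $b\in\CP$. The extension from $\CQ^\x$ to all of $\CQ$ is then by $\CQ=\CC(\CQ^\x)$ (Proposition~\ref{mpppc-inv}). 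Your Approach~2 is morally close to this, but misses that the perturbation only works when the residue is a unit, and that reduction to units is what makes the descent to $A$ possible.
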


\begin{proof}  
  By Proposition~\ref{c4-c5}(2) $\pi_w(\CP \cap R_w)$ is a prepositive cone
  on $(A_0,\s_0)$ over $Q$. Let $\CQ$ be the unique positive cone on $(A_0,\s_0)$ over $Q$
  containing $\pi_w(\CP \cap R_w)$, cf. Lemma~\ref{unique-mpppc}. We will show
  that $\pi_w(\CP \cap R_w) = \CQ$.

  We extend scalars to $F_P$ and note that by Lemma~\ref{lem:simple}, 
  $(A\ox_F F_P, \s\ox\id)$ is a finite-dimensional simple $F_P$-algebra
  with $F_P$-linear involution $\s\ox \id$ such that 
  $F_P=\Sym(A\ox_F F_P, \s\ox\id)\cap Z(A\ox_F F_P)$
  (i.e., an $F_P$-algebra with involution in the terminology of 
  \cite{A-U-pos}). 
  
  Let $\CP'$ be the unique positive cone on $(A\ox_F
  F_P, \s\ox\id)$ over $P'$ containing $\CP$ (see \cite[Prop.~5.8]{A-U-pos} and the classification of
  positive cones \cite[Thm.~7.5]{A-U-pos}). We denote  the
  $v'$-gauge $w_{\kk, \CP'}$ by $w'$.
  
  Consider the diagram
  \begin{equation}\label{diag:proj}
    \begin{aligned}
      \xymatrix@R=1ex{
      (A,\s) \ar[rr]& & (A\ox_F F_P, \s\ox\id) \\
      \rotatebox{90}{$\subseteq$} & & \rotatebox{90}{$\subseteq$} \\
      R_{w}\ar[rr]\ar@{->>}[dddd]^{\pi_w}  & & R_{w'}  \ar@{->>}[dd]^{\pi_{w'}}\\
      & & \\
                &  & ((A \ox_F F_P)_0, (\s \ox \id)_0) \\
                & & \\
      (A_0,\s_0)\ar[rr] & &  (A_0 \ox_{F_v} (F_v)_Q, \s_0 \ox \id)\ar@{^{(}->}[uu]_{\iota}\\
      }
    \end{aligned}
  \end{equation}
  where the horizontal arrows are induced by scalar extension,
  $\pi_{w'}$ denotes the canonical residue map associated to $w'$,  and 
  $\iota$ denotes  the canonical inclusion of $A_0 \ox_{F_v} (F_v)_Q$ in $(A \ox_F F_P)_0$,
  cf. \cite[Prop.~2.12]{T-W-book}, 
  i.e., $\iota: \pi_w(x) \ox \pi_{v'}(y) \mapsto \pi_{w'} (x\ox y)$. 
  Note that the lower part of the diagram is commutative by 
  the definition of $\iota$.
  
  By Proposition~\ref{hs-simple}, $\CP'$ is
  the set of hermitian squares in $(A \ox_F F_P, \s \ox \id )$.   
  We denote by $\CQ'$ the unique positive cone on $(A_0 \ox_{F_v} (F_v)_Q, \s_0 \ox \id)$
  over $Q'$ containing $\CQ$, and thus $1$ (see
  Proposition~\ref{ppc-extension} and Lemma~\ref{unique-mpppc}). 
  By Corollary~\ref{pc-ss-2},
  $\CQ'$ is the set of hermitian squares in $(A_0
  \ox_{F_v} (F_v)_Q, \s_0 \ox \id)$.

  Observe that $\pi_{w'}(\CP' \cap R_{w'})$ is a prepositive cone on $((A
  \ox_F F_P)_0, (\s \ox \id)_0)$ by Proposition~\ref{c1-c8}, and thus that
  $\iota^{-1}\bigl(\pi_{w'}(\CP' \cap R_{w'})\bigr)$ is a prepositive cone over
  $Q'$. Indeed, with reference to Definition~\ref{def-preordering}:
  \begin{itemize}
    \item properties (P1) and (P2) are clear (recall that $1 \in \pi_{w'}(\CP'
      \cap R_{w'})$);
    \item (P3) follows from the fact that $\iota$ is a morphism of algebras with
      involution and $\pi_{w'}(\CP' \cap R_{w'})$ is a prepositive cone by
      property (C5) and Proposition~\ref{c1-c8};
    \item (P5) holds since
      $\pi_{w'}(\CP' \cap R_{w'})$ is a prepositive cone and $\iota$ is
      injective;
    \item (P4) holds if the  ordering associated to
      $\iota^{-1}\bigl(\pi_{w'}(\CP' \cap R_{w'})\bigr)$ on $(F_v)_{Q}$ is
      $Q'=\bigl((F_v)_Q\bigr)^2$. This is the case since it contains $((F_v)_Q)^2$
      by (P3), and nothing more by (P5).
  \end{itemize}
  
  We now show that $\CQ'=  \iota^{-1}\bigl(\pi_{w'}(\CP' \cap R_{w'})\bigr)$.
  Since $\iota^{-1}\bigl(\pi_{w'}(\CP' \cap R_{w'})\bigr)$ is a 
  prepositive cone over $Q'$ and contains $1$, by Lemma~\ref{unique-mpppc} it is
  contained in the unique positive cone over $Q'$ containing $1$,
  i.e., in $\CQ'$. 
        
  For the reverse inclusion, let $x \in \CQ'$.  Then $x = (\s_0 \ox \id)(y) y$
  for some $y \in A_0\ox_{F_v} (F_v)_Q$.   Applying $\iota$ to $x$ and letting
  $z\in R_{w'}$ be such that $\pi_{w'}(z)=\iota(y)$, we have  $\iota(x) = (\s
  \ox \id)_0(\pi_{w'}(z)) \pi_{w'}(z) = \pi_{w'}((\s \ox \id)(z)z)$, which
  belongs to $\pi_{w'} (\CP' \cap R_{w'})$  (since $\CP'$ is the set of
  hermitian squares in $(A\ox_F F_P, \s\ox\id)$).  
  Note that the final equality follows from the definition
  of the involution $(\s\ox\id)_0$.
  
  Finally, we show $\CQ=\pi_w(\CP \cap R_w)$. By definition of $\CQ$ we have
  $\pi_{w}(\CP \cap R_{w}) \subseteq \CQ$.   For the other inclusion we first
  show that $\CQ^\x \subseteq \pi_w(\CP \cap R_w)$. The result will then follow
  since $\CQ =\CC(\CQ^\x) \subseteq \pi_w(\CP \cap R_w)$, where the first
  equality holds by Proposition~\ref{mpppc-inv} and the inclusion holds by definition of
  $\CC$ since $\pi_{w}(\CP \cap R_{w})$ is a prepositive cone.
 
  Let $a = \pi_w(b) \in \CQ^\x$ with $b \in R_w\sm I_w$. 
  We first check that we may assume that $b$ is symmetric: Since $a$ is symmetric,
  $\s_0(\pi_w(b))=\pi_w(b)$, i.e., $\pi_w(\s(b))=\pi_w(b)$. Therefore, $b-\s(b) \in I_w$ and it
  follows that $\pi_w(b)=\pi_w(\tfrac{1}{2}(b+\s(b)))$.

  The lower part of
  diagram~\eqref{diag:proj} then yields diagram~\eqref{diag:proj2}:
        
  \begin{equation}\label{diag:proj2}
    \begin{aligned}
      \xymatrix{
      b \ar@{|->}[rr]\ar@{|->}[dd]^{\pi_w}  & & b\ox 1  \ar@{|->}[d]^{\pi_{w'}}\\
                &  & \pi_{w'}(b\ox 1) \\
      a \ar@{|->}[rr] & &  a\ox 1 \ar@{|->}[u]_{\iota}\\
      }
    \end{aligned}
  \end{equation}
        
  Observe that $a \ox 1 \in \CQ'^\x$ and so $\iota(a \ox 1) = \pi_{w'}(c)$ for
  some $c \in \CP'\cap R_{w'} $ 
  since $\CQ'=  \iota^{-1}\bigl(\pi_{w'}(\CP' \cap R_{w'})\bigr)$, and
  $\pi_{w'}(c)$ is invertible.

  The commutativity of diagram \eqref{diag:proj2} then implies  that
  $\pi_{w'}(b\ox 1) = \pi_{w'}(c)$, so $b \ox 1 = c + \varepsilon$ for some
  $\varepsilon \in \Sym (I_{w'}, \s\ox\id)$. Thus, since $c \in \CP'$ and by
  Proposition~\ref{prop:complicated}, 
  we obtain $b \ox 1 \in \CP'$ and so $b \in \CP$.
\end{proof}

\begin{remark}
Observe that by Theorem~\ref{prop:maxl}, we can replace (C5) by
\begin{enumerate}
  \item[(C5')] $\pi_w(\CP\cap R_w)$ is a positive cone on $(A_0,\s_0)$.
\end{enumerate}
\end{remark}

\section{Strong anisotropy of residually positive forms}\label{sec:aniso}

In this section we collect a number of technical results for use in Section~\ref{sec:B-K}.
Let $A$ be a finite-dimensional semisimple $F$-algebra with $F$-linear involution $\s$.

Let $w:A\to \Gamma \cup \{\infty\}$ be a $v$-gauge on $A$ and denote the canonical projection
$R_w\to A_0$ by $\pi_w$.
For each $n\in \N$ we define
  \begin{equation*} 
    w_n : M_n(A) \rightarrow \Gamma \cup \{\infty\},\  w_n((a_{ij})_{i,j}) := \min_{i,j}\{w(a_{ij})\}.
  \end{equation*}
A straightforward computation shows that $w_n$ is a surmultiplicative $v$-value function.
We denote the canonical projection $R_{w_n}\to (M_n(A))_0$ by $\pi_{w_n}$.

Using the definition of $w_n$, it is easy to check that 
$R_{w_n}=M_n(R_w)$, $I_{w_n}=M_n(I_w)$ and that
the map
   \[\xi: (M_n(A))_0  \to M_n(A_0),\ \pi_{w_n}  ((a_{ij})_{i,j}) \mapsto (\pi_w(a_{ij}))_{i,j}\] 
is an isomorphism  of $F_v$-algebras.

Assume that $w$ is invariant under $\s$. Let $U = \diag(u_1,\ldots,u_n)$ with $u_i \in
 R_v^\x=  R_v \setminus I_v$. Then
  \[(\Int(U) \circ \s^t) \bigl((a_{ij})_{i,j}\bigr) = (u_iu_j^{-1} \s(a_{ji}))_{i,j}\]
and so 
\begin{align*}
    w_n \bigl( (\Int(U) \circ \s^t) \bigl((a_{ij})_{i,j}\bigr) \bigr)&= \min_{i,j} \{w (u_iu_j^{-1} \s(a_{ji}))    \}\\
    &= \min_{i,j}\{w (a_{ji} )+ v(u_i)-v(u_j)\}  \\    
    &=\min_{i,j}\{w(a_{ji})\}\\ 
    &= w_n( (a_{ij})_{i,j}),
  \end{align*} 
i.e., $w_n$ is invariant under $\Int(U) \circ \s^t$. It follows that $\Int(U) \circ \s^t$  induces 
a grade-preserving involution on $\gr_{w_n}(M_n(A))$. 

\begin{lemma}\label{lem:isom}
	The map $\xi$ is an isomorphism from
  $\bigl((M_n(A))_0, (\Int(U) \circ \s^t \bigr)_0)$ to 
  $(M_n(A_0),\Int(\xi\circ\pi_{w_n}( U)) \circ (\s_0)^t)$. 
\end{lemma}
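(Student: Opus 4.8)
The plan is to exploit that $\xi$ is already known to be an isomorphism of $F_v$-algebras, so that the only thing left to verify is that it carries the involution $(\Int(U)\circ\s^t)_0$ to $\Int(\xi\circ\pi_{w_n}(U))\circ(\s_0)^t$. Note first that $(\Int(U)\circ\s^t)_0$ really is defined: by the computation preceding the lemma, $\Int(U)\circ\s^t$ is $w_n$-invariant, hence preserves $R_{w_n}$ and $I_{w_n}$, so it descends to an involution of $(M_n(A))_0=R_{w_n}/I_{w_n}$, obtained by reducing the formula $(\Int(U)\circ\s^t)\bigl((a_{ij})_{i,j}\bigr)=(u_iu_j^{-1}\s(a_{ji}))_{i,j}$ modulo $I_{w_n}$ (here one uses $u_i,u_j^{-1}\in R_v\subseteq F\subseteq Z(A)$).

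First I would take an arbitrary $x=\pi_{w_n}\bigl((a_{ij})_{i,j}\bigr)$ of $(M_n(A))_0$, with $(a_{ij})_{i,j}\in M_n(R_w)=R_{w_n}$, and evaluate both sides on $x$. Reducing modulo $I_{w_n}$ and then applying $\xi$,
\[
\xi\bigl((\Int(U)\circ\s^t)_0(x)\bigr)=\bigl(\pi_w(u_iu_j^{-1}\s(a_{ji}))\bigr)_{i,j}=\bigl(\pi_w(u_i)\,\pi_w(u_j)^{-1}\,\s_0(\pi_w(a_{ji}))\bigr)_{i,j},
\]
using that $\pi_w$ is a ring homomorphism with $\pi_w\circ\s=\s_0\circ\pi_w$ and that $\pi_w(u_j)$ is invertible with inverse $\pi_w(u_j^{-1})$ (since $u_j\in R_v\setminus I_v$ forces $\pi_v(u_j)\neq 0$ in the field $F_v$). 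On the other side, writing $\overline U:=\xi\circ\pi_{w_n}(U)=\diag(\pi_w(u_1),\dots,\pi_w(u_n))\in M_n(A_0)^\x$, a direct matrix multiplication gives
\[
\bigl(\Int(\overline U)\circ(\s_0)^t\bigr)(\xi(x))=\overline U\,\bigl(\s_0(\pi_w(a_{ji}))\bigr)_{i,j}\,\overline U^{-1}=\bigl(\pi_w(u_i)\,\s_0(\pi_w(a_{ji}))\,\pi_w(u_j)^{-1}\bigr)_{i,j}.
\]
The two right-hand sides coincide because $\pi_w(u_j)^{-1}\in F_v$ lies in the centre of the $F_v$-algebra $A_0$, hence commutes with $\s_0(\pi_w(a_{ji}))$. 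As $\xi$ is a bijective algebra homomorphism intertwining the two involutions, it is the required isomorphism of algebras with involution.

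I do not expect a genuine obstacle: the statement is a bookkeeping verification resting on facts already in hand (the algebra isomorphism $\xi$, the $w_n$-invariance of $\Int(U)\circ\s^t$, and $\pi_w\circ\s=\s_0\circ\pi_w$). The only points needing a moment's care are the well-definedness of $(\Int(U)\circ\s^t)_0$ and the invertibility and centrality of $\pi_w(u_j)$. Alternatively one can organise the argument more structurally: since both $\Int(U)$ and $\s^t$ are themselves $w_n$-invariant, one has $(\Int(U)\circ\s^t)_0=\Int(U)_0\circ(\s^t)_0$, and it then suffices to check $\xi\circ\Int(U)_0=\Int(\overline U)\circ\xi$ and $\xi\circ(\s^t)_0=(\s_0)^t\circ\xi$ separately, which avoids writing out any matrix entries.
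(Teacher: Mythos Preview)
Your proof is correct and follows the same approach as the paper: both verify directly that $\xi$ intertwines the two involutions by reducing the entrywise formula for $\Int(U)\circ\s^t$ modulo $I_{w_n}$. The paper simply calls this ``a simple computation'' without writing out the entries, whereas you spell it out and are more careful about well-definedness, invertibility, and centrality of $\pi_w(u_j)$.
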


\begin{proof}

  Since $u_1,\ldots,u_n \in R_v   \setminus I_v$, a simple computation shows that
  \[\xi (\pi_{w_n} (   (\Int(U) \circ \s^t)(X)))
  = \Int(\pi_{w_n}(U)) \circ (\s_0)^t(\xi(\pi_{w_n}(X)))
  \] 
  for all $X \in R_{w_n}=M_n(R_w)$, i.e.,
  \[
  \xi  (   (\Int(U) \circ \s^t)_0(\pi_{w_n}(X))) =  \Int(\pi_{w_n}(U)) \circ (\s_0)^t(\xi(\pi_{w_n}(X))) 
  \] 
   for all $X \in R_{w_n}=M_n(R_w)$, proving the result.
\end{proof}

Let $R$ be a semisimple
ring and let $\tau$ be an involution on $R$. We say that $\tau$ is \emph{anisotropic} if there is no
nonzero $x\in R$ such that $\tau(x)x=0$ 
and, for $r\in \N$, that $\tau$ is  \emph{$r$-anisotropic} if the involution $\tau^t$ on $M_r(R)$ is anisotropic.
Furthermore, we say that $\tau$ is \emph{strongly anisotropic} if $\tau$ is $r$-anisotropic for every $r\in \N$.

Let $M$ be a right $R$-module,
let $h:M\x M\to R$ be a nonsingular hermitian form over 
$(R,\tau)$ and let $\ad_h$ be the adjoint involution of $h$
on $\End_R(M)$. Recall that $h(\ad_h(f)(x),y)=h(x,f(y))$ for all $x,y\in M$ and $f\in \End_R(M)$.
We call $h$ \emph{$r$-anisotropic} if $r\x h$
is anisotropic.

\begin{lemma}\label{lem:h_aniso}
	$h$ is anisotropic if and only if $\ad_h$ is anisotropic.
\end{lemma}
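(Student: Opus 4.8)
The statement is a standard "adjoint involution is anisotropic iff the form is anisotropic" fact, and the proof should be a direct unwinding of the definitions together with the module-theoretic interpretation of isotropy. First I would recall that $h: M \times M \to R$ is anisotropic means $h(x,x) \neq 0$ for every nonzero $x \in M$, while $\ad_h$ is anisotropic means there is no nonzero $f \in \End_R(M)$ with $\ad_h(f) f = 0$. The link between the two is the identity $h(x, \ad_h(f)(f(y))) = h(f(x), f(y))$ for all $x, y \in M$ and $f \in \End_R(M)$, which follows directly from the defining property $h(\ad_h(g)(x), y) = h(x, g(y))$ applied twice (once with $g = f$, and using that $h$ is hermitian to move the argument across).

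\textbf{Forward direction.} Suppose $\ad_h$ is \emph{not} anisotropic, so there is $f \neq 0$ with $\ad_h(f) f = 0$. Pick $y \in M$ with $z := f(y) \neq 0$ (possible since $f \neq 0$). Then for all $x \in M$ we have $h(f(x), z) = h(f(x), f(y)) = h(x, \ad_h(f)(f(y))) = h(x, 0) = 0$; in particular, taking $x$ ranging over $M$, the element $z$ is orthogonal to the whole image of $f$... but I actually want $h(z,z)=0$. The cleaner route: $\ad_h(f)f = 0$ gives $h(x, \ad_h(f)f(y)) = 0$ for all $x,y$, i.e. $h(f(x),f(y)) = 0$ for all $x,y$, so $h$ vanishes identically on the nonzero submodule $f(M)$; choosing any nonzero $z \in f(M)$ gives $h(z,z) = 0$, so $h$ is isotropic. (If one prefers to stay with anisotropy in the literal "no nonzero isotropic vector" sense, this already contradicts anisotropy of $h$; there is no need to pass to $r$-anisotropy here since the lemma only concerns the plain notion.)

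\textbf{Reverse direction.} Suppose $h$ is isotropic, say $h(z,z) = 0$ for some nonzero $z \in M$. Since $h$ is nonsingular, the map $M \to \Hom_R(M,R)$, $x \mapsto h(x, -)$ is an isomorphism, so there exists $g \in \End_R(M)$ with $h(x, g(y)) = h(x, z)\, \varphi(y)$ for a suitable $R$-linear functional... more simply: define $f \in \End_R(M)$ by $f(y) = z \cdot \psi(y)$ where $\psi \in \Hom_R(M,R)$ is chosen, via nonsingularity of $h$, so that $h(x, f(y))$ has the desired shape; concretely pick $\psi = h(z,-)$ so $f(y) = z\, h(z,y)$, a rank-one "hermitian square" type endomorphism. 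Then $f \neq 0$ (since $h$ nonsingular forces $h(z,-) \neq 0$, and $z \neq 0$), and one computes $h(x, \ad_h(f)f(y)) = h(f(x), f(y)) = h(z\,h(z,x), z\,h(z,y)) = \overline{h(z,x)}\, h(z,z)\, h(z,y) = 0$ for all $x, y$, using $\tau$-sesquilinearity of $h$ and $h(z,z) = 0$. By nonsingularity of $h$ this forces $\ad_h(f)f = 0$, so $\ad_h$ is isotropic.

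\textbf{Main obstacle.} The only genuinely delicate point is the reverse direction: constructing a nonzero $f$ with $\ad_h(f)f = 0$ from a single isotropic vector, and in particular verifying $f \neq 0$ and that the computation $h(f(x),f(y)) = 0$ really uses only $h(z,z)=0$ plus sesquilinearity — one must be careful about the side ($\tau$ applied on the left versus right) and about the fact that $\End_R(M)$ acts on the correct side of $M$, matching the conventions $h(\ad_h(f)(x),y) = h(x,f(y))$ fixed just before the lemma. Once the rank-one endomorphism $f = z\,h(z,-)$ is set up with the right-handedness, everything else is a routine substitution, and nonsingularity of $h$ is exactly what converts "$h(x, \ad_h(f)f(y)) = 0$ for all $x$" into "$\ad_h(f)f(y) = 0$".
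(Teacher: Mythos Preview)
Your forward direction is correct and matches what the paper calls ``straightforward'': from $\ad_h(f)f=0$ you get $h(f(x),f(y))=0$ for all $x,y$, so any nonzero vector in $f(M)$ is isotropic.

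The reverse direction has a genuine gap. You claim $f(y)=z\,h(z,y)$ is nonzero because ``$h(z,-)\neq 0$ and $z\neq 0$'', but this inference fails when $R$ has zero divisors, which is allowed since $R$ is only assumed semisimple. Concretely, take $R=F\times F$ with the swap involution $\tau(a,b)=(b,a)$, $M=R$, and $h(x,y)=\tau(x)y$. This $h$ is nonsingular and hermitian. The vector $z=(1,0)$ is isotropic since $h(z,z)=(0,1)(1,0)=(0,0)$, yet $h(z,(c,d))=(0,d)$ and $z\cdot(0,d)=(1,0)(0,d)=(0,0)$, so your $f$ is identically zero. The problem is that the image of $h(z,-)$ can land entirely inside the annihilator of $z$.

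The paper sidesteps this by using semisimplicity of $R$ in a different way: since $R$ is semisimple, the submodule $zR$ is a direct summand of $M$, and one takes $f$ to be the projection onto $zR$. Then $f\neq 0$ trivially, and $h(f(x),f(y))=0$ because both $f(x)$ and $f(y)$ lie in $zR$ and $h(zr,zs)=\tau(r)h(z,z)s=0$. Your rank-one idea can be repaired along these lines, but as written the nonvanishing of $f$ is not justified.
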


\begin{proof}
	The left to right implication is straightforward. We prove the other direction by contraposition.
	Assume that $h$ is isotropic, i.e., assume that there is an $x\in M$, $x\not=0$, such that 
	$h(x,x)=0$. Consider the submodule $xR$ of $M$. Since $R$ is semisimple, $xR$ is a direct
	summand of $M$. Let $f$ be the projection from $M$ to $xR$. Then $\ad_h(f)f=0$. 
	Indeed, for all $y,z\in M$ we have
	\[ h(\ad_h(f)f(y),z)=h(f(y), f(z))=0
	\]
	since $h(x,x)=0$. Since $h$ is nonsingular we must have $\ad_h(f)f(y)=0$ for all $y\in M$.	
\end{proof}

\begin{lemma}\label{lem:endo}
	Let $r\in \N$. There is an isomorphism of rings with involution
\[
	\bigl( \End_R (M^r), \ad_{r\x h} \bigr) \cong \bigl( M_r(\End_R(M)), (\ad_h)^t   \bigr),
\]
where $r\x h:=h\perp\cdots \perp h$ \tu{(}$r$ times\tu{)}.
\end{lemma}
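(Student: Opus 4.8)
The plan is to write down the standard ring isomorphism $\Phi\colon \End_R(M^r)\to M_r(\End_R(M))$ explicitly and then check that it intertwines $\ad_{r\x h}$ with $(\ad_h)^t$. First I would view $M^r = M\oplus\cdots\oplus M$ with elements written as columns $x = (x_1,\ldots,x_r)$, and attach to each $\phi\in\End_R(M^r)$ the matrix $(\phi_{ij})_{i,j}$ with entries $\phi_{ij}\in\End_R(M)$ determined by $(\phi(x))_i = \sum_{j=1}^r \phi_{ij}(x_j)$. That $\Phi\colon \phi\mapsto(\phi_{ij})_{i,j}$ is a ring isomorphism is routine: composition of endomorphisms corresponds to matrix multiplication and $\id_{M^r}$ to the identity matrix, and I would not belabour it.

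The substance is the behaviour of the involution. Since $h$ is nonsingular, so is $r\x h$, given by $(r\x h)\bigl((x_i)_i,(y_i)_i\bigr) = \sum_{i=1}^r h(x_i,y_i)$, and hence $\ad_{r\x h}$ is a well-defined involution on $\End_R(M^r)$. Fix $\phi\in\End_R(M^r)$, put $\psi = \ad_{r\x h}(\phi)$, and write $\Phi(\phi) = (\phi_{ij})_{i,j}$, $\Phi(\psi) = (\psi_{ij})_{i,j}$. The defining adjunction $h\bigl(\ad_{r\x h}(\phi)(x),y\bigr) = h\bigl(x,\phi(y)\bigr)$ expands to
\[
  \sum_{i,j} h\bigl(\psi_{ij}(x_j),y_i\bigr) = \sum_{i,j} h\bigl(x_i,\phi_{ij}(y_j)\bigr)\qquad\text{for all }(x_i)_i,(y_i)_i\in M^r.
\]
Specializing $x$ to a tuple concentrated in position $k$ and $y$ to a tuple concentrated in position $l$ collapses both sides to $h\bigl(\psi_{lk}(x_k),y_l\bigr) = h\bigl(x_k,\phi_{kl}(y_l)\bigr)$ for all $x_k,y_l\in M$, which by the definition of $\ad_h$ (together with nonsingularity of $h$, giving uniqueness of the adjoint) says exactly $\psi_{lk} = \ad_h(\phi_{kl})$. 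Equivalently $\psi_{ij} = \ad_h(\phi_{ji})$, i.e.\ $\Phi(\ad_{r\x h}(\phi)) = (\ad_h)^t\bigl(\Phi(\phi)\bigr)$, so $\Phi$ is an isomorphism of rings with involution.

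The one thing to watch is the bookkeeping with conventions: whether one represents endomorphisms of $M^r$ by their action on columns or on rows, and correspondingly where the transpose enters — a careless choice yields $\ad_h$ ``without transpose'' or a spurious extra twist. The coordinate-concentration step is precisely what forces the transpose to appear, and nothing deeper (in particular, no use of the semisimplicity of $R$) is needed here.
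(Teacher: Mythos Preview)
Your proof is correct and follows exactly the paper's approach: write down the standard block-matrix isomorphism $\mu\colon\End_R(M^r)\to M_r(\End_R(M))$ and verify $(\ad_h)^t\circ\mu=\mu\circ\ad_{r\times h}$ --- the paper simply declares this a ``lengthy, but straightforward computation'', whereas you carry it out in full via the coordinate-concentration argument. One small slip: in the line ``The defining adjunction $h(\ad_{r\times h}(\phi)(x),y)=h(x,\phi(y))$'' the outer form should be $r\times h$, not $h$; your subsequent expansion is correct regardless.
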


\begin{proof}
	Write $M^r=M_1\x \cdots \x M_r$, where $M_1=\cdots=M_r=M$.  Consider
	the ring isomorphism
	\[
		\mu: \End_R(M^r)\to M_r(\End_R(M)),\ f\mapsto (f_{ij})_{i,j},
		\]
	where  $f|_{M_i}=(f_{1i},\ldots, f_{ri})$ with $f_{ij}:M_j\to M_i$, cf. \cite[Chap.~9, 
	Prop.~2.3]{Grillet}.	
	A lengthy, but straightforward computation then shows that 
	$ (\ad_h)^t\circ \mu=\mu\circ \ad_{r\x h}$.	
\end{proof}

Lemmas~\ref{lem:h_aniso} and \ref{lem:endo} have been proved for general nonsingular 
hermitian forms, but will only be used in Lemma~\ref{lem:sa} for a nonsingular diagonal 
hermitian form, in which case their proofs can be obtained by a simple computation.

\begin{lemma}\label{lem:sa}
   Let $s_1, \ldots, s_n \in  \Sym(R, \tau) \cap R^\x$ and 
  $S = \diag(s_1,\ldots,s_n)$. Then for any $r\in \N$,
      $\Int(S) \circ \tau^t$ is $r$-anisotropic on $M_n(R)$ if and only if the hermitian form $\qf{s_1,\ldots, s_n}_\tau$
      is $r$-anisotropic.
\end{lemma}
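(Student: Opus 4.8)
The plan is to reduce the statement to the two preceding lemmas, Lemma~\ref{lem:h_aniso} and Lemma~\ref{lem:endo}, by identifying $(M_n(R),\Int(S)\circ\tau^t)$ with the adjoint involution of the diagonal form $h:=\qf{s_1,\ldots,s_n}_\tau$. First I would recall that for $M=R^n$ (viewed as a right $R$-module of column vectors), the form $h$ is nonsingular since all $s_i$ are invertible, and that the adjoint involution $\ad_h$ on $\End_R(M)\cong M_n(R)$ is exactly $\Int(S)\circ\tau^t$: for $f=(a_{ij})_{i,j}$ acting on columns, the defining relation $h(\ad_h(f)(x),y)=h(x,f(y))$ unwinds to $\tau(\ad_h(f))^t S = S\,\tau(f)^t$ in the sense that... more carefully, writing $h(x,y)=\tau(x)^t S y$, one gets $\tau(\ad_h(f)(x))^t S y = \tau(x)^t S f(y)$ for all $x,y$, hence $\ad_h(f)=S^{-1}\tau(f)^t S=\Int(S^{-1})\circ\tau^t(f)$; since $S\in\Sym(M_n(R),\tau^t)$, $\Int(S^{-1})\circ\tau^t$ and $\Int(S)\circ\tau^t$ coincide after replacing $S$ by $S^{-1}$, or one simply notes the form $\qf{s_1^{-1},\ldots,s_n^{-1}}_\tau$ is isometric to $\qf{s_1,\ldots,s_n}_\tau$ (via $\diag(s_i^{-1})$) so the distinction is immaterial. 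I would state this identification cleanly as the first step, noting (as the paper itself suggests in the remark after Lemma~\ref{lem:endo}) that for a diagonal form it is a one-line matrix computation.

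Next I would fix $r\in\N$ and chase the equivalences. By definition, $\Int(S)\circ\tau^t$ is $r$-anisotropic on $M_n(R)$ iff the involution $(\Int(S)\circ\tau^t)^t$ on $M_r(M_n(R))$ is anisotropic. By Lemma~\ref{lem:endo} applied with $M=R^n$ and the form $h$, we have an isomorphism of rings with involution
\[
\bigl(M_r(\End_R(M)),(\ad_h)^t\bigr)\cong\bigl(\End_R(M^r),\ad_{r\x h}\bigr),
\]
and under the identification $\End_R(M)\cong M_n(R)$, $\ad_h=\Int(S)\circ\tau^t$, the left-hand side is precisely $\bigl(M_r(M_n(R)),(\Int(S)\circ\tau^t)^t\bigr)$. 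Hence $\Int(S)\circ\tau^t$ is $r$-anisotropic iff $\ad_{r\x h}$ is anisotropic on $\End_R(M^r)$. Now applying Lemma~\ref{lem:h_aniso} to the nonsingular hermitian form $r\x h$ over $(R,\tau)$ (nonsingularity of $r\x h$ follows from that of $h$), $\ad_{r\x h}$ is anisotropic iff $r\x h$ is anisotropic, i.e., iff $h=\qf{s_1,\ldots,s_n}_\tau$ is $r$-anisotropic by the definition of $r$-anisotropy for forms. Concatenating these equivalences yields the claim.

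I expect the main obstacle to be purely bookkeeping: making sure the module-theoretic identifications (columns versus rows, $\End_R(M)$ acting on the left versus matrices multiplying on which side, the precise form of $\ad_h$ for $h(x,y)=\tau(x)^t S y$) are all consistent, and in particular that the transpose-type involution $(\ad_h)^t$ coming out of Lemma~\ref{lem:endo} genuinely matches $(\Int(S)\circ\tau^t)^t$ as involutions on $M_r(M_n(R))\cong M_{rn}(R)$ — one should check there is no spurious extra permutation of indices. Since the statement only concerns the diagonal form $\qf{s_1,\ldots,s_n}_\tau$, as the paper notes after Lemma~\ref{lem:endo}, I could alternatively bypass the two general lemmas entirely and give a direct computation: an element $X\in M_r(M_n(R))=M_{rn}(R)$ satisfies $(\Int(S)\circ\tau^t)^t(X)\,X=0$ iff $\widehat S^{-1}\tau(X)^t\widehat S X=0$ where $\widehat S=\diag(S,\ldots,S)$, and multiplying on the left by $\tau$-symmetric invertible $\widehat S$ this is $\tau(X)^t\widehat S X=0$, which says exactly that the columns of $X$ are isotropic vectors for $r\x\qf{s_1,\ldots,s_n}_\tau$; so a nonzero such $X$ exists iff that form is isotropic. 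I would include this short direct argument as the actual proof, since it is cleaner than invoking the general machinery, while remarking that it is the specialization of Lemmas~\ref{lem:h_aniso} and~\ref{lem:endo}.
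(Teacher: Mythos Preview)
Your main argument is essentially the paper's own proof: identify $\Int(S)\circ\tau^t$ with the adjoint involution of the diagonal form (the paper writes it as $\ad_{\qf{s_1^{-1},\ldots,s_n^{-1}}_\tau}$, you note the same $S$ versus $S^{-1}$ issue), then chain Lemma~\ref{lem:endo} and Lemma~\ref{lem:h_aniso}, and finish with the isometry $\qf{s_1^{-1},\ldots,s_n^{-1}}_\tau\simeq\qf{s_1,\ldots,s_n}_\tau$. One small slip in your optional direct computation: with the paper's convention $\Int(S)(x)=SxS^{-1}$, one gets $(\Int(S)\circ\tau^t)^t(X)=\widehat S\,\tau(X)^t\,\widehat S^{-1}$ rather than $\widehat S^{-1}\tau(X)^t\widehat S$, so the resulting condition is $\tau(X)^t\widehat S^{-1}X=0$; this is harmless since it again reduces to anisotropy of the isometric form $\qf{s_1^{-1},\ldots,s_n^{-1}}_\tau$.
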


\begin{proof}
	Observe that $\Int(S) \circ \tau^t$ is adjoint to the hermitian form $\qf{s_1^{-1},\ldots, s_n^{-1}}_\tau$.
	Therefore, $\Int(S) \circ \tau^t$ is $r$-anisotropic on $M_n(R)$ 
	if and only if $(\Int(S) \circ \tau^t)^t$ is anisotropic on $M_r(M_n(R))$ if and only if
	$\ad_{r\x \qf{s_1^{-1},\ldots, s_n^{-1}}_\tau}$ is anisotropic on $M_{rn}(R)$ by Lemma~\ref{lem:endo}
	if and only if
	$r\x \qf{s_1^{-1},\ldots, s_n^{-1}}_\tau$ is anisotropic by Lemma~\ref{lem:h_aniso}. 
	The result follows since the forms
  $\qf{s_1^{-1},\ldots, s_n^{-1}}_\tau$ and $\qf{s_1,\ldots, s_n}_\tau$ are isometric.
\end{proof}

Recall from Remark~\ref{spec-aniso} that the existence of a $\s$-special gauge implies that $\s$ is anisotropic. 
This observation will be used  for several involutions in what follows.

\begin{prop}\label{prop:special}  
  Let $n\in \N$.
  Then:
   \begin{enumerate}[$(1)$]
     \item $w_n$ is a  $v$-gauge on $M_n(A)$.
     \item If the involution $\s_0$ on $A_0$ is $n$-anisotropic, then $w_n$ is $\s^t$-special.
   \end{enumerate}
\end{prop}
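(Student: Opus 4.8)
The plan is to check the defining properties of a $v$-gauge for $w_n$ in part~(1), and then, in part~(2), to identify the involution induced by $w_n$ on the residue algebra and appeal to the characterisation of $\s$-speciality via anisotropy of the residue involution recalled just before Lemma~\ref{power-of-2}.

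For part~(1), recall it was already observed that $w_n$ is a surmultiplicative $v$-value function, so it remains to verify that $w_n$ is a $v$-norm and that $\gr_{w_n}(M_n(A))$ is a semisimple $\gr_v(F)$-algebra. For the $v$-norm property I would fix a splitting basis $\{q_1,\dots,q_m\}$ for $w$ on $A$ together with the matrix units $E_{ij}$, and check directly that $\{q_\ell E_{ij}\}_{\ell,i,j}$ (where $q_\ell E_{ij}$ is the matrix with $q_\ell$ in position $(i,j)$ and zeros elsewhere) is a splitting basis for $w_n$; this is immediate from $w_n(q_\ell E_{ij})=w(q_\ell)$ and the definition of $w_n$ as an entrywise minimum. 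For semisimplicity, the key point is that $(M_n(A))_{\geq\gamma}=M_n(A_{\geq\gamma})$ and $(M_n(A))_{>\gamma}=M_n(A_{>\gamma})$ for every $\gamma$, so that $(M_n(A))_\gamma\cong M_n(A_\gamma)$ and, compatibly with the graded products, $\gr_{w_n}(M_n(A))\cong M_n(\gr_w(A))$ as graded $\gr_v(F)$-algebras, with the matrix units in degree~$0$. Since $\gr_w(A)$ is semisimple ($w$ being a gauge) and a matrix ring over a graded semisimple ring is again graded semisimple -- every homogeneous two-sided ideal of $M_n(\gr_w(A))$ has the form $M_n(J)$ for a homogeneous two-sided ideal $J$ of $\gr_w(A)$, and is nilpotent only if $J$ is -- it follows that $\gr_{w_n}(M_n(A))$ is semisimple, hence $w_n$ is a $v$-gauge.

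For part~(2), since $w$ is $\s$-invariant, the computation preceding Lemma~\ref{lem:isom} (taking $U=\diag(1,\dots,1)$) shows that $w_n$ is $\s^t$-invariant; thus $w_n$ is a $\s^t$-invariant $v$-gauge and induces an involution $(\s^t)_0$ on $(M_n(A))_0$. Applying Lemma~\ref{lem:isom} with $U=I_n$, so that $\pi_{w_n}(U)$ is the identity, the isomorphism $\xi\colon (M_n(A))_0\to M_n(A_0)$ carries $(\s^t)_0$ to $(\s_0)^t$. By hypothesis $\s_0$ is $n$-anisotropic, which by definition means precisely that $(\s_0)^t$ is anisotropic on $M_n(A_0)$; hence $(\s^t)_0$ is anisotropic on $(M_n(A))_0$. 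By \cite[Prop.~1.1~(b)$\Rightarrow$(a)]{T-W-2011}, as recalled before Lemma~\ref{power-of-2}, we conclude that $w_n$ is $\s^t$-special.

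I expect the only point requiring some care to be the semisimplicity assertion in part~(1): one should check that the grading induced by $w_n$ on $M_n(\gr_w(A))$ is indeed the standard one with the matrix units homogeneous of degree $0$, and that the usual correspondence of two-sided ideals for matrix rings respects homogeneity. The remainder of the argument is a direct consequence of the earlier computations and of Lemma~\ref{lem:isom} specialised to the identity diagonal matrix.
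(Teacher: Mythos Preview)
Your proposal is correct and follows essentially the same approach as the paper's proof: for (1), the paper also uses the splitting basis $\{a_\ell E_{ij}\}$ and the identification $\gr_{w_n}(M_n(A))\cong M_n(\gr_w(A))$, arguing semisimplicity via the ideal correspondence $I=M_n(J)$; for (2), the paper likewise invokes Lemma~\ref{lem:isom} with $U=I_n$ to transport $(\s^t)_0$ to $(\s_0)^t$ and then applies \cite[Prop.~1.1~(b)$\Rightarrow$(a)]{T-W-2011}. Your cautionary remark about checking that the ideal correspondence respects homogeneity matches the paper's own parenthetical caveat.
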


\begin{proof}
(1) Let $\{a_\ell\}$
  be a splitting basis of $A$ with respect to $w$ and let $\{E_{ij}\}$ denote the standard basis of $M_n(F)$. Then
  an easy computation shows that $\{a_\ell E_{ij}\}$ is a splitting basis of $M_n(A)$ with respect to $w_n$,
  thus $w_n$ is a $v$-norm. The canonical map 
  \[(M_n(A))_\gamma\to M_n(A_\gamma),\  (a_{ij})_{i,j}+(M_n(A))_{>\gamma} \mapsto 
  (a_{ij} +  A_{>\gamma} )_{i,j}\]
  induces an isomorphism of the graded algebras $\gr_{w_n}(M_n(A))$ and $M_n(\gr_w(A))$, and since the latter 
  is semisimple 
  (writing an ideal $I$ of $M_n(\gr_w(A))$ as $M_n(J)$ for some ideal $J$ of $\gr_w(A)$, a direct verification
  show that $M_n(\gr_w(A))$    
  does not contain any nontrivial $2$-sided nilpotent homogeneous ideal
  since $ \gr_w(A)$ does not),  the former is also semisimple. We conclude that $w_n$ is a $v$-gauge on $M_n(A)$. 
  
(2)  The indices $0$ below are with respect to the gradings induced by $w$ and $w_n$, as appropriate.
  We follow the arguments in the proof of 
  \cite[Prop.~2.12 (iii)$\Rightarrow$(i)]{Kulsh-2011}  (without the restriction that $\s$ is of the  
  first kind and noting that the assumption that
  $j$ is a norm appears to be missing):   Since $\s_0$ is $n$-anisotropic, 
  $(\s_0)^t$ is anisotropic
  on $M_n(A_0)$. By Lemma~\ref{lem:isom} with $U=I_n$ we have
  $\bigl((M_n(A))_0, (\s^t)_0\bigr) \cong 
  (M_n(A_0), (\s_0)^t)$ and so we obtain that $(\s^t)_0$ is anisotropic on $(M_n(A))_0$. Since $w$ is $\s$-invariant,
  $w_n$ is $\s^t$-invariant and the result follows from 
  \cite[Prop.~1.1 (b)$\Rightarrow$(a)]{T-W-2011}.
\end{proof}

\begin{prop}\label{lem:genius}
  Assume that there is a positive cone $\CQ$ on $(A_0,\s_0)$ over $Q\in X_{F_v}$ with $1\in \CQ$.
  Then for any $\ell\in\N$ and any $u_1,\ldots, u_\ell \in R_v^\x= R_v\sm I_v$ such that 
  $\pi_v(u_i) \in Q$,  the hermitian form $\qf{u_1,\ldots, u_\ell}_\s$ is strongly anisotropic.
\end{prop}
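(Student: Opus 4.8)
The plan is to reduce the claim, via Lemma~\ref{lem:sa}, to the anisotropy of a family of conjugate-transposition-type involutions on matrix algebras over $A$, and then to establish that anisotropy by passing to the residue algebra $A_0$ of an auxiliary value function and feeding in the positive cone $\CQ$. Concretely, I would fix $r\in\N$ and show that $r\times\qf{u_1,\ldots,u_\ell}_\s$ is anisotropic. Setting $m:=r\ell$ and letting $t_1,\ldots,t_m$ be the list $u_1,\ldots,u_\ell$ repeated $r$ times, this form equals $\qf{t_1,\ldots,t_m}_\s$, with each $t_j\in R_v^\x$, $\pi_v(t_j)\in Q$, and $t_j\in\Sym(A,\s)\cap A^\x$ (as $t_j\in F\subseteq Z(A)$ and $\s$ is $F$-linear). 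By Lemma~\ref{lem:sa} applied with $S=T:=\diag(t_1,\ldots,t_m)$, it suffices to prove that $\Int(T)\circ\s^t$ is anisotropic on $M_m(A)$.

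To do this I would bring in $w_m$ on $M_m(A)$. It is a surmultiplicative $v$-value function, and it is $\Int(T)\circ\s^t$-invariant by the computation carried out just before Lemma~\ref{lem:isom} (valid since the $t_j$ lie in $R_v^\x=R_v\sm I_v$), so it induces an involution $(\Int(T)\circ\s^t)_0$ on the residue algebra $(M_m(A))_0$. Lemma~\ref{lem:isom} then identifies
\[
\bigl((M_m(A))_0,(\Int(T)\circ\s^t)_0\bigr)\cong\bigl(M_m(A_0),\Int(\bar T_0)\circ(\s_0)^t\bigr),\qquad \bar T_0=\diag\bigl(\pi_v(t_1),\ldots,\pi_v(t_m)\bigr).
\]
Here each $\pi_v(t_j)$ is a symmetric invertible \emph{central} element of the finite-dimensional semisimple $F_v$-algebra $A_0$ (again because $t_j\in F\cap R_v^\x$ and $\s$ is $F$-linear), so I can invoke Lemma~\ref{lem:sa} a second time, now over $(A_0,\s_0)$: the involution $\Int(\bar T_0)\circ(\s_0)^t$ is anisotropic on $M_m(A_0)$ as soon as the hermitian form $\qf{\pi_v(t_1),\ldots,\pi_v(t_m)}_{\s_0}$ is anisotropic over $(A_0,\s_0)$.

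The anisotropy of that residue form is where $\CQ$ enters. Since $1\in\CQ$ we have $Q\subseteq\CQ$ by (P4), hence every element represented by $\qf{\pi_v(t_1),\ldots,\pi_v(t_m)}_{\s_0}$ — being of the form $\sum_{j=1}^m\s_0(x_j)\pi_v(t_j)x_j$ — lies in $\CQ$ by (P2) and (P3). If it vanished with some $x_j\neq0$, then Lemma~\ref{lem:sszero}(1) would force each summand $\s_0(x_j)\pi_v(t_j)x_j=0$, whence $\s_0(x_j)x_j=0$ (as $\pi_v(t_j)$ is a central unit) and then $x_j=0$ by Lemma~\ref{lem:sszero}(2) — a contradiction. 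So the residue form is anisotropic, and therefore so is $(\Int(T)\circ\s^t)_0$ on $(M_m(A))_0$.

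To finish, I would lift this back to $M_m(A)$: since $w_m$ is $\Int(T)\circ\s^t$-invariant with anisotropic induced involution, \cite[Prop.~1.1 (b)$\Rightarrow$(a)]{T-W-2011} makes $w_m$ a $\Int(T)\circ\s^t$-special $v$-value function, and then Remark~\ref{spec-aniso} yields that $\Int(T)\circ\s^t$ is anisotropic on $M_m(A)$. By the first reduction this means $r\times\qf{u_1,\ldots,u_\ell}_\s$ is anisotropic, and since $r$ is arbitrary the form $\qf{u_1,\ldots,u_\ell}_\s$ is strongly anisotropic. I do not expect a genuine conceptual obstacle; the point needing care is the two-stage use of Lemma~\ref{lem:sa} sandwiching the residue computation — in particular checking that the residues $\pi_v(t_j)$ are central symmetric units of $A_0$ lying in $Q$, which is exactly what lets $\CQ$ (and Lemma~\ref{lem:sszero}) be used on the residue side.
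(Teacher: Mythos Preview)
Your argument is correct and is essentially the paper's own proof: both reduce via Lemma~\ref{lem:sa} to anisotropy of $\Int(T)\circ\s^t$ on a matrix algebra over $A$, transport to the residue via Lemma~\ref{lem:isom}, use $\CQ$ together with Lemma~\ref{lem:sszero} to prove the residue hermitian form is anisotropic, and lift back via \cite[Prop.~1.1]{T-W-2011} and Remark~\ref{spec-aniso}. The only cosmetic difference is that you unfold strong anisotropy by taking $m=r\ell$ copies, whereas the paper runs the argument for arbitrary $\ell$ and observes at the end that unboundedness of $\ell$ yields strong anisotropy (packaging your direct appeal to \cite[Prop.~1.1]{T-W-2011} as an instance of Proposition~\ref{prop:special}(2) with $n=1$).
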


\begin{proof} 
  We first show that for all $\ell\in\N$ and $u_1,\ldots, u_\ell \in R_v\sm I_v$ with $\pi_v(u_i) \in Q$,
  the hermitian  form $\qf{\pi_v(u_1),\ldots, \pi_v(u_\ell)}_{\s_0}$ is  anisotropic over $(A_0,\s_0)$.
  Assume therefore that there exist $x_1,   \ldots, x_\ell \in R_w$ such that 
  $\sum_{i=1}^\ell \pi_v( u_i) \s_0(\pi_w(x_i))  \pi_w( x_i) = 0$. Then, by Lemma~\ref{lem:sszero}, 
  every  $\pi_v( u_i) \s_0(\pi_w(x_i))  \pi_w( x_i) $ is zero, and since $\pi_v(u_i)\not=0$ it follows that 
  $\s_0(\pi_w(x_i))  \pi_w( x_i) =0$ for every $i$. Therefore, $\pi_w(x_i)=0$ for every $i$ 
  by Lemma~\ref{lem:sszero} and since $1\in \CQ$. Since there is no bound on $\ell$, it follows
  that  $\qf{\pi_v(u_1),\ldots, \pi_v(u_\ell)}_{\s_0}$ is strongly anisotropic over $(A_0,\s_0)$.

  We now consider the gauge $w_\ell$ on $M_\ell(A)$.  
  By Lemma~\ref{lem:sa}, and with $S:=\diag(u_1,\ldots, u_\ell)$, the involution
  $ \Int(\xi\circ\pi_{w_\ell}( S)) \circ \s_0^t$ is strongly anisotropic on $M_\ell(A_0)$ and so  
  $(\Int(S) \circ \s^t)_0$ is strongly anisotropic on $(M_\ell(A))_0$
  by Lemma~\ref{lem:isom}.    
  Then by Proposition~\ref{prop:special}(2) with $n=1$ (applied to $(M_\ell(A), \Int(S)\circ \s^t)$
   and $w_\ell$ instead of
  $(A,\s)$ and $w$), 
  the gauge $w_\ell$ is $\Int(S)\circ \s^t$-special,
  and thus $\Int(S)\circ \s^t$ is anisotropic on $M_\ell(A)$ by Remark~\ref{spec-aniso}. 
  Therefore, the form $\qf{u_1,\ldots, u_\ell}_\s$ is anisotropic by Lemma~\ref{lem:sa}, and thus strongly
  anisotropic since there is no bound on $\ell$.
\end{proof}

\begin{lemma}\label{lem:omega}
  Assume that $\Gamma_w = \Gamma_v$.
  Let $\omega \in F^\x$ be such that $v(\omega) \not\in 2\Gamma_v$ and let $v'$
  be the unique extension of $v$ to $F(\sqrt{\omega})$. Then, using the
  \tu{(}tame\tu{)} $v'$-gauge $w\ox v'$ on $A\ox_F F(\sqrt{\omega})$, we have
  \[(A_0,\s_0) \cong ((A\ox_F F(\sqrt{\omega}))_0, (\s \ox \id)_0)\]
  as $F_v$-algebras with involution.
\end{lemma}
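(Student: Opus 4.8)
The plan is to compute the two residue algebras directly and to observe that they coincide because $F(\sqrt\omega)/F$ is totally ramified of degree $2$, so that it enlarges the value group but leaves the residue field unchanged.

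First I would record the shape of the extension. Since $v(\omega)\notin 2\Gamma_v$, the unique extension $v'$ of $v$ to $F(\sqrt\omega)$ has value group $\Gamma_{v'}=\Gamma_v+\tfrac12 v(\omega)\Z$, in which $\Gamma_v$ has index $2$ with cosets $\Gamma_v$ and $\Gamma_v+\tfrac12 v(\omega)$; hence for $a,b\in F$ the values $v(a)$ and $v(b)+\tfrac12 v(\omega)$ lie in distinct cosets, so $v'(a+b\sqrt\omega)=\min\{v(a),\,v(b)+\tfrac12 v(\omega)\}$ and in particular $F(\sqrt\omega)_{v'}=F_v$. Using now the construction of the tensor-product gauge (cf.\ \cite[Cor.~1.26]{T-W-2010}), a splitting basis $\{e_i\}$ for $w$ is, viewed over $F(\sqrt\omega)$, also a splitting basis for $w\ox v'$; expanding a general element $a_0\ox 1+a_1\ox\sqrt\omega$ of $A\ox_F F(\sqrt\omega)$ (with $a_0,a_1\in A$) in this basis and applying the formula for $v'$ just obtained gives
\[(w\ox v')(a_0\ox 1+a_1\ox\sqrt\omega)=\min\{\,w(a_0),\ w(a_1)+\tfrac12 v(\omega)\,\}.\]

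Here the hypothesis $\Gamma_w=\Gamma_v$ enters decisively: since $w(a_1)\in\Gamma_v\cup\{\infty\}$ while $-\tfrac12 v(\omega)\notin\Gamma_v$, one has $w(a_1)\ge-\tfrac12 v(\omega)\iff w(a_1)>-\tfrac12 v(\omega)$ for every $a_1\in A$. Therefore $R_{w\ox v'}$ consists of the $a_0\ox 1+a_1\ox\sqrt\omega$ with $w(a_0)\ge 0$ and $w(a_1)\ge-\tfrac12 v(\omega)$, and $I_{w\ox v'}$ of those with $w(a_0)>0$ and $w(a_1)\ge-\tfrac12 v(\omega)$, the condition on $a_1$ being the same in both cases. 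Hence $\pi_{w\ox v'}(a_0\ox 1+a_1\ox\sqrt\omega)\mapsto\pi_w(a_0)$ is a well-defined bijection from $R_{w\ox v'}/I_{w\ox v'}$ onto $R_w/I_w=A_0$; it is an $F_v$-algebra homomorphism because it is the identity on the common copy of $F_v=R_{v'}/I_{v'}=R_v/I_v$, and it intertwines the induced involutions since $\s\ox\id$ fixes $1\ox\sqrt\omega$ and restricts to $\s$ on $A\ox 1$, so that $(\s\ox\id)_0\bigl(\pi_{w\ox v'}(a_0\ox 1+a_1\ox\sqrt\omega)\bigr)=\pi_{w\ox v'}(\s(a_0)\ox 1+\s(a_1)\ox\sqrt\omega)$ maps to $\s_0(\pi_w(a_0))$. (Alternatively one can argue via the graded ring, writing $\gr_{w\ox v'}(A\ox_F F(\sqrt\omega))=\gr_w(A)\ox_{\gr_v(F)}\gr_{v'}(F(\sqrt\omega))=\gr_w(A)\oplus\gr_w(A)\eta$ with $\eta$ the image of $\sqrt\omega$, homogeneous of degree $\tfrac12 v(\omega)\notin\Gamma_v=\Gamma_w$, and reading off the degree-$0$ component.)

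The main obstacle I anticipate is purely formal: justifying the displayed value formula for $w\ox v'$, i.e.\ that the base change of a splitting basis for $w$ is a splitting basis for $w\ox v'$. Once that is granted, the rest is bookkeeping whose only real content is that $\Gamma_w=\Gamma_v$ together with $v(\omega)\notin 2\Gamma_v$ forces the component $a_1\ox\sqrt\omega$ to be invisible in the residue.
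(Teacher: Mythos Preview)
Your argument is correct and the route is close to the paper's, though you compute everything by hand where the paper invokes a general black box. The paper observes that $\Gamma_w\cap\Gamma_{v'}=\Gamma_v$ (immediate from $\Gamma_w=\Gamma_v$) and then applies \cite[Prop.~2.12]{T-W-book} to obtain the canonical isomorphism $A_0\ox_{F_0}F(\sqrt\omega)_0\to(A\ox_F F(\sqrt\omega))_0$; since $F(\sqrt\omega)_0=F_0$, this collapses to $A_0$. Your parenthetical alternative via the graded tensor product $\gr_w(A)\ox_{\gr_v(F)}\gr_{v'}(F(\sqrt\omega))$ is exactly this. Your primary argument unpacks the same mechanism explicitly at the level of $R_{w\ox v'}$ and $I_{w\ox v'}$, which has the advantage of not relying on the cited proposition.

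One small point: your sentence ``it is an $F_v$-algebra homomorphism because it is the identity on the common copy of $F_v$'' only establishes $F_v$-linearity, not multiplicativity. Multiplicativity amounts to $a_1b_1\omega\in I_w$ whenever $a_1,b_1$ satisfy $w(a_i)>-\tfrac12 v(\omega)$, which follows from surmultiplicativity since $w(a_1)+w(b_1)+v(\omega)>0$. Cleaner still: argue via the inverse map $\pi_w(a_0)\mapsto\pi_{w\ox v'}(a_0\ox 1)$, which is visibly a ring homomorphism commuting with the involutions, and which you have already shown is bijective.
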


\begin{proof}
  By hypothesis $\Gamma_w \cap \Gamma_{v'} = \Gamma_v$, so by
\cite[Prop.~2.12]{T-W-book}, the map \[A_0 \ox_{F_0}
F(\sqrt{\omega})_0 \rightarrow (A \ox_F F(\sqrt{\omega}))_0, \quad
\pi_w(a) \ox \pi_{v'}(z) \mapsto \pi_{w\ox v'}(a \ox z)\] is an
isomorphism of $F_0$-vector spaces. A direct verification using the
definition of the product in the graded algebras and the definition
of the residue involution shows that it is a morphism of algebras
with involution.

 Again, a direct verification shows that the $F_0$-algebras with
involution $(A_0 \ox_{F_0} F(\sqrt{\omega})_0, \s_0 \ox \id)$ and
$(A_0, \s_0)$ are isomorphic via the canonical map, and the result
follows.
\end{proof}

\begin{prop}\label{prop:genius}
  Assume that $\Gamma_w = \Gamma_v$ and that there is a positive cone
$\CQ$ on $(A_0,\s_0)$ over $Q\in X_{F_v}$ with $1\in \CQ$. Let $P \in 
X_F$ be a lifting of $Q$ to $F$ \tu{(}i.e., $\pi_v(P\cap R_v)=Q$\tu{)}
and let $a_1, \ldots, a_\ell \in P \setminus
\{0\}$. Then the hermitian form $\qf{a_1, \ldots, a_\ell}_\s$ is
strongly anisotropic.
\end{prop}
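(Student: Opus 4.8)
The plan is to run a Baer--Krull-style induction that removes the value-group obstruction one $\mathbb{F}_2$-dimension at a time, the end of the induction being the case already settled in Proposition~\ref{lem:genius}. First observe that $r\x\qf{a_1,\dots,a_\ell}_\s$ is again a diagonal hermitian form all of whose diagonal entries lie in $P\setminus\{0\}$; hence it suffices to prove the a priori weaker assertion that \emph{for every $\ell\in\N$ and all $a_1,\dots,a_\ell\in P\setminus\{0\}$ the form $\qf{a_1,\dots,a_\ell}_\s$ is anisotropic}, and strong anisotropy of any particular such form then follows immediately. I would prove this assertion by induction on $m:=\dim_{\mathbb{F}_2}\big\langle\,\overline{v(a_1)},\dots,\overline{v(a_\ell)}\,\big\rangle$, the dimension of the $\mathbb{F}_2$-span of the classes of the $v(a_i)$ in $\Gamma_v/2\Gamma_v$, the inductive statement being understood to quantify over all admissible data $(A,\s,w,\CQ,Q,P)$ satisfying the hypotheses of the proposition, so that it may be applied to a quadratic scalar extension. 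Note also that $v$ is compatible with $P$, since $\pi_v(P\cap R_v)=Q$ is an ordering.

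For the base case $m=0$ every $v(a_i)$ lies in $2\Gamma_v$, so $v(a_i)=2v(c_i)$ for some $c_i\in F^\x$, and $u_i:=a_ic_i^{-2}$ is a unit of $R_v$. As $c_i^{-2}$ is a square and $a_i\in P$, we get $u_i\in P\cap R_v$, hence $\pi_v(u_i)\in\pi_v(P\cap R_v)=Q$. The central symmetric element $c_i\in F$ gives an isometry $x\mapsto c_i^{-1}x$, so $\qf{a_1,\dots,a_\ell}_\s\cong\qf{u_1,\dots,u_\ell}_\s$, and the latter is (strongly, in particular) anisotropic by Proposition~\ref{lem:genius}.

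For the inductive step, with $m\ge1$, I would choose $i_0$ with $v(a_{i_0})\notin2\Gamma_v$, set $\omega:=a_{i_0}\in P\setminus\{0\}$ and $F':=F(\sqrt\omega)$, and let $v'$ be the unique (totally ramified, degree-$2$) extension of $v$. Since $\Gamma_w=\Gamma_v$, Lemma~\ref{lem:omega} supplies the tame $v'$-gauge $w':=w\ox v'$ on $A':=A\ox_FF'$ together with an isomorphism $(A_0,\s_0)\cong\big((A')_0,(\s\ox\id)_0\big)$ of $F_v$-algebras with involution; transporting $\CQ$ across it yields a positive cone $\CQ'$ over $Q$ containing $1$ on $\big((A')_0,(\s\ox\id)_0\big)$. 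One then checks: (i) $\Gamma_{w'}=\Gamma_{v'}$, since a splitting basis $\{e_i\}$ of $w$ gives one, $\{e_i\ox1\}$, for $w'$ and the values $w(e_i)\in\Gamma_w=\Gamma_v\subseteq\Gamma_{v'}$ absorb into $\Gamma_{v'}$; (ii) $P$ extends to an ordering $P'$ of $F'$ that is compatible with $v'$ and lifts $Q$ — embed $F'$ into a real closure $F_P$ of $F$ at $P$ (possible as $\omega\in P$), restrict the natural extension of $v$, and use that $F'/F$ is residually trivial; and (iii) the $\mathbb{F}_2$-span of $\{\overline{v'(a_i)}\}$ in $\Gamma_{v'}/2\Gamma_{v'}$ is the image of $\big\langle\overline{v(a_i)}\big\rangle$ under the quotient by the nonzero class $\overline{v(\omega)}=\overline{v(a_{i_0})}$, hence has dimension exactly $m-1$. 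The inductive hypothesis applied to $(A',\s\ox\id,w',\CQ',Q,P')$ then gives that $\qf{a_1,\dots,a_\ell}_{\s\ox\id}$, which is the scalar extension of $\qf{a_1,\dots,a_\ell}_\s$, is anisotropic over $(A',\s\ox\id)$; since a nonzero isotropic vector of $\qf{a_1,\dots,a_\ell}_\s$ would remain isotropic after extending scalars to $F'$, the form $\qf{a_1,\dots,a_\ell}_\s$ is anisotropic over $(A,\s)$, completing the induction.

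The part I expect to require the most care is not a single deep step but keeping the induction clean: one must verify that every hypothesis survives the quadratic extension ($A'$ semisimple, $w'$ a $v'$-gauge with $\Gamma_{w'}=\Gamma_{v'}$, the transported positive cone still over $Q$ and containing $1$, and $P'$ genuinely lifting $Q$), and above all that the dimension strictly drops from $m$ to $m-1$ — which is exactly why $\omega$ is taken to be one of the $a_i$ themselves rather than an arbitrary element of prescribed value. The verification in (ii) that $P$ extends to $F'$ compatibly with $v'$ is routine once $F'$ is realized inside $F_P$ and one uses that $v$ is $P$-compatible with the same residue field as $v'$.
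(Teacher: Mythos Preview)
Your proof is correct and follows essentially the same route as the paper's: reduce to the unit-coefficient case of Proposition~\ref{lem:genius} by adjoining square roots of elements whose values generate the relevant $\mathbb{F}_2$-subspace of $\Gamma_v/2\Gamma_v$, using Lemma~\ref{lem:omega} to identify residue algebras along the way. The only difference is cosmetic: the paper adjoins all the square roots at once to reach a single extension $L=F(\sqrt{\omega_1},\dots,\sqrt{\omega_r})$ and then invokes Lemma~\ref{lem:omega} $r$ times, whereas you unwind this into an explicit induction on $m$ with one quadratic step at a time, which forces you to check (correctly) that every hypothesis transfers at each stage.
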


\begin{proof}
  Since $a_1, \ldots, a_\ell \in F$, we can write
  \[\qf{a_1, \ldots, a_\ell}_\s \simeq \rho_1 h_1 \perp \cdots \perp \rho_s
  h_s,\]
  where $v(\rho_1), \ldots, v(\rho_r)$ are all different in $\Gamma_v/2\Gamma_v$
  and $h_i = \qf{u_{i,1}, \ldots, u_{i,k_i}}_\s$ with $v(u_{i,j}) = 0$ for all
  $i,j$.
  Let $\omega_1,\ldots, \omega_r \in F$ be such that $v(\omega_1), \ldots,
  v(\omega_r)$ form a $\Z/2\Z$-basis of the subspace of $\Gamma_v/2\Gamma_v$
  generated by $v(\rho_1), \ldots, v(\rho_s)$. 
  Up to replacing $\omega_i$ by
  $-\omega_i$ we can assume that $\omega_1, \ldots, \omega_r \in P$.

  We consider $L := F(\sqrt{\omega_1}, \ldots, \sqrt{\omega_r})$. Since the
  elements $v(\omega_i)$ are linearly independent in $\Gamma_v/2\Gamma_v$, there
  is only one extension $v'$ of $v$ to $L$ and $F_v = L_{v'}$, cf.
  \cite[Thm.~3.3.4]{EP}. Moreover, $P$ extends to an ordering $P'$ on $L$, and
  $v'$ is compatible with $P'$ (indeed: write $v$ as $v_{\kk,P}$; then
  $v_{\kk,P'}$ extends $v$ to $L$, so is equal to $v'$).
  By definition of $L$, $v'(a_i) \in 2\Gamma_{v'}$ for $i=1,\ldots,
  \ell$, so that $a_i = u_i b_i^2$ for some $u_i, b_i \in L$ with $v'(u_i) = 0$.
  Therefore
  \[\qf{a_1, \ldots, a_\ell}_{\s \ox \id} \simeq \qf{u_1, \ldots, u_\ell}_{\s \ox
  \id}\]
  over $(A \ox_F L, \s \ox \id)$, and $u_i = a_i b_i^{-2} \in P'$, so
  $\pi_{v'}(u_i) \in Q$ for $i=1, \ldots, \ell$.
  
  Furthermore, by applying Lemma~\ref{lem:omega} $r$ times, we see that $((A\ox_F L)_0, (\s
  \ox \id)_0) \cong (A_0, \s_0)$, so that there is a positive cone containing $1$ on $((A \ox_F L)_0, (\s
  \ox \id)_0)$ over $Q$. Since  $A \ox_F L$ is semisimple and $\s \ox \id$ is
  $L$-linear, we can apply Proposition~\ref{lem:genius} with $A\ox_F
  L$, and obtain that $\qf{a_1, \ldots, a_\ell}_{\s \ox \id}$ is strongly
  anisotropic. The result follows.
\end{proof}

\section{Baer-Krull type results for positive cones and gauges}\label{sec:B-K}

Let $v:F\to \Gamma_v\cup\{\infty\}$ be a valuation on $F$.
We recall the following presentation of the classical Baer-Krull theorem, cf. 
\cite[pp. 27--28]{P-D-2001}: 
Let $\Omega := \{\omega_i\}_{i \in I} \subseteq F^\x$ be such that $\{v(\omega_i)\}_{i \in I}$ is a 
$\Z/2\Z$-basis
of $\Gamma_v/2\Gamma_v$ and let $\Omega_\pr$ be the set of all finite products of elements of 
$\Omega$
(including $1$). 
Each $a \in F$ can be written in the form $a = u b^2 \rho$ with $v(u)=0$, $b \in F$ and $\rho \in 
\Omega_\pr$. 

\begin{thm}[Baer-Krull]\label{cBK}
  Let $\eta : \Omega \rightarrow \{-1,1\}$ be any map, 
  extended multiplicatively to $\Omega_\pr$. Then, for $Q \in X_{F_v}$, 
    \[P_{(\eta,Q)}:= \{a = ub^2\rho \in F \mid \pi_v(u) \cdot \eta(\rho) \in Q\}   \]
  is an ordering on $F$ that is a lifting of $Q$ (i.e., $\pi_v(P_{(\eta,Q)} \cap R_v) =
  Q$), and all the liftings of $Q$ are of this form. In this case, $v$ is compatible with
  $P_{(\eta,Q)}$.
\end{thm}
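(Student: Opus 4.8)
The statement is the classical Baer--Krull theorem, and I would prove it by verifying the ordering axioms for $P:=P_{(\eta,Q)}$ directly, identifying $P$ as a lifting of $Q$, deducing compatibility from the residue characterisation of compatibility, and then running the standard surjectivity argument. A preliminary point: $P$ is well defined. If $a=ub^2\rho=u'b'^2\rho'$ are two admissible decompositions then $v(\rho)\equiv v(\rho')\pmod{2\Gamma_v}$, and since $\{v(\omega_i)\}$ is a $\Z/2\Z$-basis of $\Gamma_v/2\Gamma_v$ the $\Omega$-exponents of $\rho$ and $\rho'$ agree modulo $2$; hence $\eta(\rho)=\eta(\rho')$, $\rho/\rho'$ is a square in $F^\x$, and $u/u'$ is the square of a $v$-unit, so $\pi_v(u)/\pi_v(u')\in(F_v^\x)^2\subseteq Q$ and the defining condition is independent of the chosen decomposition.

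To see that $P$ is an ordering: $P\cup-P=F$ and $P\cap-P=\{0\}$ are immediate from $\pi_v(u)\neq0$ together with the corresponding properties of $Q$ (using the convention $0\in P$), and $P\cdot P\subseteq P$ is immediate from multiplicativity of $\pi_v$, of $\eta$ and of $Q$. The only real work, and the step I expect to be the main obstacle, is $P+P\subseteq P$. Here I would use the following device: for nonzero $a,a'\in P$ with $v(a)\le v(a')$, factor $a+a'=a\,(1+a'/a)$. If $v(a)<v(a')$ then $\pi_v(a'/a)=0$; if $v(a)=v(a')$ then the equality of values pins down the $\Omega$-exponent parities of $\rho$ and $\rho'$, so $\eta(\rho)=\eta(\rho')$, and since $\pi_v(u)\eta(\rho)$ and $\pi_v(u')\eta(\rho')$ lie in $Q$ the residue $\pi_v(a'/a)$ (which equals $\pi_v(u'/u)$ times a square) lies in $Q$. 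In either case $1+a'/a$ is a $v$-unit with residue $1+\pi_v(a'/a)\in Q\setminus\{0\}$ (nonzero since $-1\notin Q$), so, writing $a=ub^2\rho$, the decomposition $a+a'=\bigl(u(1+a'/a)\bigr)b^2\rho$ is admissible and $\pi_v\bigl(u(1+a'/a)\bigr)\eta(\rho)=\bigl(\pi_v(u)\eta(\rho)\bigr)\cdot\pi_v(1+a'/a)\in Q\cdot Q\subseteq Q$; hence $a+a'\in P$.

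Next I would check $\pi_v(P\cap R_v)=Q$. If $a\in R_v\setminus\{0\}$ has $v(a)>0$ then $\pi_v(a)=0\in Q$; if $v(a)=0$ then the value condition forces all $\Omega$-exponents of $\rho$ to be even, so $\eta(\rho)=1$ and $a=u\,c^2$ with $v(c)=0$, whence $a\in P\iff\pi_v(u)\in Q\iff\pi_v(a)\in Q$. Combined with the surjectivity of $\pi_v\colon R_v\to F_v$ this yields $\pi_v(P\cap R_v)=Q$, i.e. $P$ is a lifting of $Q$. Since $P$ is an ordering whose residue set $\pi_v(P\cap R_v)=Q$ is an ordering of $F_v$, the criterion \cite[Thm.~3.6.1(1)]{Marshall} shows immediately that $v$ is compatible with $P$, which is the last assertion.

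Finally, for the surjectivity of the Baer--Krull map let $P$ be an arbitrary lifting of $Q$ (so $v$ is compatible with $P$, again by \cite[Thm.~3.6.1(1)]{Marshall}) and set $\eta(\omega_i)=1$ if $\omega_i\in P$ and $\eta(\omega_i)=-1$ otherwise, extended multiplicatively. Since $\eta(\omega_i)\omega_i\in P$ for every $i$, both $\eta(\rho)\rho$ and $\eta(\rho)\rho^{-1}$ are products of powers of elements of $P$, hence lie in $P$; so for nonzero $a=ub^2\rho\in P$ we get $u\eta(\rho)=a\cdot b^{-2}\cdot\bigl(\eta(\rho)\rho^{-1}\bigr)\in P$ with $v(u\eta(\rho))=0$, whence $u\eta(\rho)\in P\cap R_v$ and $\pi_v(u)\eta(\rho)=\pi_v(u\eta(\rho))\in Q$. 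Thus $P\subseteq P_{(\eta,Q)}$, and since one ordering of a field contained in another must equal it, $P=P_{(\eta,Q)}$. This completes the plan.
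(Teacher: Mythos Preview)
Your proof is correct, but note that the paper does not actually prove this theorem: it is stated as a recollection of the classical Baer--Krull theorem, with a reference to \cite[pp.~27--28]{P-D-2001}, and is then used as a tool in Section~\ref{sec:B-K}. So there is no ``paper's own proof'' to compare against; you have supplied what the paper deliberately omits.

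That said, your argument is the standard one and matches what one finds in Prestel--Delzell. The well-definedness check, the verification of the ordering axioms (with the factoring trick $a+a'=a(1+a'/a)$ for additivity), the identification of $\pi_v(P\cap R_v)$ with $Q$, the deduction of compatibility via the residue criterion, and the surjectivity step via $\eta(\omega_i)=\mathrm{sign}_P(\omega_i)$ are all carried out correctly. One tiny remark: in the additivity step when $v(a)=v(a')$, you might make explicit that $(b'/b)^2(\rho'/\rho)$ is a $v$-unit (it is, since $a'/a$ and $u'/u$ are) before taking its residue, but this is implicit in your phrasing.
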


Assume now that $A$ is a finite-dimensional simple $F$-algebra with $F$-linear involution $\s$
and centre $K=Z(A)$
such that
$F=\Sym(A,\s)\cap K$. 
Let $w$ be a $\s$-invariant $v$-gauge on $A$ and let 
$\CQ$ be a positive cone on $(A_0,\s_0)$ over $Q \in X_{F_v}$, such that $1 \in \CQ$.
By Theorem~\ref{cBK}, if $P$ is a lifting of $Q$, then $v$ is compatible with $P$ and  
so $v=v_{\kk, P}$ for some subfield $\kk$ of $F$, cf. \cite[Thm.~7.21]{Prestel84}.

A positive cone $\CP$ on $(A,\s)$ over $P$ is called a \emph{lifting} of $\CQ$ (with respect
to $w$) if $P$ is a lifting of $Q$ with respect to $v$ and
  \[\pi_w({\CP \cap R_w} )= \CQ.\]

We consider the involution trace form 
$$\tas: A\x A \to K, (x,y)\mapsto \Trd_A(\s(x)y),$$ 
which is a symmetric bilinear form over $F$ if $\s$ is of the first kind and a hermitian form over $(K,\iota)$ if $\s$ is of the second kind, where $\iota$ is the nontrivial $F$-automorphism of $K$.
Note that $\tas$ is nondegenerate, cf. \cite[\S11]{BOI}. 
Let $\alpha_1,\ldots, \alpha_m \in F$ be the entries in a diagonalization of $\tas$, where  
$m=\dim_K A$.

\begin{thm}\label{bk2}
  Let $P\in X_F$.
  The following statements are equivalent:
  \begin{enumerate}[$(1)$]
    \item $\CQ$ lifts to a positive cone $\CP$ on $(A,\s)$ over $P$, that contains $1$.
    In this case  $w=w_{\kk, \CP}$ \tu{(}where $v=v_{\kk,\CP}$\tu{)} 
    and in particular is compatible with $\CP$.
    
    \item $\s$ is positive at $P$.
    
    \item The form $\tas$ is definite at $P$.
    
    \item $\alpha_1,\ldots, \alpha_m \in P$ or $\alpha_1,\ldots, \alpha_m \in -P$.    
  \end{enumerate}
  
\end{thm}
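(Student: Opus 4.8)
The plan is to prove the cycle of implications $(2)\Leftrightarrow(3)\Leftrightarrow(4)$ first, since these concern only the field $F$, the ordering $P$, and the involution trace form, and then to close the loop with $(1)\Rightarrow(2)$ (which is essentially immediate from Remark~\ref{L}(1)) and the main implication $(2)\Rightarrow(1)$, which is where all the work lies. For $(2)\Leftrightarrow(3)$: by Proposition~\ref{prop:tired}, $\s$ is positive at $P$ precisely when $(A\ox_F F_P,\s\ox\id)\cong(M_{n_P}(D_P),\bt)$, and one computes $\tas\ox F_P$ directly in this model as a hermitian (or symmetric bilinear) form over $F_P$: it becomes the trace form of $(M_{n_P}(D_P),\bt)$, which is a scaled orthogonal sum of copies of the norm form of $D_P$ (all of which are positive definite over $F_P$), hence positive definite. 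Conversely, $\tas$ definite at $P$ forces the signature of $\s$ at $P$ to be maximal in absolute value, which by \cite[Rem.~4.3, Prop.~4.4]{A-U-PS} (already invoked in Section~\ref{sec:5.3}) means $\s$ is positive (or negative) at $P$; replacing $\s$ by $\Int(a)\circ\s$ if necessary as in the remark at the start of Section~\ref{sec:general} lets us assume positive. The equivalence $(3)\Leftrightarrow(4)$ is just the standard fact that a diagonal form $\qf{\alpha_1,\dots,\alpha_m}$ over a field is definite at $P$ iff all $\alpha_i$ have the same sign at $P$ (taking care of the second-kind case by passing to $K$ and using that $\tas$ is then hermitian over $(K,\iota)$, whose signature behaviour is governed by its values on $\Sym$).

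For $(1)\Rightarrow(2)$: if $\CP$ is a lifting of $\CQ$ containing $1$, then in particular $1\in\CP$, so $\s$ is positive at $P$ by Remark~\ref{L}(1), and the supplementary claims ($w=w_{\kk,\CP}$, compatibility) follow from Theorem~\ref{gauge} once we know $\CP$ is a positive cone over $P$ containing $1$ and $v=v_{\kk,P}$ — the latter holds because $P$ is a lifting of $Q$, hence $v$ is compatible with $P$ by Theorem~\ref{cBK}, hence $v=v_{\kk,P}$ for some $\kk$. So the heart of the theorem is $(2)\Rightarrow(1)$: assuming $\s$ positive at $P$, build a positive cone $\CP$ on $(A,\s)$ over $P$, containing $1$, with $\pi_w(\CP\cap R_w)=\CQ$. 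The natural candidate is $\CP:=\CC_P(S)$ where $S$ is a suitable set of $\s$-symmetric units lifting generators of $\CQ$, or more structurally: use the explicit gauge description. Here is the route I would follow. Since $\s$ is positive at $P$ and (by hypothesis of this section) $\Gamma_w=\Gamma_v$ is available whenever we need it, reduce via the diagonalization $h\simeq\rho_1 h_1\perp\cdots\perp\rho_s h_s$ of the hermitian form attached to $\s$ (as in Section~\ref{sec:5.3} and Proposition~\ref{prop:genius}) to control the residue.

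Concretely, I expect the argument to run: first dispose of the case $F=F_P$ real closed — there $\CP=\PSD$ is forced, $A_0$ with $\s_0$ is a product of matrix algebras over residue division algebras, and $\CQ$ must be the hermitian squares; one checks $\pi_w(\CP\cap R_w)=\CQ$ by the explicit formulas \eqref{eq:wkp}, \eqref{eq:rkp} and Proposition~\ref{semisimple2}. Then, for general $F$: set $v=v_{\kk,P}$ (legitimate since $P$ lifts $Q$), form the gauge $w_{\kk,\CP'}$ for the positive cone $\CP'=\PSD$ on the split algebra over $F_P$, and pull back. The key input is Proposition~\ref{prop:genius}: because $\Gamma_w=\Gamma_v$, there is a positive cone containing $1$ on $(A_0,\s_0)$ over $Q$ (namely $\CQ$), and $\alpha_1,\dots,\alpha_m\in P$, so $\tas=\qf{\alpha_1,\dots,\alpha_m}_\s$ is strongly anisotropic — this is precisely the anisotropy needed so that a $\s$-special $v$-gauge exists (Proposition~\ref{inspired}(1), via the Henselization) and, combined with Theorem~\ref{gauge}, forces $w=w_{\kk,\CP}$ for the cone $\CP$ we are constructing. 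Finally, Theorem~\ref{prop:maxl} gives $\pi_w(\CP\cap R_w)$ is a positive cone over $Q$ containing $1$; by uniqueness (Lemma~\ref{unique-mpppc}) it equals $\CQ$, so $\CP$ is the desired lifting.

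The main obstacle is the construction of $\CP$ itself in $(2)\Rightarrow(1)$: one must produce a positive cone on $(A,\s)$ over the specified lifting $P$ whose residue is exactly $\CQ$ and not merely \emph{some} positive cone over $Q$, and the only leverage for matching residues is the explicit computation of Section~\ref{sec:5.3} together with the rigidity supplied by the uniqueness of $\s$-special gauges (Proposition~\ref{inspired}) and Theorem~\ref{prop:maxl}. Getting the reduction to the split real-closed case clean — i.e.\ verifying that scalar extension to $F_P$ sends the candidate $\CP$ to $\PSD$ \emph{and} that the residue computation commutes with this extension (this is where $\Gamma_w=\Gamma_v$ and \cite[Prop.~2.12]{T-W-book} enter, as in Lemma~\ref{lem:omega} and Proposition~\ref{prop:genius}) — is the delicate bookkeeping. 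I would structure the write-up so that all the anisotropy is quarantined into a single appeal to Proposition~\ref{prop:genius}, and all the residue-matching into a single appeal to Theorem~\ref{prop:maxl}, leaving only the existence of \emph{a} positive cone over $P$ containing $1$ (which follows from $\s$ positive at $P$ by \cite[Cor.~7.7, Thm.~7.5]{A-U-pos}, choosing the one over $P$ containing $1$) to be established by hand.
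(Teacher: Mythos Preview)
Your final paragraph actually contains the correct argument, but you have buried it under a layer of machinery that is not needed and, in one place, not available.

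First, a factual error: $\Gamma_w=\Gamma_v$ is \emph{not} a hypothesis of this section or of Theorem~\ref{bk2}. That condition only enters later, in Theorem~\ref{wadth}. Consequently your appeal to Proposition~\ref{prop:genius} (which does require $\Gamma_w=\Gamma_v$) is illegitimate here, and the whole detour through the split real-closed model, Lemma~\ref{lem:omega}, and strong anisotropy of $\tas$ should be discarded.

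Second, the ``main obstacle'' you identify --- matching the residue to $\CQ$ rather than to \emph{some} positive cone over $Q$ --- dissolves immediately once you notice that $\CQ$ is, by the standing hypotheses, a positive cone over $Q$ containing $1$, and Lemma~\ref{unique-mpppc} says there is only one such. So no bespoke construction of $\CP$ is needed: any positive cone over $P$ containing $1$ will do. The paper's proof of $(2)\Rightarrow(1)$ is accordingly a four-line argument: $\s$ positive at $P$ gives a positive cone $\CP\ni 1$ over $P$ by \cite[Cor.~7.7]{A-U-pos}; uniqueness of the $\s$-invariant $v$-gauge (Theorem~\ref{gauge}, Proposition~\ref{inspired}) forces $w=w_{\kk,\CP}$; compatibility (Proposition~\ref{c1-c8}) and Theorem~\ref{prop:maxl} give that $\pi_w(\CP\cap R_w)$ is a positive cone over $Q$ containing $1$; Lemma~\ref{unique-mpppc} identifies it with $\CQ$.

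For the remaining equivalences the paper simply cites \cite[Prop.~4.8]{A-U-PS} for $(2)\Leftrightarrow(3)$ and calls $(3)\Leftrightarrow(4)$ clear; your direct approach is fine but note that your worry about ``positive versus negative'' is unfounded, since $\tas(1,1)=\Trd_A(1)$ is a positive integer, so a definite $\tas$ is automatically positive definite.
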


\begin{proof}
  The equivalence $(2)\lra (3)$ is \cite[Prop.~4.8]{A-U-PS} with $u=1$, and
  the equivalence $(3)\lra (4)$ is clear.
  
  $(1)\Rightarrow (2)$: This follows from \cite[Cor.~7.7]{A-U-pos}. 
  
  $(2)\Rightarrow (1)$: Let $\kk$ be a subfield of $F$ such that $v=v_{\kk,
  P}$.  By \cite[Cor.~7.7]{A-U-pos}, there exists a positive cone $\CP$ on
  $(A,\s)$ over $P$ such that $1\in \CP$. By Theorem~\ref{gauge} and
  Proposition~\ref{inspired}, $w_{\kk,\CP}$
  is $\s$-special and thus the unique $\s$-invariant $v$-gauge on $A$, so $w=w_{\kk,\CP}$.
  Therefore, $w$ and $\CP$ are compatible by Proposition~\ref{c1-c8}, and we
  conclude with Theorem~\ref{prop:maxl} and Lemma~\ref{unique-mpppc}.  
\end{proof}

The equivalent conditions of Theorem~\ref{bk2} are not always satisfied despite the existence of 
the positive cone $\CQ$, as the 
following example shows. 

\begin{ex}\label{bk2-ex}
  Let $F=\R(\!(x)\!)(\!(y)\!)$ be the iterated 
  Laurent series field in the unknowns $x$ and $y$ and let $v:F\to \Z\x \Z$ be the 
  standard $(x,y)$-adic (Henselian) valuation on $F$ (cf. \cite[\S 3]{Wadsworth-2002}). Note that
  $F$ has exactly four orderings, determined by the signs of $x$ and $y$.
  
  Let $A=(x,y)_F$ be the quaternion division algebra with generators $i$ and $j$ such that
$i^2=x$ and $j^2=y$. Let $\gamma$ be the canonical symplectic involution (quaternion
conjugation) on $A$ and also consider the orthogonal involution $\s=\Int(i)\circ \gamma$ on $A$.
Straightforward computations (or see  \cite[(11.3)(1)]{BOI}) show that
  \[
    T_{(A,\gamma)}\simeq \qf{2,-2x,-2y,2xy} \text{ and }
    T_{(A,\s)}\simeq \qf{2,-2x,2y,-2xy}.
  \]

Since $v$ is Henselian it extends uniquely to a valuation $w$ on $A$ (cf. 
\cite[Thm.~2]{Morandi-1989a}), specifically $w(a)=\tfrac{1}{2} v(\gamma(a)a)$, cf. 
\cite[(2.7)]{Wadsworth-2002}. Since $\ch F_v=0$,
$w$ is a $v$-gauge on $A$ by \cite[Prop.~1.13]{T-W-2010}. 
Observe that any $F$-linear involution $\tau$ on $A$ must be anisotropic since
$A$ is a division algebra. 
With reference to \cite[Ex.~6.8]{A-U-prime} we note that $A_0=F_v=\R$.
  
Let $S$ be the ordering on $F$ where $x<0$ and $y<0$. Then $A\ox_F F_S\cong (-1,-1)_{F_S}$, and so
$S\in\Nil[A,\s]$ and $S\not\in \Nil[A,\gamma]$, 
cf. \cite[Def.~3.7]{A-U-Kneb}. If $P$ is any of the other three orderings
on $F$, then $A\ox_F F_P \cong M_2(F_P)$, and so $P\not\in \Nil[A,\s]$, but $P\in \Nil[A,\gamma]$. 
Therefore, $\Nil[A,\s]=\{S\}$ and $\Nil[A,\gamma]= X_F \sm \{S\}$.

Note that since $A_0=\R$, $\gamma_0=\sigma_0=\id$. Therefore, the only positive cone on 
$(A_0,\gamma_0)=(A_0,\s_0)=(\R, \id)$ that contains $1$ is the unique ordering $Q$ of $\R$. 

Theorem~\ref{bk2} describes the orderings of $F$ for which this positive cone can be lifted to a positive cone that contains $1$. In
case of $(A,\gamma)$ this is precisely $S$. In case of $(A,\sigma)$ it is the unique ordering with
$x<0$ and $y>0$.

Observe that for $(A,\gamma)$ the orderings for which we cannot lift the positive cone $Q$ are
exactly the orderings in $\Nil[A,\gamma]$, but that the corresponding set for $(A,\s)$ contains
both orderings that are in $\Nil[A,\s]$ and orderings that are not. In the latter case observe that
there is a positive cone on $(A,\s)$ over the ordering, but it will not contain $1$, cf.
\cite[Prop.~6.6]{A-U-pos}.
\end{ex}
\medskip

In view of Theorem~\ref{bk2} and Example~\ref{bk2-ex}, if $P \in X_F$ is a
lifting of $Q$, there may not always be a lifting $\CP$ of $\CQ$ over $P$ that contains
$1$. In order to describe when such liftings always exists, we investigate more
closely the behaviour of $\tas$.

We define
\[\lift(\CQ) := \{ P \in X_F \mid \text{ there is a lifting of $\CQ$ over $P$,
that contains $1$}\}.\]

Let $(\prescript{\iota}{}A, \prescript{\iota}{}\s)$  denote the conjugate
algebra with involution of $(A,\s)$, cf. \cite[\S3.B]{BOI}. There is an
isomorphism of $K$-algebras with involution
\begin{equation}\label{eq:AAEnd}
  (A\ox_K \prescript{\iota}{}A, \s\ox \prescript{\iota}{}\s) \simtoo
  (\End_K(A), \ad_{\tas}),
\end{equation} 
cf. \cite[Prop.~11.1]{BOI}.
Clearly, the $v$-gauge $w$ is defined on $\prescript{\iota}{}A$ as well:
$w(\prescript{\iota}{}a):=w(a)$ for all $a\in A$.
Let $v'$ be an extension of $v$ to $K$. Recall that $\Gamma_{v'}=\Gamma_v \cup 
(\tfrac{1}{2} v(d)+\Gamma_v)$, where $K=F(\sqrt{d})$.

\begin{lemma}\label{not-extend}
  Assume that $\s$ is of the second kind, and let $P \in \lift(\CQ)$. Then $P$
  does not extend to $K=Z(A)$.
\end{lemma}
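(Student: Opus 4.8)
The plan is to use Lemma~\ref{lem:simple} to force $A\ox_F F_P$ to be a simple ring, and then to observe that an ordering of $K$ lying over $P$ would produce a nontrivial central idempotent in $A\ox_F F_P$, a contradiction.

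Concretely, since $P\in\lift(\CQ)$ there is a positive cone $\CP$ on $(A,\s)$ over $P$ (a lifting of $\CQ$ over $P$); in particular $\CP$ is a prepositive cone on $(A,\s)$ over $P$. As $A$ is finite-dimensional simple with $F=\Sym(A,\s)\cap Z(A)$, Lemma~\ref{lem:simple} applied with $L=F_P$ shows that $A\ox_F F_P$ is a simple ring, and a simple ring has no central idempotents other than $0$ and $1$.

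Next I would exploit that $\s$ is of the second kind, i.e.\ that $[K:F]=2$; write $K=F(\sqrt d)$ with $d\in F^\x$. Suppose, for a contradiction, that $P$ extends to an ordering $P_1$ on $K$, and fix a real closure $K_{P_1}$ of $K$ at $P_1$. Then $K_{P_1}$ is algebraic over $F$ and real closed, with induced ordering extending $P$, so $K_{P_1}$ is also a real closure of $F$ at $P$ and hence isomorphic to $F_P$ over $F$. Since $\sqrt d\in K\subseteq K_{P_1}$, the element $d$ is a square in $F_P$, so $K\ox_F F_P\cong F_P[t]/(t^2-d)\cong F_P\x F_P$, which contains a nontrivial idempotent $e$. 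Because $K=Z(A)$, the subring $K\ox_F F_P$ of $A\ox_F F_P$ is central, so $e$ is a nontrivial central idempotent of $A\ox_F F_P$, contradicting simplicity. Therefore $P$ does not extend to $K$.

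The hypothesis that $\s$ is of the second kind enters precisely in the step $[K:F]=2$ (for first-kind involutions $K=F$ and the statement is vacuous). I do not expect a real obstacle here: the only mildly delicate point is the standard fact that an ordering of $K$ over $P$ makes $d$ a square in $F_P$ — equivalently makes $K\ox_F F_P$ split as $F_P\x F_P$ — and everything else is an immediate consequence of the simplicity of $A\ox_F F_P$ furnished by Lemma~\ref{lem:simple}.
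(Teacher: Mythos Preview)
Your proof is correct and takes a somewhat different route than the paper's. The paper argues via the set $\Nil[A,\s]$: it invokes \cite[Prop.~8.4]{A-U-pos} to show that if $P$ extends to $K$ then $P\in\Nil[A,\s]$, and then uses Theorem~\ref{bk2} together with \cite[Thm.~6.8]{A-U-pos} to conclude that $\s$ being positive at $P$ forces $P\notin\Nil[A,\s]$, a contradiction. You instead stay inside the paper by quoting Lemma~\ref{lem:simple} to get that $A\ox_F F_P$ is simple, and then observe directly that an extension of $P$ to $K=F(\sqrt{d})$ makes $d$ a square in $F_P$, whence $K\ox_F F_P\cong F_P\times F_P$ yields a nontrivial central idempotent in $A\ox_F F_P$. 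The two arguments are really the same mathematics viewed from different angles (Lemma~\ref{lem:simple} itself goes through $P\notin\Nil[A,\s]$), but your version has the advantage of being self-contained relative to the paper and avoiding two external citations; the paper's version has the advantage of making explicit the conceptual role of $\Nil[A,\s]$.
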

\begin{proof}
  If $P$ extends to $Z(A)$, then $P \in \Nil[A,\s]$ by
  \cite[Prop.~8.4]{A-U-pos}.  Indeed, $\Nil[A,\s]$ only depends on the Brauer
  class of $A$  and the type of $\s$, cf. \cite[Def.~3.7]{A-U-Kneb}. Let $D$ be
  the skewfield part of $A$. Then $D$ also carries an involution of the second
  kind (cf. \cite[Thm.~3.1]{BOI}), which we call $\vt$. Then $\Nil[A,\s] =
  \Nil[D,\vt]$ and we can apply \cite[Prop.~8.4]{A-U-pos} since $Z(A)=Z(D)$.

  This contradicts that $\s$ is positive at $P$ (cf. Theorem~\ref{bk2}) by
  \cite[Thm.~6.8]{A-U-pos}.
\end{proof}

\begin{lemma}\label{tensor-gauge}
  Assume that $\lift(\CQ) \not = \varnothing$. Then
  \begin{enumerate}[$(1)$]
    \item $v'$ is the unique extension of $v$ to $K$;
    \item $w$ is a $v'$-gauge on $A$;
    \item $w\ox w$ is a $\s \ox \pri\s$-invariant $v'$-gauge and $v$-gauge on
      $A\ox_K \pri A$.
  \end{enumerate}
\end{lemma}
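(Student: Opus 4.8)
The plan is the following. For $\s$ of the first kind, $K=F$ and $v'=v$, so (1) and (2) are trivial and (3) just says that $w\ox w$ is a $(\s\ox\s)$-invariant $v$-gauge on $A\ox_F A$, which is covered uniformly with the general case below; so I assume $\s$ is of the second kind and write $K=F(\sqrt d)$ with $d\in F^\x$.

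\emph{Proof of (1).} Fix $P\in\lift(\CQ)$, which exists by hypothesis. By Lemma~\ref{not-extend}, $P$ does not extend to $K$, hence $d<_P 0$; and by Theorem~\ref{cBK}, $v$ is compatible with $P$, say $v=v_{\kk,P}$. I split according to $v(d)\bmod 2\Gamma_v$. If $v(d)\notin 2\Gamma_v$, then any extension $v_K$ of $v$ to $K$ satisfies $v_K(\sqrt d)=\tfrac{1}{2}v(d)\notin\Gamma_v$, so $[\Gamma_{v_K}:\Gamma_v]\ge 2$ and $v_K$ is forced to be the unique extension. If $v(d)\in 2\Gamma_v$, write $d=c^2u$ with $c\in F^\x$ and $u\in R_v^\x$; since $d<_P 0$ and $v$ is compatible with $P$, the residue $\pi_v(u)$ is negative for the induced ordering $Q=\pi_v(P\cap R_v)$ of $F_v$, hence a nonsquare in $F_v$, so $K_{v_K}\supseteq F_v(\sqrt{\pi_v(u)})$ has degree $\ge 2$ over $F_v$ and again $v_K$ is unique. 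Thus $v$ has a unique extension $v'$ to $K$.

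\emph{Proof of (2).} First, $w$ is $\s$-special: picking $P\in\lift(\CQ)$ with an associated lifting $\CP$ of $\CQ$ (a positive cone on $(A,\s)$ over $P$ with $1\in\CP$), Theorem~\ref{bk2}(1) gives $w=w_{\kk,\CP}$, which is $\s$-special by Theorem~\ref{gauge}. Hence for $\lambda\in K^\x$, $w(\lambda)=\tfrac{1}{2}w(\s(\lambda)\lambda)=\tfrac{1}{2}w(N_{K/F}(\lambda))=\tfrac{1}{2}v(N_{K/F}(\lambda))$; and since $K/F$ is quadratic Galois and $v'$ is, by (1), the unique extension of $v$, the conjugate $v'\circ\iota$ also extends $v$ and so equals $v'$, whence $v'(N_{K/F}(\lambda))=2v'(\lambda)$ and $w|_K=v'$. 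Consequently $w(\lambda x)=v'(\lambda)+w(x)$ for all $\lambda\in K^\x$, $x\in A$ (``$\ge$'' by surmultiplicativity, ``$\le$'' by applying it to $\lambda^{-1}(\lambda x)$), so $w$ is a surmultiplicative $v'$-value function. It is a $v'$-norm: $\gr_w(A)$ is a graded module over the graded field $\gr_{v'}(K)$, hence a graded $\gr_{v'}(K)$-vector space; since $\CQ$ is over some $Q\in X_{F_v}$, $F_v$ is formally real, so $\ch F_v=0$, the quadratic extension $K/F$ is defectless and $[\gr_{v'}(K):\gr_v(F)]=2$; comparing with the $v$-norm $w$ gives $\dim_{\gr_{v'}(K)}\gr_w(A)=\tfrac{1}{2}\dim_F A=\dim_K A$, which is the norm condition. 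Finally $\gr_w(A)$ is graded semisimple regardless of the base field, so $w$ is a $v'$-gauge on $A$.

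\emph{Proof of (3).} By \eqref{eq:AAEnd}, $A\ox_K\pri A\cong\End_K(A)$ is simple. The same $w$ is a $v'$-gauge on $\pri A$ (same underlying ring; $v'\circ\iota=v'$ by the uniqueness in (1), and $w\circ\pri\s=w$), and both copies are tame because $\ch K_{v'}=0$ (Remark~\ref{ttame}); by the Tignol--Wadsworth tensor theorem for tame gauges (cf. \cite[Cor.~1.26]{T-W-2010}) together with Remark~\ref{ttame}, $w\ox w$ is a tame $v'$-gauge on $A\ox_K\pri A$, and it is $\s\ox\pri\s$-invariant by \cite[Cor.~1.4]{T-W-2011}. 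That $w\ox w$ is moreover a $v$-gauge follows by reversing the argument of (2): restricted to the centre $K$ it equals $v'$, hence restricted to $F$ it equals $v$, so it is a surmultiplicative $v$-value function; it is a $v$-norm because $\gr_{w\ox w}(A\ox_K\pri A)$ is a $\gr_{v'}(K)$-vector space of dimension $\dim_K(A\ox_K\pri A)$, hence a free $\gr_v(F)$-module of dimension $2\dim_K(A\ox_K\pri A)=\dim_F(A\ox_K\pri A)$; and its associated graded ring is semisimple. The step I expect to be the most delicate is precisely this passage between the $v$- and $v'$-structures of $w$ on $A$ (and of $w\ox w$ on $A\ox_K\pri A$): it rests on $\gr_{v'}(K)$ being a graded field, so that all graded modules over it are free, and on the defectlessness $[\gr_{v'}(K):\gr_v(F)]=[K:F]=2$ granted by $\ch F_v=0$; establishing $w|_K=v'$ in turn relies both on the $\s$-speciality of $w$ — which is why the hypothesis $\lift(\CQ)\neq\varnothing$ is indispensable — and on the uniqueness of (1), whose ramified/inertial dichotomy is the other point requiring genuine care.
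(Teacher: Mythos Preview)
Your proof is correct, but it takes a different route from the paper in all three parts.

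For (1), the paper argues by contradiction: if $v$ had two extensions to $K$, the fundamental inequality would force $\Gamma_{v'}=\Gamma_v$ and $K_{v'}=F_v$, and then the Baer--Krull data $(\eta,Q)$ that define $P$ over $F$ would equally well define an ordering on $K$ extending $P$, contradicting Lemma~\ref{not-extend}. You instead use Lemma~\ref{not-extend} directly to get $d<_P 0$ and then run the ramified/inert dichotomy on $v(d)\bmod 2\Gamma_v$. Your argument is more elementary and self-contained; the paper's is shorter but leans on the Baer--Krull machinery already in play.

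For (2) and (3), the paper simply invokes \cite[Prop.~2.1]{F-W} (which passes between $v$-gauges and $v'$-gauges when $v'$ is the unique extension) and \cite[Cor.~1.28]{T-W-2010} for the tensor product. You instead prove these by hand: you establish $w|_K=v'$ from $\s$-speciality via $w(\lambda)=\tfrac12 v(N_{K/F}(\lambda))$ together with $v'\circ\iota=v'$, and then check the norm condition by the graded-dimension count $\dim_{\gr_{v'}(K)}\gr_w(A)=\tfrac12\dim_F A=\dim_K A$, using defectlessness of $K/F$ in residue characteristic $0$. This is a genuine alternative: it avoids the external reference \cite{F-W} and makes transparent exactly where the hypothesis $\lift(\CQ)\neq\varnothing$ enters (namely, to secure $\s$-speciality of $w$ via Theorem~\ref{bk2} and Theorem~\ref{gauge}, which in turn yields $w|_K=v'$). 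The paper's approach has the advantage of brevity and of not re-deriving a known transfer principle; yours has the advantage of being self-contained and of isolating the role of $\s$-speciality. One minor point: for the tensor of two gauges over $K$ you want \cite[Cor.~1.28]{T-W-2010} rather than Cor.~1.26 (the latter handles gauge~$\ox$~valuation).
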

\begin{proof}
  (1) The statement is obvious if $\s$ is of the first kind, so we assume that
  $\s$ is of the second kind. If there are two extensions of $v$ to $K$ then, by
  the fundamental inequality of valuation theory, $\Gamma_{v'}=\Gamma_{v}$ and
  $K_{v'}=F_v$. Let $P \in \lift(\CQ)$. Then $P = P_{(\eta,Q)}$ as described in
  Theorem~\ref{cBK} and, again by Theorem~\ref{cBK}, the set
  \[\{ub^2\rho \in K \mid u \in R_{v'} \setminus I_{v'},\ b \in K,\ \rho \in
  \Omega_\pr \text{ such that } \pi_{v'}(u) \cdot \eta(\rho) \in Q\}\]
  defines an ordering on $K$ extending $P$, which is not possible by
  Lemma~\ref{not-extend}.

  (2) This follows from (1) by \cite[Prop.~2.1]{F-W}.

  (3) $w\ox w$ is a $v'$-gauge on $A\ox_K \pri A$ by \cite[Cor.~1.28]{T-W-2010}
  and Remark~\ref{ttame}, and is also a $v$-gauge by \cite[Prop.~2.1]{F-W}. It
  is clearly $\s \ox \pri\s$-invariant.
\end{proof}

In order to keep notation simple,we identify $A\ox_K \pri A$ and $M_m(K)$ and
consider the  gauge $w \ox w$ as being defined on $M_m(K)$.

We now write the form $\tas$ as
\[\tas \simeq \rho_1 h_1 \perp \dots \perp \rho_k h_k,\]
where $\rho_1, \ldots, \rho_k$ are distinct elements of $\Omega_\pr$, and $h_1,
\ldots, h_k$ are diagonal forms over $(K,\iota)$ with coefficients in $R_v \setminus I_v$. Since
$m$ is represented by $\tas$ we can assume that $\rho_1 = 1$, and that the first
coefficient of $h_1$ is $m$.

\begin{lemma}\label{u-in-Q}
  Assume that $\lift(\CQ) \not = \varnothing$.
  For every $i=1, \ldots, k$, either all coefficients of $h_i$ project via
  $\pi_v$ into $Q$ \tu{(}in this case we define $\ve_i:=1$\tu{)}, or all coefficients of
  $h_i$ project via $\pi_v$ into $-Q$ \tu{(}in this case we define $\ve_i := -1$\tu{)}.
\end{lemma}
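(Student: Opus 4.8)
The plan is to reduce the whole statement to a single, well-chosen ordering of $F$ and then read off signs. First I would fix $P\in\lift(\CQ)$, which exists by hypothesis, and apply Theorem~\ref{bk2}: since $\CQ$ lifts to a positive cone on $(A,\s)$ over $P$ containing $1$, the involution $\s$ is positive at $P$, equivalently $\tas$ is definite at $P$ (the chain $(2)\lra(3)\lra(4)$ of that theorem). In the given decomposition $\tas\simeq\rho_1 h_1\perp\dots\perp\rho_k h_k$ with $h_i=\qf{u_{i,1},\ldots,u_{i,k_i}}$ and $u_{i,j}\in R_v\setminus I_v$, the diagonal entries of $\tas$ are exactly the $\rho_i u_{i,j}$, so definiteness at $P$ means that all the $\rho_i u_{i,j}$ share a common sign at $P$; since $\tas$ moreover represents the positive integer $m$, which lies in every ordering of $F$, that common sign is positive, that is, $\rho_i u_{i,j}>_P 0$ for all $i,j$. (The argument below in fact works whichever sign occurs, so this last point is only a convenience.)

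Next I would use that $P$ is a lifting of $Q$. By the Baer--Krull theorem (Theorem~\ref{cBK}) there is a map $\eta\colon\Omega\to\{-1,1\}$, extended multiplicatively to $\Omega_\pr$, with $P=P_{(\eta,Q)}$. Now fix $i$. Each $\rho_i u_{i,j}$ is literally in the canonical Baer--Krull shape $u\,b^2\rho$, with $u=u_{i,j}\in R_v\setminus I_v$, $b=1$ and $\rho=\rho_i\in\Omega_\pr$. Hence the positivity $\rho_i u_{i,j}>_P 0$ translates, through the description of $P_{(\eta,Q)}$, into $\pi_v(u_{i,j})\cdot\eta(\rho_i)\in Q$. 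As the scalar $\eta(\rho_i)\in\{-1,1\}$ depends only on $i$ and not on $j$, this forces all of $\pi_v(u_{i,1}),\ldots,\pi_v(u_{i,k_i})$ into the same one of $Q$, $-Q$, namely into $\eta(\rho_i)\,Q$; setting $\ve_i:=\eta(\rho_i)$ then gives precisely the claimed dichotomy, with $\ve_i=1$ in the first case and $\ve_i=-1$ in the second.

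I expect the proof to be short and essentially free of obstacles. The only points requiring a little care are unwinding ``$\s$ positive at $P$'' into ``the $\rho_i u_{i,j}$ share a sign at $P$'' via the equivalences of Theorem~\ref{bk2}, and checking that each $\rho_i u_{i,j}$ is of the Baer--Krull form $u\,b^2\rho$ (immediate, since $\rho_i\in\Omega_\pr$ and $u_{i,j}$ is a $v$-unit), so that its $P$-sign is read directly off $\eta$ and $\pi_v$. Note that, unlike the lemmas that follow, this statement uses neither the gauge $w\ox w$ on $A\ox_K\pri A\cong M_m(K)$ nor any residue computation; it is purely about the trace form $\tas$ and the arithmetic of liftings. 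In particular one gets $\ve_1=1$ for free, matching the normalization $\rho_1=1$, $u_{1,1}=m$.
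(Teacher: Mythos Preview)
Your proof is correct and takes a genuinely different, more elementary route than the paper. The paper's argument passes through the tensor square: it builds a positive cone $\CS$ on $(M_m(K),\ad_{\tas})$ over $P$, takes the associated $\ad_{\tas}$-special gauge $w_{\kk,\CS}$, computes the residue algebra via Proposition~\ref{semisimple2} as a product $\prod_i (B_i,\ad_{\wt h_i})$, pushes $\CS$ down to a positive cone on each factor, and then invokes positivity of each $\ad_{\wt h_i}$ at $Q$ to conclude that the residues of the coefficients of $h_i$ share a sign in $Q$. You bypass all of this by reading the signs of the diagonal entries $\rho_i u_{i,j}$ of $\tas$ directly at $P$ through the Baer--Krull description $P=P_{(\eta,Q)}$: since each $\rho_i u_{i,j}$ is already in the canonical form $u\cdot 1^2\cdot\rho$, its $P$-sign is $\eta(\rho_i)\cdot\sgn_Q(\pi_v(u_{i,j}))$, and the factor $\eta(\rho_i)$ is constant in $j$. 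Your approach is shorter and uses only Theorem~\ref{bk2} and the classical Baer--Krull theorem; the paper's approach, while heavier, sets up the gauge $w\ox w=w_{\kk,\CS}$ and its residue decomposition, which are reused in the proof of Proposition~\ref{main-BK}(2) to compute $\Gamma_{w\ox w}$. As a bonus, your identification $\ve_i=\eta(\rho_i)$ already contains one direction of Proposition~\ref{main-BK}(1).
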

\begin{proof}
  Let $P \in \lift(\CQ)$. By Theorem~\ref{bk2}, the form $\tas$ is definite at
  $P$. By   \cite[Prop.~4.8 with $u=1$]{A-U-PS}, $\s$ and thus $\s\ox\pri\s$ are positive at $P$.
  Therefore, the involution $\ad_{\tas}$
  is positive at $P$, and by \cite[Cor.~7.7]{A-U-pos} there is a positive cone
  $\CS$ on $(M_m(K), \ad_{\tas})$ that contains $1$. We can therefore define the
  $\ad_{\tas}$-special $v$-gauge $w_{\kk,\CS}$ on $M_m(K)$ (cf.
  Section~\ref{sec:general}; actually, since $w \ox w$ is an
  $\ad_{\tas}$-invariant $v$-gauge, we have $w \ox w = w_{\kk, \CS}$ 
  by
  Theorem~\ref{gauge}), and this $v$-gauge induces a positive cone
  $\CS_0$ on $((M_m(K))_0, (\ad_{\tas})_0)$ over $Q$ that contains $1$ by
  Theorem~\ref{prop:maxl} (or by property (C5)).

  We consider the residue algebra with involution $((M_m(K))_0, (\ad_{\tas})_0)$
  with respect to the gauge $w_{\kk,\CS}$.  By Proposition~\ref{semisimple2},
  $(M_m(K))_0 \cong B_1 \times \cdots \times B_k$, with each $B_i$
  simple, and the residue involution is $(\ad_{\tas})_0 \cong \ad_{\wt h_1} \x
  \cdots \x \ad_{\wt h_k}$.  By Corollary~\ref{pc-ss-2}, $\CS_0 = \CS_1 \times
  \cdots \times \CS_k$ with $\CS_i$ a positive cone on $(B_i, \ad_{\wt h_i})$
  over $Q$, that contains $1$. Therefore, the involution $\ad_{\wt h_i}$ is
  positive at $Q$ (see \cite[Cor.~7.7]{A-U-pos}). By
  \cite[Prop.~4.4(iv)]{A-U-PS} (using the diagonal matrix $u$ that contains the coefficients of
  $\wt h_i$, and noting that $\ad_{\wt h_i}= (\s_0^t)_u$), the  coefficients of $\wt h_i$ are
  either all in $Q$, or all  in $-Q$.
\end{proof}

Observe that $\ve_1=1$ since the first coefficient of $h_1$ is $m$.

\begin{prop}\label{main-BK}
  Assume that $\lift(\CQ) \not = \varnothing$.
  \begin{enumerate}[$(1)$]
    \item $\lift(\CQ)$ is the intersection of the Harrison set $H(\ve_1 \rho_1,
      \ldots, \ve_k \rho_k)$ with the set of all liftings of $Q$ to $F$.
    \item $\Gamma_w \subseteq \Gamma_{w
      \ox w} = \sum_{i,j=1}^k \tfrac{1}{2}( v(\rho_i) -v(\rho_j))  +
      \Gamma_{v'}$.
  \end{enumerate}
\end{prop}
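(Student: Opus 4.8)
The plan is to treat the two parts separately, deriving (1) from Theorem~\ref{bk2} together with the Baer--Krull parametrization of Theorem~\ref{cBK} and Lemma~\ref{u-in-Q}, and (2) from the uniqueness of $\s$-invariant gauges (Theorem~\ref{gauge}) together with the explicit computation of Section~\ref{sec:5.3}.

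For (1): by the definition of $\lift(\CQ)$, every $P \in \lift(\CQ)$ is a lifting of $Q$, and by Theorem~\ref{bk2} (equivalence $(1)\Leftrightarrow(4)$, applied with the diagonalization of $\tas$ read off from $\tas \simeq \rho_1 h_1 \perp \dots \perp \rho_k h_k$, where $h_i = \qf{u_{i,1},\dots,u_{i,n_i}}$ over $(K,\iota)$ with $u_{i,j} \in R_v \setminus I_v$) a class $P \in X_F$ lies in $\lift(\CQ)$ precisely when it is a lifting of $Q$ and all the coefficients $\rho_i u_{i,j}$ of $\tas$ lie in $P$, or all lie in $-P$. I would then fix a lifting $P = P_{(\eta,Q)}$ of $Q$ as in Theorem~\ref{cBK}: since $v(u_{i,j})=0$ we have $\rho_i u_{i,j} = u_{i,j}\cdot 1^2 \cdot \rho_i \in P_{(\eta,Q)}$ iff $\pi_v(u_{i,j})\cdot\eta(\rho_i) \in Q$, which by Lemma~\ref{u-in-Q} holds iff $\ve_i\eta(\rho_i)=1$. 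Hence $\sgn_P(\rho_i u_{i,j}) = \ve_i\eta(\rho_i)$ depends only on $i$; since $\ve_1 = 1$ and $\rho_1 = 1$ force $\ve_1\eta(\rho_1)=1$, the ``all in $-P$'' case cannot occur, so $\tas$ is definite at $P$ iff $\ve_i\eta(\rho_i)=1$ for every $i$, equivalently (using $\ve_i\rho_i = \ve_i\cdot 1^2\cdot\rho_i$ and $\pi_v(\ve_i)=\ve_i$) iff $\ve_i\rho_i \in P$ for every $i$. Therefore $\lift(\CQ) = H(\ve_1\rho_1,\dots,\ve_k\rho_k) \cap \{P \in X_F \mid P \text{ is a lifting of } Q\}$, as claimed.

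For (2): the inclusion $\Gamma_w \subseteq \Gamma_{w\ox w}$ is immediate, since $a \mapsto a\ox 1$ embeds $A$ into $A\ox_K \pri A$ and $w\ox w$ restricts to $w$ on $A\ox 1$. For the equality I would reuse the argument already present in the proof of Lemma~\ref{u-in-Q}: fixing $P \in \lift(\CQ)$ (nonempty by hypothesis), the involution $\ad_{\tas}$ on $M_m(K) \cong A\ox_K \pri A$ (via the isomorphism~\eqref{eq:AAEnd}) is positive at $P$ by Theorem~\ref{bk2} and \cite[Prop.~4.8 with $u=1$]{A-U-PS}, so there is a positive cone $\CS$ on $(M_m(K),\ad_{\tas})$ with $1 \in \CS$ over $P$; since $P$ is a lifting of $Q$, $v$ is compatible with $P$ (Theorem~\ref{cBK}), so $v = v_{\kk,P}$ and $w_{\kk,\CS}$ is a $\ad_{\tas}$-special $v$-gauge on $M_m(K)$ by Theorem~\ref{gauge}; as $w\ox w$ is a $\ad_{\tas}$-invariant $v$-gauge on $A\ox_K \pri A \cong M_m(K)$ (Lemma~\ref{tensor-gauge}(3)), the uniqueness in Theorem~\ref{gauge} forces $w\ox w = w_{\kk,\CS}$.

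It then remains to evaluate $\Gamma_{w_{\kk,\CS}}$. Here $(M_m(K),\ad_{\tas})$ is of the form $(M_m(E'),\ad_h)$ with $E' = K$ and involution $\iota$ (with $E' = F$, $\iota = \id$, when $\s$ is of the first kind) and $h = \tas \simeq \qf{\rho_1 u_{1,1},\dots,\rho_k u_{k,n_k}}$; because $v'$ is the unique extension of $v$ to $K$ (Lemma~\ref{tensor-gauge}(1)) and $\Sym(K,\iota) = F$, the computations of Section~\ref{sec:5.3} apply verbatim. Formula~\eqref{eq:wkp} then gives $w_{\kk,\CS}\bigl((a_{pq})_{p,q}\bigr) = \min_{p,q}\{v'(a_{pq}) + \tfrac12 v(e_p) - \tfrac12 v(e_q)\}$, where $e_1,\dots,e_m$ enumerate the diagonal coefficients $\rho_i u_{i,j}$; since $u_{i,j} \in R_v \setminus I_v$ we have $v(e_p) = v(\rho_i)$ whenever $e_p$ lies in the $i$-th block, so Proposition~\ref{semisimple2} (applied, exactly as in the proof of Lemma~\ref{u-in-Q}, to this algebra with involution and this gauge) yields $\Gamma_{w_{\kk,\CS}} = \sum_{i,j=1}^k \tfrac12(v(\rho_i) - v(\rho_j)) + \Gamma_{v'}$, completing the proof. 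I expect the main obstacle to be bookkeeping rather than conceptual: correctly matching the Larmour-type decomposition $\tas \simeq \rho_1 h_1 \perp \dots \perp \rho_k h_k$ to the data $(\pi_i, v_E)$ underlying Section~\ref{sec:5.3} (so that the $\pi_i$ become the $\rho_i$ and $v_E$ becomes $v'$), and making sure that the explicit formula there --- stated for $E$ of index $\le 2$ under conjugate transposition --- transfers to the possibly larger quadratic extension $K$, which is precisely where the uniqueness of $v'$ from Lemma~\ref{tensor-gauge}(1) is needed.
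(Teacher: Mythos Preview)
Your proposal is correct and follows essentially the same route as the paper: part~(1) via Theorem~\ref{bk2}(4) and Lemma~\ref{u-in-Q} (the paper is terser but makes the same reduction, including the observation that $\ve_1=1$, $\rho_1=1$ rules out the ``all in $-P$'' case), and part~(2) via the uniqueness of $\s$-invariant gauges to identify $w\ox w$ with $w_{\kk,\CS}$ and then Proposition~\ref{semisimple2} for the value group. The only minor difference is that you justify $\Gamma_w \subseteq \Gamma_{w\ox w}$ by the restriction $(w\ox w)(a\ox 1)=w(a)$, whereas the paper cites \cite[Thm.~3.21(ii)]{T-W-2010}; and, as you anticipated, the paper also silently applies Section~\ref{sec:5.3} with $(E,\bbar)=(K,\iota)$.
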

\begin{proof}
  (1) is a direct consequence of Theorem~\ref{bk2}$(4)$ and the definition of
  the elements $\ve_i$ in Lemma~\ref{u-in-Q}. Indeed, a lifting $P$ of $Q$ is in
  $\lift(\CQ)$ if and only if the coefficients of $\tas$ are all in $P$ or all
  in $-P$, if and only if each $\ve_i \rho_i$ is positive at $P$ (or negative at
  $P$, but this case is not possible since $\rho_1=1$ and $\ve_1=1$).
  
  (2) Let $P\in\lift(\CQ)$, then $\tas$ is positive at $P$ by Theorem~\ref{bk2}.
  It follows, using \cite[Cor.~7.7]{A-U-pos} and as already observed in the
  proof of Lemma~\ref{u-in-Q}, that there exists a positive cone $\CS$ on
  $(M_m(K), \ad_{\tas})$ over $P$ that contains $1$.  We are therefore in the
  situation of Section~\ref{sec:5.3} with $(E,\bbar)=(K,\iota)$ and $h=\tas$,
  and so may consider the $\ad_{\tas}$-special $v$-gauge $w_{\kk,\CS}$.  By
  Proposition~\ref{inspired} and Theorem~\ref{gauge}, this is the unique $\ad_{\tas}$-invariant
  $v$-gauge on $\End_K(A)$. Therefore, $w\ox w = w_{\kk,\CS}$ and $\Gamma_{w\ox
  w}=\Gamma_{w_{\kk, \CS}}$. The result then follows by
  Proposition~\ref{semisimple2}, using that $\Gamma_w\subseteq \Gamma_{w\ox w}$
  (see \cite[Thm.~3.21(ii)]{T-W-2010}).
\end{proof}

\begin{lemma}\label{kinds}
  Assume that $\s$ is of the second kind, and that for every lifting $P$ of $Q$
  to $F$, the involution $\s$ is positive at $P$. Then 
  $\Gamma_v = \Gamma_{v'}$.
\end{lemma}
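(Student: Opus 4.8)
The plan is to argue by contradiction. Since $\s$ is of the second kind we have $K=F(\sqrt{d})$ for some $d\in F^\x$, and the recalled formula $\Gamma_{v'}=\Gamma_v\cup(\tfrac{1}{2}v(d)+\Gamma_v)$ shows that $\Gamma_v=\Gamma_{v'}$ is equivalent to $v(d)\in 2\Gamma_v$. So I would assume $v(d)\notin 2\Gamma_v$ and aim for a contradiction with the hypothesis that $\s$ is positive at \emph{every} lifting of $Q$ to $F$.

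The heart of the argument is that Baer--Krull leaves enough freedom to produce a lifting of $Q$ over which $d$ is positive. Fix $\Omega$ and $\Omega_\pr$ as in Theorem~\ref{cBK} and write $d=ub^2\rho$ with $v(u)=0$, $b\in F$, $\rho\in\Omega_\pr$. Since $v(d)\equiv v(\rho)\pmod{2\Gamma_v}$ and $v(d)\notin 2\Gamma_v$, the element $\rho$ is a \emph{nontrivial} product $\omega_{i_1}\cdots\omega_{i_t}$ with $t\ge 1$. For an arbitrary map $\eta:\Omega\to\{-1,1\}$ and for the map $\eta'$ obtained from $\eta$ by flipping its value at $\omega_{i_1}$, Theorem~\ref{cBK} provides two liftings $P:=P_{(\eta,Q)}$ and $P':=P_{(\eta',Q)}$ of $Q$ to $F$; since $\eta'(\rho)=-\eta(\rho)$ and $\pi_v(u)\in F_v^\x$, exactly one of $\pi_v(u)\eta(\rho)$, $\pi_v(u)\eta'(\rho)$ lies in the ordering $Q$, hence exactly one of $P$, $P'$ contains $d=ub^2\rho$. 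After renaming I may assume $d\in P$, so $d>_P 0$, and therefore $P$ extends to an ordering of $K=F(\sqrt{d})$ (an ordering of a field extends across a quadratic extension precisely when the radicand is positive).

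Finally I would invoke the hypothesis: $\s$ is positive at the lifting $P$, so by Theorem~\ref{bk2} the positive cone $\CQ$ lifts to a positive cone on $(A,\s)$ over $P$ that contains $1$, i.e., $P\in\lift(\CQ)$; but then Lemma~\ref{not-extend} forces $P$ \emph{not} to extend to $K$, contradicting the previous step. Hence $v(d)\in 2\Gamma_v$ and $\Gamma_v=\Gamma_{v'}$. The only slightly delicate point is the middle paragraph --- checking that one of the two Baer--Krull orderings really does contain $d$ --- which hinges on $\rho\ne 1$, so that flipping a single generator flips the sign of $\rho$ without disturbing $Q$; everything else is an immediate combination of Theorems~\ref{cBK} and~\ref{bk2} with Lemma~\ref{not-extend}.
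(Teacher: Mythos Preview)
Your proof is correct and follows essentially the same route as the paper's: write $K=F(\sqrt{d})$, reduce the claim to $v(d)\in 2\Gamma_v$, use Baer--Krull to produce (when $v(d)\notin 2\Gamma_v$) a lifting $P$ of $Q$ at which $d$ is positive, and then derive a contradiction from Lemma~\ref{not-extend} (via Theorem~\ref{bk2}). The paper's version is terser---it simply asserts that Baer--Krull yields such a lifting---whereas you spell out the sign-flipping argument explicitly, but the logic is identical.
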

\begin{proof}
  (This elegant argument was suggested to us by A.~Wadsworth.) Write
  $K=F(\sqrt{d})$ for some $d\in F$. An ordering on $F$ extends to $K$ if and
  only if $d$ is positive at this ordering.  By the assumption and
  Lemma~\ref{not-extend}, $d$ must be negative at every lifting of $Q$ to $F$.
  However, if $v(d)$ were not in $2\Gamma_v$, then $d$ would be positive at some
  liftings of $Q$ by the Baer-Krull theorem. Hence, $v(d)$ must be an element of
  $2\Gamma_v$, which implies that $\Gamma_{v'} = \Gamma_{v}$.
\end{proof}

We can now describe when every lifting of $Q$ to $P$ gives rise to a lifting of
the positive cone $\CQ$ that contains $1$.

\begin{thm}\label{wadth}
  The following are equivalent:
  \begin{enumerate}[$(1)$]
    \item For every lifting $P$ of $Q$, there is a lifting $\CP$ of $\CQ$ that contains $1$.
    \item $\Gamma_w = \Gamma_v$.
  \end{enumerate}
  If one of these holds, then the number of liftings of $\CQ$ to $(A,\s)$ that
  contain $1$ is equal to $|\Gamma_v/2\Gamma_v|$.
\end{thm}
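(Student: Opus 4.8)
The plan is to prove the two implications separately and then read off the count from the Baer--Krull parametrization. I begin with a general observation used throughout: since $w$ restricts to $v$ on $F$ (and $w(1)=0$), one has $\Gamma_v = w(F^\x)\subseteq \Gamma_w$, so the real content of (2) is the reverse inclusion. I also record, via Lemma~\ref{unique-mpppc}, that over each lifting $P$ of $Q$ there is \emph{at most one} positive cone on $(A,\s)$ over $P$ containing $1$; hence the assignment $\CP\mapsto\CP_F$ sets up a bijection between the liftings of $\CQ$ containing $1$ and the set $\lift(\CQ)$.

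For $(2)\Rightarrow(1)$, I would fix a lifting $P$ of $Q$ and apply Proposition~\ref{prop:genius} (whose hypotheses are precisely $\Gamma_w=\Gamma_v$, the existence of $\CQ$ with $1\in\CQ$, and $P$ lifting $Q$) to conclude that $\qf{a_1,\dots,a_\ell}_\s$ is anisotropic for every finite tuple $a_1,\dots,a_\ell\in P\sm\{0\}$. It is then routine to check that $\CC_P(1)$ is a prepositive cone on $(A,\s)$ over $P$ containing $1$: (P1)--(P3) are immediate, (P4) follows from Remark~\ref{aboutPC} since $1\in\CC_P(1)$ and $P\subseteq(\CC_P(1))_F$, and (P5) is exactly the anisotropy statement above (a nonzero element of $\CC_P(1)\cap-\CC_P(1)$ yields an isotropy relation for some $\qf{a_1,\dots,a_\ell}_\s$ with $a_i\in P\sm\{0\}$). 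By Lemma~\ref{unique-mpppc}, $\CC_P(1)$ is contained in a positive cone on $(A,\s)$ over $P$ containing $1$, so $\s$ is positive at $P$ by Remark~\ref{L}(1), and Theorem~\ref{bk2} then supplies a lifting of $\CQ$ over $P$ containing $1$. As $P$ was an arbitrary lifting of $Q$, this proves (1).

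For $(1)\Rightarrow(2)$, note first that liftings of $Q$ to $F$ exist (Theorem~\ref{cBK}), so (1) says $\lift(\CQ)\ne\varnothing$ and in fact $\lift(\CQ)$ equals the set of \emph{all} liftings of $Q$ to $F$. I would then invoke Proposition~\ref{main-BK}(1): $\lift(\CQ)=H(\ve_1\rho_1,\dots,\ve_k\rho_k)\cap\{\text{liftings of }Q\}$, so each $\ve_i\rho_i$ — equivalently $\rho_i$ up to sign — is positive at every lifting of $Q$. A short Baer--Krull computation (writing $\rho_i$ in the form $ub^2\rho$ of Theorem~\ref{cBK} and varying $\eta$) shows that this forces the $\Omega_\pr$-part of $\rho_i$ to be trivial, hence $v(\rho_i)\in 2\Gamma_v$ for every $i$. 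By Proposition~\ref{main-BK}(2) it follows that $\Gamma_w\subseteq\Gamma_{w\ox w}=\sum_{i,j}\tfrac12(v(\rho_i)-v(\rho_j))+\Gamma_{v'}=\Gamma_{v'}$ (each $\tfrac12(v(\rho_i)-v(\rho_j))$ lying in $\Gamma_v\subseteq\Gamma_{v'}$). If $\s$ is of the first kind then $\Gamma_{v'}=\Gamma_v$ trivially; if $\s$ is of the second kind, then $\s$ is positive at every lifting of $Q$ (Theorem~\ref{bk2}), so $\Gamma_{v'}=\Gamma_v$ by Lemma~\ref{kinds}. Either way $\Gamma_w\subseteq\Gamma_v$, which with the opening observation yields $\Gamma_w=\Gamma_v$.

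Finally, for the cardinality count: assuming (1) (equivalently (2)), the arguments above give $\lift(\CQ)=\{\text{all liftings of }Q\text{ to }F\}$, so by the bijection recorded in the first paragraph the number of liftings of $\CQ$ containing $1$ equals the number of liftings of $Q$ to $F$, which by Theorem~\ref{cBK} is $|\Gamma_v/2\Gamma_v|$. I expect the main obstacle to be the $(1)\Rightarrow(2)$ direction — reducing, via Proposition~\ref{main-BK} together with Baer--Krull bookkeeping, to the assertion $v(\rho_i)\in 2\Gamma_v$ for all $i$, and then controlling $\Gamma_{v'}$ in the second-kind case through Lemma~\ref{kinds}; the direction $(2)\Rightarrow(1)$ is essentially immediate once Proposition~\ref{prop:genius} is in hand.
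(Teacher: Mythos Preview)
Your proof is correct and follows essentially the same approach as the paper: both directions hinge on Proposition~\ref{main-BK} (together with Baer--Krull and Lemma~\ref{kinds}) for $(1)\Rightarrow(2)$, and on Proposition~\ref{prop:genius} to verify (P5) for the prepositive cone generated by $1$ in $(2)\Rightarrow(1)$. The only cosmetic difference is that in $(2)\Rightarrow(1)$ you package the final step through Theorem~\ref{bk2} (via ``$\s$ positive at $P$''), whereas the paper unpacks this directly by identifying $w$ with $w_{\kk,\CP}$ and invoking Theorem~\ref{prop:maxl}; your observation that the $\rho_i$ already lie in $\Omega_\pr$ (so the Baer--Krull argument forces $\rho_i=1$, i.e.\ $k=1$) is exactly the paper's conclusion.
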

\begin{proof}
  (1) $\Rightarrow$ (2): By Proposition~\ref{main-BK}(1) we obtain that the set
  of all liftings of $Q$ to $F$ is included in $H(\ve_1\rho_1, \ldots,
  \ve_k\rho_k)=H(1, \ve_2\rho_2 \ldots, \ve_k\rho_k)$.  By the Baer-Krull
  theorem, this can only happen if $k=1$ (since otherwise there would be a
  lifting of $P$ at which $\ve_2\rho_2$ would be negative). In this case,
  Proposition~\ref{main-BK}(2) gives $\Gamma_w = \Gamma_{v'}$.  This concludes
  the proof if $\s$ is of the first kind. If $\s$ is of the second kind,
  Lemma~\ref{kinds} (using Theorem~\ref{bk2}) gives $\Gamma_{v'} = \Gamma_v$.

  (2) $\Rightarrow$ (1): Let $P$ be a lifting of $Q$ to $F$, determined by the
  function $\eta$, cf. Theorem~\ref{cBK}. We define 
  \[\CP_{(\eta,Q)} := \Bigl\{\sum_{i=1}^r u_i   \rho_i  \s(x_i)x_i \,\Bigl | \Bigr.\, 
  r \in \N,\ u_i \in R^\x_v,\  \rho_i\in \Omega_\pr,\    \pi_v(u_i)\cdot \eta(\rho_i)
  \in Q,   \ x_i \in A \Bigr\}\]
  and observe that $u_i   \rho_i \in P_{(\eta,Q)}$ and $1 \in
  \CP_{(\eta,Q)}$. The set $\CP$ is the
  closure of $\{1\}$ under the operations defining a positive cone over $P$,
  cf. properties (P2), (P3), and (P4) in Definition~\ref{def-preordering}.
  We first check that $\CP_{(\eta,Q)}$ is a prepositive cone on
  $(A,\s)$ over $P_{(\eta,Q)}$:

  Properties (P1), (P2) and (P3) from Definition~\ref{def-preordering} are
  clearly satisfied. Property (P5) follows from Proposition~\ref{prop:genius}
  since the elements $u_i \rho_i$ are in $P_{(\eta,Q)} \setminus \{0\}$.
  
  For (P4), and using that (P5) holds, it suffices to show that $P_{(\eta, Q)}
  \subseteq (\CP_{(\eta,Q)})_F$, cf.  Remark~\ref{aboutPC}.  Thus, let $a\in
  P_{(\eta, Q)}$ and write $a = u b^2 \rho$ with $v(u)=0$, $b \in F$, $\rho \in
  \Omega_\pr$ and $\pi_v(u)\eta(\rho) \in Q$.  Then, following the definition of
  $\CP_{(\eta,Q)}$, 
  \[a(u_i  \rho_i \s(x_i)x_i)=   (u u_i) (\rho\rho_i)  \s(bx_i)bx_i \]
  is again an element of $\CP_{(\eta,Q)}$.

  Let $\CP$ be the unique positive cone on $(A,\s)$ containing $\CP_{(\eta,Q)}$.
  By Theorem~\ref{gauge} and Proposition~\ref{inspired}, $w_{\kk,\CP}$ is the
  unique $\s$-invariant $v$-gauge on $(A,\s)$ and $w=w_{\kk,\CP}$. Therefore,
  $w$ and $\CP$ are compatible by Proposition~\ref{c1-c8}, and we conclude with
  Theorem~\ref{prop:maxl} and Lemma~\ref{unique-mpppc}.
  
  This completes the proof of the equivalences. The final statement of the
  theorem is now clear: In this case, a lifting of $\CQ$ that contains $1$ is
  completely determined by the lifting $P$ of $Q$ (cf.
  Lemma~\ref{unique-mpppc}), and the number of such liftings is determined by
  the Baer-Krull theorem.
\end{proof}

One might wonder if there could be a clear relation between the number of cosets
of $\Gamma_v$ in $\Gamma_w$ and the number of liftings of $\CQ$. This does not
appear to be the case, as the following example shows:

\begin{ex}
  Let $F$ be a field and let $v$ be a real valuation on $F$ 
  such that $\{\rho_1, \rho_2, \rho_3, \rho_4\} 
  \subseteq  \Gamma_v$ is a basis of $\Gamma_v/2\Gamma_v$. Let $A=M_6(F)$ and consider the involutions 
  $\ad_{\vf}$ and $\ad_\psi$ on $A$, where
  \[
    \vf=\qf{1,\rho_1,\rho_2,\rho_3, \rho_4,  \rho_1\rho_2\rho_3\rho_4}\quad \text{ and }\quad 
    \psi=\qf{1,\rho_1,\rho_2,\rho_3, \rho_1\rho_2, \rho_3\rho_4}.
  \]
  Let $P$ be an ordering  on $F$ such that $\rho_1, \rho_2,   \rho_3, \rho_4 \in P$. 
  Since $\ad_\vf$
  and $\ad_\psi$ are positive at $P$, there is a positive cone $\CP_1$ on $(A,\ad_\vf)$ over
  $P$ and a positive cone $\CP_2$ on $(A,\ad_\psi)$ over  $P$ such that both contain $1$, cf.
  \cite[Cor.~7.7]{A-U-pos}.
  Thus, we may consider the $v$-gauges $w_{\Q,\CP_1}$ and $w_{\Q,\CP_2}$. 
  Using Proposition~\ref{semisimple2} a direct, but lengthy, verification gives
  \[
    [\Gamma_{w_{\Q,\CP_1}}:\Gamma_v]=16 \quad \text{ and }\quad [\Gamma_{w_{\Q,\CP_2}}:\Gamma_v]=13.
  \]
  By Theorem~\ref{prop:maxl},  
  the positive cones $\CP_1$ and $\CP_2$ induce positive cones $\CQ_1$ and $\CQ_2$, respectively,
  on the residue algebras with involution of $(A,\ad_\vf)$ with respect to $w_{\Q,\CP_1}$ and 
  $(A,\ad_\psi)$ with respect to $w_{\Q,\CP_2}$, respectively. In fact,
  \[
    (A_0,(\ad_\vf)_0) \cong (A_0,(\ad_\psi)_0) \cong   (F_v\x F_v \x F_v\x F_v\x F_v\x F_v,\id)
  \]
  by Proposition~\ref{semisimple2} and it follows from Corollary~\ref{pc-ss-2} that
  \[
    \CQ_1=\CQ_2= Q\x Q\x Q\x Q\x Q\x Q,
  \]
  where $Q$ is the ordering on $F_v$ induced by $P$.
  
  Observe that $P$ is the only ordering on $F$ for which $\ad_\vf$ is positive. Likewise,
  $P$ is the only ordering on $F$ for which $\ad_\psi$ is positive. Therefore,
  \[
    |\lift(\CQ_1)|= |\lift(\CQ_2)|=1,
  \]
  by Theorem~\ref{bk2}.  
\end{ex}

\appendix

\section{Quaternionic matrices and eigenvalues}\label{sec:quat-mat}

Given an ordering $P$ on $F$, several of our proofs consist of a reduction to
the case of matrices over $F_P$, $F_P(\sqrt{-1})$ and $(-1,-1)_{F_P}$. The
results that we present below in the case of $(-1,-1)_{F_P}$ are obtained when 
$F_P = \R$ using algebraic arguments, and are therefore valid in the case
of a general real closed field instead of $\R$.

Let $R$ be a real closed field, $C:=R(\sqrt{-1})$ and $H:=(-1,-1)_R$, and let
$\bbar$ denote $\id_R$, complex conjugation or quaternion conjugation,
respectively.

The theory of eigenvalues of  real (symmetric) and complex (hermitian) matrices
is well-known. Quaternionic matrices have also been studied extensively, see for
example \cite{Zhang-1997}, from which we recall the following results. 

Let $i$ and $j$ denote the generators of the quaternion division algebra $H$ over $R$.
Let $M=M_1+M_2 j\in M_n(H)$ with $M_1, M_2\in M_n(C)$. 

A quaternion $\lambda \in H$ is a \emph{left} (resp. \emph{right})
\emph{eigenvalue} of $M$ if and only if $Mx=\lambda x$ (resp. $Mx = x \lambda$),
for some $x\in H^n\sm \{0\}$.  Note that $M$ is invertible if and only if all
its left and right eigenvalues are nonzero, cf. \cite[Thm.~4.3]{Zhang-1997}. An
interesting property of the set of right eigenvalues of a quaternion matrix is
that it is closed under conjugation: If $Mx =x \lambda $ as above and $c \in H
\setminus \{0\}$, then $M(xc) = (xc)(c^{-1}\lambda c)$.

We denote by $\chi_M$ the matrix 
\[\begin{pmatrix}
M_1 & M_2\\[3pt]
-\ovl{M_2} & \ovl{M_1}
\end{pmatrix} \in M_{2n}(C)  \]
and consider the  ``characteristic polynomial'' $p_M(X):= \det (X\cdot I_{2n}
-\chi_M) \in C[X]$. The proof of \cite[Thm.~8.1(5)]{Zhang-1997} shows that
$p_M(X)$ belongs to $R[X]$. In fact, the map that
sends $M$ to $\chi_M$ is a morphism from $M_n(H)$ to its splitting $M_n(H) \ox_R C \cong
M_{2n}(C)$. Therefore, $p_M$ is the reduced characteristic polynomial of $M$, and so
has coefficients in $R$.

By \cite[Thm.~8.1(5)]{Zhang-1997} the Cayley-Hamilton theorem holds for $M$,
i.e., $p_M(M)=0$, and, for a quaternion $\lambda \in H$, $p_M(\lambda)=0$ if and
only if $\lambda$ is a right eigenvalue of $M$. 

\begin{remark}\label{ad-hoc}
  Let $M, N \in M_n(H)$.  Since the map $M \mapsto \chi_M$ is a morphism, we have
  $\chi_{MN} = \chi_M \chi_N$ and thus $p_{MN}$ is the  characteristic
  polynomial of $\chi_M \chi_N$, which is equal to the  characteristic
  polynomial of $\chi_N\chi_M = \chi_{NM}$. Therefore $p_{MN} = p_{NM}$, and in
  particular the right eigenvalues of $MN$ are exactly the right eigenvalues of
  $NM$.
  
  This can also be obtained by a direct computation, as well as the fact that,
  in case $M$ is invertible, the right eigenvalues of $M^{-1}$ are exactly the
  inverses of the right eigenvalues of $M$.
\end{remark}

The Principal Axis Theorem holds for quaternion matrices by
\cite[Cor.~6.2]{Zhang-1997}: if $M\in M_n(H)$ is such that $\ovl{M}^t = M$, then
there exist a unitary matrix $U \in M_n(H)$ (i.e., $\ovl{U}^t=U^{-1}$) and
scalars $\lambda_1,\ldots, \lambda_n \in R$ such that $\ovl{U}^t M U = \diag
(\lambda_1,\ldots, \lambda_n)$. 

A matrix $M \in M_n(H)$ such that $\ovl{M}^t=M$ is called \emph{positive
semidefinite} if $\ovl{x}^t M x \geq 0$ for all $x\in H^n$, cf.
\cite[Rem.~6.1]{Zhang-1997}. Negative semidefinite matrices are defined
similarly.

\begin{lemma}\label{ev-char}
  Let $M \in M_n(H)$. Then
  \begin{enumerate}[$(1)$] 
  
    \item If $\ovl{M}^t=M$ and $\lambda \in H$ is a right eigenvalue of $M$,
      then $\lambda \in R$.
    
    \item If $\ovl{M}^t=M$, then $M$ is positive semidefinite if and only if $M$
      only has nonnegative right eigenvalues.

    \item If $U\in M_n(H)$ is invertible, then the right eigenvalues of $M$ are
      precisely the right eigenvalues of $U^{-1} M U$.

    \item If $\ovl{M}^t=M$ and $\lambda \in H$, then $\lambda$ is a right
      eigenvalue of $M$ if and only if for every unitary $U\in M_n(H)$ such that
      $\ovl{U}^t M U$ is diagonal, $\lambda$ is one of the diagonal elements.
    
    \item If $\ovl{M}^t=M$, then $M$  is positive semidefinite if and only if
      whenever $U$ is unitary such that $\ovl{U}^t M U$ is diagonal, its
      diagonal elements are nonnegative.
 
  \end{enumerate}
\end{lemma}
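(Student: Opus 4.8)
The plan is to prove the five items in the order (3), then (1), (4), (2), (5), each relying on its predecessors. Item (3) is essentially formal: if $Mx = x\lambda$ with $x\ne 0$ and $U\in M_n(H)^\x$, then $(U^{-1}MU)(U^{-1}x) = (U^{-1}x)\lambda$ with $U^{-1}x\ne 0$, so every right eigenvalue of $M$ is one of $U^{-1}MU$, and the reverse inclusion follows by replacing $M$ by $U^{-1}MU$ and $U$ by $U^{-1}$. (Alternatively, by Remark~\ref{ad-hoc}, $p_{U^{-1}MU} = p_{(MU)U^{-1}} = p_M$, and right eigenvalues are exactly the roots of the characteristic polynomial in $H$.)

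Before the remaining parts I would record an auxiliary computation: for $D = \diag(\mu_1,\dots,\mu_n)$ with $\mu_i\in R$, the set of right eigenvalues of $D$ is exactly $\{\mu_1,\dots,\mu_n\}$. Each $\mu_i$ is one since $De_i = e_i\mu_i$; conversely $\chi_D = \diag(D,\ovl D) = \diag(D,D)$ as $D$ is real, so $p_D(X) = \prod_i (X-\mu_i)^2$, and since the $\mu_i$ are central in $H$ the evaluation $R[X]\to H$ is multiplicative, giving $p_D(\mu) = \prod_i(\mu-\mu_i)^2$ for $\mu\in H$, which vanishes only if $\mu=\mu_i$ for some $i$ because $H$ has no zero divisors. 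For item (1), given $\ovl M^t = M$ and $Mx = x\lambda$ with $x\ne 0$, the scalar $\ovl x^t M x$ equals its own conjugate-transpose (using that conjugate-transpose is an anti-automorphism and $\ovl M^t = M$), hence — being $1\times1$ — is fixed by quaternion conjugation, so $\ovl x^t M x \in \Sym(H,\bbar) = R$; since $\ovl x^t M x = (\ovl x^t x)\lambda$ with $\ovl x^t x = \sum_i \ovl{x_i}x_i > 0$ in $R$, we conclude $\lambda\in R$. (One may instead diagonalize $M$ by the Principal Axis Theorem and combine (3) with the auxiliary computation.)

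For item (4), the Principal Axis Theorem produces at least one unitary $U$ with $\ovl U^t M U = D$ diagonal with entries in $R$, and for any such $U$ part (3) and the auxiliary computation identify the right eigenvalues of $M$ with the diagonal entries of $D$; both directions then follow. For item (2), if $M$ is positive semidefinite and $Mx = x\lambda$ with $x\ne 0$, then $\lambda\in R$ by (1) and $0\le\ovl x^t M x = (\ovl x^t x)\lambda$ with $\ovl x^t x>0$, so $\lambda\ge 0$; conversely, writing $M = U D\,\ovl U^t$ with $U$ unitary and $D=\diag(\lambda_1,\dots,\lambda_n)$, each $\lambda_i$ is a right eigenvalue by (4) hence $\ge 0$, and for $x\in H^n$ with $y=\ovl U^t x$ one computes $\ovl x^t M x = \ovl y^t D y = \sum_i\lambda_i\,\ovl{y_i}y_i\ge 0$. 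Item (5) is immediate from (2) and (4), since (4) shows the diagonal entries of any diagonalization $\ovl U^t M U$ are precisely the right eigenvalues of $M$.

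The main obstacle is only careful bookkeeping with non-commutativity: that conjugate-transpose is an anti-automorphism (so $(AB)^*=B^*A^*$), that evaluating the real-coefficient polynomial $p_D$ at a quaternion is multiplicative because the coefficients are central, and that the right-eigenvalue set of a real diagonal matrix is exactly its diagonal — which uses both that $H$ has no zero divisors and the (here vacuous) closure of right-eigenvalue sets under conjugation. Beyond assembling the cited facts from Zhang's work and the Principal Axis Theorem in this order, no deeper difficulty arises.
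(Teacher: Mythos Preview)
Your proof is correct and follows essentially the same route as the paper. The minor differences are organizational: the paper proves (2) by citing Zhang's \cite[Rem.~6.1]{Zhang-1997} rather than arguing directly as you do, and in (4) the paper avoids your characteristic-polynomial auxiliary computation by arguing directly that if $Dx = x\lambda$ with $D$ real diagonal, $\lambda \in R$ (by (1)), and $x_i \ne 0$, then the $i$-th coordinate gives $\lambda_i x_i = x_i\lambda = \lambda x_i$, so $\lambda = \lambda_i$. Your version is slightly more self-contained; the paper's is slightly more elementary for (4).
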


\begin{proof}
  $(1)$ Let $x \in H^n \setminus \{0\}$ be such that $Mx = x\lambda$. Then
  $\lambda = (\bar{x}^tx)^{-1} (\bar{x}^t Mx) \in R$.

  $(2)$ This is \cite[Rem.~6.1]{Zhang-1997}.

  $(3)$ This is immediate.
  
  $(4)$ Let $\lambda \in H$ be a right eigenvalue of $M$.  Observe that since
  $\ovl{M}^t=M$, the standard argument shows that $\lambda \in R$. Let
  $\ovl{U}^t M U = \diag (\lambda_1,\ldots, \lambda_n)$ for $\lambda_1,\ldots,
  \lambda_n \in R$.  By $(3)$ there exists $x\in H^n\sm \{0\}$ such that
  $\ovl{U}^t M U x = x \lambda$. If, for instance, $x_1\not = 0$, we obtain
  $\lambda_1 x_1 = x_1 \lambda= \lambda x_1$. The result follows.  Conversely,
  the diagonal elements of $\ovl{U}^t M U$ are clearly right eigenvalues of
  $\ovl{U}^t M U$, and the result follows from $(3)$.

  $(5)$ This follows from $(2)$ and $(4)$.  
\end{proof}

\section*{Acknowledgments}

We would like to take this opportunity to extend a very warm thank
you to Adrian Wadsworth, who read a previous version of this paper in
great detail and discovered some serious errors. He also suggested
some more elegant arguments and greatly helped our work in
Section~\ref{sec:B-K} via a stimulating exchange of emails. In
particular, he drew our attention to the condition
$\Gamma_w=\Gamma_v$ that led to Theorem~\ref{wadth}.


\def\cprime{$'$}


\begin{thebibliography}{10}

\bibitem{A-U-Kneb}
V. Astier and T. Unger.
\newblock Signatures of hermitian forms and the {K}nebusch trace formula.
\newblock {\em Math. Ann.}, 358(3-4):925--947, 2014.

\bibitem{A-U-prime}
V. Astier and T. Unger.
\newblock Signatures of hermitian forms and ``prime ideals'' of {W}itt groups.
\newblock {\em Adv. Math.}, 285:497--514, 2015.

\bibitem{A-U-PS}
V. Astier and T. Unger.
\newblock Signatures of hermitian forms, positivity, and an answer to a
  question of {P}rocesi and {S}chacher.
\newblock {\em J. Algebra}, 508:339--363, 2018.



\bibitem{A-U-pos}
V. Astier and T. Unger.
\newblock Positive cones on algebras with involution.
\newblock {\em Adv. Math.}, 361:106954, 2020.



\bibitem{BCR}
J. Bochnak, M. Coste, and M.-F. Roy.
\newblock {\em Real algebraic geometry}, volume~36 of {\em Ergebnisse der
  Mathematik und ihrer Grenzgebiete (3)}.
\newblock Springer-Verlag, Berlin, 1998.


\bibitem{EP}
A.J. Engler and A. Prestel.
\newblock {\em Valued fields}, 
Springer Monographs in Mathematics. Springer-Verlag, Berlin, 2005.


\bibitem{F-W}
M.A. Ferreira and A.R. Wadsworth, 
\newblock Value functions and Dubrovin valuation rings on simple algebras. 
\newblock {\em Trans. Amer. Math. Soc.}, 368(3):1681--1734, 2016. 


\bibitem{Grillet}
P.A. Grillet. 
\newblock {\em Abstract algebra}. 
\newblock Second edition. {\em Graduate Texts in Mathematics}, 242. Springer, New York, 2007.

\bibitem{Holland-1980}
S.S. Holland, Jr.
\newblock {$^{\ast} $}-valuations and ordered {$^{\ast} $}-fields.
\newblock {\em Trans. Amer. Math. Soc.}, 262(1):219--243, 1980.

\bibitem{K-S-1989}
M. Knebusch and C. Scheiderer.
\newblock {\em Einf\"uhrung in die reelle {A}lgebra}, volume~63 of {\em Vieweg
  Studium: Aufbaukurs Mathematik}.
\newblock Friedr. Vieweg \& Sohn, Braunschweig, 1989.

\bibitem{Knus}
M.-A. Knus.
\newblock {\em Quadratic and {H}ermitian forms over rings}, volume 294 of {\em
  Grundlehren der Mathematischen Wissenschaften}.
\newblock Springer-Verlag, Berlin, 1991.

\bibitem{BOI}
M.-A. Knus, A. Merkurjev, M. Rost, and J.-P.  Tignol.
\newblock {\em The book of involutions}, volume~44 of {\em American
  Mathematical Society Colloquium Publications}.
\newblock American Mathematical Society, Providence, RI, 1998.

\bibitem{Kulsh-2011}
A. Kulshrestha.
\newblock Strongly anisotropic involutions on central simple algebras.
\newblock {\em Comm. Algebra}, 39(5):1686--1704, 2011.

\bibitem{Lam-1983}
T.Y. Lam.
\newblock {\em Orderings, valuations and quadratic forms}, volume~52 of {\em
  CBMS Regional Conference Series in Mathematics}.
\newblock  Amer. Math. Soc., Providence, RI, 1983.

\bibitem{Larmour}
D.W. Larmour.
\newblock A {S}pringer theorem for {H}ermitian forms.
\newblock {\em Math. Z.}, 252(3):459--472, 2006.

\bibitem{Marshall}
M. Marshall. 
\newblock {\em Spaces of orderings and abstract real spectra}. 
\newblock Lecture Notes in Mathematics, 1636. Springer-Verlag, Berlin, 1996.

\bibitem{Morandi-1989a}
P. Morandi. 
\newblock The Henselization of a valued division algebra.
\newblock {\em J. Algebra}, 122(1): 232--243, 1989.

\bibitem{Morandi-1989}
P. Morandi.
\newblock Value functions on central simple algebras.
\newblock {\em Trans. Amer. Math. Soc.}, 315(2):605--622, 1989.

\bibitem{Prestel84}
A. Prestel.
\newblock {\em Lectures on formally real fields}, volume 1093 of {\em Lecture
  Notes in Mathematics}.
\newblock Springer-Verlag, Berlin, 1984.

\bibitem{P-D-2001}
A. Prestel and C.N. Delzell.
\newblock {\em Positive polynomials}.
\newblock Springer Monographs in Mathematics. Springer-Verlag, Berlin, 2001.
\newblock From Hilbert's 17th problem to real algebra.

\bibitem{Sch}
W. Scharlau.
\newblock {\em Quadratic and {H}ermitian forms},  {Grundlehren
  der Mathematischen Wissenschaften}, 270.
\newblock Springer-Verlag, Berlin, 1985.

\bibitem{T-W-2010}
J.-P. Tignol and A.R. Wadsworth.
\newblock Value functions and associated graded rings for semisimple algebras.
\newblock {\em Trans. Amer. Math. Soc.}, 362(2):687--726, 2010.

\bibitem{T-W-2011}
J.-P. Tignol and A.R. Wadsworth.
\newblock Valuations on algebras with involution.
\newblock {\em Math. Ann.}, 351(1):109--148, 2011.

\bibitem{T-W-book}
J.-P. Tignol and A.R. Wadsworth.
\newblock {\em Value functions on simple algebras, and associated graded
  rings}.
\newblock Springer Monographs in Mathematics. Springer, Cham, 2015.

\bibitem{Wadsworth-2002}
A.R. Wadsworth.
\newblock Valuation theory on finite dimensional division algebras.
\newblock In {\em Valuation theory and its applications, {V}ol. {I}
  ({S}askatoon, {SK}, 1999)}, volume~32 of {\em Fields Inst. Commun.}, pages
  385--449. Amer. Math. Soc., Providence, RI, 2002.

\bibitem{Zhang-1997}
F. Zhang.
\newblock Quaternions and matrices of quaternions.
\newblock {\em Linear Algebra Appl.}, 251:21--57, 1997.

\end{thebibliography}
\end{document}